\documentclass{article}

\usepackage[utf8]{inputenc} 
\usepackage[T1]{fontenc}    
\usepackage[colorlinks=true,linkcolor=blue, urlcolor=blue, citecolor=blue]{hyperref}   
\usepackage{url}            
\usepackage{booktabs}       
\usepackage{amsfonts}       
\usepackage{amsmath, amsthm, amssymb}
\usepackage{nicefrac}       
\usepackage{microtype}      
\usepackage{xcolor}         
\usepackage{multirow}
\usepackage[margin=1in]{geometry}
\usepackage{relsize}
\usepackage{dsfont}

\usepackage{graphicx}

\newcommand{\D}{\textrm{d}}
\newcommand{\R}{\mathbb{R}}
\newcommand{\X}{\mathbf{X}}
\newcommand{\Y}{\mathbf{Y}}
\newcommand{\E}{\mathbf{E}}

\newcommand{\bbP}{\mathbb{P}}

\newcommand{\cA}{\mathcal{A}}

\newcommand{\cR}{\mathcal{R}}

\newcommand{\EE}{\mathbb{E}}

\newcommand{\pr}{\mathbb{P}}

\newcommand{\var}{\textrm{var}}

\newcommand{\one}{\mathds{1}}

\newcommand{\iid}{\overset{\textrm{iid}}{\sim}}

\newcommand{\eps}{\varepsilon} 
\newcommand{\Ct}{C_{*}}

\newcommand{\lin}{\text{lin}}
\newcommand{\maxlin}{\text{max-lin}}
\newcommand{\trunc}{\mathsf{trunc}}

\usepackage{enumerate}
\newcommand{\chisqlin}{\text{trunc-$\chi^2$}}
\newcommand{\chisqmax}{\text{max-trunc-$\chi^2$}}
\newcommand{\linmax}{\text{max-lin}}

\usepackage{enumerate}

\newtheorem{theorem}{Theorem}
\newtheorem{lemma}{Lemma}
\newtheorem{definition}{Definition}
\newtheorem{corollary}{Corollary}

\newtheorem{proposition}{Proposition}

\theoremstyle{remark}

\usepackage{xr-hyper}
\usepackage{lmodern}
\usepackage{natbib}

\title{Minimax optimal submatrix detection: Sharp non-asymptotic rates}

\author{%
  Parker Knight and Julien Chhor\footnote{Direct correspondence to \texttt{pknight@g.harvard.edu}. The authors contributed equally to this work.}  \\
Harvard University, Boston, USA and \\ Toulouse School of Economics, University of Toulouse Capitole, Toulouse, France
}

\begin{document}

\maketitle

\begin{abstract}
  Given an observation $\mathbf Y \in \R^{d_1\times d_2}$ from the model $\mathbf Y = \mathbf X + \mathbf E$ where $\mathbf X$ is constant and $\mathbf E$ has i.i.d. $N(0,1)$ entries, we consider the problem of detecting a planted submatrix in the mean matrix $\mathbf X$. Specifically, we aim to distinguish the null hypothesis $\mathbf X = 0$ from the alternative hypothesis in which $\mathbf X$ is non-zero only on a submatrix of size $s_1 \times s_2$ with elevated entries bounded below by $\mu>0$.
  We establish a minimax lower bound characterizing how large $\mu$ must be to ensure that the two hypotheses are distinguishable with high probability. 
  Furthermore, we derive novel minimax-optimal tests achieving the lower bound, and describe extensions of these tests that are adaptive to unknown sparsity levels $s_1$ and $s_2$. 
  In contrast with previous work, which required restrictive assumptions on $s_1,s_2, d_1$ and $d_2$, our non-asymptotic upper and lower bounds match for any configuration of these parameters.
  
\end{abstract}

\addtocontents{toc}{\protect\setcounter{tocdepth}{-1}}

\section{Introduction}\label{sec:intro}

Since the groundbreaking work of Ingster \citep{ingster1982minimax, ingster1987minimax,ingster1996some, ingster1998minimax, ingster2012nonparametric}, a vast literature has emerged aiming to establish fundamental statistical limits of testing hypotheses under structured alternatives \citep{baraud2002non, arias2008searching, arias2011global,arias2014community, verzelen2015community, collier2017minimax, ingster2010detection, berthet2013optimal, perry2018optimality, liu2021minimax}. A notable line of research within this literature focuses on the \textit{submatrix detection problem}, in which in the statistician observes a noisy matrix and seeks to determine, by way of statistical hypothesis testing, whether the matrix admits a potentially sparse submatrix of entries with elevated mean. Submatrix detection arises naturally in practical high-dimensional data analysis, in particular as the biclustering problem in genomics \citep{cheng2000biclustering, shabalin2009finding, xie2019time, balakrishnan2011statistical} and as community detection in statistical network analysis \citep{dey2022community, arias2014community, fortunato202220}. The rigorous study of submatrix detection was famously initiated by Butucea and Ingster \citep{butucea2013detection}, in which the authors derive the precise minimax detection boundary for detecting a sparse submatrix in homoscedastic Gaussian noise in a particular high-dimensional asymptotic regime. The precision and tightness of the results of \citet{butucea2013detection}, as well as those of subsequent works \citep{arias2014community, verzelen2015community, dadon2024detection, luo2022tensor}, explicitly rely on the form of the asymptotic regime considered by the authors. To our knowledge, there are no existing results establishing the non-asymptotic minimax rate of submatrix detection. 
Moreover, prior work has relied on restrictive assumptions on the shape of the observed matrix and the planted submatrix, which severely hampers the statistical community's understanding of this problem in regimes not captured by such assumptions.

We aim to close this gap by studying the submatrix detection problem in an entirely non-asymptotic setting. We observe a matrix $\Y \in \R^{d_1\times d_2}$ drawn from the model
\begin{equation}\label{eq_model}
    \Y = \X  +\E
\end{equation}
where $\X$ is a constant mean matrix and the entries of the noise matrix $\E$ satisfy $\E_{ij}\iid N(0,1)$ for $(i,j) \in [d_1] \times [d_2]$. Our goal is to test for whether or not $\X$ admits a submatrix of elevated entries. We formalize this problem in the language of minimax hypothesis testing.

Let $s_1, s_2 \in \mathbb N$ with $s_1 \leq d_1$ and $s_2 \leq d_2$. For any $k,n \in \mathbb N$, let $\mathcal{P}_k(n)$ denote the set of all subsets of $[n]$ with cardinality $k$.
For any $S_1 \in \mathcal{P}_{s_1}(d_1)$, $S_2 \in \mathcal{P}_{s_2}(d_2)$ and any $\mu > 0$, we define
\begin{align*}
    \Theta(S_1, S_2, \mu) = \left\{\mathbf M \in \R^{d_1\times d_2} ~\text{ s.t. }~ \Bigg\{\begin{array}{ll}
        \!\forall (i,j) \in S_1\!\times\! S_2: \, \mathbf M_{ij} \geq \mu\\
        \!\forall (i,j) \notin S_1\!\times\! S_2: \, \mathbf M_{ij} = 0
    \end{array}
    \right\}.
\end{align*}
For any $\mu>0$, let 
\begin{align*}
    \Theta(s_1,s_2, d_1, d_2, \mu) = \bigcup_{\substack{S_1 \in \mathcal{P}_{s_1}(d_1)\\ S_2 \in \mathcal{P}_{s_2}(d_2)}} \Theta(S_1, S_2, \mu).
\end{align*}
For any matrix $\X \in \Theta(s_1,s_2, d_1, d_2, \mu)$, the quantities $s_1$ and $s_2$ denote the row and column sparsity levels of $\mathbf{X}$ respectively, and $\mu$ represents the strength of the signal contained in the nonzero entries of $\X$. We consider the testing problem
\begin{align}
    H_0: \forall (i,j) \in [d_1]\times [d_2]: \X_{ij} =  0 \qquad \text{ against } \qquad  H_1(\mu) : \X \in \Theta(s_1,s_2, d_1, d_2, \mu).\label{eq_testing_problem}
\end{align}
A \textit{test} is a measurable function of the data $\Y$ taking its values in $\{0,1\}$. We measure the quality of a test $\Delta$ by its \textit{risk}, defined as the sum of its type I and type II errors.  Here and throughout the paper, we denote by $\mathbb P_\X$ the probability distribution of $\Y$ under the mean parameter $\X \in \R^{d_1 \times d_2}$ and let $\mathbb P_{0}$ denote the distribution of $\Y$ under the null hypothesis.

\begin{definition}
     The risk of a test $\Delta$ is defined as
\begin{equation}\label{eq_testing_risk}
    \cR(\Delta, \mu) = \mathbb P_{0}(\Delta = 1) + \sup_{\X \in \Theta(s_1,s_2,d_1,d_2, \mu)} \mathbb P_\X(\Delta = 0).
\end{equation}
\end{definition}
\noindent Furthermore, we define the \textit{minimax risk} as the infimal risk over all tests for the problem (\ref{eq_testing_problem}).
\begin{definition}
    The \textit{minimax risk} associated with~\eqref{eq_testing_problem} is defined as
\begin{equation}\label{eq_minimax_risk}
    \cR^*(s_1,s_2,d_1,d_2, \mu) = \inf_{\Delta} \cR(\Delta, \mu),
\end{equation}
where the infimum is taken over all tests $\Delta$.
\end{definition}

\noindent We measure the difficulty of the testing problem (\ref{eq_testing_problem}) via the \textit{minimax separation rate} $\mu^*$ defined as follows.
\begin{definition}
Let $\eta \in (0,1)$ be given. The minimax separation rate of problem (\ref{eq_testing_problem}) at the desired level of risk $\eta$ is defined as
    \begin{equation}\label{eq_minimax_rate}
    \mu^*(s_1, s_2, d_1, d_2) = \inf \left\{\mu>0: \cR^*(s_1,s_2,d_1,d_2,\mu) < \eta\right\}.
\end{equation}
\end{definition}

We will consider $\eta \in (0,1)$ as a fixed constant throughout the paper. We will also write $\mu^*$ instead of $\mu^*(s_1,s_2, d_1, d_2)$ whenever no ambiguity arises, but we emphasize that our goal is to obtain a sharp expression of this quantity of interest as a function of $s_1, s_2, d_1$ and $d_2$. The minimax separation rate $\mu^*$ is a non-asymptotic function of the tuple $(s_1, s_2, d_1, d_2, \mu)$ that provides us with necessary and sufficient conditions on the signal strength $\mu$ such that consistent detection is possible with risk controlled to be at most $\eta$. Whenever $\mu$ satisfies $\mu > \mu^*$, then a test $\Delta^*$ exists such that $\cR(\Delta^*, \mu) \leq \eta$. Conversely, if $\mu < \mu^*$, no such test exists. In the present work, we aim to derive the value of $\mu^*$ up to absolute multiplicative constants, which will grant us a complete characterization of the statistical hardness of the testing problem (\ref{eq_testing_problem}) across all values of $s_1, s_2, d_1, d_2$.

\subsection{Prior work}

\citet{butucea2013detection} established the first formal decision-theoretic results pertaining to submatrix detection in Gaussian noise. They consider the asymptotic setting with $s_1, s_2, d_1$, and $d_2$ all tending to infinity with $s_1 = o(d_1)$ and $s_2 = o(d_2)$, and impose that 
\begin{enumerate}
    \item $s_1 \log(d_1/s_1) \asymp s_2 \log(d_2/s_2)$ 
    \item $\log \log (d_1/s_1) = o(\log(d_2/s_2))$ and $\log \log (d_2/s_2) = o(\log(d_1/s_1))$.
\end{enumerate}
In this regime, \citet{butucea2013detection} show that the asymptotic minimax rate of separation in this regime satisfies
\begin{equation}\label{eq:bi-intro}
    \big(\mu^*\big)^2 = \left[\frac{d_1d_2}{s_1^2s_2^2} \land 2 \Bigg(\frac{1}{s_2}\log\left(\frac{d_1}{s_1}\right) + \frac{1}{s_1}\log\left(\frac{d_2}{s_2}\right)\Bigg)\right](1+o(1)).
\end{equation}
This expression establishes a clear phase transition in the form of $\mu^*$, which we elucidate by temporarily restricting our attention to the asymptotically square submatrix setting $s_1 \asymp s_2, d_j \asymp s_j^{A_j}$ where $A_j > 1$ for $j \in \{1,2\}$. The minimax rate in this setting simplifies to:
\begin{align*}
    \mu^* &\asymp s_1^{-2 + (A_1 + A_2)/2} \quad \quad \quad \quad \quad  \; \; \;\text{if $A_1 + A_2 \leq 3$}, \\
    \mu^* &\asymp \sqrt{\frac{2(A_1 + A_2 - 2)\log(s_1)}{s_1}} \quad \text{otherwise.}
\end{align*}
The former expression corresponds to the \textit{dense regime}, in which $s_1$ and $s_2$ grow quickly relative to $d_1$ and $d_2$ respectively, and the latter to the \textit{sparse regime}. This phase transition in $\mu^*$ is analogous to classical results in the sparse vector testing literature \citep{ingster1996some, donoho2004higher}, but the result of Butucea and Ingster carries with it the additional complexity introduced by the product-structure of the alternative hypothesis in \eqref{eq_testing_problem} (namely, the decoupling of $A_1$ and $A_2$). Furthermore, \eqref{eq:bi-intro} is exact, and establishes the precise minimax constant in the asymptotic regime that they consider.

Following this seminal result of \cite{butucea2013detection}, a wave of results were published establishing the statistical properties of submatrix detection beyond the homoscedastic Gaussian setting. Arias-Castro and Verzelen considered submatrix detection in Bernoulli noise \citep{arias2014community, verzelen2015community}, motivated by the problem of community detection in Erdos-Renyi graphs, and establish the minimax rates of detecting a square submatrix in the adjacency matrix of a random graph with elevated connection probability in the dense \citep{arias2014community} and sparse \citep{verzelen2015community} graph settings respectively. The work of \citet{luo2022tensor} extended the results of \citet{butucea2013detection} to the multi-modal tensor setting, and prove the minimax rate of detecting a subtensor in a tensor of standard Gaussian noise.  More recently, \citet{dadon2024detection} provided upper and lower bounds on the minimax rate of detecting multiple submatrices in a square Gaussian matrix that are tight up to log factors. \citet{oren2026inhomogeneous} extended these results to the inhomogeneous setting, in which the planted submatrices may have heterogeneous means or covariance matrices.

A parallel line of research investigates computational aspects of the submatrix detection problem; namely, whether or not the information-theoretic lower bounds establishes by \citet{butucea2013detection} are attainable by (possibly randomized) polynomial time algorithms. Under the planted clique hypothesis, \citet{ma2015computational} established that no randomized polynomial time test can detect a square submatrix of size $s \times s$ in a $d \times d$ Gaussian matrix if $\mu = o(d s^{-2 - \delta})$ for $\delta > 0$ and $\sqrt{\log d} \lesssim \mu^{-1}$. These conditions on $\mu$ are strictly stronger than the lower bound of \citet{butucea2013detection} when $\sqrt{s} \gg \log d$, revealing a regime in which detection is statistically possible, but computationally intractable. \citet{hajek2015computational} established an analogous statistical-computational gap in the community detection problem. These results were generalized by \citet{brennan2019universality}, who derived computational lower bounds for detecting a submatrix of $\mathcal{Q}$-distributed entries in a matrix of $\mathcal{P}$-distributed data which hold for any $(\mathcal{Q}, \mathcal{P})$ pair with a finite KL divergence. Computational barriers to submatrix localization, defined as the problem of finding a planted submatrix rather than merely detecting its presence, has also received widespread attention in the statistics community over the past several years; we refer interested readers to \citet{balakrishnan2011statistical, chen2016statistical, cai2017computational, hajek2017information, hajek2018submatrix, hajek2016semidefinite}.

Our work also builds on and contributes to recent developments in the sparse Gaussian sequence model (GSM) testing literature, which dates back to \citet{baraud2002non}. \citet{collier2017minimax} established non-asymptotic minimax rates for estimating linear and quadratic functionals on the sparse GSM, which they used to derive the corresponding minimax rates of signal detection under sparsity. The concurrent work \citet{collier2018optimal} presented optimal rates for adaptive estimation of linear functions when the sparsity and noise variance are unknown. \citet{10.3150/19-BEJ1180} presented minimax rates for estimating general $\ell_p$ norms (i.e., nonlinear functionals) of the mean parameter under the same sparse GSM. \citet{carpentier2019adaptive} considered the estimation of the degree of sparsity $s$ given draws from a $d$-dimensional GSM. Minimax rates for sparse changepoint detection in the GSM where established by \citet{liu2021minimax}. Recently, \citet{kotekal2023minimax} described minimax rates of signal detection in the sparse GSM with correlated observations, and \citet{laurent2012non,chhor2024sparse} derived analogous results under heteroskedastic noise profiles.

Finally, we remark that the results in this paper complement those of our concurrent work \citet{chhor2026optimal}, which considers the problem of detecting a planted community in a bipartite graph. This corresponds to the submatrix detection problem under Bernoulli noise, which presents entirely different behavior from the Gaussian setting. Moreover, the results of \citet{chhor2026optimal} are limited by restrictive assumptions on the observed graph's baseline connection probability, and thus fail to completely characterize the statistical hardness of community detection in bipartite graphs. 
These restrictive assumptions also cause difficulty in obtaining adaptation to the size of the hidden communities. In the present paper, we completely resolve the minimax rate of submatrix detection in Gaussian noise and describe optimal adaptive tests, which constitutes a significant advance over our own companion work. Furthermore, we emphasize that since we place ourselves in the Gaussian setting, the optimal testing procedure described in Section \ref{sec_ub} relies on fundamentally different test statistics from those used in the bipartite graph setting.
 
\subsection{Our contributions}
In this paper, we reopen the investigation into the statistical limits of submatrix detection in Gaussian noise, and make the following contributions.
 \begin{enumerate}
     \item We fully resolve the non-asymptotic  minimax rate $\mu^*$ defined in \eqref{eq_minimax_rate}, and show that it satisfies
     \begin{align*}
    \big(\mu^*\big)^2 \asymp &\Bigg[\frac{1}{s_1} \log\!\Big(1+ \frac{d_2}{s_2^2} \log\!\Big(\!e\scalebox{.92}{$\displaystyle{d_1 \choose s_1}$}\Big)\,\Big) + \frac{1}{s_2} \log\!\Big(1+ \frac{d_1}{s_1^2} \log\!\Big(\!e\scalebox{.92}{$\displaystyle{d_2 \choose s_2}$}\Big)\,\Big)\Bigg] \\[5pt]
    &\land \, \Bigg[\frac{d_1}{s_1^2} \log\left(1+\frac{d_2}{s_2^2}\right)\Bigg] \, \land \, \Bigg[\frac{d_2}{s_2^2} \log\left(1+\frac{d_1}{s_1^2}\right)\Bigg].
\end{align*}
    Our results generalize the rates of sparse signal detection in the vector case~\citep{collier2017minimax} and also encompass the seminal results of Butucea and Ingster up to constants (Corollary \ref{cor_match_bi}), which required the stringent balancedness assumption $s_1\log(d_1/s_1) \asymp s_2 \log(d_2/s_2)$. 
     In contrast to the previous literature, our bounds always match up to multiplicative constants for any configuration of $d_1,d_2,s_1$ and $s_2$. 
     This allows us to provide a comprehensive picture of all possible regimes and phase transitions, especially in highly imbalanced settings $s_1 \ll s_2$ or $d_1 \ll d_2$, which, to the best of our knowledge, were previously absent from the literature. 

     \item 
     Our proof of the lower bound on $\mu^*$ in Theorem \ref{thm_main} relies on a refined use of the second moment method. However, our argument departs significantly from the lower bound techniques developed in~\cite{butucea2013detection} and~\cite{arias2014community}. Rather than analyze a truncated likelihood ratio statistic, we directly control the variance of the likelihood ratio under all possible regimes of sparsity. This requires several auxiliary results for controlling the moment-generating function of the product of binomial random variables which may be of independent interest.

     \item Our upper bound on the minimax rate requires the development of new testing procedures. We use a combination of linear tests and truncated $\chi^2$-based tests, first studied by \citet{collier2017minimax} for optimal testing in the sparse Gaussian sequence model, and carefully designed scanning procedures to achieve the optimal submatrix detection rates across all values of $s_1, s_2, d_1$, and $d_2$. To our knowledge, the tests described in this paper (as well as in \citet{chhor2026optimal}) constitute the first multi-dimensional extensions of the truncated $\chi^2$ method now widely-used in the testing literature \citep{collier2017minimax, kotekal2023minimax, liu2021minimax, chhor2024sparse}, and we anticipate that the fundamental ideas behind the construction of our optimal tests will inspire the development of new methods for optimal sparse signal detection in matrix- or tensor-structured data. Furthermore, we describe an adaptive extension of our testing procedure that is minimax optimal without knowledge of the size of the planted submatrix $(s_1, s_2)$.
 \end{enumerate}


The manuscript is structured as follows. In Section \ref{sec_main}, we present our main theorem. Section \ref{sec_ub} presents the construction of our optimal testing procedure and gives an expanded statement of the upper bound included in Theorem \ref{thm_main} (Proposition \ref{prop_ub}). We then describe the adaptive extension of our optimal test and present a guarantee on its risk (Proposition \ref{prop_ada_ub}). In Section \ref{sec_comparison}, we provide a detailed comparison of our results to those in the existing literature. Finally, Section \ref{sec_discussion} summarizes our contributions and outlines the many promising avenues for future work implied by our results.

\subsection{Notation}

The following notation will be used throughout the paper. For $d \in \mathbb N$, let $[d] := {1,...,d}$. We use $\mathcal P_s(d)$ to denote the set of subsets of $[d]$ of size $s$. For $a,b \in \R$,
denote $a \lor b:= \max\{a,b\}$ and $a \land b =: \min\{a,b\}$.  We will use $a \lesssim b$ if there exists a constant $C > 0$ depending on $\eta$ such that $a \leq C b$. We say $a \asymp b$ if $a \lesssim b$ and $b \lesssim a$. For two sets $A_1$ and $A_2$, we denote $A_1 \times A_2 = \{(i,j) : i \in A_1, j \in A_2\}$ as the Cartesian product of $A_1$ and $A_2$. For a finite set $A$, we use $|A|$ to denote the cardinality of $A$. We use $\one_{\{\cdot\}}$ as the indicator function, meaning that $\one_{A} = 1$ if the event $A$ occurs and $\one_{A} = 0$ otherwise. For a matrix $\mathbf X$, we use $\X_{ij}$ to denote its $(i,j)_{\text{th}}$ entry. Given two probability distributions $\bbP$ and $\mathbb Q$, we use $\text{TV}(\bbP, \mathbb Q) = \sup_A|\bbP(A) - \mathbb Q(A)|$ to denote the total variation distance between $\bbP$ and $\mathbb Q$. We use $c, C, \bar{C}, C', c_1, c_2, c_3$, and $c_4$ to denote constants whose value may change between instances of their usage in the paper. 
We will also use the convention that $[a,b] = \emptyset$ for any two real numbers $a,b$ such that $a>b$. 
For any two integers $n,k \in \mathbb N\cup \{0\}$, we denote by ${n \choose k}$ the binomial coefficient, equal to $\frac{n!}{k! (n-k)!}$.

\section{Main result}\label{sec_main}

 We begin this section by introducing notation that will serve us in the statements of the results that follow. For any $s_1,s_2, d_1, d_2 \in \mathbb N$, we define the following quantities:
\begin{align}
    \psi(s_1,s_2,d_1,d_2) &= \frac{1}{s_1} \log\left(1+ \frac{d_2}{s_2^2} \log\left(e{d_1 \choose s_1}\right)\right), \label{eq_def_psi}\\[5pt]
    \phi(s_1,s_2, d_1,d_2) &= 
        \frac{d_1}{s_1^2} \log\left(1+\frac{d_2}{s_2^2}\right)\label{eq_def_phi}
\end{align}
To make our notation concise, we write
\begin{align*}
    &\phi_{12} = \phi(s_1,s_2, d_1,d_2),
    &\phi_{21} = \phi(s_2,s_1, d_2,d_1),\,\\
    &\psi_{12} = \psi(s_1,s_2, d_1,d_2),
    &\psi_{21} = \psi(s_2,s_1, d_2,d_1).
\end{align*}
The following theorem is our main result, which describes the precise form of the minimax rate of separation $\mu^*$ as defined in (\ref{eq_minimax_rate}) up to constants.
\begin{theorem}\label{thm_main}
    Let $\eta \in (0,1)$ be given. Then, the following claims hold. 
    \begin{enumerate}
        \item There exists a constant $c > 0$ that depends only on $\eta$ such that
        \begin{equation}
            (\mu^*)^2 \geq c \cdot \big[\left( \psi_{12} + \psi_{21}\right) \land \phi_{12} \land \phi_{21} \big].
        \end{equation}
        \item There exists a constant $C' > 0$ that depends only on $\eta$ such that if $\mu^2 \geq C' \cdot [(\psi_{12} + \psi_{21}) \land \phi_{12} \land \phi_{21}]$, then the test $\Delta^*$ defined in \eqref{eq:delta-optimal} satisfies
        $\cR(\Delta^*, \mu) \leq \eta$.
        This implies that 
        \begin{equation}
            (\mu^*)^2 \leq C' \cdot \big[\left( \psi_{12} + \psi_{21}\right) \land \phi_{12} \land \phi_{21} \big].
        \end{equation}
    \end{enumerate}
\end{theorem}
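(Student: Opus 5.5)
The proof splits into a lower bound via the second moment method and an upper bound via an explicit combination of tests.

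For part 1, I use a uniform prior over supports. Let $\pi$ draw $S_1$ uniformly from $\mathcal P_{s_1}(d_1)$ and $S_2$ uniformly from $\mathcal P_{s_2}(d_2)$, independently, and set $\X=\mu\one_{S_1}\one_{S_2}^\top$. For the mixture $\bbP_\pi$ the standard bound gives
\[
\cR^*\ \ge\ 1-\tfrac12\sqrt{\EE_0[L_\pi^2]-1}.
\]
By the Gaussian identity,
\[
\EE_0[L_\pi^2]=\EE\exp\big(\mu^2 K_1K_2\big),
\]
where $K_1=|S_1\cap S_1'|$ and $K_2=|S_2\cap S_2'|$ are independent hypergeometrics, stochastically dominated in the mgf sense by $\mathrm{Bin}(s_1,s_1/d_1)$ and $\mathrm{Bin}(s_2,s_2/d_2)$. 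The whole task is to show $\EE e^{\mu^2K_1K_2}\le 1+\eta'$ whenever $\mu^2\le c[(\psi_{12}+\psi_{21})\land\phi_{12}\land\phi_{21}]$.

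To do this I condition on $K_1$. This gives $\EE_{K_1}[(1+\tfrac{s_2}{d_2}(e^{\mu^2K_1}-1))^{s_2}]$, and I split according to the size of $\mu^2K_1$.
\begin{itemize}
\item When $\mu^2 K_1\lesssim 1$, linearize $e^{\mu^2K_1}-1\approx\mu^2K_1$. The inner term becomes $\exp(\tfrac{s_2^2}{d_2}\mu^2K_1)$, and a binomial mgf bound handles it. This is where the $\log(1+d_2/s_2^2)$ and $\phi$ terms appear: $\mu^2\le\phi_{12}$ is exactly what makes $\EE\exp(\mu^2 K_1\cdot\min(\cdot))$ small, via $\log\EE e^{\lambda K_1}\le \tfrac{s_1^2}{d_1}(e^\lambda-1)$.
\item When $\mu^2K_1$ is large, bound the inner term by $\exp(\tfrac{s_2^2}{d_2}e^{\mu^2K_1})$. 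Then use the tail $\pr(K_1=k)\le \binom{s_1}{k}(s_1/d_1)^k$, together with $\mu^2 k\le \mu^2 s_1\le c\log(1+\tfrac{d_2}{s_2^2}\log(e\binom{d_1}{s_1}))$. This gives $\tfrac{s_2^2}{d_2}e^{\mu^2k}\le (\log e\binom{d_1}{s_1})^{c}$-type bounds, and these are beaten by the entropy $k\log(d_1/s_1)$ in the tail probability, after summing over $k$.
\item Symmetrically, I swap the roles of $K_1$ and $K_2$ in the region where $K_2$ is the larger factor.
\end{itemize}
The condition $\mu^2\le c(\psi_{12}+\psi_{21})$ means both $\mu^2\le c\psi_{12}$ and $\mu^2\le c\psi_{21}$ up to a factor 2. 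The min with $\phi$ corresponds to regimes where the linearization always applies.

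This mgf bound for the product of binomials, uniformly over all regimes, is the main obstacle. The delicate part is the intermediate range of $k$, where neither linearization nor the crude tail bound is tight. There I plan a dyadic decomposition over $k$, using $\binom{s_1}{k}(s_1/d_1)^k\le (es_1^2/(kd_1))^k$ and comparing with $\log\binom{d_1}{s_1}\asymp s_1\log(ed_1/s_1)$.

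For part 2, I construct $\Delta^*$ as the maximum of three families of tests, each calibrated by a union bound at level $\eta/6$.
\begin{enumerate}
\item A $\phi_{12}$-test. For each column $j$, form the row-sum vector. Apply a truncated $\chi^2$ statistic of Collier et al. type across columns: $\sum_j (Z_j^2-\nu_j)\one\{|Z_j|>\tau\}$ with $Z_j=d_1^{-1/2}\sum_i \Y_{ij}$ and threshold $\tau^2\asymp\log(1+d_2/s_2^2)$. Under the alternative each of the $s_2$ columns has mean $s_1\mu/\sqrt{d_1}$, so the sparse-vector detection rate yields $\mu^2\gtrsim \tfrac{d_1}{s_1^2}\log(1+d_2/s_2^2)$.
\item The symmetric $\phi_{21}$-test, with rows and columns exchanged.
\item A scan test for $\psi_{12}+\psi_{21}$. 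For every $S_1\in\mathcal P_{s_1}(d_1)$, compute the column scores $W_j=s_1^{-1/2}\sum_{i\in S_1}\Y_{ij}$. Take the maximum over $S_1$ of a truncated $\chi^2$ (or a linear statistic in the dense subcase) in the $W_j$, with threshold $t^2\asymp \log(1+\tfrac{d_2}{s_2^2}\log e\binom{d_1}{s_1})$. The union over $\binom{d_1}{s_1}$ sets is paid by sub-exponential concentration of the truncated statistic, at deviation $\log\binom{d_1}{s_1}$. Under the alternative each $W_j$ has mean $\sqrt{s_1}\mu$, which requires $s_1\mu^2\gtrsim t^2$, i.e. $\mu^2\gtrsim\psi_{12}$. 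I also symmetrize this test.
\end{enumerate}
Since $\mu^2\gtrsim\psi_{12}+\psi_{21}$ implies $\mu^2$ exceeds the larger of the two, one of the scan tests fires, and similarly for the $\phi$ minima. The union-bound variance calculation for the max over $S_1$ is the technical core here. I expect it to follow from Bernstein-type bounds on truncated Gaussians, with the type II analysis handled by the mean-variance comparison of each statistic.
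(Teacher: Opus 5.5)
Your upper-bound plan largely matches the paper's tests, but there is a genuine gap in the lower bound and an unjustified step in the upper bound.

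\textbf{Lower bound.} Your argument rests on the claim that $\mu^2\le c(\psi_{12}+\psi_{21})$ gives both $\mu^2\le 2c\psi_{12}$ and $\mu^2\le 2c\psi_{21}$. That is false. It only gives $\mu^2\le 2c\max(\psi_{12},\psi_{21})$, and the second moment must stay bounded up to the \emph{larger} of the two.

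The difference is real. Take $s_1=s_2=1$, $d_2$ fixed and $d_1\to\infty$. Then $\psi_{12}\asymp\log\log d_1$, while $\psi_{21}\asymp\phi_{21}\asymp\log d_1$, which is the true rate for a single elevated entry. Your second bullet uses exactly the unavailable bound $\mu^2 s_1\le c\log\bigl(1+\frac{d_2}{s_2^2}\log(e\binom{d_1}{s_1})\bigr)$. Without it, the relaxation $\bigl(1+\frac{s_2}{d_2}(e^{\mu^2k}-1)\bigr)^{s_2}\le\exp\bigl(\frac{s_2^2}{d_2}e^{\mu^2k}\bigr)$ is doubly exponential in $\mu^2k$, and no entropy can absorb it. In the example, at $\mu^2=c\log d_1$ it bounds the $k=1$ term by $\exp(d_1^c/d_2)/d_1\to\infty$, whereas the true contribution is $O(d_1^{c-1})$. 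As written, you would at best get $\min(\psi_{12},\psi_{21})\land\phi_{12}\land\phi_{21}$, which does not match the upper bound.

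The missing step is to keep the exact $s_2$-th power. For every $k\ge C_*s_1^2/d_1$, require $s_2\log\bigl(1+\frac{s_2}{d_2}(e^{k\mu^2}-1)\bigr)\le c\,k\log\bigl(\frac{kd_1}{2es_1^2}\bigr)$. Together with $\pr(K_1=k)\le(2es_1^2/(kd_1))^k$, the tail then sums as a geometric series. You must then show that the minimum over $k$ of the resulting threshold is $\gtrsim(\psi_{12}+\psi_{21})\land\phi_{12}\land\phi_{21}$. That optimization is the real content of the proof, and your ``dyadic decomposition'' leaves it open. The paper needs the reduction $d_1/s_1\ge e\log(d_2/s_2)$ and case splits on $s_1^2$ vs.\ $d_1s_2$, on $s_2^2/d_2$ vs.\ $s_1^2/d_1$, and on $s_2$ vs.\ $s_1\log(d_1s_2/s_1^2)$.

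Splitting by the size of $\mu^2K_1$ is also the wrong cut. The available entropy is $k\log\bigl(\frac{kd_1}{es_1^2}\bigr)$, not $k\log(d_1/s_1)$, and it vanishes at the typical value $k\asymp s_1^2/d_1$. So typical $K_1$ with $\mu^2K_1\gg1$ is not handled by your bullets. The $\phi_{12}$ term comes from exactly this bulk $K_1\le C_*s_1^2/d_1$: replace $K_1$ by $C_*s_1^2/d_1$ and evaluate the $K_2$-mgf exactly at exponent $\lesssim\log(1+d_2/s_2^2)$. Linearizing $e^{\mu^2K_1}$ only ever produces the linear-test scale $\frac{d_1d_2}{s_1^2s_2^2}$, not the logarithm.

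\textbf{Upper bound.} You omit the Bonferroni cost from the signal requirement. The maximum over $\binom{d_1}{s_1}$ truncated-$\chi^2$ statistics forces a threshold of order $\log\binom{d_1}{s_1}$, as you note, so the signal of order $s_1s_2\mu^2$ must exceed it too. The row-scan test therefore needs $\mu^2\gtrsim\psi_{12}+\frac{1}{s_1s_2}\log\binom{d_1}{s_1}$, not merely $\mu^2\gtrsim\psi_{12}$. The second term can dominate: for $s_2=1$ and $d_1=d_2=s_1^2$ it is $\asymp\log s_1$, while $\psi_{12}\asymp\frac{\log s_1}{s_1}$.

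Hence ``$\mu^2$ exceeds the larger $\psi$, so a scan test fires'' is not justified as stated. You must prove $\min(\psi_{12}+\beta_{21},\psi_{21}+\beta_{12})\lesssim\psi_{12}+\psi_{21}$. The paper does this through the selection rule based on $\tilde R$ and Lemma~\ref{lem_simplified_rate_for_UB}. A quicker route when $d_1\ge es_1$ and $d_2\ge es_2$: if $s_1\ge s_2$ then $\beta_{12}\lesssim\psi_{12}$, so scan over column sets of size $s_2$ and truncate along rows, and symmetrically otherwise.

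Finally, when $d_2<s_2^2$ your truncated-$\chi^2$ $\phi_{12}$-test only reaches $\mu^2\gtrsim\frac{d_1\sqrt{d_2}}{s_1^2s_2}$. To reach $\frac{d_1d_2}{s_1^2s_2^2}\asymp\phi_{12}$ you need the linear test, which exploits the sign of the signal.
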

Theorem \ref{thm_main} establishes that the minimax rate satisfies
\begin{equation}\label{eq_our_rate}
    (\mu^*)^2 \asymp \left( \psi_{12} + \psi_{21}\right) \land \phi_{12} \land \phi_{21}.
\end{equation}
The upper bound of Theorem \ref{thm_main} is achieved by an entirely novel testing procedure constructed from a combination of carefully aggregated truncated $\chi^2$ and linear test statistics. We defer the details to Section \ref{sec_ub}. The proof of the lower bound in Theorem \ref{thm_main} relies on a precise application of the second moment method \citep{ingster1982minimax}.  We lower bound the minimax risk with the risk of a simple versus simple testing problem, in which the alternative hypothesis is defined by integrating over our parameter space with a uniform prior. We then invoke the Neyman-Pearson lemma and control the second moment of the resulting likelihood ratio statistic to show that the testing risk can be made arbitrarily large for $\mu \leq c\mu^*$, where $c > 0$ is a small constant that depends on $\eta$.  Notably, our proof substantially departs from that of \citet{butucea2013detection}. Rather than employ the well-established truncated second moment method \citep{butucea2013detection, arias2014community, verzelen2015community}, we perform an elementary but extremely detailed analysis of the second moment of the likelihood ratio test across all possible regimes of $(s_1, s_2, d_1, d_2)$. We provide the proof in detail in the supplement.

\section{Upper bounds}\label{sec_ub}
In this section, we describe the novel tests that we combine to achieve the upper bound in Theorem \ref{thm_main}. For $Z \sim N(0,1)$ and any $\tau>0$, define 
\begin{equation}\label{eq:betatau}
    \nu_{\tau} = \EE[Z^2 \big|\, |Z|>\tau]  
\end{equation}
Given the threshold $\tau_{\chisqlin, 1} = \sqrt{C\log(1 + \frac{d_2}{s_2^2})}$ for constant $C > 0$ to be chosen later, we define the \textit{truncated $\chi^2$ test statistic} as
\begin{equation}\label{eq_trunc_lin}
    t_{\chisqlin, 1} = \sum_{j = 1}^{d_2}\Big((\bar{\Y}_j)^2 - \nu_{\tau_{\chisqlin, 1}}\Big)\one\big(|\bar{\Y}_j| > \tau_{\chisqlin, 1}\big)
\end{equation}
for $\bar{\Y}_j = \frac{1}{\sqrt{d_1}}\sum_{i = 1}^{d_1}\Y_{ij}$. For $h > 0$, the truncated $\chi^2$ test is defined as 
\begin{equation}
\Delta_{\chisqlin, 1}^h = \one(t_{\chisqlin} > h).
\end{equation}
For any $J_1 \in \mathcal{P}_{s_1}(d_1)$ and $j \in \{1, ..., d_2\}$, define 
\[\bar{\Y}_{J_1, j} = \frac{1}{\sqrt{s_1}}\sum_{i \in J_1}\Y_{ij}.\]
For $\tau_{\chisqmax, 1} = \sqrt{C\log\Big(1 + \frac{d_2}{s_2^2}\log (e{d_1 \choose s_1})\Big)}$ and $C > 0$ to be chosen later, we define the \textit{Bonferroni corrected truncated $\chi^2$ test statistic} as
\begin{equation}\label{eq_trunc_max}
    t_{\chisqmax, 1} = \max_{J_1 \in \mathcal P_{s_1}\!(d_1)}\sum_{j = 1}^{d_2}\big((\bar{\Y}_{J_1, j})^2 - \nu_{\tau_{\chisqmax, 1}}\big)\one(|\bar{\Y}_{J_1, j}| > \tau_{\chisqmax, 1}).
\end{equation}
The corresponding test is defined as 
\begin{equation}
\Delta_{\chisqmax, 1}^h = \one(t_{\chisqmax} > h),
\end{equation}
for $h > 0$. We define $\Delta_{\chisqlin, 2}^h$ and $\Delta_{\chisqmax, 2}^h$ by swapping the roles of $s_1$ and $d_1$ with those of $s_2$ and $d_2$ in the construction of the test statistics \eqref{eq_trunc_lin} and \eqref{eq_trunc_max} as well as the thresholds $\tau_{\chisqlin, 1}$ and $\tau_{\chisqmax, 1}$, respectively. Next, we define the \textit{linear test statistic} as
\begin{equation}
    t_{\lin} = \frac{1}{\sqrt{d_1d_2}}\sum_{i, j}\Y_{ij},
\end{equation}
and the corresponding linear test is
\begin{equation}
    \Delta^h_{\lin} = \one(t_\lin > h),
\end{equation}
for $h > 0$.
Finally, we define the \textit{Bonferroni corrected linear test statistic} as
\begin{equation}\label{eq_lin_max}
    \displaystyle t_{\linmax, 1} = \max_{J_1 \in \mathcal P_{s_1}\!(d_1)}\frac{1}{\sqrt{d_2}}\sum_{j = 1}^{d_2}\bar{\Y}_{J_1, j}.
\end{equation}
The corresponding test is defined as 
\begin{equation}
\Delta_{\linmax, 1}^h = \one(t_{\linmax, 1} > h),
\end{equation}
for $h > 0$. We will aggregate the linear tests and the truncated $\chi^2$ tests as follows:
\begin{align}
    &\Delta_a^{h_1,h_2} = \begin{cases}
        \Delta^{h_1}_{\chisqlin, 1} & \text{if  $\frac{d_2}{s_2^2} \geq 1$,} \\ \Delta^{h_2}_{\lin} & \text{otherwise}
    \end{cases} \qquad \text{ and } 
    \qquad \Delta_b^{h_1',h_2'} = \begin{cases}
        \Delta^{h_1'}_{\chisqlin, 2} & \text{if  $\frac{d_1}{s_1^2} \geq 1$,} \\ \Delta^{h_2'}_{\lin} & \text{otherwise.}
    \end{cases} \label{def_Delta_a_and_b}
\end{align}
We similarly aggregate the Bonferroni corrected linear and truncated $\chi^2$ tests as follows:
\begin{align*}
    &\Delta_c^{h_3,h_4} = \begin{cases}
        \Delta^{h_3}_{\chisqmax, 1} & \text{if  $\frac{d_2}{s_2^2}\log\!\left(e{d_1 \choose s_1}\right) \geq 1$,} \\ \Delta^{h_4}_{\linmax, 1} & \text{otherwise}
    \end{cases} \quad \text{ and } 
    \quad \Delta_d^{h_3',h_4'} = \begin{cases}
        \Delta^{h_3'}_{\chisqmax, 2} & \text{if  $\frac{d_1}{s_1^2}\log\!\left(e{d_2 \choose s_2}\right) \geq 1$,} \\ \Delta^{h_4'}_{\linmax, 2} & \text{otherwise}.
    \end{cases} 
\end{align*}
The optimal test consists of rejecting $H_0$ whenever one of the tests defined above rejects $H_0$
\begin{equation}\label{def_opt_test}
    \Delta^*_0 =  \Delta_a^{h_1, h_2} \lor \Delta_b^{h'_1, h'_2} \lor \Delta_c^{h_3, h_4} \lor  \Delta_d^{h_3', h_4'}.
\end{equation}
However, we will prove a stronger result, highlighting which test rejects the null hypothesis in any specific regime. To do so, we define
\begin{equation}\label{eq_def_beta}
    \beta(s_1, s_2, d_1, d_2) = \frac{1}{s_1 s_2}\log\left({d_2 \choose s_2}\right)\one{\left\{\frac{d_1}{s_1^2}\log\left(e{d_2 \choose s_2}\right) > 1\right\}},
\end{equation}
and let $\beta_{12} = \beta(s_1, s_2, d_1, d_2)$ and $\beta_{21} = \beta(s_2, s_1, d_2, d_1)$ for concision. We also define
\begin{equation}
    \tilde{R} := \tilde{R}(s_1, s_2, d_1, d_2) = \big(\psi_{12} + \beta_{21}\big) \land \big(\psi_{21} + \beta_{12}\big) \land \phi_{12} \land \phi_{21}.
\end{equation}
Our final test $\Delta^*$ is constructed according to which term dominates in the expression $\tilde{R}$:
\begin{equation}\label{eq:delta-optimal}
    \Delta^* = \begin{cases}
        \Delta_a^{h_1, h_2} & \text{ if } \tilde R = \phi_{12}, \\
        \Delta_b^{h'_1, h'_2} & \text{ if } \tilde R = \phi_{21}, \\
        \Delta_c^{h_3, h_4} & \text{ if } \tilde R = \psi_{12} + \beta_{21},\\
        \Delta_d^{h_3', h_4'} & \text{ if } \tilde R = \psi_{21} + \beta_{12}.
    \end{cases}
\end{equation}
\begin{proposition}\label{prop_ub}
    Let $\eta \in (0,1)$ be given. There exist constants $C, C_{h_1}, C_{h'_1}, C_{h_2}, C_{h_3}, C_{h_3'}, C_{h_4}, C_{h_4'} > 0$ such that if
    \[\mu^2 \geq C\big[(\psi_{12} + \psi_{21}) \land \phi_{12} \land \phi_{21}\big],\]
    then the test $\Delta^*$ implemented with the cutoffs
    \begin{align*}
        h_1 &= C_{h_1}s_2\log\big(1 + \frac{d_2}{s_2^2}\big) \\
        h'_1 &= C_{h'_1}s_1\log\big(1 + \frac{d_1}{s_1^2}\big) \\
        h_2 &= h'_2 = C_{h_2} \\
        h_3 &= C_{h_3}\Big(s_2\log\Big(1 + \frac{d_2}{s_2^2}\log \Big(e{d_1 \choose s_1}\Big)\Big) + \log \Big(e {d_1 \choose s_1}\Big)\Big) \\
        h'_3 &= C_{h_3'}\Big(s_1\log\Big(1 + \frac{d_1}{s_1^2}\log \Big(e{d_2 \choose s_2}\Big)\Big) + \log \Big(e{d_2 \choose s_2}\Big)\Big)\\
        h_4 &=  C_{h_4}\sqrt{\log\left(e {d_1 \choose s_1}\right)} \\
        h'_4 &=  C_{h_4'}\sqrt{\log\left(e {d_2 \choose s_2}\right)},
    \end{align*}
    satisfies
    \[\cR(\Delta^*, \mu) \leq \eta.\]
\end{proposition}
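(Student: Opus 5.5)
Since $\Delta^*$ is one of four tests, chosen by which term attains the minimum in $\tilde R$, I will first reduce the hypothesis on $\mu$ to one involving $\tilde R$, and then analyse each of the four cases separately.

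\textbf{Reduction to $\tilde R$.} I will show $\tilde R \lesssim (\psi_{12}+\psi_{21})\land\phi_{12}\land\phi_{21}$. It suffices to prove $\beta_{21}\lesssim \psi_{21}+\phi_{12}\land\phi_{21}$ and the symmetric statement, or directly $\min(\psi_{12}+\beta_{21},\psi_{21}+\beta_{12}) \lesssim \psi_{12}+\psi_{21}$. Take the case where $\beta_{21}\neq 0$, i.e.\ $x:=\frac{d_2}{s_2^2}\log(e\binom{d_1}{s_1})>1$. Then $\psi_{12}=\frac1{s_1}\log(1+x)\ge \frac{c}{s_1}$, and
\[
\beta_{21}=\frac{1}{s_1s_2}\log\binom{d_1}{s_1}\le \frac{s_2}{s_1 d_2}\,x .
\]
Here I would use $x/\log(1+x)$ against $d_2/s_2$ and compare with $\psi_{12}$ or $\psi_{21}$. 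When this comparison fails, the other alternative $\psi_{21}+\beta_{12}$ should be small instead. I expect this bookkeeping to be an elementary case split on which of $s_1\log(ed_1/s_1)$ and $s_2\log(ed_2/s_2)$ is larger. Once the inequality holds, choosing $C$ large gives $\mu^2\ge C''\tilde R$. It then suffices to show that the selected test has type I plus type II error at most $\eta$ when $\mu^2\ge C''\cdot(\text{selected term})$.

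\textbf{Linear and $\chi^2$ components.} By symmetry I only treat tests $a$ and $c$.
\begin{itemize}
\item Test $\Delta_a$ in the case $d_2<s_2^2$. Under $H_0$, $t_\lin\sim N(0,1)$. Under the alternative its mean is at least $\mu s_1s_2/\sqrt{d_1d_2}$. The condition $\mu^2\gtrsim\phi_{12}\asymp d_1d_2/(s_1^2s_2^2)$ holds because $\log(1+u)\asymp u$ for $u\le1$, so Chebyshev suffices.
\item Test $\Delta_a$ in the case $d_2\ge s_2^2$. The columns $\bar\Y_j$ are independent $N(\theta_j,1)$, with $\theta_j\ge \mu s_1/\sqrt{d_1}$ on $s_2$ columns. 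This is exactly the sparse sequence setting, so I would invoke the truncated-$\chi^2$ mean and variance bounds of \citet{collier2017minimax}. They give that the null mean is $0$ and the null variance is $\lesssim d_2 \tau^2 e^{-\tau^2/2}\lesssim s_2^2$ for $C$ large. Under the alternative the mean is $\gtrsim s_2\theta^2$ once $\theta^2\gtrsim\tau^2$, which is precisely $\mu^2\gtrsim\phi_{12}$. Chebyshev with $h_1$ then finishes this case.
\item Test $\Delta_c$. For fixed $J_1$ the same analysis applies, and under the alternative I take $J_1=S_1$. Under the null I need a uniform tail bound over the $\binom{d_1}{s_1}$ sets. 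Chebyshev is insufficient here, so I need an exponential (Bernstein-type) concentration for the sum of independent truncated-$\chi^2$ terms. Each term is sub-exponential with bounded scale once centered, with variance proxy $\lesssim d_2e^{-\tau^2/2}\tau^2$. This gives a deviation $\sqrt{V\log N}+\log N$ with $\log N=\log(e\binom{d_1}{s_1})$, which matches $h_3$ after the choice of $\tau$ via $\tau^2\asymp \log(1+d_2\log N/s_2^2)$. Under the alternative the signal $s_2\mu^2 s_1$ must exceed $h_3$, which requires $\mu^2\gtrsim \psi_{12}+\frac{1}{s_1s_2}\log\binom{d_1}{s_1}=\psi_{12}+\beta_{21}$. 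That is exactly the selected term, and it explains where $\beta$ comes from.
\item The max-linear branch of $\Delta_c$ is handled by a Gaussian union bound with threshold $h_4$. There the mean is $\mu\sqrt{s_1}s_2/\sqrt{d_2}$, and the condition $\mu^2\gtrsim \frac{d_2}{s_1s_2^2}\log N$ equals $\psi_{12}$ up to constants because $x<1$.
\end{itemize}

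\textbf{Main obstacle.} The main difficulty is the null concentration for the truncated statistic uniformly over $\binom{d_1}{s_1}$ sets, together with the mean shift $\nu_\tau$ under heterogeneous signals. Entries exceed $\mu$ rather than equal it, so I also need monotonicity: the expectation of $(Y^2-\nu_\tau)\one(|Y|>\tau)$ is increasing in $|\theta|$, and its variance under the alternative is bounded by a multiple of its mean plus the null variance. For the concentration I would first try to bound the MGF of $(Z^2-\nu_\tau)\one(|Z|>\tau)$ directly, for $\lambda\le 1/4$.
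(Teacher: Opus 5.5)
Your proposal is correct and follows the paper's overall structure. Both arguments reduce the hypothesis to $\mu^2\gtrsim\tilde R$ and then treat the four branches: the linear test; the truncated $\chi^2$ test on the collapsed vector $(\bar{\Y}_j)_j$; and the Bonferroni-corrected truncated $\chi^2$ and max-linear tests, evaluated at $J_1=S_1$ under the alternative and controlled by a union bound plus exponential null concentration. For those concentration, mean and variance bounds the paper simply cites Lemmas 5--7 of \citet{liu2021minimax}. These are exactly the Bernstein-type estimates you plan to derive from the moment generating function.

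The genuine divergence is the reduction $\tilde R\lesssim R$, which you leave as ``I expect''. The paper proves it as Lemma~\ref{lem_simplified_rate_for_UB}, routing through the lower-bound simplification lemmas: it assumes $s_j\le c d_j$ and $d_1/s_1\ge e\log(d_2/s_2)$, and treats the very dense case separately. Your direct comparison is more elementary and does work. It is cleanest if you split on $L_j:=\log(e\binom{d_j}{s_j})$ rather than on $s_j\log(ed_j/s_j)$. The two quantities differ when $s_j$ is comparable to $d_j$, and splitting on $s_j\log(ed_j/s_j)$ forces the extra comparison with $\psi_{12}$ that you allude to.

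Concretely, suppose $L_1\le L_2$. Set $r=d_1/s_1\ge 1$ and $u=L_1/s_1\le 2(1+\log r)$. Then $\beta_{21}\le u/s_2$ and $\psi_{21}\ge \frac{1}{s_2}\log(1+ru)$. Moreover $u\lesssim \log(1+ru)$:
\begin{itemize}
\item if $u\le 1$, use $\log(1+u)\ge u\log 2$;
\item if $u>1$, use $\log(1+ru)\ge\log(1+r)\ge \frac{\log 2}{2}(1+\log r)\ge\frac{\log 2}{4}u$.
\end{itemize}
Hence $\beta_{21}\lesssim\psi_{21}$. Symmetrically, $\beta_{12}\lesssim\psi_{12}$ when $L_2\le L_1$. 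It follows that $\min(\psi_{12}+\beta_{21},\psi_{21}+\beta_{12})\lesssim\psi_{12}+\psi_{21}$ always. Together with $\tilde R\le\phi_{12}\land\phi_{21}$, this gives $\tilde R\lesssim R$ with no restriction on $(s_1,s_2,d_1,d_2)$, which is shorter than the paper's route.
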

Proposition \ref{prop_ub} establishes the upper bound in Theorem \ref{thm_main}. We provide the proof in the supplement. Our optimal test $\Delta^*$ is constructed from four constituent tests: the linear test, the truncated $\chi^2$ test, and the Bonferroni corrected linear and truncated $\chi^2$ tests. The linear test is elementary and was studied in \citet{butucea2013detection}, and the Bonferroni corrected linear test is a straightforward extension used to handle certain regimes of imbalanced sparsity. The truncated $\chi^2$ and Bonferroni corrected truncated $\chi^2$ tests are, to our knowledge, entirely novel to the submatrix detection literature. We provide additional discussion of these tests in Section \ref{sec_discussion}.

\subsection{Adaptivity to unknown sparsity}

The definition of our optimal test $\Delta^*$ requires knowledge of the row sparsity $s_1$ and column sparsity  $s_2$, which is typically unavailable in practice. Here, we construct a test that achieves the minimax rate and is adaptive to unknown sparsity using a straightforward scanning procedure.

For any $(s_1, s_2) \in [d_1] \times [d_2]$, we will let $\tau_{\chisqlin, 1}(s_2)$ and $\tau_{\chisqlin, 2}(s_1)$ simply denote $\tau_{\chisqlin, 1}$ and $\tau_{\chisqlin, 2}$ with the dependence on the sparsity level made clear. We also denote the sparsity-dependent truncated $\chi^2$ test statistics $t_{\chisqlin, 1}(s_2)$ and $t_{\chisqlin, 2}(s_1)$ and tests  $\Delta^{h_1}_{\chisqlin, 1}(s_2)$ and $\Delta^{h'_1}_{\chisqlin, 2}(s_1)$, as well as the sparsity dependent Bonferroni corrected linear test statistics $t_{\linmax, 1}(s_1)$ $t_{\linmax, 2}(s_2)$ and tests $\Delta^{h_4}_{\linmax, 1}(s_1)$ and $\Delta^{h'_4}_{\linmax, 2}(s_2)$, in the same manner. We define the adjusted threshold value
\begin{equation}
    \tau_{\chisqmax, 1}(s_1, s_2) = \sqrt{C\log\left(1 + \frac{d_2}{s_2^2}\log \left({d_1 \choose s_1}\log_2(d_1)\log_2(d_2)\right)\right)}
\end{equation}
for a constant $C > 0$ to be chosen later. We then define $t_{\chisqmax, 1}(s_1, s_2)$ according to \eqref{eq_trunc_max} with this newly defined threshold, and let $\Delta^{h_3}_{\chisqmax, 1}(s_1, s_2) = \one(t_{\chisqmax, 1}(s_1, s_2) > h_3)$. We define $\tau_{\chisqmax, 2}(s_1, s_2)$, $t_{\chisqmax, 2}(s_1, s_2)$, and $\Delta^{h'_3}_{\chisqmax, 2}(s_1, s_2)$ by simply swapping the roles of $s_1$ and $s_2$ and $d_1$ and $d_2$ as before. Note that we do not need to introduce any additional notation to handle the linear test, as it does not require knowledge of the sparsity level. With these constituent tests well-defined, we let $\Delta^*(s_1, s_2)$ denote the aggregate test constructed according to \eqref{eq:delta-optimal} with the dependence on sparsity made explicit, and finally, we let $\Omega$ denote a dyadic partition of the set $[d_1] \times [d_2]$:
\begin{equation}
    \Omega = \Bigg\{\Big(\frac{d_1}{2^{m_1}}, \frac{d_2}{2^{m_2}}\Big): m_1 \in [\log_2(d_1)], m_2 \in [\log_2(d_2)]  \Bigg\}.
\end{equation}
Our proposed optimal adaptive test is defined as follows:
\begin{equation}
    \Delta^{*, \text{ada}} = \max_{(s_1, s_2) \in \Omega}\Delta^*(s_1, s_2).
\end{equation}
In the statement of the following proposition, we let 
\begin{align*}
    &\phi_{12} = \phi(s^*_1,s^*_2, d_1,d_2),
    &\phi_{21} = \phi(s^*_2,s^*_1, d_2,d_1),\,\\
    &\psi_{12} = \psi(s^*_1,s^*_2, d_1,d_2),
    &\psi_{21} = \psi(s^*_2,s^*_1, d_2,d_1),
\end{align*}
where $s_1^*$ and $s_2^*$ denote the true row and column sparsities under the alternative hypothesis respectively.
\begin{proposition}\label{prop_ada_ub}
    Let $\eta \in (0,1)$ be given. Suppose that $d_1 \land d_2 \geq 8$, $s^*_1 \land s_2^* \geq 3$, and that there exists a constant $C' > 0$ such that $s_i^* + \log \log d_j \geq C' \log \log d_i$ for $i, j \in \{1,2\}, i \neq j$. Then there exists a constant $C > 0$ and cutoffs $h_1, h_1', h_2, h_3,h_3', h_4, h_4' > 0$ such that if
    \[\mu^2 \geq C\big[(\psi_{12} + \psi_{21}) \land \phi_{12} \land \phi_{21}\big],\]
    then the test $\Delta^{*, \text{ada}}$ implemented with $h_1, h_1', h_2, h_3, h_3', h_4$ and $h_4'$ satisfies
    \[\cR(\Delta^{*, \text{ada}}, \mu) \leq \eta.\]
\end{proposition}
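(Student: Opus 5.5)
We prove Proposition \ref{prop_ada_ub} with the standard scanning argument, applied on top of Proposition \ref{prop_ub}. The type I error and the type II error are handled separately.

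\textbf{Type I error.} Under $H_0$ we bound $\mathbb P_0(\Delta^{*,\text{ada}}=1)$ by a union bound over the at most $\log_2(d_1)\log_2(d_2)$ pairs in $\Omega$. A crude union bound costs a factor $\log_2 d_1\log_2 d_2$, and each constituent test handles this differently.

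\begin{itemize}
\item \emph{Bonferroni corrected truncated $\chi^2$ tests.} These already maximize over $\mathcal P_{s_1}(d_1)$, and their threshold $\tau_{\chisqmax,1}(s_1,s_2)$ has been inflated to include $\log({d_1\choose s_1}\log_2 d_1\log_2 d_2)$. We redo the sub-exponential tail analysis from the proof of Proposition \ref{prop_ub}, now with the enlarged family of $|\Omega|{d_1\choose s_1}$ subsets. The cutoff $h_3$ is enlarged to $C_{h_3}\big(s_2\log(1+\frac{d_2}{s_2^2}\log(\cdots))+\log({d_1\choose s_1}\log_2 d_1\log_2 d_2)\big)$. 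Each centered truncated statistic has Bernstein-type tails with variance proxy $d_2 e^{-c\tau^2}\tau^4$ and scale $\tau^2$. A union bound over the enlarged family then gives total null error at most $\eta/2$.
\item \emph{Max-linear tests.} Here $\max_{J_1}$ of Gaussians with $\log$-cardinality $\log{d_1\choose s_1}+\log\log_2 d_1+\log\log_2 d_2$ exceeds $C\sqrt{\cdot}$ with small probability, so we use $h_4(s_1)=C\sqrt{\log(e{d_1\choose s_1}\log_2 d_1\log_2 d_2)}$.
\item \emph{Truncated $\chi^2$ tests $\Delta_{\chisqlin,1}(s_2)$.} These depend only on $s_2$, so the union is over $\log_2 d_2$ values. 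The null tail of the statistic is $\exp(-c\,h_1/\tau^2)$-type, with $h_1\gtrsim s_2\tau^2$. We therefore choose $h_1(s_2)=C(s_2\log(1+d_2/s_2^2)+\log\log_2 d_2)$. For the linear test no correction is needed.
\end{itemize}

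\textbf{Type II error.} Fix the true $(s_1^*,s_2^*)$. Because $\Omega$ is dyadic, there are $(s_1,s_2)\in\Omega$ with $s_i^*/2< s_i\le s_i^*$. A matrix in $\Theta(s_1^*,s_2^*,\ldots,\mu)$ contains an $s_1\times s_2$ submatrix with entries $\ge\mu$, and the remaining nonnegative elevated entries only help every statistic: all the statistics are monotone in the mean once one restricts to $J_1\subset S_1^*$. We then apply the type II argument of Proposition \ref{prop_ub} to $\Delta^*(s_1,s_2)$. We must check two things.
\begin{enumerate}
\item The rate at $(s_1,s_2)$ is within a constant of the rate at $(s_1^*,s_2^*)$. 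For example, $\psi(s_1,\ldots)\le 2\cdot 4\,\psi(s_1^*,\ldots)$, using $\log{d\choose s/2}\le\log{d\choose s}$ and $\log(1+4x)\le4\log(1+x)$. The quantity $\phi$ is handled the same way.
\item The inflated cutoffs and thresholds still lie below the signal. The extra terms are $\log\log_2 d_1+\log\log_2 d_2$. These are absorbed by $\log{d_1\choose s_1}\ge s_1$ and by the terms $s_2\log(\cdot)$ in $h_1,h_3$ precisely because of the assumption $s_i^*+\log\log d_j\ge C'\log\log d_i$, together with $s^*\ge3$ and $d\ge8$, which ensure $\log\log_2 d\gtrsim1$ and $s_i\ge 2$. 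Inside $\tau$, the quantity $\log({d_1\choose s_1}\log_2 d_1\log_2 d_2)\lesssim\log e{d_1\choose s_1}$ under the same condition, so $\tau^2$ changes by at most a constant.
\end{enumerate}

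\textbf{Main obstacle.} The delicate step is the truncated $\chi^2$ type II analysis with the inflated threshold. The signal per column is $\sqrt{s_1}\mu$, and it must exceed $\tau$. We will verify that $s_1\mu^2\gtrsim\tau^2$ still follows from $\mu^2\ge C\psi$ up to constants. Failing a direct absorption, we would split into the cases $s_i^*\ge C'\log\log d_i/2$ versus $\log\log d_j$ dominating, as the hypothesis suggests. The rest is routine bookkeeping of constants.
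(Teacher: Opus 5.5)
There is a genuine gap, and it is exactly the step you call the main obstacle. The resolution you assert in point~2 is false. The hypothesis only gives $\log\log d_2\lesssim s_2^*+\log\log d_1$. It never compares $\log\log d_2$ with $\log\binom{d_1}{s_1}$; for instance, take $d_1=8$, $s_1^*=3$, $s_2^*\geq\log\log d_2$ and let $d_2\to\infty$. So $\log\big(\binom{d_1}{s_1}\log_2 d_1\log_2 d_2\big)\lesssim\log\big(e\binom{d_1}{s_1}\big)$ does not follow.

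Consider the truncated regime $\frac{d_2}{s_2^2}\log(e\binom{d_1}{s_1})\geq 1$. There the inflated level $\tau^2_{\chisqmax,1}(s_1,s_2)$ exceeds its non-adaptive value by an additive term of order $\log\big(1+\log\log d_2/\log\binom{d_1}{s_1}\big)$, which is unbounded. By contrast, $\mu^2\gtrsim\psi_{12}+\beta_{21}$ only guarantees the per-column signal $s_1\mu^2\gtrsim\log\big(1+\frac{d_2}{s_2^2}\log(e\binom{d_1}{s_1})\big)+\frac{1}{s_2}\log\binom{d_1}{s_1}$.

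Now take $s_2$ large and this logarithm small compared with $\log\big(\log\log d_2/\log\binom{d_1}{s_1}\big)$. This is compatible with every hypothesis of the proposition. The signal columns then lie below the truncation level. Whenever $\Delta_c$ is the test selected at your rounded pair, its mean is negligible next to its cutoff, so the type~II argument of Proposition~\ref{prop_ub} breaks there. No bookkeeping of constants repairs this.

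Your fallback case split cannot help either. The hypothesis controls $\log\log d_2$ only through $s_2^*$. That quantity enters the aggregate signal $s_1s_2\mu^2$, which is what absorbs the additive $\log\log_2 d_2$ in the cutoff. It does not enter the per-column signal $s_1\mu^2$ that must clear $\tau$. The same excess, multiplied by $s_2$, also sits inside the logarithm of your $h_3$; the paper's cutoff $9\big(\sqrt{d_2e^{-\tau^2/2}(\cdot)}+\log(\cdot)\big)$ avoids that.

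For comparison, the paper's proof handles this step in its adaptive Bonferroni-corrected truncated $\chi^2$ lemma. It compares $\tau^2(s_1^+,s_2^-)$ with the aggregate $s_1^*s_2^-\mu^2\gtrsim s_2^*+\log\binom{d_1}{s_1^*}$, and that is where it invokes the hypothesis. The mean lower bound for truncated statistics is a per-coordinate condition; the paper itself checks $\EE[\bar Y_j]\geq 8\tau$ in the single truncated $\chi^2$ case. So importing that step does not close your gap. A repair needs a different mechanism. One option is to remove the $\log_2 d_2$ factor from $\tau$ and pay for the scan over $s_2$ with level-dependent error budgets. Another is to show that some other component of $\Delta^{*,\text{ada}}$ rejects in this regime.

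Two further points need fixing.

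\emph{Type~I for $\Delta_{\chisqlin,1}(s_2)$.} Your argument is wrong as stated. At the $O(1)$ dyadic levels with $d_2/s_2^2\asymp1$, the null statistic is approximately Gaussian with standard deviation of order $\sqrt{d_2}\asymp s_2$; it does not have an $\exp(-ch_1/\tau^2)$ tail. So adding $\log\log_2 d_2$ to $h_1$ cannot make each level's error $\eta/\log_2 d_2$. The paper, in its adaptive truncated $\chi^2$ lemma, instead sums budgets $e^{-u(s_2)}$ with $u(s_2)\asymp\log^2(1+d_2/s_2^2)\wedge s_2\log(1+d_2/s_2^2)$ over the dyadic levels, and does not inflate $h_1$.

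\emph{The cutoff $h_4$.} It should carry only $\log_2 d_1$, because the max-linear statistic does not depend on $s_2$. With your $h_4$, the type~II bound needs $\log\binom{d_1}{s_1}\gtrsim\log\log d_2$, which the hypothesis does not provide.

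Apart from these issues, your overall reduction is a cleaner organization than the paper's split into four regimes. You run the type~II argument of Proposition~\ref{prop_ub} at a rounded pair, whatever test is selected there, using $R(s_1,s_2)\lesssim R(s_1^*,s_2^*)$. Two details need adjusting. The monotonicity claim is false for truncated statistics, whose summand is negative just above $\tau$; it is also unnecessary, since the bounds of Liu et al.\ apply directly on the larger support. And $\binom{d_1}{s_1}\leq\binom{d_1}{s_1^*}$ for $s_1\leq s_1^*$ requires $s_1^*\leq d_1/2$, so the very dense case needs a separate, easy argument.
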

We defer explication of the cutoffs $h_1, h_1', h_2, h_3, h_3', h_4$ and $h_4'$ to the supplementary material. Proposition \ref{prop_ada_ub} establishes that minimax optimal submatrix detection is possible without knowledge of the row and column sparsity levels $s_1^*$ and $s_2^*$, under the assumptions $d_1 \land d_2 \geq 8$, $s^*_1 \land s_2^* \geq 3$ and $s_i^* + \log \log d_{j} \gtrsim\log \log d_i$ for $i \neq j$ with $i,j \in \{1,2\}$. While $s_i^* + \log \log d_j \gtrsim\log \log d_i$ is a weak condition on the shapes of the observed matrix and planted submatrix, the determination of whether cost-free adaptation is possible without this condition is an outstanding open problem that we leave to future work.

\section{Comparison with existing results}\label{sec_comparison}
\subsection{Relation to \citet{butucea2013detection} and subsequent work}

    Theorem \ref{thm_main} recovers (up to absolute constants) the results of \cite{butucea2013detection}. We recall the asymptotic setting adopted by \cite{butucea2013detection} here for convenience. They assert that $s_1, s_2, d_1,$ and $d_2$ all tend to infinity with $s_1 = o(d_1)$ and $s_2 = o(d_2)$. Furthermore, their lower bound on the minimax rate (Theorem 2.2 in \cite{butucea2013detection}) requires the following additional assumptions:
    \begin{enumerate}[(a)]
        \item $\frac{\log \log\big( \frac{d_1}{s_1}\big)}{\log \big(\frac{d_2}{s_2}\big)} \lor \frac{\log \log\big( \frac{d_2}{s_2}\big)}{\log \big(\frac{d_1}{s_1}\big)} \rightarrow 0$.
        \item $s_1 \log \big(\frac{d_1}{s_1}\big) \asymp s_2 \log \big(\frac{d_2}{s_2}\big)$.
    \end{enumerate}
    To fix our reference points for this discussion, we will refer to these conditions as Assumptions (a) and (b). Under these assumptions, \cite{butucea2013detection} prove that
    \begin{equation}\label{eq_bi_rate}
        (\mu^*)^2 = \left[\frac{d_1d_2}{s_1^2s_2^2} \land 2 \Bigg(\frac{1}{s_2}\log\left(\frac{d_1}{s_1}\right) + \frac{1}{s_1}\log\left(\frac{d_2}{s_2}\right)\Bigg)\right](1+o(1)).
    \end{equation}
    In comparison, we derive the following corollary from Theorem \ref{thm_main}.
    \begin{corollary}\label{cor_match_bi}
        Suppose that the following conditions hold:
        \begin{enumerate}
        \item $e \leq \frac{d_1}{s_1} \land \frac{d_2}{s_2}$
        \item $\frac{\log \log\big( \frac{d_1}{s_1}\big)}{\log \big(\frac{d_2}{s_2}\big)} \lor \frac{\log \log\big( \frac{d_2}{s_2}\big)}{\log \big(\frac{d_1}{s_1}\big)} \leq 1$.
        \item $s_1 \log \big(\frac{d_1}{s_1}\big) \asymp s_2 \log \big(\frac{d_2}{s_2}\big)$.
    \end{enumerate}
        Then the minimax rate of separation $\mu^*$ satisfies
        \begin{equation}
            (\mu^*)^2 \asymp \frac{d_1d_2}{s_1^2s_2^2} \land \Bigg(\frac{1}{s_2}\log\left(\frac{d_1}{s_1}\right) + \frac{1}{s_1}\log\left(\frac{d_2}{s_2}\right)\Bigg).
        \end{equation}
    \end{corollary}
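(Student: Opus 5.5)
The plan is to derive Corollary~\ref{cor_match_bi} from Theorem~\ref{thm_main}. Write $a = d_1/s_1^2$, $b = d_2/s_2^2$, $r_1 = d_1/s_1$, $r_2 = d_2/s_2$, and $L_k = \log(e\binom{d_k}{s_k})$. Since $r_k \ge e$ by condition 1, the standard bounds $(d/s)^s \le \binom{d}{s} \le (ed/s)^s$ give $L_k \asymp s_k \log r_k$. Condition 3 then says $L_1 \asymp L_2$. By Theorem~\ref{thm_main} it suffices to show
\[
(\psi_{12}+\psi_{21}) \land \phi_{12} \land \phi_{21} \asymp ab \land S,
\qquad S := \frac{\log r_1}{s_2} + \frac{\log r_2}{s_1},
\]
because $ab = d_1 d_2/(s_1^2 s_2^2)$ is exactly the first term of the target rate.

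\textbf{Step 1: $\psi_{12}+\psi_{21} \asymp S$.} By condition 3, $bL_1 \asymp bL_2 \asymp (d_2/s_2^2)\, s_2 \log r_2 = r_2 \log r_2$. Since $r_2 \ge e$, this gives
\[
\log(1+bL_1) \asymp \log r_2 + \log\log r_2 \asymp \log r_2 ,
\]
so $\psi_{12} \asymp \log(r_2)/s_1$. Symmetrically, $\psi_{21} \asymp \log(r_1)/s_2$, and summing gives $\psi_{12}+\psi_{21} \asymp S$. A useful consequence is that condition 3 makes the two summands of $S$ comparable: $\log(r_1)/s_2 \asymp \log(r_2)/s_1 \asymp S$.

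\textbf{Step 2: upper comparison.} Since $\log(1+x) \le x$, we have $\phi_{12} \le ab$ and $\phi_{21} \le ab$. Together with Step 1 this gives $(\psi_{12}+\psi_{21}) \land \phi_{12}\land\phi_{21} \lesssim S \land ab$.

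\textbf{Step 3: lower comparison, which is the main obstacle.} We must show $\phi_{12} \gtrsim ab \land S$; the bound for $\phi_{21}$ follows by symmetry.
\begin{itemize}
\item If $b \le 1$, then $\log(1+b) \ge b \log 2$, so $\phi_{12} \gtrsim ab$ and we are done.
\item If $b > 1$, then $\log(1+b) \ge \log 2$. We compare $\phi_{12}$ with $S \asymp \log(r_2)/s_1$ by writing
\[
\phi_{12} = \frac{r_1 \log(1+b)}{s_1}.
\]
It then suffices to show $r_1 \log(1+b) \gtrsim \log r_2$, and this is where condition 2 enters. Condition 2 gives $\log\log r_2 \le \log r_1$, that is, $\log r_2 \le r_1$. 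Hence $r_1 \log(1+b) \ge (\log 2)\, r_1 \ge (\log 2)\log r_2$, which yields $\phi_{12} \gtrsim S$.
\end{itemize}
Without condition 2 this step genuinely fails. For example, take $r_1 = e$, $d_2 = 2s_2^2$ and $s_2 \log s_2 \asymp s_1$: then $\phi_{12}$ can be much smaller than $S \land ab$. So the crux is making sure the case split on $b$ and the use of condition 2 are arranged so that no regime escapes.

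Combining Steps 1–3 gives $(\psi_{12}+\psi_{21})\land\phi_{12}\land\phi_{21} \asymp ab \land S$. Plugging this into both parts of Theorem~\ref{thm_main} yields the stated two-sided bound on $(\mu^*)^2$, with constants depending only on $\eta$ and on the implicit constants in condition 3.
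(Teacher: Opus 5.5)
Your proof is correct and follows the same route as the paper's. You reduce to Theorem~\ref{thm_main}, show $\psi_{12}+\psi_{21}\asymp S$, bound $\phi_{12},\phi_{21}\le ab$, and lower-bound $\phi_{12}$ via the split $d_2\le s_2^2$ versus $d_2>s_2^2$ together with $\log(d_2/s_2)\le d_1/s_1$ from condition 2. Your Step 1 is actually cleaner than the paper's: substituting condition 3 inside the logarithm gives $bL_1\asymp r_2\log r_2\ge e$ at once. This avoids the paper's case split on $s_1\log(d_1/s_1)$ versus $s_2^2/d_2$, and its use of condition 2 for the $\psi$ terms.

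Your aside on why condition 2 is needed is wrong as stated. With $d_2=2s_2^2$ you have $b=2$, so $\log(1+b)\asymp b$ and $\phi_{12}\asymp ab$, and nothing fails. A genuine failure needs $b\to\infty$ with $\log b=o(\log s_2)$. For example, take $r_1$ constant, $b=\log s_2$ and $s_1\asymp s_2\log s_2$; then $\phi_{12}\asymp(\log\log s_2)/s_1\ll(\log s_2)/s_1\asymp ab\land S$.
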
    
    This corollary demonstrates that our rate~\eqref{eq_our_rate} agrees with~\eqref{eq_bi_rate} in the setting of \cite{butucea2013detection}. 
    For comparison, their result pins down the sharp asymptotic constant under Assumptions $(a)$ and $(b)$ above, whereas our results are only expressed up to universal multiplicative constants. 
    However, we emphasize that their result requires Assumption (b), namely $s_1 \log(d_1/s_1) \asymp s_2 \log(d_2/s_2)$, which imposes the strong restriction that the hidden submatrix has comparable number of rows and columns up to logarithmic factors. 
    In contrast, our theorem covers any configuration of $s_1, s_2, d_1,$ and $d_2$ and provides entirely non-asymptotic upper and lower bounds presented in Theorem \ref{thm_main}. We are thus able to characterize the statistical hardness of submatrix detection in the \textit{imbalanced submatrix} regime (see Section \ref{sec_imbalanced}) and capture the precise behavior of the detection boundary around $\frac{d_1}{s_1^2}, \frac{d_2}{s_2^2} \asymp 1$ and $\frac{d_2}{s_2^2} \log(e{d_1 \choose s_1}), \; \frac{d_1}{s_1^2} \log(e{d_2 \choose s_2}) \asymp 1$. This degree of precision allows us to derive new phase transitions in the minimax rate that, to our knowledge, have not been described anywhere in the submatrix detection literature until now. To illustrate this in a special case, Corollary \ref{cor_pt} presents an expression of $\mu^*$ derived from Theorem \ref{thm_main} that notably does not follow from \eqref{eq_bi_rate}
    \begin{corollary}\label{cor_pt}
        Suppose that $s_1^2 \geq \bar{c}d_1 s_2$ for a constant $\bar{c} > 0$ and $s_j \leq c_j d_j$ for $j \in \{1, 2\}$ where $c_1,c_2 > 0$ are sufficiently small constants. Additionally, that $\frac{d_1}{s_1} \geq e \log(\frac{d_2}{s_2})$ and that there exists a constant $\alpha > 0$ such that $d_2 \geq s_2^{2 + \alpha}$. Then it holds
\begin{equation}\label{eq_pt}
    (\mu^*)^2 \asymp \frac{1}{s_2}\log\left(1 + \frac{d_1s_2}{s_1^2}\log(d_2)\right).
\end{equation}
    \end{corollary}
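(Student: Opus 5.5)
The plan is to derive the Corollary directly from Theorem \ref{thm_main}. We show that, under the stated assumptions, $\psi_{21}$ has the claimed order, that $\psi_{12}\lesssim\psi_{21}$, and that $\phi_{12}\land\phi_{21}\gtrsim\psi_{21}$. Then $(\psi_{12}+\psi_{21})\land\phi_{12}\land\phi_{21}\asymp\psi_{21}$, and the two parts of Theorem \ref{thm_main} give both bounds. Throughout we use $\log(1+cx)\asymp\log(1+x)$ for a constant $c>0$, and $\log(1+x)\le x$.

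\emph{Step 1: the order of $\psi_{21}$.} We use the standard bounds $s\log(d/s)\le\log{d\choose s}\le s\log(ed/s)$. Since $d_2\ge s_2^{2+\alpha}$, we get $\log(d_2/s_2)\ge\frac{1+\alpha}{2+\alpha}\log d_2$. Hence $\log(e{d_2\choose s_2})\asymp s_2\log d_2$, with constants depending on $\alpha$. Therefore
\[
\psi_{21}\asymp \frac{1}{s_2}\log\Big(1+\frac{d_1s_2}{s_1^2}\log d_2\Big),
\]
which is the claimed rate. Write $x=\frac{d_1s_2}{s_1^2}\log d_2$. Note that $x\le \log(d_2)/\bar c$ by the assumption $s_1^2\ge\bar c d_1s_2$, and that $\psi_{21}\le x/s_2=\frac{d_1}{s_1^2}\log d_2$.

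\emph{Step 2: the $\phi$ terms dominate.} First, $d_2/s_2^2\ge d_2^{\alpha/(2+\alpha)}$, so $\log(1+d_2/s_2^2)\gtrsim\log d_2$. This gives $\phi_{12}\gtrsim\frac{d_1}{s_1^2}\log d_2\ge\psi_{21}$. Second, $d_1/s_1^2\le 1/(\bar c s_2)\lesssim1$, so $\log(1+d_1/s_1^2)\asymp d_1/s_1^2$. Hence $\phi_{21}\asymp\frac{d_1}{s_1^2}\cdot\frac{d_2}{s_2^2}$. Since $d_2/s_2^2\ge d_2^{\alpha/(2+\alpha)}\gtrsim\log d_2$, this is again $\gtrsim \frac{d_1}{s_1^2}\log d_2\ge\psi_{21}$.

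\emph{Step 3: $\psi_{12}\lesssim\psi_{21}$.} We expect this to be the main obstacle. By the same binomial bounds, with $s_1\le c_1d_1$,
\[
\psi_{12}\asymp\frac1{s_1}\log\Big(1+\frac{d_2s_1}{s_2^2}\log\frac{ed_1}{s_1}\Big).
\]
The available tools are two consequences of the assumptions. From $s_1^2\ge\bar c d_1s_2$ we get $s_1\ge\bar c s_2\,(d_1/s_1)$. From $d_1/s_1\ge e\log(d_2/s_2)\gtrsim\log d_2$ we then get $s_1\gtrsim s_2\log d_2$, and also $s_1\gtrsim s_2\log(ed_1/s_1)$, since $t\ge\log(et)$.

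We split into two cases. If $x\ge1$, then $\psi_{21}\asymp\log(1+x)/s_2\gtrsim1/s_2$. It then suffices to show $\log(1+\frac{d_2s_1}{s_2^2}\log\frac{ed_1}{s_1})\lesssim s_1/s_2$. We would bound the argument by $d_2\,s_1\,(d_1/s_1)$ and use $s_1/s_2\gtrsim \log d_2$ together with $s_1/s_2\gtrsim d_1/s_1\ge \log(d_1/s_1)$. The remaining $\log s_1$ contribution is controlled by $s_1/s_2\gtrsim\log d_2\ge\log s_2$ and the elementary inequality $\log(s_1/s_2)\le s_1/s_2$.

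If $x<1$, then $\psi_{21}\asymp\frac{d_1}{s_1^2}\log d_2$, and we must show
\[
\frac{1}{s_1}\log\Big(1+\frac{d_2s_1}{s_2^2}\log\frac{ed_1}{s_1}\Big)\lesssim\frac{d_1\log d_2}{s_1^2},
\quad\text{i.e.}\quad \log\Big(1+\frac{d_2s_1}{s_2^2}\log\frac{ed_1}{s_1}\Big)\lesssim \frac{d_1}{s_1}\log d_2 .
\]
Here we use $d_1/s_1\ge e$, so the right side is $\gtrsim\log d_2$, together with $d_1/s_1\ge\log(ed_1/s_1)$. The left side is at most $\log(d_2)+\log(s_1)+\log\log(ed_1/s_1)+O(1)$. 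The delicate term is $\log s_1$. We would handle it through $s_1\le d_1$ and $\log d_1=\log s_1+\log(d_1/s_1)$, possibly combined with $x<1$, i.e.\ $s_1^2>d_1s_2\log d_2$.

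This bookkeeping of logarithmic factors in Step 3 is where care is needed. If the $\log s_1$ term resists, we would first try to absorb it using $\log(d_1/s_1)$ and the product structure $\frac{d_1}{s_1}\log d_2$.
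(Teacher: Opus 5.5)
Steps 1 and 2 are correct, and so is the case $x\ge 1$ of Step 3. The bookkeeping you describe there never uses $x\ge1$: it shows $\psi_{12}\lesssim 1/s_2$ from the hypotheses alone, which is more direct than the contradiction argument inside the paper's Lemma~\ref{lem_simplify_dense_allrates}. The gap is the case $x<1$. There the inequality $\psi_{12}\lesssim\psi_{21}$ that your plan requires is false, so the $\log s_1$ term cannot be absorbed however the logarithms are arranged.

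To see this, take $s_2=1$, fix $d_2\ge 1/c_2$, fix $d_1/s_1=K$ with $K\ge (1/c_1)\vee e\log d_2$, and let $d_1\to\infty$ along multiples of $K$. All hypotheses hold; in particular $s_1^2=d_1^2/K^2\ge \bar c d_1 s_2$ once $d_1\ge \bar c K^2$. Then $x=K^2\log(d_2)/d_1\to0$ and $\psi_{21}\le K^2\log(ed_2)/d_1$. On the other hand $\psi_{12}\ge \frac{K}{d_1}\log\big(\frac{d_1d_2\log K}{K}\big)\gtrsim \frac{K\log d_1}{d_1}$, so $\psi_{12}/\psi_{21}\to\infty$. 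The hypothesis $x<1$ only makes $s_1$ larger, so it pushes $\log s_1$ up rather than helping.

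The missing idea is that when $x<1$ the minimum is attained by $\phi_{12}$, not by $\psi_{12}+\psi_{21}$. For $x\le 1$, the bounds $x\log 2\le\log(1+x)\le x$ give $\psi_{21}\asymp x/s_2=\frac{d_1}{s_1^2}\log d_2$. Meanwhile $\phi_{12}\le\frac{d_1}{s_1^2}\log(2d_2)\lesssim\frac{d_1}{s_1^2}\log d_2$, so $\phi_{12}\lesssim\psi_{21}$. Together with your Step 2 lower bounds and the trivial bound $\psi_{12}+\psi_{21}\ge\psi_{21}$, this gives $(\psi_{12}+\psi_{21})\land\phi_{12}\land\phi_{21}\asymp\psi_{21}$.

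So the correct plan is: the minimum is always $\gtrsim\psi_{21}$. It is $\lesssim\psi_{21}$ via $\psi_{12}\lesssim\psi_{21}$ when $x\ge1$, and via $\phi_{12}\lesssim\psi_{21}$ when $x<1$.

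This is exactly the case split behind the paper's proof. The paper cites Lemma~\ref{lem_simplify_dense_allrates} for $(\psi_{12}+\psi_{21})\land\phi_{12}\asymp\psi_{21}\land\phi_{12}$. That lemma shows $\psi_{21}\gtrsim\phi_{12}$ when $\frac{d_1s_2}{s_1^2}\log\frac{d_2}{s_2}\le1$, and $\psi_{12}\lesssim\psi_{21}$ otherwise. The paper then checks $\psi_{21}\lesssim\phi_{12}\lesssim\phi_{21}$, much as in your Step 2.
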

    In particular,~\eqref{eq_pt} reveals a phase transition at $s_1^2 \asymp d_1s_2\log(d_2)$ which is left obscured by the results of all prior work \citep{butucea2013detection, ma2015computational, luo2022tensor, dadon2024detection}. The case $s_1^2 > d_1s_2\log(d_2)$, in which we can linearize the right-hand side of \eqref{eq_pt}, is of particular interest as it contradicts Assumption (b) of \citet{butucea2013detection}. We provide additional discussion in the sequel.

    \subsection{Imbalanced submatrix regime}\label{sec_imbalanced}

    Assumption (b) of \citet{butucea2013detection} asserts that $s_1 \log(d_1/s_1) \asymp s_2 \log(d_2/s_2)$, which limits the scope of the their results to the \textit{balanced submatrix} setting in which the number of rows $s_1$ and columns $s_2$ must be of the same order up to a logarithmic term. As our Theorem \ref{thm_main} is free of any analogous balancedness assumption, we are able to characterize the minimax rate of submatrix detection across all regimes of the shape of the planted submatrix, most notably in any \textit{imbalanced submatrix} regime in which Assumption (b) is violated. In Corollary \ref{cor_simplify_newregime}, we simplify \eqref{eq_our_rate} in a regime that is explicitly excluded by Assumption (b) of \cite{butucea2013detection}.
\begin{corollary}\label{cor_simplify_newregime}
        Suppose that $s_1 = s_2^2$ and $d_1 = d_2^2$, and that there exists a constant $\alpha > 0$ such that $d_1 \geq s_1^{2 + \alpha}$. Then it holds
       \begin{equation}\label{eq_newregime_rate}
            (\mu^*)^2 \asymp \frac{\log \left(d_1\right)}{\sqrt{s_1}}.
        \end{equation}
    \end{corollary}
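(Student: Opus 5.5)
The plan is to substitute the assumed relations directly into the rate \eqref{eq_our_rate} from Theorem \ref{thm_main}. I then evaluate each of the four quantities $\psi_{12},\psi_{21},\phi_{12},\phi_{21}$ up to constants, and show that the minimum is attained by $\psi_{12}+\psi_{21}\asymp \log(d_1)/\sqrt{s_1}$.

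\textbf{Parametrization and binomial coefficients.} Write $s_2=s$ and $d_2=d$, so that $s_1=s^2$ and $d_1=d^2$. The hypothesis $d_1\ge s_1^{2+\alpha}$ becomes $d\ge s^{2+\alpha}$. Equivalently, $s\le d^{1/(2+\alpha)}$, so $s^2\le d^{2/(2+\alpha)}$ and $s^3\le d^{3/(2+\alpha)}\le d^{3/2}$. Hence $\log(d/s)$, $\log(d^2/s^2)$, $\log(d/s^2)$ and $\log(d^2/s^4)$ are all of order $\log d$, with constants depending on $\alpha$; this is allowed, since $\alpha$ is treated as a constant. Using $k\log(n/k)\le \log{n\choose k}\le k\log(en/k)$, I get
\[
\log\Big(e{d_1\choose s_1}\Big)\asymp s^2\log d \qquad\text{and}\qquad \log\Big(e{d_2\choose s_2}\Big)\asymp s\log d .
\]
Trivial small cases, such as $d$ bounded, are absorbed into the constants.

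\textbf{The $\psi$ terms.} For the first one,
\[
\psi_{12}=\frac1{s^2}\log\Big(1+\frac{d}{s^2}\,s^2\log d\Big)\asymp\frac{\log d}{s^2}.
\]
For the second,
\[
\psi_{21}=\frac1s\log\Big(1+\frac{d^2}{s^4}\,s\log d\Big)=\frac1s\log\Big(1+\frac{d^2\log d}{s^3}\Big).
\]
The argument of the logarithm is at least $d^{1/2}$ and at most $d^3$, so $\psi_{21}\asymp \log(d)/s$. Since $\psi_{12}\le\psi_{21}$, this gives
\[
\psi_{12}+\psi_{21}\asymp \frac{\log d}{s}=\frac{\log d_1}{2\sqrt{s_1}} .
\]

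\textbf{The $\phi$ terms.} It remains to check that both $\phi$ terms are at least of order $\log(d)/s$.
\begin{itemize}
\item $\phi_{12}=\frac{d^2}{s^4}\log(1+d/s^2)$. Here $d^2/s^4\ge 1/s$ because $d\ge s^2$, and $\log(1+d/s^2)\gtrsim\log d$ because $d/s^2\ge d^{\alpha/(2+\alpha)}$. Hence $\phi_{12}\gtrsim \log(d)/s$.
\item $\phi_{21}=\frac{d}{s^2}\log(1+d^2/s^4)\gtrsim \log d$, which dominates $\log(d)/s$.
\end{itemize}
Therefore the minimum in \eqref{eq_our_rate} is $\asymp \log(d_1)/\sqrt{s_1}$, and Theorem \ref{thm_main} yields \eqref{eq_newregime_rate}.

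\textbf{Main obstacle.} The only delicate step is making sure the logarithms $\log(1+d/s^2)$ and $\log(1+d^2/s^4)$ are genuinely of order $\log d$ rather than $O(1)$. This is exactly where $d_1\ge s_1^{2+\alpha}$ is used: it guarantees polynomial gaps $d/s^2\ge d^{\alpha/(2+\alpha)}$. The resulting constants depend on $\alpha$, and the remaining work is bookkeeping of those constants and of small-$d$ edge cases.
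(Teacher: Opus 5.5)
Your proposal is correct and follows essentially the same route as the paper. Like the paper, you substitute $s_1=s_2^2$, $d_1=d_2^2$ into the rate of Theorem~\ref{thm_main}, show $\psi_{12}+\psi_{21}\asymp \log(d_1)/\sqrt{s_1}$ with $\psi_{21}$ dominating, and use the polynomial gap from $d_1\ge s_1^{2+\alpha}$ to check that $\phi_{12}$ and $\phi_{21}$ are at least of that order. Your tracking of the $\alpha$-dependent constants is more explicit than the paper's. One caveat, which the paper shares: the case $d_2=1$ cannot be absorbed into constants, because then $\log d_1=0$, so the statement should be read for $d_2\ge 2$ (or with $\log(ed_1)$ in place of $\log d_1$).
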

    Lifting ourselves into an asymptotic setting with $d_1, d_2, s_1, s_2 \rightarrow \infty$, the conditions $s_1 = s_2^2$ and $d_1 = d_2^2$ of Corollary \ref{cor_simplify_newregime} imply that $s_2\log(d_2/s_2) = o(s_1\log(d_1/s_1))$, which places us outside the balanced submatrix regime considered by \cite{butucea2013detection}. We remark that the rate \eqref{eq_newregime_rate} is in fact achieved by the scan test of \citet{butucea2013detection} under the conditions of Corollary \ref{cor_simplify_newregime}. However, without Assumption (b), the lower bound of \citet{butucea2013detection} (Theorem 2.2) is loose, and they are thus not able to recover the form of the minimax rate given in \eqref{eq_newregime_rate}.

    Furthermore, violations of Assumption (b) can lead to cases in which the results of \citet{butucea2013detection} are explicitly suboptimal. The following corollary, which follows directly from Corollary \ref{cor_pt}, leads us towards such a case.
    
    \begin{corollary}\label{cor_suboptimal_regime}
        Suppose that $s_1^2 > d_1 s_2 \log(d_2)$ and $s_j \leq c_j d_j$ for $j \in \{1, 2\}$ where $c_1,c_2 > 0$ are sufficiently small constants. Additionally, suppose that that $\frac{d_1}{s_1} \geq e \log(\frac{d_2}{s_2})$ and that there exists a constant $\alpha > 0$ such that $d_2 \geq s_2^{2 + \alpha}$. Then it holds
\begin{equation}\label{eq_suboptimal_regime}
    (\mu^*)^2 \asymp \frac{d_1}{s_1^2}\log(d_2).
\end{equation}
    \end{corollary}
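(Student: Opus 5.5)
The plan is to read this off directly from Corollary \ref{cor_pt}: first check that its hypotheses hold, then linearize the logarithm in \eqref{eq_pt} using the condition $s_1^2 > d_1 s_2\log(d_2)$.

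\emph{Step 1: hypotheses of Corollary \ref{cor_pt}.} The conditions $s_j \le c_j d_j$, $\frac{d_1}{s_1}\ge e\log(\frac{d_2}{s_2})$ and $d_2 \ge s_2^{2+\alpha}$ are assumed verbatim. The only one to check is $s_1^2 \ge \bar c\, d_1 s_2$ for some constant $\bar c>0$. Since $s_2 \ge 1$ and $s_2 \le c_2 d_2$ with $c_2$ small (say $c_2 \le 1/e$), we get $d_2 \ge s_2/c_2 \ge e$. Hence $\log(d_2)\ge 1$, and
\[
s_1^2 > d_1 s_2 \log(d_2) \ge d_1 s_2 ,
\]
so the condition holds with $\bar c = 1$. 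Corollary \ref{cor_pt} then gives
\[
(\mu^*)^2 \asymp \frac{1}{s_2}\log\left(1+x\right), \qquad x := \frac{d_1 s_2}{s_1^2}\log(d_2).
\]

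\emph{Step 2: linearization.} The assumption $s_1^2 > d_1 s_2\log(d_2)$ is exactly $x\in(0,1)$. On $[0,1]$ we have the elementary bounds $\frac{x}{2}\le \log(1+x)\le x$, the lower one following from concavity of $\log(1+x)$ since $\log 2 \ge 1/2$. Therefore $\log(1+x)\asymp x$ with absolute constants, and
\[
\frac{1}{s_2}\log(1+x) \asymp \frac{1}{s_2}\cdot \frac{d_1 s_2}{s_1^2}\log(d_2) = \frac{d_1}{s_1^2}\log(d_2),
\]
which is \eqref{eq_suboptimal_regime}.

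There is no genuine obstacle: all the substance lies in Corollary \ref{cor_pt}, hence in Theorem \ref{thm_main}. The only points needing care are ensuring $\log d_2$ is bounded below by a constant, which is handled by the smallness of $c_2$, and the harmless fact that the constants from Corollary \ref{cor_pt} combine with the absolute constants from the linearization.
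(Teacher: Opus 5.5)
Your proof is correct and follows the paper's route exactly: the paper says this corollary follows directly from Corollary~\ref{cor_pt} by linearizing the logarithm in \eqref{eq_pt}, since $s_1^2 > d_1 s_2 \log(d_2)$ makes its argument smaller than $1$. Your explicit check that $\log d_2 \ge 1$ (from $s_2 \le c_2 d_2$ with $c_2 \le 1/e$) shows that the hypothesis $s_1^2 \ge \bar c\, d_1 s_2$ of Corollary~\ref{cor_pt} holds with $\bar c = 1$, which fills in a step the paper leaves implicit.
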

    Here, \eqref{eq_suboptimal_regime} follows from simply linearizing the logarithm on the right-hand side of \eqref{eq_pt}. For the sake of discussion, we again place ourselves in an asymptotic setting with $d_1, d_2, s_1, s_2 \rightarrow \infty$ and $s_1 = o(d_1), s_2 = o(d_2)$. In this case, the condition $s_1^2 > d_1 s_2 \log(d_2)$ implies a violation of Assumption (b). To illustrate this, suppose that $s_1^2 > d_1 s_2 \log(d_2)$ and that $s_1\log(d_1/s_1) \asymp s_2 \log(d_2/s_2)$. Then it holds
    \begin{align}
        s_2 \log(d_2/s_2) &\asymp s_1\log(d_1/s_1) \\
        &> \sqrt{d_1s_2\log(d_2/s_2)} \log(d_1/s_1),
    \end{align}
    which implies
    \begin{equation}\label{eq_suboptimal_implication}
         \sqrt{s_2 \log(d_2/s_2)} > \sqrt{d_1}\log(d_1/s_1).
    \end{equation}
    Since  $s_1\log(d_1/s_1) \asymp s_2 \log(d_2/s_2)$ by assumption, \eqref{eq_suboptimal_implication} implies that $\sqrt{s_1 \log(d_1/s_1)} > \sqrt{d_1}\log(d_1/s_1)$, which clearly admits a contradiction in the setting $s_1 = o(d_1)$. Moreover, under the stated conditions it holds
    \begin{equation}
        (\mu^*)^2 \asymp \frac{d_1}{s_1^2}\log(d_2) < \frac{1}{s_2} \leq \frac{1}{s_2}\log\left(\frac{d_1}{s_1}\right) + \frac{1}{s_1}\log\left(\frac{d_2}{s_2}\right),
    \end{equation}
    where the final expression on the right-hand side is precisely the rate (i.e., the minimal signal strength necessary for consistent detection) attained by the scan test of \citet{butucea2013detection} (Theorem 2.1). If we additionally assume that $s_2^{\alpha} > \log(d_2)$ where $\alpha$ is obtained from Corollary \ref{cor_suboptimal_regime}, it holds
    \begin{equation}
        (\mu^*)^2 \asymp \frac{d_1}{s_1^2}\log(d_2) < \frac{d_1}{s_1^2}s_2^{\alpha} \leq \frac{d_1d_2}{s_1^2s_2^2},
    \end{equation}
    where here the expression on the right-hand side is the rate achieved by the linear test. These calculations suggest that the minimax rate established by \citet{butucea2013detection} under Assumptions (a) and (b) may in fact be strictly greater than the true minimax rate in the setting of Corollary \ref{cor_suboptimal_regime}. We formalize this conclusion in the following proposition. Here, we use  
\begin{equation}
    \mu^2_{\text{BI}} =  \frac{d_1d_2}{s_1^2s_2^2} \land 2 \Bigg(\frac{1}{s_2}\log\left(\frac{d_1}{s_1}\right) + \frac{1}{s_1}\log\left(\frac{d_2}{s_2}\right)\Bigg),
\end{equation}
to denote the rate of \citet{butucea2013detection}.

    \begin{proposition}\label{prop_suboptimal}
        Suppose that $d_1, d_2, s_1, s_2 \rightarrow \infty$ and $s_1 = o(d_1), s_2 = o(d_2)$. Furthermore, suppose that the conditions of Corollary 4 hold, and that $\log(d_2) = o(s_2^{\alpha})$. Then it holds
        \[\frac{\mu^*}{\mu_{\text{BI}}} \rightarrow 0.\]
    \end{proposition}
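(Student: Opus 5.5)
We will argue directly from Corollary \ref{cor_suboptimal_regime} and an explicit comparison with the two terms in $\mu_{\text{BI}}^2$. Corollary \ref{cor_suboptimal_regime} applies under the stated assumptions and gives $(\mu^*)^2 \le C\,\frac{d_1}{s_1^2}\log(d_2)$ for a constant $C$ depending only on $\eta$. Since $\mu^*, \mu_{\text{BI}}>0$, it suffices to show that
\[
\frac{d_1\log(d_2)/s_1^2}{\mu_{\text{BI}}^2}\to 0 .
\]
Because $\mu_{\text{BI}}^2$ is a minimum of two terms, it is enough to show that $\frac{d_1}{s_1^2}\log(d_2)$ is $o(\cdot)$ of each term separately.

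\emph{Comparison with the scan term.} The key hypothesis is $s_1^2 > d_1 s_2\log(d_2)$, which gives $\frac{d_1}{s_1^2}\log(d_2) < \frac1{s_2}$. This is only a strict inequality, not $o(\cdot)$. To gain a vanishing factor we use that the scan term is
\[
\frac{2}{s_2}\log\Big(\frac{d_1}{s_1}\Big)+\frac{2}{s_1}\log\Big(\frac{d_2}{s_2}\Big)\ \ge\ \frac{2}{s_2}\log\Big(\frac{d_1}{s_1}\Big),
\]
together with $s_1=o(d_1)$, so that $\log(d_1/s_1)\to\infty$. Hence
\[
\frac{d_1\log(d_2)/s_1^2}{\frac{2}{s_2}\log(d_1/s_1)} < \frac{1}{2\log(d_1/s_1)}\to 0 .
\]
This is the step to check most carefully: if the strict inequality were the only gain, the argument would fail, and the logarithmic factor is what makes it work.

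\emph{Comparison with the linear term.} We have
\[
\frac{d_1\log(d_2)/s_1^2}{d_1d_2/(s_1^2s_2^2)} = \frac{s_2^2\log(d_2)}{d_2} \le \frac{\log(d_2)}{s_2^{\alpha}},
\]
using $d_2\ge s_2^{2+\alpha}$. This tends to $0$ by the assumption $\log(d_2)=o(s_2^{\alpha})$.

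\emph{Conclusion.} Combining the two comparisons, $(\mu^*)^2/\mu^2_{\text{BI}}\le C\cdot o(1)$, and hence $\mu^*/\mu_{\text{BI}}\to0$. The constant from Theorem \ref{thm_main} is fixed because $\eta$ is fixed, so it does not affect the limit.
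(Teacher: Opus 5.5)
Your proof is correct and follows the same route as the paper's argument (the chain of inequalities given just before the proposition). Both bound $(\mu^*)^2 \lesssim \frac{d_1}{s_1^2}\log d_2$ via Corollary 4 and then compare this separately with the scan term, using $s_1^2 > d_1 s_2 \log d_2$, and with the linear term, using $d_2 \ge s_2^{2+\alpha}$ together with $\log d_2 = o(s_2^{\alpha})$. Your explicit use of $\log(d_1/s_1) \to \infty$ to get a vanishing ratio in the scan comparison spells out a step that the paper's chain leaves implicit.
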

    Proposition \ref{prop_suboptimal} encapsulates one of the primary consequences of our Theorem \ref{thm_main}; namely, that there exist regimes in which the prior state-of-the-art theoretical results for submatrix detection are suboptimal with respect to the true minimax rate of detection. The reliance of \citet{butucea2013detection} on Assumptions (a) and (b) leads them to paint an incomplete picture of the detection boundary, and analogous assumptions similarly constrain all subsequent works \citep{arias2014community,verzelen2015community,ma2015computational, luo2022tensor, dadon2024detection, oren2026inhomogeneous}. The generality of Theorem \ref{thm_main} allows us to completely settle the minimax rate of submatrix detection, and Proposition \ref{prop_ub} ensures that we are able to achieve this rate with a minimax optimal test for any values of $s_1, s_2, d_1,$ and $d_2$.
\subsection{Case $s_1 = 1$}\label{subsubsec_s1=1}

To further illustrate that our results are free from any balancedness condition, we evaluate our rate in the case where the hidden submatrix is the most imbalanced, that is, when taking $s_1 = 1$. 
We also allow the ambient matrix to have arbitrarily many rows ($d_1$) and columns $(d_2)$. 
The case $s_1 = 1$ is known to be closely related to testing in $\ell_\infty$ norm (see e.g.~\cite{chhor2024sparse,kotekal2026locally}), and also provides some helpful insight into our results. 
Our rate can be shown to simplify as in Table~\ref{table_s1=1} below.
\begin{table}[h]
\centering
\begin{tabular}{|c|c|c|c|}
\hline
\textbf{ $\log (ed_1)$ vs $s_2$ } & \textbf{dense or sparse} & \textbf{rate} & \textbf{ optimal test } \\ \hline
$\log (ed_1) > s_2 \phantom{\Bigg|}$ & any & $\displaystyle(\mu^*)^2\asymp\log\Big(\frac{ed_2}{s_2}\Big) + \frac{\log(ed_1)}{s_2}$ & $\Delta_{\chisqmax, 1}^{h_3}$ \\ \hline
$\log (ed_1) < s_2\phantom{\Bigg|}$ & $\displaystyle\frac{d_2 \log (ed_1)}{s_2^2} \leq 1$ (dense) & $\displaystyle \hspace{-1mm}(\mu^*)^2 \, \asymp \, \frac{d_2}{s_2^2} \, \log (ed_1)$ & $\Delta^{h_1'}_{\maxlin, 1}$ \\
\hline
$\log (ed_1) < s_2\phantom{\Bigg|}$ & $\displaystyle\frac{d_2 \log (ed_1)}{s_2^2} > 1$ (sparse) & $\displaystyle(\mu^*)^2\asymp\log\!\Big(1\hspace{-.5mm}+\hspace{-.5mm} \frac{d_2\log(ed_1)}{s_2^2}\Big)$ & $\Delta_{\chisqmax, 1}^{h_3}$ \\ \hline
\end{tabular}
\caption{Regimes of separation when $s_1 = 1$. The proof is deferred to the supplement.}
\label{table_s1=1}
\end{table}

In this case, the mean matrix $\mathbf X \in \R^{d_1 \times d_2}$ should be viewed as a collection of $d_1$ row vectors of size $d_2$, among which at most \textit{one} is $s_2$-sparse and all others are zero. 
The problem is therefore to detect exactly one such vector given a noisy observation $\mathbf Y$, which is equivalent to the following multiple testing problem
\begin{align}
\begin{array}{l}
     H_{0}^{(i)}: \,\big(\mathbf X_{i j}\big)_j = 0_{\R^{d_2}} \qquad \text{ against }\\[10pt]
     H_{1}^{(i)}: \exists S \in \mathcal{P}_{s_2}(d_2) ~\text{ s.t. } \Bigg\{\begin{array}{ll}
        \!\!\forall j \in S: \, \mathbf X_{i j} \geq \mu,\\[-2pt]
        \!\!\forall j \notin S: \, \mathbf X_{i j} = 0 ,
    \end{array} 
\end{array}
       \qquad \forall i = 1,\dots, d_1. \label{eq_multiple_testing}
\end{align}
With high probability under $H_0$, all null hypotheses $H_0^{(i)}$ should be accepted. 
Hence, a natural procedure is a Bonferroni correction of the optimal procedures for each decision problem $H_0^{(i)}$ versus $H_1^{(i)}$. 
In Table~\ref{table_comparison_tests_s1=1} below, we summarize the optimal tests for any individual testing problem $H_0^{(i)}$ against $H_1^{(i)}$ and for the joint testing problem $H_0$ against $H_1$.
\begin{table}[h]
\centering
\resizebox{1\textwidth}{!}{
\begin{tabular}{|c|c|c|}
\hline
 $\phantom{\Bigg|}$& \textbf{$H_0^{(i)}$ against $H_1^{(i)}$} & \textbf{$H_0$ against $H_1$} \\ \hline
$\substack{\displaystyle \text{Optimal}\\[3pt]
\displaystyle \text{rejection}\\[3pt]
\displaystyle\text{rule}}$ $\phantom{\huge{\Bigg|}}\hspace{-1.95mm}$ & ${\Large\Bigg\{}\begin{array}{ll}
    \displaystyle\sum_{j=1}^{d_2} \big(\mathbf Y_{ij}^2 \!-\! \alpha\big)\mathds 1_{\scalebox{.8}{$|\mathbf Y_{ij}|\!>\!\lambda$}\!} \,\geq\, h_{s} & \text{if } s_2^2 < d_2\\[5pt]
    \displaystyle \sum_{j=1}^{d_2} \mathbf Y_{ij} \geq h_d & \text{otherwise}
\end{array}$ & ${\Large\Bigg\{}\begin{array}{ll}
        \!\!\displaystyle\max_{i \in [d_1]} \, \sum_{j = 1}^{d_2}\!\big(\mathbf Y_{ij}^2 \!-\! \tilde \alpha\big)\mathds 1_{\scalebox{.8}{$|\mathbf Y_{ij}|\!>\!\widetilde \lambda$}} \,\geq\, \tilde h_s & \text{if } s_2^2 \hspace{-.3mm}<\hspace{-.3mm} d_{(\text{eff}\hspace{.3mm})} \\[1pt]
        \!\!\displaystyle\max_{i \in [d_1]} \, \sum_{j = 1}^{d_2}\mathbf Y_{ij} \geq \tilde h_d & \text{otherwise}
    \end{array}$\\[15pt] \hline
$\substack{\displaystyle\text{Effective}\\[4pt]\displaystyle \text{dimension}}$ $\phantom{\huge{\Big|}}$& $d_2$ & $d_{(\text{eff}\hspace{.3mm})} = d_2 \log(ed_1)$\\[12pt] \hline
$\substack{\displaystyle \text{Truncation}\\[4pt] \displaystyle\text{thresholds}}$ $\phantom{{\Large\Bigg|}}\hspace{-1.95mm}$ & $\displaystyle\lambda = C\Big[\log\!\Big(1+\frac{d_2}{s_2^2}\Big)\,\Big]^{1/2}$ & $\displaystyle \hspace{-1mm}\tilde \lambda = C\left[\log\Big(1 + \frac{d_{(\text{eff}\hspace{.3mm})}}{s_2^2}\Big)\right]^{1/2}$ \\
\hline
$\substack{\displaystyle \text{Centering}\\[4pt] \displaystyle\text{parameters}}$ $\phantom{{\Large\Bigg|}}\hspace{-1.95mm}$ & $\displaystyle\alpha = \mathbb E\big(X^2 \big| \,|X|>\lambda\big)$ ~for~ $X \sim N(0,1)$ & $\displaystyle\tilde \alpha = \mathbb E\big(X^2 \big| \,|X|>\tilde \lambda\big)$ ~for~ $X \sim N(0,1)$ \\
\hline
$\substack{\displaystyle \text{Detection}\\[4pt] \displaystyle\text{thresholds}}$ $\phantom{\huge{\bigg|}}\hspace{-1.95mm}$& $\Bigg\{\begin{array}{ll}
    \displaystyle h_s = C_\eta \, s_2\log\!\Big(1+\frac{d_2}{s_2^2}\Big)\\
    \displaystyle h_d = C_\eta \, d_2^{1/2}
\end{array}$ & $\Bigg\{\begin{array}{ll}
    \displaystyle \tilde h_s = C_\eta \, s_2\log\!\Big(1 + \frac{d_{(\text{eff}\hspace{.3mm})}}{s_2^2}\Big) + C_\eta\log (e d_1)\\
    \displaystyle \tilde h_d = C_\eta \big[d_{(\text{eff}\hspace{.3mm})}\big]^{1/2}.
\end{array}$ \\[20pt] 
\hline
\end{tabular}
}
\caption{Optimal tests for $H_0^{(i)}$ against $H_1^{(i)}$ and for the joint testing problem $H_0$ against $H_1$.}
\label{table_comparison_tests_s1=1}
\end{table}

This comparison showcases a formal analogy between the two tests, but highlights that the truncation threshold $\tilde \lambda$ as well as the rejection thresholds $\tilde h_s, \tilde h_d$ must be carefully adjusted by logarithmic factors to control the joint Type~1 error of the multiple testing problem~\eqref{eq_multiple_testing} simultaneously for all $i \in [d_1]$. 
We also note that the phase transition between the dense and sparse regimes occurs around $s_2^2 \asymp d_2 \log(ed_1)$ rather than $s_2^2 \asymp d_2$ as in the classical vector case. 
In this setting, the Bonferroni-corrected $\chi^2$ test is required whenever $s_2^2<d_2\log(ed_1)$ and $s_1 = 1$, which corresponds to the severely sparse case where the hidden submatrix has simultaneously few rows and few columns. 
This phenomenon also extends to the $s_1>1$ case and is further discussed in Section~\ref{sec_discussion_tests}.

Note that in the first line of Table~\ref{table_s1=1}, when $s_2 \leq \log(ed_1)$, the condition $s_2^2 \leq d_2 \log(ed_1)$ is always satisfied, meaning that the test $\Delta^{h_3}_{\chisqmax, 1}$ is optimal. 
Additionally, the rate in Table~\ref{table_s1=1} degenerates into 
\begin{align*}
    (\mu^*)^2 \asymp \frac{1}{s_1} \log\left(\frac{ed_2}{s_2}\right) + \frac{1}{s_2} \log\left(\frac{ed_1}{s_1}\right),
\end{align*}
which is known to be the detection rate of the scan test statistic employed in~\cite{butucea2013detection}. 
However, as argued in Proposition~\ref{prop_suboptimal}, a simple combination of this scan test and linear test is generally suboptimal, and more refined procedures, as developed in the present paper, are required to achieve minimax optimality.


\subsection{Connection with the vector case: $s_1 \asymp d_1$}\label{subsec_vector_case}

    Our results in the matrix case allow us to directly recover the minimax rates of sparse signal detection in the vector case \citep{collier2017minimax}. 
    Indeed, it suffices to evaluate our results for $d_1 = s_1 = 1$. According to Table~\ref{table_s1=1} above, we can directly deduce that 
    \begin{align*}
        (\mu^*)^2 \asymp \log\left(1+ \frac{d_2}{s_2^2}\right).
    \end{align*}
    More generally, the problem of submatrix detection can be reduced to the vector case whenever $s_1 \asymp d_1$ or $s_2 \asymp d_2$. 
    To illustrate this, assume for now that $s_1 = d_1$. Inside each column of the observed matrix, the entries are either i.i.d. with distribution $N(0,1)$ or i.i.d. with distribution $N(\mu, 1)$. 
    It is well-known that a sufficient statistic for $\theta \in \mathbb{R}$ based on $n$ i.i.d.\ observations from $N(\theta,1)$ is the sum of these $n$ variables. 
    Therefore, the row vector obtained as the column-wise sum of the entries of the observed matrix is a sufficient statistic for the mean matrix $\X$, which ensures that techniques from the vector case developed in~\cite{collier2017minimax} can be applied to obtain the minimax separation rate $\mu^*$.

%
%

\section{Discussion}\label{sec_discussion}

\subsection{Discussion on the proposed tests}\label{sec_discussion_tests}
Our test $\Delta^*$ is a careful combination of several testing procedures, which substantially depart from test statistics proposed in the literature for submatrix detection problems. 

\cite{butucea2013detection} (resp.~\cite{baraud2002non}) use a \textit{scan} test statistic, which which consists of scanning over all possible submatrices (resp. subvectors) of size $s_1 \times s_2$ (resp. of size $s$) and rejecting if one of them has entries summing to an unusually large value. 
Analogous procedures have been used in related community detection settings~\citep{arias2014community,rotenberg2024planted}. 
Notably, our results never make use of the scan test, and highlight that it can always be successfully replaced by a Bonferroni corrected linear or truncated $\chi^2$ test, with lower time complexity\footnote{ Computing $\Delta^{h_3}_{\chisqmax, 1}$ requires $O(s_1d_2{d_1 \choose s_1})$ operations, rather than $O(s_1s_2{d_1 \choose s_1}{d_2 \choose s_2})$ for the scan test. }. 

Our two novel tests $\Delta_{\chisqlin, 1}^h$ and $\Delta^{h_3}_{\chisqmax, 1}$ build on the truncated $\chi^2$ test recently developed in the Gaussian sequence model. 
Given a vector $X \in \R^d$, the truncated $\chi^2$ test statistic is given by
\begin{align*}
    T = \sum_{j=1}^d (X_j^2 - \nu_a) \one{\big(|X_j| \geq a\big)}
\end{align*}
for some $a>0$ and where $\nu_a = \mathbb E\left(Z^2 \big|\,|Z| \geq a\right)$. 
The corresponding test that rejects for large values of $T$ is known to be optimal for detecting sparse alternatives in the Gaussian sequence model~\citep{collier2017minimax}. 
To adapt this test to the matrix case where $\mathbf Y \in \R^{d_1 \times d_2}$ the truncated $\chi^2$ test, we use 
\begin{align*}
    \sum_{i=1}^{d_1} \Big(\Big(\sum_{j=1}^{d_2} \mathbf Y_{ij}\Big)^2 - \nu_a\Big)\one{\big(\sum_{j=1}^{d_2} \mathbf Y_{ij} \geq a\big)}
\end{align*}
for some suitable truncation threshold $a>0$ and re-centering parameter $\nu_a$. 
This test statistic can be successfully applied to achieve optimality in regimes where $\tilde R \asymp \phi_{21}$ and $s_1^2 < d_1$ (see~\eqref{eq:delta-optimal} and~\eqref{def_Delta_a_and_b}). 
However, this test can also become suboptimal in other important sparse regimes. 
Specifically, its main limitation is due to the terms $\sum_{j=1}^{d_2} \mathbf Y_{ij}$ for $i\in [d_1]$, which completely eliminate the sparse structure of the corresponding vectors $(Y_{ij})_{j}$, and thereby prevent us from leveraging sparsity along the columns. 
We overcome this difficulty by proposing a Bonferroni correction of tests statistics of the form
\begin{align*}
    \sum_{i=1}^{d_1} \Big(\Big(\sum_{j\in S} \mathbf Y_{ij}\Big)^2 - \nu_a\Big)\one{\big(\sum_{j\in S} \mathbf Y_{ij} \geq a\big)}, \qquad \forall S \in \mathcal{P}_{s_2}(d_2),
\end{align*}
which preserve the sparse structure along the columns.
The rest of this section is devoted to discussing the differences between these two truncated $\chi^2$ test statistics.

Heuristically, our truncated $\chi^2$ test statistic $t_{\chisqlin, 1}$ defined in~\eqref{eq_trunc_lin} is optimal when the submatrix has a large number of rows and a small number of columns. 
In contrast, the Bonferroni-corrected truncated $\chi^2$ test statistic $t_{\chisqmax, 1}$ defined in~\eqref{eq_trunc_max} is optimal when the number of rows and of columns are both small. 
To see this, assume first that $s_2 = d_2$ and $s_1 \ll d_1$ for the sake of discussion. 
We have justified in Section~\ref{subsec_vector_case} that the column vector obtained as the row-wise sum of $\mathbf Y$ is a sufficient statistic of the data for the mean matrix $\mathbf X$. 
Therefore, when the number of rows or columns is large enough, the problem can be reduced to a vector-based problem, for which the linear test or truncated chi-square test are known to be optimal depending on whether $s_1 \lessgtr \sqrt{d_1}$~\citep{collier2017minimax}. This justifies the construction of our test statistic $t_{\chisqlin, 1}$, which consists of summing row-wise or column-wise, and applying the linear test if $s_1 \geq \sqrt{d_1}$ or the vector-based truncated $\chi^2$ test otherwise.

When the hidden submatrix has few rows and columns, collapsing the observation matrix into a single vector (by summing over rows or columns) destroys too much information from the data. 
In this regime, a natural idea would be to apply the scan test proposed in~\cite{butucea2013detection}. Unfortunately, this procedure is not optimal in most regimes, generalizing the suboptimality reported in~\cite{baraud2002non} (see Proposition~\ref{prop_suboptimal}). 
Instead, we propose a non-trivial refinement based on a Bonferroni correction of carefully selected  truncated chi-square test statistics. 
Our procedure, the Bonferroni corrected truncated $\chi^2$ test as formally defined in equation (\ref{eq_trunc_max}), is obtained by scanning and summing over subsets along \textit{one} dimension of the matrix and truncating along the other dimension. 
In some regimes, this procedure fails and needs to be applied with the dimensions flipped, that is, by scanning over columns and truncating along rows. 
This procedure captures subtle concentration effects of elevated submatrices with extremely small number of rows and columns, achieving minimax optimality in such regimes. 


\subsection{Conclusion and future work}
In this paper, we have completely resolved the minimax rate of separation for detecting a planted submatrix in homoscedastic Gaussian noise. Our results reveal new phase transitions in the minimax rate that were previously unknown to the literature, and allow us to describe regimes in which the previous state-of-the-art results are in fact suboptimal. Our upper bound relies on the construction of new testing procedures, the ideas behind which we anticipate will lead to the development of new optimal procedures for similar detection and estimation problems in matrix- and tensor-structured data.

Our results present many opportunities for future research. We briefly discuss a few of these opportunities below.
\begin{itemize}
    \item \underline{Computational gaps}: The submatrix detection admits a well-known statistical-computation gap; that is, there exists a regime where detection is statistically possible but computationally infeasible \citep{ma2015computational}. However, the results of \citet{ma2015computational} and follow-up work \citep{brennan2019universality, brennan2018reducibility} hold only in the square matrix and submatrix regime (with $d_1 = d_2, s_1 = s_2$), and it is naturally of interest to investigate the existence of such gaps without this restriction. We employ the Bonferroni corrected linear and truncated $\chi^2$ tests whenever $(\mu^*) \asymp \psi_{12} + \psi_{21}$, which require maximizing over all subsets of $[d_1]$ (resp. $[d_2]$) of size $s_1$ (resp. $s_2$). These procedures can be intractable when ${d_1 \choose s_1}$ or ${d_2 \choose s_2}$ is exponential in one of the parameters, and we conjecture that constraining ourselves to the use of polynomial time testing procedures will lead to suboptimal statistical performance in this regime. Establishing this phenomenon formally is an important future research direction.
    \item \underline{Extension to tensors}: The work of \citet{luo2022tensor} in part extends the results of \citet{butucea2013detection} to the tensor setting, in which the statistician observes a multi-modal tensor in Gaussian noise and attempts to discern whether there exists a planted sub-tensor of elevated mean. Their results similarly require balancedness conditions on the size of the planted sub-tensor, adapted from \citet{butucea2013detection} to the multi-modal setting (see Theorem 9 in \citet{luo2022tensor} and its assumptions). Extending our Theorem \ref{thm_main} to the tensor setting is of great interest, and we conjecture that such a result would reveal new phenomena in the sub-tensor detection problem, in particular new phase transitions according to the relative shapes of the observed tensor and planted sub-tensor.
    \item \underline{Submatrix recovery}: In this paper, we have focused our attention on the submatrix detection problem, in which the statistician aims to detect the presence of a planted submatrix in noise. Submatrix recovery, where the goal is to estimate the support of the planted submatrix, is a statistically harder task that has also seen great research activity over the past decade \citep{hajek2018submatrix, cai2017computational,butucea2015sharp,banks2018information, hajek2016semidefinite, dadon2024detection, oren2026inhomogeneous}. Resolving the non-asymptotic minimax rate of submatrix support recovery for any configuration of $s_1, s_2, d_1$, and $d_2$ constitutes a valuable research direction, and any such result will likely rely on recent developments in the variable selection literature for the Gaussian sequence model \citep{butucea2018variable, ndaoud2020optimal, butucea2023variable}. We leave these investigations to future work.
\end{itemize}

\textbf{Acknowledgements:} This paper has been funded by the Agence Nationale de la Recherche under grant ANR-17-EURE-0010 (Investissements d'Avenir program). Parker Knight is support by an NSF Graduate Research Fellowship. The authors thank Rajarshi Mukherjee for drawing their attention to the submatrix detection problem. The authors would also like to thank the Rose Traveling Fellowship of the Harvard T.H. Chan School of Public Health for supporting an international visit between the authors. 

\bibliographystyle{apalike}
\bibliography{biblio}

\newpage

\appendix 

\begin{center}
    \LARGE{Supplementary Material}
\end{center}

\addtocontents{toc}{\protect\setcounter{tocdepth}{2}}
\tableofcontents

\section{Proof of lower bound}

In this section, we prove the lower bound on the minimax rate of separation $\mu^*$. Our general strategy is to lower bound the minimax risk of testing with the Bayes risk defined with respect to a well-chosen prior over the parameter space. We then invoke the Neyman-Pearson lemma and carefully control the second moment of the resulting likelihood ratio statistic. Throughout this section of the supplement, we will denote our desired minimax rate of separation as
\[R := (\psi_{12} + \psi_{21}) \land \phi_{12} \land \phi_{21}.\]

\bigskip \noindent Let $\eta \in (0,1)$ be given. Recall that the minimax risk is defined as
\begin{align*}
    \cR^*(s_1,s_2,d_1,d_2, \mu) &= \inf_\Delta \cR(\Delta, \mu) \\ 
    &= \inf_\Delta\Big\{\mathbb P_{0}(\Delta = 1) + \sup_{\X \in \Theta(s_1,s_2,d_1,d_2, \mu)} \mathbb P_\X(\Delta = 0)\Big\}.
\end{align*}
Define the reduced parameter space $\overline{\Theta}(s_1, s_2, d_1, d_2, \mu)$ as
\[\overline{\Theta}(s_1, s_2, d_1, d_2, \mu) = \{\X \in \Theta(s_1, s_2, d_1, d_2, \mu) : X_{ij} \neq 0 \implies X_{ij} = \mu\}.\]
We let $\pi$ denote the uniform distribution over $\overline{\Theta}(s_1, s_2, d_1, d_2, \mu)$, meaning that for any $\mathbf M \in \overline{\Theta}(s_1, s_2, d_1, d_2, \mu)$ it holds $\bbP_{\X \sim \pi}(\X = \mathbf M) = \frac{1}{{d_1 \choose s_1}{d_2 \choose s_2}}$. Using $\pi$ as our prior on $\overline{\Theta}(s_1, s_2, d_1, d_2, \mu)$, we define the mixture distribution $\mathbf{P}_\pi$ on $\R^{d_1\times d_2}$ as
\[\mathbf P_{\pi}(A)=\int_{\X \in \overline{\Theta}(s_1, s_2, d_1, d_2, \mu)}\bbP_\X(A)\pi(\D \X),\]
where $A$ is a measurable set. With this infrastructure in hand, we can lower bound the minimax risk as follows:
\begin{align*}
    \cR^*(s_1,s_2,d_1,d_2, \mu) &=\inf_\Delta\Big\{\mathbb P_{0}(\Delta = 1) + \sup_{\X \in \Theta(s_1,s_2,d_1,d_2, \mu)} \mathbb P_\X(\Delta = 0)\Big\} \\
    &\geq \inf_\Delta\Big\{\mathbb P_{0}(\Delta = 1) + \sup_{\X \in \overline{\Theta}(s_1,s_2,d_1,d_2, \mu)} \mathbb P_\X(\Delta = 0)\Big\} \\
    &\geq \inf_\Delta\Big\{\mathbb P_{0}(\Delta = 1) + \mathbf P_\pi(\Delta = 0)\Big\}.
\end{align*}
The final expression above is the minimax risk of a simple versus simple hypothesis testing problem, and we can characterize it precisely using the Neyman-Pearson lemma (\cite{rigollet2023high}, Lemma 4.3). Combining this result with standard equivalent formulations of the total variation distance (\cite{rigollet2023high}, Proposition 4.4), we have
\begin{align*}
    \inf_\Delta\Big\{\mathbb P_{0}(\Delta = 1) + \mathbf P_\pi(\Delta = 0)\Big\} &= 1 - \text{TV}(\bbP_0, \mathbf P_{\pi}) \\
    &= 1 - \frac12\int_{\Y \in \R^{d_1\times d_2}}|\D\bbP_0(\Y) - \D \mathbf P_\pi (\Y)| \\
    &= 1 - \frac12\int_{\Y \in \R^{d_1\times d_2}}\Big|\frac{\D\mathbf P_\pi (\Y)}{\D \bbP_0(\Y)} - 1\Big|\D \bbP_0(\Y),
\end{align*}
where $\D \bbP_0$ and $\D \mathbf P_\pi$ denote the Radon-Nikodym derivatives with respect to Lebesgue measure of $\bbP_0$ and $\mathbf P_\pi$ respectively. Letting $L_\pi = \frac{\D\mathbf P_\pi }{\D \bbP_0}$ denote the likelihood ratio, we apply the Cauchy-Schwarz inequality to obtain
\begin{align*}
    \int_{\Y \in \R^{d_1\times d_2}}\Big|\frac{\D\mathbf P_\pi (\Y)}{\D \bbP_0(\Y)} - 1\Big|\D \bbP_0(\Y) &= \int_{\Y \in \R^{d_1\times d_2}}\Big|L_\pi(\Y) - 1\Big|\D \bbP_0(\Y) \\
    &\leq \sqrt{\int_{\Y \in \R^{d_1\times d_2}}\Big(L_\pi(\Y) - 1\Big)^2\D \bbP_0(\Y)} \\
    &= \sqrt{\EE_0\big[L_\pi^2(\Y)\big] - 1},
\end{align*}
where we arrive at the final expression by expanding the square and using $\EE_0[L_\pi(\Y)] = 1$. This chain of calculations reveals
\[\cR^*(s_1, s_2, d_1, d_2) \geq 1 - \frac12\sqrt{\EE_0\big[L_\pi^2(\Y)\big] - 1}.\]
Therefore, to prove that $\cR^*(s_1, s_2, d_1, d_2) \geq 1 - \eta$, it suffices to show
\begin{align*}
    \EE_0\big[L_\pi^2\big] &\leq 1 + 4\eta^2 \\
    &= 1 + \eps,
\end{align*}
where we define $\eps := 4\eta^2$. By direct calculation, we have
\begin{align*}
    \EE_0\big[L_\pi^2\big] &= \int_{\Y}\frac{\big(\mathbf \D \mathbf P_\pi(\Y)\big)^2}{\D \bbP_0(\Y) } \\
    &= \int_{\Y}\frac{\int \D \bbP_\X(\Y)\pi(\D \X)\int \D \bbP_{\X'}(\Y)\pi(\D \X')}{\D \bbP_0(\Y) } \\
    &= \int_\X \int_{\X'}\Big[\int_\Y\frac{\D \bbP_\X(\Y)\D \bbP_{\X'}(\Y)}{\D \bbP_0(\Y)}\Big]\pi(\D \X)\pi(\D \X'), 
\end{align*}
where $\X$ and $\X'$ are two mean matrices drawn independently from $\pi$ with supports $S = S_1 \times S_2$ and $S' = S_1' \times S_2'$ respectively. Recall that the entries of $\Y$ are assumed to be independent Gaussian random variables with variance $1$. We can therefore write the above integrand as
\begin{align*}
    \int_\Y\frac{\D \bbP_\X(\Y)\D \bbP_{\X'}(\Y)}{\D \bbP_0(\Y)} &= \int_\Y\exp\Big[-\frac12\Big(\sum_{(i,j) \in S}(Y_{ij} - \mu)^2 +
    \sum_{(i,j) \notin S}Y_{ij}^2  \\
    &\quad \quad  + \sum_{(i,j) \in S'}(Y_{ij} - \mu)^2 + \sum_{(i,j) \notin S'}Y_{ij}^2 - \sum_{\substack{i \in [d_1], \\ j \in [d_2]}}Y_{ij}^2\Big)\Big] \\
    &= \int_\Y\exp\Big[-\frac12\Big(\sum_{\substack{i \in [d_1], \\ j \in [d_2]}}Y^2_{ij} + 2\sum_{(i,j \in )S \cap S'}\mu^2 + \sum_{(i,j) \in S \triangle S'}\mu^2 \\
    &\quad \quad - 4\sum_{(i,j) \in S \cap S'}Y_{ij}\mu - 2\sum_{(i,j) \in S \triangle S'}Y_{ij}\mu\Big)\Big] \\
    &= \int_\Y\exp\Big[-\frac12\Big(\sum_{(i,j) \in S \cap S'}(Y_{ij} - 2\mu)^2 + \sum_{(i,j) \in S \triangle S'}(Y_{ij} - \mu)^2 - 2|S \cap S'|\mu^2\Big)\Big] \\
    &= \exp\Big(\mu^2|S_1 \cap S_1'||S_2 \cap S'_2|\Big).
\end{align*}
It follows that
\begin{align*}
    \EE_0\big[L_\pi^2\big] &= \EE\Big[\exp\Big(\mu^2|S_1 \cap S_1'||S_2 \cap S'_2|\Big)\Big] \\
&=\EE[\exp(\mu^2W_1W_2)],
\end{align*}
where $W_j \sim \text{HyperGeometric}(d_j, s_j, s_j)$. By Lemma 3 of \citet{arias2011global}, each $W_j$ is stochastically dominated by a Bin$(d_j, \frac{s_j}{d_j - s_j})$ random variable, and therefore it suffices to show that
\begin{equation}\label{eq_mgf_goal}
    \EE[\exp(\mu^2XY)] \leq 1+\eps,
\end{equation}
where $X \sim \text{Bin}(s_1, \frac{s_1}{d_1 - s_1})$ and $Y \sim \text{Bin}(s_2, \frac{s_2}{d_2 - s_2})$. The remainder of the proof is structured as follows.
\begin{enumerate}
    \item In the \textit{general sparsity} setting, meaning that $s_1 \leq c_1 d_1$ and $s_2 \leq c_2 d_2$ for sufficiently small constants $c_1, c_2 > 0$, we show that there exists a constant $c_\mu > 0$ such that if $\mu^2 \leq c_\mu R$, then (\ref{eq_mgf_goal}) holds. The proof of this claim constitutes the primary technical difficulty of the derivation of our lower bound, and relies on a precise analysis of the moment generating function in the left hand side of (\ref{eq_mgf_goal}). The details are given in Section \ref{sec_pf_lb_general}, with the key lemmas collected in Section \ref{sec_lemmas_lb}.
    \item Otherwise, we place our selves in the \textit{very dense} setting and assume without loss of generality that $s_1 \geq c d_1$ for a constant $c \in (0,1)$. In this case, the problem roughly reduces to the sparse signal detection problem in a standard Gaussian sequence model. In Section \ref{sec_pf_lb_dense}, we prove that there exist constants $c_\mu,c' > 0$ such that if $\mu^2 \leq c_\mu \frac{d_1}{s_1^2}\log\big(1 + c'\frac{d_2}{s_2^2}\big)$, then (\ref{eq_mgf_goal}) holds. We will then show that $R \asymp \frac{d_1}{s_1^2}\log\big(1 + c'\frac{d_2}{s_2^2}\big)$ in this regime, which completes the proof.
\end{enumerate}
We remark that this proof strategy is similar to that employed in our companion work \citet{chhor2026optimal}. We state the proof in its entirety here for completeness, and to render the dependence of the results in Appendices \ref{app_ub} and \ref{app_additional_justifications} on the lemmas established in this section as clear as possible.
\subsection{Proof of lower bound in the general sparsity setting}\label{sec_pf_lb_general}
Throughout this proof, we assume that there exist sufficiently small constants $c_1, c_2 \in (0,1)$ which depend on $\eta$ such that $s_1 \leq c_1d_1$ and $s_2 \leq c_2d_2$. It this regime, it holds that $\frac{s_j}{d_j - s_j} \asymp \frac{s_j}{d_j}$, and therefore it suffices to control $\EE[\exp(\mu^2XY)]$ for $X \sim \text{Bin}(s_1, \frac{s_1}{d_1})$ and $Y \sim \text{Bin}(s_2, \frac{s_2}{d_2})$. We will also assume that $\frac{d_1}{s_1} \geq e \log(\frac{d_2}{s_2})$. In fact, this assumption may be made without loss of generality due to Lemma \ref{lem_2.6}. Recall that we aim to show that there exists a constant $c_\mu > 0$ such that if $\mu^2 \leq c_\mu R$, then $\EE[\exp(\mu^2XY)] \leq 1 + \eps$. We divide our analysis into two cases.

\bigskip 

\noindent\underline{Case 1:} Suppose that $s_1^2 \geq (2e)^{-4}d_1s_2$. Let $\Ct \geq 1$ and $c_{1, \mu} > 0$ be the constants obtained from applying Lemma \ref{lem_simplify_dense_allrates} with $\alpha = \eps/2$. We can write
\begin{align*}
\EE[\exp(\mu^2XY)] &= \EE[\exp(\mu^2XY)\one(X \leq \Ct \frac{s_1^2}{d_1})] + \EE[\exp(\mu^2XY)\one(X > \Ct \frac{s_1^2}{d_1})]
\end{align*}
By Lemma \ref{lem_simplify_dense_allrates}, if $\mu^2 \leq c_{1, \mu}R$, then $\EE[\exp(\mu^2XY)\one(X > \Ct \frac{s_1^2}{d_1})] < \eps/2$.
Now we turn our attention to the first term. Suppose that $\Ct \frac{s_1^2}{d_1} < 1$. Then
\begin{align*}
    \EE[\exp(\mu^2XY)\one(X \leq \Ct \frac{s_1^2}{d_1})] &= \bbP(X = 0) \\
    &\leq 1,
\end{align*}
and it holds that $\EE[\exp(\mu^2XY)] \leq 1+\eps/2$ and the proof is complete. Otherwise, suppose that $\Ct \frac{s_1^2}{d_1} \geq 1$. Then by Lemma \ref{lemma:lb-principal-term}, there exists a constant $c_{2, \mu} > 0$ such that if $\mu^2 \leq c_{2, \mu}R$, it holds $\EE[\exp(\mu^2XY)\one(X \leq \Ct \frac{s_1^2}{d_1})] < 1 + \eps/2$. Letting $c_\mu = \min(c_{1, \mu}, c_{2, \mu})$, it follows that if $\mu^2 \leq c_\mu R$, then
\[\EE[\exp(\mu^2XY)] \leq 1+\eps.\]
\bigskip 

\noindent \underline{Case 2:} Suppose that there exists a constant $\bar{c} \in (0, (2e)^{-4})$ such that $s_1^2 \leq \bar{c}d_1s_2$. We split our analysis into two sub-cases. Assume first that $s_2 < s_1\log\big(\frac{d_1s_2}{s_1^2}\big)$. For a constant $C_* \geq 1$ whose value will be determined later, we form the partition
\begin{align*}
    \EE[\exp(\mu^2XY)] &= \EE[\exp(\mu^2XY)\one(X \leq \Ct \frac{s_1^2}{d_1})] \\
    &\quad +  \EE[\exp(\mu^2XY)\one(\Ct \frac{s_1^2}{d_1} < X < s_2^{-1}\log\big(\frac{d_1s_2}{s_1^2}\big))] \\
    &\quad + \EE[\exp(\mu^2XY)\one(X \geq \Ct \frac{s_1^2}{d_1} \lor s_2^{-1}\log\big(\frac{d_1s_2}{s_1^2}\big) )] \\
    &= \text{I}(C_*) + \text{II}(C_*) + \text{III}(C_*).
\end{align*}
By Lemma \ref{lem_simplification_rate_max_test}, there exist constants $C_{1,*} \geq 1$ and $c_{1, \mu} > 0$ such that if $\mu^2 \leq c_{1, \mu}R$, then $\text{III}(C_{1,*}) < \eps/3$. Suppose that $\lceil C_{1,*}\frac{s_1^2}{d_1}\rceil \geq \lfloor s_2^{-1}\log \big(\frac{d_1s_2}{s_1^2}\big)\rfloor$. In this case, we take $C_* = C_{1,*}$, which gives us $\text{II}(C_*) = 0$. If $C_* \frac{s_1^2}{d_1} < 1$, it immediately follows that $\text{I}(C_*) \leq 1$ from an elementary calculation in the proof of Case 1, and thus $\EE[\exp(\mu^2XY)] \leq 1 + \eps/3$ which completes the proof. Otherwise, if $C_* \frac{s_1^2}{d_1} \geq 1$, then by Lemma \ref{lemma:lb-principal-term} there exists a constant $c_{2,\mu} > 0$ such that if $\mu^2 \leq c_{2,\mu}R$, then $\text{I}(C_*) \leq 1 + \frac23\eps$. In this case, we take $c_\mu = \min(c_{1, \mu}, c_{2, \mu})$ and for $\mu^2 \leq c_{\mu}R$ it holds that $\EE[\exp(\mu^2XY)]\leq 1 + \eps$. This completes the proof in the case $\lceil C_{1,*}\frac{s_1^2}{d_1}\rceil \geq \lfloor s_2^{-1}\log \big(\frac{d_1s_2}{s_1^2}\big)\rfloor$.

Now suppose that $\lceil C_{1,*}\frac{s_1^2}{d_1}\rceil \leq \lfloor s_2^{-1}\log \big(\frac{d_1s_2}{s_1^2}\big)\rfloor$. From here, we consider two further subcases. First, suppose that $2e\frac{s_1^2}{d_1} \geq \frac{s_2^2}{d_2}$. Then, by Lemma \ref{lem_simplify_s1d1big_truncchi2}, there exist constants $C_{2,*} \geq 1$ and $c_{2,\mu}$ such that if $\mu^2 \leq c_{2,\mu}R$, then $\text{II}(C_{2,*}) < \eps/3$. Now we take $C_* = \max(C_{1,*}, C_{2,*})$. If $C_* \frac{s_1^2}{d_1} < 1$, it immediately follows that $\text{I}(C_*) \leq 1$, and thus $\EE[\exp(\mu^2XY)] \leq 1 + \frac23\eps$ which completes the proof. Otherwise, if $C_* \frac{s_1^2}{d_1} \geq 1$, then by Lemma \ref{lemma:lb-principal-term} there exists a constant $c_{3,\mu} > 0$ such that if $\mu^2 \leq c_{3,\mu}R$, then $\text{I}(C_*) \leq 1 + \eps/3$. We then take $c_\mu = \min(c_{1, \mu}, c_{2, \mu}, c_{3,\mu})$ and for $\mu^2 \leq c_{\mu}R$ it holds that $\EE[\exp(\mu^2XY)]\leq 1 + \eps$. Next, suppose that $2e\frac{s_1^2}{d_1} \leq \frac{s_2^2}{d_2}$. In this case, we partition $\text{II}(C_*)$ as follows:
\begin{align*}
    \text{II}(C_*) &= \EE[\exp(\mu^2XY)\one(\Ct \frac{s_1^2}{d_1} < X < s_2^{-1}\log\big(\frac{d_1s_2}{s_1^2}\big))] \\
    &= \EE\left[\exp\big(\mu^2XY\big)\one\bigg(1 \vee \Ct \frac{s_1^2}{d_1} \leq X \leq \frac{\Ct s_2^2/d_2}{\log\big(\frac{d_1s_2^2}{s_1^2d_2}\big)}\wedge s_1\bigg)\right] \\
    &\quad +\EE\bigg[\exp(\mu^2XY)\one\bigg( 1 \vee \Ct \frac{s_1^2}{d_1} \vee  \frac{\Ct s_2^2/d_2}{\log\big(\frac{d_1s_2^2}{s_1^2d_2}\big)} \leq X \leq \frac{s_2}{\log\big(\frac{d_1s_2}{s_1^2}\big)}\bigg)\bigg] \\
    &= \text{II}^{(a)}(C_*) + \text{II}^{(b)}(C_*).
\end{align*}
By Lemmas \ref{lem_simplify_s2d2big_maxlin} and \ref{lem_simplify_s2d2big_truncchi2}, there exist constants $C_{2,*}, C_{3,*} \geq 1$ and $c_{2,\mu}, c_{3,\mu} > 0$ such that if $\mu^2 \leq \min(c_{2,\mu}, c_{3,\mu})R$, then $\text{II}^{(a)}(C_{2,*}) + \text{II}^{(b)}(C_{3,*}) < \eps/3$. We take $C_* = \max(C_{1,*}, C_{2,*}, C_{3,*})$. If $C_* \frac{s_1^2}{d_1} < 1$, then $\text{I}(C_*) \leq 1$ and thus $\EE[\exp(\mu^2XY)] \leq 1 + \frac23\eps$ which completes the proof. Otherwise, if $C_* \frac{s_1^2}{d_1} \geq 1$, then by Lemma \ref{lemma:lb-principal-term} there exists a constant $c_{4,\mu} > 0$ such that if $\mu^2 \leq c_{4,\mu}R$, then $\text{I}(C_*) \leq 1 + \eps/3$. We then take $c_\mu = \min(c_{1, \mu}, c_{2, \mu}, c_{3,\mu}, c_{4,\mu})$ and for $\mu^2 \leq c_{\mu}R$ it holds that $\EE[\exp(\mu^2XY)]\leq 1 + \eps$. This completes the proof in the case $s_2 < s_1 \log \big(\frac{d_1s_2}{s_1^2}\big)$.

We now turn our attention to the case $s_2 \geq s_1 \log \big(\frac{d_1s_2}{s_1^2}\big)$. In this case, we perform the partition
\begin{align*}
    \EE[\exp(\mu^2XY)] &= \EE[\exp(\mu^2XY)\one(X \leq \Ct \frac{s_1^2}{d_1})] \\
    &\quad +  \EE[\exp(\mu^2XY)\one(\Ct \frac{s_1^2}{d_1} < X \leq s_1\big))] \\
    &= \text{I}(C_*) + \text{II}(C_*)
\end{align*}
The remainder of the proof follows exactly as in the $s_2 < s_1 \log \big(\frac{d_1s_2}{s_1^2}\big)$ setting. If $2e\frac{s_1^2}{d_1} \geq \frac{s_2^2}{d_2}$, we control $\text{II}(C_*)$ using Lemma \ref{lem_simplify_s1d1big_truncchi2}; otherwise, we use Lemmas \ref{lem_simplify_s2d2big_maxlin} and \ref{lem_simplify_s2d2big_truncchi2}. We then control $\text{I}(C_*)$ using Lemma \ref{lemma:lb-principal-term} if needed. We omit the details for brevity. The proof of the lower bound in the general sparsity setting is complete.

\subsection{Proof of lower bound in the very dense setting}\label{sec_pf_lb_dense}

Now suppose that there exists a constant $c > 0$ such that $s_1 > cd_1$. Suppose that $\mu^2 \leq c\frac{d_1}{s_1^2}\log\big(1 + c'\frac{d_2}{s_2^2}\big)$ for $c' = \log(1 + \eps)$. By the definition of $c$, it immediately holds that
\[\mu^2 \leq c\frac{d_1}{s_1^2}\log\big(1 + c'\frac{d_2}{s_2^2}\big) \leq \frac{1}{s_1}\log\big(1 + c'\frac{d_2}{s_2^2}\big)\]
We divide our analysis into two simple cases. First, suppose that there exists a constant $\bar{c} < 1$ such that $s_2 < \bar{c}d_2$, which grants that $\frac{s_2}{d_2 - s_2} \asymp \frac{s_2}{d_2}$, and we may consider $Y \sim \text{Bin}(s_2, \frac{s_2}{d_2})$ instead of $Y \sim \text{Bin}(s_2, \frac{s_2}{d_2 - s_2})$. By direct calculation, we have
\begin{align*}
    \EE[\exp(\mu^2XY)] &\leq \EE[\exp(\mu^2s_1Y)] \quad \text{(since $X \leq s_1$ almost surely)} \\
    &\leq \EE\Big[\exp\Big(\log\big(1+c'\frac{d_2}{s_2^2}\big)Y\Big)\Big] \quad \text{(since $\mu^2\leq \frac{1}{s_1}\log\big(1 + c'\frac{d_2}{s_2^2}\big)$)} \\
    &= \Big(1 + \frac{s_2}{d_2}\big(e^{\log\big(1 + c'\frac{d_2}{s_2^2}\big)} - 1\big)\Big)^{s_2} \\
    &= \big(1 + \frac{c'}{s_2}\big)^{s_2} \\
    &\leq e^{c'} \\
    &= 1 + \eps.
\end{align*}
If no such $\bar{c}$ exists, then $s_2 \geq c d_2$ for a constant $1 \geq c > 0$, and we have
\begin{align*}
    \EE[\exp(\mu^2XY)] &\leq \exp(\mu^2s_1s_2) \\
    &\leq \exp\Big(\log\big(1+c'\frac{d_2}{s_2^2}\big)s_2\Big) \quad \text{(since $\mu^2\leq \frac{1}{s_1}\log\big(1 + c'\frac{d_2}{s_2^2}\big)$)} \\
    &= \big(1 + \frac{c'd_2}{s^2_2}\big)^{s_2} \\
    &\leq e^{c' \cdot c } \\
    &= 1 + \eps
\end{align*}
where we adjust the constant $c'$ to satisfy $c' = c^{-1}\log(1 + \eps)$. It remains to show that $\frac{d_1}{s_1^2}\log\big(1 + c'\frac{d_2}{s_2^2}\big) \asymp R$. Let $C' = \frac1c$. First, note that $\frac{d_1}{s_1^2} \leq \frac{C'}{s_1} \leq C'$, so hence if $R$ is defined with $C \geq C'$, it holds $R \leq \frac{d_1}{s_1^2}\log\big(1 + c'\frac{d_2}{s_2^2}\big)$ by definition. It remains to show that $\frac{d_1}{s_1^2}\log\big(1 + c'\frac{d_2}{s_2^2}\big) \lesssim R$. Since $\frac{d_1}{s_1} \leq C'$, we have
\begin{align*}
    \frac{d_1}{s_1^2}\log\big(1 + c'\frac{d_2}{s_2^2}\big) &\leq \frac{C'}{s_1}\log\big(1 + c'\frac{d_2}{s_2^2}\big) \\
    &\leq  \frac{C'}{s_1}\log\left(1 + c'\frac{d_2}{s_2^2}\log \left(e {d_1 \choose s_1}\right)\right) \\
    &= \psi_{12} \\
    &\leq \psi_{12} + \psi_{21}.
\end{align*}
Furthermore, using the inequality $\log(1 + x) \leq x$ for any $x > 0$, we have
\begin{align*}
    \frac{d_1}{s_1^2}\log\big(1 + c'\frac{d_2}{s_2^2}\big) 
    &\leq c'\frac{d_1d_2}{s_1^2s_2^2} \\
    &\leq c' C' \frac{d_2}{s_1s_2^2} \\
    &\leq C''\frac{d_2}{s_2^2}\log\Big(1 + \frac{1}{s_1}\Big) \quad \text{(since $x \lesssim \log(1 + x)$ for $x < 1$)}\\
    &\leq C''\frac{d_2}{s_2^2}\log\Big(1 + \frac{d_1}{k^2_1}\Big) \\
    &\leq C'' \phi_{21}.
\end{align*}
Clearly, $\frac{d_1}{s_1^2}\log\big(1 + c'\frac{d_2}{s_2^2}\big) \leq \phi_{12}$. Therefore we have $\frac{d_1}{s_1^2}\log\big(1 + c'\frac{d_2}{s_2^2}\big) \lesssim R$, and the proof is complete.

\subsection{Lemmas for proof of lower bound}\label{sec_lemmas_lb}

\begin{lemma}\label{lemma:lb-principal-term}
    Let $C > 0$ be a constant such that that $s_1^2 \geq d_1 / C$. Then for any $\alpha > 0$, there exists a constant $c_{\mu} > 0$ such that  if $\mu^2 \leq c_{\mu}\frac{d_1}{s_1^2}\log\big(1 + \frac{d_2}{s_2^2}\big)$, then
    \[\EE\Big[\exp(\mu^2XY)\one\big(X \leq C \frac{s_1^2}{d_1}\big)\Big] < 1 + \alpha.\]
\end{lemma}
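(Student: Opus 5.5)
On the event $\{X \leq C s_1^2/d_1\}$ we have $\mu^2 X \leq c_\mu C \log(1 + d_2/s_2^2) =: \lambda$. Because $Y \geq 0$, the integrand is monotone in $X$, so
\[\EE\big[\exp(\mu^2XY)\one(X \leq C s_1^2/d_1)\big] \leq \EE[\exp(\lambda Y)].\]
This bound does not depend on $X$ at all, so we only need the moment generating function of $Y \sim \mathrm{Bin}(s_2, p)$ with $p = s_2/d_2$ (or $s_2/(d_2-s_2)$, which is comparable under the general sparsity assumption $s_2 \leq c_2 d_2$).

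The binomial MGF gives
\[\EE[\exp(\lambda Y)] = \big(1 + p(e^\lambda - 1)\big)^{s_2} \leq \exp\big(s_2 p (e^\lambda - 1)\big).\]
Choose $c_\mu \leq 1/C$, so that $\lambda = \kappa \log(1 + d_2/s_2^2)$ with $\kappa := c_\mu C \leq 1$. By concavity of $t \mapsto (1+x)^t$ on $[0,1]$, we get $e^\lambda - 1 = (1 + d_2/s_2^2)^\kappa - 1 \leq \kappa\, d_2/s_2^2$. Hence
\[s_2 p (e^\lambda - 1) \leq \frac{s_2^2}{d_2}\cdot \kappa\, \frac{d_2}{s_2^2} = \kappa,\]
up to the constant factor from $p \asymp s_2/d_2$. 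Therefore $\EE[\exp(\lambda Y)] \leq e^{\kappa C''}$.

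Finally, take $c_\mu$ small enough that $e^{c_\mu C C''} < 1 + \alpha$, for instance $c_\mu < \log(1+\alpha)/(C C'')$. This proves the claim, and the resulting constant depends only on $C$, $\alpha$ and the sparsity constant.

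The main subtlety is the linearization of $(1+x)^\kappa - 1$. The bound $(1+x)^\kappa - 1 \leq \kappa x$ holds uniformly in $x$ only for $\kappa \leq 1$, which is why $c_\mu$ must also be at most $1/C$. This is exactly what makes the dense regime ($x = d_2/s_2^2$ small) and the sparse regime ($x$ large) go through together.

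If the hypothesis $s_1^2 \geq d_1/C$ plays any role at all, it only ensures that the truncation level is at least of constant order. The argument above does not actually use it.
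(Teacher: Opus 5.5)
Your argument is correct and matches the paper's proof: bound the truncated expectation pointwise by the MGF of $Y$ at level $c_\mu C\log(1+d_2/s_2^2)$, apply $1+u\le e^u$, then linearize $(1+d_2/s_2^2)^{c_\mu C}-1\le c_\mu C\,d_2/s_2^2$ for $c_\mu C\le 1$; the paper likewise never uses $s_1^2\ge d_1/C$. One small wording fix: that last inequality follows from \emph{convexity} of $t\mapsto(1+x)^t$ (the chord lies above the graph on $[0,1]$), or equivalently from concavity of $x\mapsto(1+x)^\kappa$, and not from concavity in $t$.
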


\begin{proof}
   By direct calculation, we have
    \begin{align*}
        \EE\Big[\exp(\mu^2XY)\one\big(X \leq C \frac{s_1^2}{d_1}\big)\Big] &\leq \EE\Big[\exp(\mu^2C \frac{s_1^2}{d_1}Y)\Big] \\ 
        &= \Big(1 + \frac{s_2}{d_2}\big(e^{\mu^2C (s_1^2/d_1)} - 1\big)\Big)^{s_2} \\ 
        &\leq \exp\Big(\frac{s_2^2}{d_2}\big(e^{\mu^2C (s_1^2/d_1)} - 1\big)\Big).
    \end{align*}
    Now since $\mu^2 \leq c_{\mu}\frac{d_1}{s_1^2}\log\big(1 + \frac{d_2}{s_2^2}\big)$, we have 
    \begin{align*}
        \exp\Big(\frac{s_2^2}{d_2}\big(e^{\mu^2C (s_1^2/d_1)} - 1\big)\Big) &\leq \exp\Big(\frac{s_2^2}{d_2}\big(e^{c_{\mu}\frac{d_1}{s_1^2}\log\big(1 + \frac{d_2}{s_2^2}\big)C (s_1^2/d_1)} - 1\big)\Big) \\ 
        &= \exp\Big(\frac{s_2^2}{d_2}\big(\big(1 + \frac{d_2}{s_2^2}\big)^{c_{\mu}C} - 1\big)\Big) \\ 
        &\leq \exp\Big(\frac{s_2^2}{d_2}\big(c_{\mu}C\frac{d_2}{s_2^2}\big)\Big) \\
        &= \exp\big(c_\mu C\big)
    \end{align*}
    where the second inequality holds for $c_\mu < C^{-1}$, by the inequality $(1+x)^y \leq 1+y x$ which holds for any $x\geq 0$ and $y \in [0,1]$. This final expression is at most $1 + \alpha$ for $c_\mu$ taken sufficiently small.
\end{proof}

For any $C \geq 1$, we define the set of indices
\begin{align}
    \bar{\cA}_C = \left\{ \Big\lceil C\frac{s_1^2}{d_1}\Big\rceil, \dots , s_1\right\}\label{eq_def_A_C}
\end{align}
The results in this section will depend largely on the following lemma.

\begin{lemma}\label{lemma:geom-series}
    Suppose that $s_1 \leq \frac12d_1$. Then for any $\alpha > 0$, there exist constants $\Ct \geq 1$ and $c_\mu > 0$ such that for any $\cA \subset \bar{\cA}_{\Ct}$, if
    \[\mu^2 \leq \min_{k \in \cA}\frac{1}{k}\log\Bigg(1 + \frac{d_2}{s_2}\bigg(\exp\Big[\frac{c_\mu k}{s_2}\log\big(\frac{kd_1}{2es_1^2}\big)\Big] - 1\bigg)\Bigg)\]
    then 
    \[\EE\Big[\exp(\mu^2XY)\one\Big(X \in \cA \Big)\Big] < \alpha.\]

\end{lemma}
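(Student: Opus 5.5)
We decompose according to the value of $X$ and integrate out $Y$ exactly, then bound the resulting sum by a geometric series. Recall that here $X \sim \text{Bin}(s_1, \frac{s_1}{d_1})$ and $Y\sim \text{Bin}(s_2,\frac{s_2}{d_2})$ are independent. Conditioning on $X=k$ gives
\[
\EE\big[\exp(\mu^2XY)\one(X\in\cA)\big]=\sum_{k\in\cA}\bbP(X=k)\Big(1+\tfrac{s_2}{d_2}\big(e^{\mu^2 k}-1\big)\Big)^{s_2}.
\]
The hypothesis on $\mu^2$ is designed exactly so that this MGF factor is controlled. For every $k\in\cA$ we have
\[
\mu^2k\le \log\Big(1+\tfrac{d_2}{s_2}\big(e^{c_\mu k\log(kd_1/(2es_1^2))/s_2}-1\big)\Big),
\]
which rearranges to
\[
\tfrac{s_2}{d_2}\big(e^{\mu^2k}-1\big)\le e^{c_\mu k\log(kd_1/(2es_1^2))/s_2}-1.
\]
Hence the $k$-th factor is at most
\[
\exp\Big(c_\mu k\log\big(\tfrac{kd_1}{2es_1^2}\big)\Big)=\Big(\tfrac{kd_1}{2es_1^2}\Big)^{c_\mu k}.
\]

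Next we bound the binomial probabilities. Using $\binom{s_1}{k}\le (es_1/k)^k$, we get
\[
\bbP(X=k)\le \Big(\tfrac{es_1^2}{kd_1}\Big)^k.
\]
Write $u_k := \frac{kd_1}{es_1^2}$. For $k\ge \Ct s_1^2/d_1$ this satisfies $u_k\ge \Ct/e$. The $k$-th summand is then at most
\[
u_k^{-k}\,(u_k/2)^{c_\mu k}\le u_k^{-(1-c_\mu)k}.
\]
If we take $c_\mu\le 1/2$, this is at most $(e/\Ct)^{k/2}$. Here we need $u_k/2\le u_k$ and $u_k\geq 1$, and we choose $\Ct\ge e$ to guarantee the latter. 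The assumption $s_1\le d_1/2$ ensures that $\bar\cA_{\Ct}$ makes sense and that the binomial parameter is at most $1/2$, although the crude bound above does not really need this.

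Summing over $k\in\cA\subset\bar\cA_{\Ct}$, every index satisfies $k\ge1$ because of the ceiling. The total is therefore at most
\[
\sum_{k\ge1}(e/\Ct)^{k/2}=\frac{\sqrt{e/\Ct}}{1-\sqrt{e/\Ct}},
\]
which is below $\alpha$ once $\Ct$ is chosen large depending on $\alpha$. Fix $c_\mu=1/2$, or any smaller value.

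The main point requiring care is making the two exponents compatible: the MGF factor grows like $(kd_1/s_1^2)^{c_\mu k}$, while the binomial tail decays like $(kd_1/s_1^2)^{-k}$. The factor $2e$ inside the logarithm in the hypothesis is exactly what absorbs the $e$ from Stirling's bound, so no subtlety should remain beyond checking that $u_k\ge1$ on all of $\bar\cA_{\Ct}$. This holds because $k\ge \Ct s_1^2/d_1$ gives $u_k\ge \Ct/e\ge 1$.
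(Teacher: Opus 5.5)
Your proof is correct and follows essentially the same route as the paper. You condition on $X=k$, evaluate the binomial MGF of $Y$ exactly, use the hypothesis to bound that factor by $(kd_1/(2es_1^2))^{c_\mu k}$, and sum a geometric series over $k \geq \lceil \Ct s_1^2/d_1\rceil \geq 1$. The only variation is that you bound $\bbP(X=k) \leq \binom{s_1}{k}(s_1/d_1)^k \leq (es_1^2/(kd_1))^k$ directly instead of invoking Lemma~\ref{lemma:probXk}, which is slightly sharper and does not need $s_1 \leq d_1/2$. You should also explicitly take $\Ct \geq 2e$, so that $\log(kd_1/(2es_1^2)) \geq 0$ and the hypothesis on $\mu^2$ is well defined.
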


\begin{proof}
    We have 
    \begin{align*}
        \EE\Big[\exp(\mu^2XY)\one\Big(X  \in \cA\Big)\Big] &= \sum_{k \in \cA}\EE\Big[\exp(\mu^2kY)\Big]\Pr(X = k) \\
        &= \sum_{k \in \cA}\Big(1 + \frac{s_2}{d_2}\big(e^{k\mu^2} - 1\big)\Big)^{s_2}\Pr(X = k) \\
        &\leq \sum_{k \in \cA}\Big(1 + \frac{s_2}{d_2}\big(e^{k\mu^2} - 1\big)\Big)^{s_2}\Big(\frac{s^2_12e}{kd_1}\Big)^ke^{-s_1^2/d_1}
    \end{align*}
    where the inequality follows from Lemma \ref{lemma:probXk}. Since we have assumed that
     \[\mu^2 \leq \min_{k \in \cA}\frac{1}{k}\log\Bigg(1 + \frac{d_2}{s_2}\bigg(\exp\Big[\frac{c_\mu k}{s_2}\log\big(\frac{kd_1}{2es_1^2}\big)\Big] - 1\bigg)\Bigg),\]
    we can substitute this into the above calculation to obtain
    \begin{align*}
        &\sum_{k \in \cA}\Big(1 + \frac{s_2}{d_2}\big(e^{k\mu^2} - 1\big)\Big)^{s_2}\Big(\frac{s^2_12e}{kd_1}\Big)^k \\ 
        &\leq \sum_{k \in \cA}\Bigg(\Big(\frac{kd_1}{s_1^22e}\Big)^{c_\mu}\Bigg)^k\Bigg(\frac{s^2_12e}{kd_1}\Bigg)^k \\ 
        &= \sum_{k \in \cA}\Bigg(\Big(\frac{s^2_12e}{kd_1}\Big)^{1-c_\mu}\Bigg)^k.
    \end{align*}
    Since $\cA \subset \bar{\cA}_{\Ct}$, for every $k \in \cA$ we have     
    $$\frac{s_1^2}{kd_1} \leq \frac{1}{\Ct}.$$
    Therefore, by taking $\Ct$ sufficiently large, we can control the above sum with a geometric series
    \begin{align*}
        &\sum_{k \in \cA}\Bigg(\Big(\frac{s^2_12e}{kd_1}\Big)^{1-c_\mu}\Bigg)^k \\
        &\leq \sum_{k \in \cA}\Bigg(\Big(\frac{2e}{\Ct}\Big)^{1-c_\mu}\Bigg)^k \\
        &\leq \frac{\tilde c}{1 - \tilde c}
    \end{align*}
    where $\tilde c = (2e/\Ct)^{1 - c_\mu}$. This quantity can be made arbitrarily small by taking $c_\mu$ sufficiently small and $\Ct$ sufficiently large. This completes the proof.
\end{proof}

\begin{lemma}\label{lem_max_test_lower_bound}
    Suppose that there exists a constant $\bar{c} \in (0,(2e)^{-4})$ such that $s_1^2 \leq \bar{c}d_1s_2$, and $s_1 \leq \frac12d_1$. 
    Furthermore, suppose that $s_2 < s_1\log\big(\frac{d_1s_2}{s_1^2}\big)$. Then for any $\alpha > 0$, there exist  constants $c_\mu > 0$ and $C_* \geq 1$ such that if 
    $$\mu^2 \leq c_\mu\frac{1}{s_2}\log\left(\frac{s_2d_1}{2es_1^2} \log\left(\frac{d_2}{s_2}\right)\right),$$
    then 
    \[\EE\left[\exp(\mu^2XY)\one\Big(X \geq \frac{s_2}{\log\big(\frac{d_1s_2}{s_1^2}\big)}\vee   \frac{\Ct s_1^2}{d_1}\Big)\right] < \alpha.\]
    If we further assume that $\frac{d_2}{s_2} \geq 2(1 - e^{-c_\mu})^{-1}$ and that $s_1 < s_2\log(d_2/s_2)$, then if $\mu^2 \leq \frac{c_\mu}{s_2}\log\big(\frac{d_1}{2es_1}\big) + \frac{1}{s_1}\log\big((1-e^{-c_\mu})\frac{d_2}{s_2}\big)$ it holds
\[\EE\left[\exp(\mu^2XY)\one\Big(X > \frac{s_2}{\log\big(\frac{d_1s_2}{s_1^2}\big)}\vee   \frac{\Ct s_1^2}{d_1}\Big)\right] < \alpha.\]
\end{lemma}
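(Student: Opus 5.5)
The plan is to reduce both claims to Lemma \ref{lemma:geom-series}, or to a direct repetition of its geometric-series argument, applied with $\cA = \{k \in \bar{\cA}_{\Ct} : k \geq s_2/\log(d_1 s_2/s_1^2)\}$. For any $k$ in this range, I would first show that the assumed bound on $\mu^2$ implies the hypothesis of Lemma \ref{lemma:geom-series} at level $k$. The natural route is the crude bound $\EE[\exp(\mu^2 kY)] \le \exp(\mu^2 k s_2)$, which holds since $Y\le s_2$. By Lemma \ref{lemma:probXk}, it then suffices to show
\[
\mu^2 s_2 \le c_\mu \log\Big(\frac{k d_1}{2e s_1^2}\Big) \quad\text{for all } k \in \cA .
\]
The right-hand side increases in $k$, so only the smallest $k$ matters, namely $k_0 = s_2/\log(d_1 s_2/s_1^2)$.

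At $k_0$ we have $\frac{k_0 d_1}{2es_1^2} = \frac{d_1 s_2}{2e s_1^2 \log(d_1s_2/s_1^2)}$. The assumption $s_1^2 \le \bar c d_1 s_2$ with $\bar c < (2e)^{-4}$ makes $x := d_1 s_2/s_1^2 \ge (2e)^4$. Then $\log(x/(2e\log x)) \gtrsim \log x$, because $2e \log x \le x^{1/2}$ in this range, up to constant adjustments. So the bound reduces to requiring $\mu^2 \lesssim \frac{c_\mu}{s_2}\log(d_1 s_2/s_1^2)$.

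The stated hypothesis has the extra factor $\log(d_2/s_2)$ inside the logarithm, so the crude bound $Y \le s_2$ loses too much; this is the main obstacle. To recover the factor, I would use the exact MGF $(1+\frac{s_2}{d_2}(e^{k\mu^2}-1))^{s_2}$. In the range $k\mu^2 \le \log(d_2/s_2)$ this is at most $\exp(\frac{s_2^2}{d_2}e^{k\mu^2}) \le \exp(s_2)$-type bounds. I would split further at $k\mu^2 \approx \log(d_2/s_2)$: below it the MGF is $\exp(O(s_2 \cdot \frac{s_2}{d_2}e^{k\mu^2}))$, and above it the crude bound applies with $k$ so large that $k\mu^2 \ge \log(d_2/s_2)$. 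Combining these, $\log\EE[e^{k\mu^2 Y}] \le c\,k\log(kd_1/(2es_1^2))$ should follow from $\mu^2 \le \frac{c_\mu}{s_2}\log(\frac{s_2 d_1}{2es_1^2}\log\frac{d_2}{s_2})$. I expect the verification to split into these two sub-ranges of $k$ relative to $\log(d_2/s_2)/\mu^2$, using $s_2 < s_1\log(d_1s_2/s_1^2)$ to guarantee $\cA$ is nonempty and below $s_1$. Once the per-$k$ bound holds, the sum is a geometric series in $(2e/\Ct)^{(1-c_\mu)k}$, which is less than $\alpha$ for large $\Ct$.

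For the second claim, with $\mu^2 \le \frac{c_\mu}{s_2}\log\frac{d_1}{2es_1} + \frac1{s_1}\log((1-e^{-c_\mu})\frac{d_2}{s_2})$, I would verify the hypothesis of Lemma \ref{lemma:geom-series} directly. Writing $\mu^2 k = a_k + b_k$ with $b_k = \frac{k}{s_1}\log((1-e^{-c_\mu})d_2/s_2)\le \log((1-e^{-c_\mu})d_2/s_2)$, we get $\frac{s_2}{d_2}(e^{k\mu^2}-1) \le (1-e^{-c_\mu})e^{a_k}$. This should yield $\mathrm{MGF}\le \exp(s_2(e^{a_k}-1)+\dots)$, which is controlled by $\exp(c_\mu k \log(kd_1/(2es_1^2)))$ since $k \ge s_2/\log(\cdot)$ and $\log\frac{d_1}{2es_1}\le \log\frac{kd_1}{2es_1^2}$. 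The assumptions $d_2/s_2 \ge 2(1-e^{-c_\mu})^{-1}$ and $s_1 < s_2\log(d_2/s_2)$ are used to keep the logarithms positive and the inequality $\log(1+(1-e^{-c})u(e^{a}-1)\dots)$ manipulations valid. The geometric-series conclusion then follows as before.
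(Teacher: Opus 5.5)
Your reduction to Lemma~\ref{lemma:geom-series} on $\cA=\{k\in\bar\cA_{\Ct}: k\ge s_2/\log(d_1s_2/s_1^2)\}$ matches the paper. However, your argument for the second claim fails at a specific step.

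Write $c=1-e^{-c_\mu}$. You replace $b_k=\frac{k}{s_1}\log(c\,d_2/s_2)$ by its value at $k=s_1$, and then use $\log\frac{d_1}{2es_1}\le\log\frac{kd_1}{2es_1^2}$. That inequality is reversed, because every $k\in\cA$ satisfies $k\le s_1$. As a result, $s_2a_k=c_\mu k\log\frac{d_1}{2es_1}$ overshoots the admissible exponent $c_\mu k\log\frac{kd_1}{2es_1^2}$ by $c_\mu k\log\frac{s_1}{k}$.

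This is not just a matter of constants. The sharpest use of your bound is $s_2\log(1+ce^{a_k})\ge s_2(a_k-\log\frac1c)$. It then requires $c_\mu k\log\frac{s_1}{k}\le s_2\log\frac1c$, i.e.\ $c_\mu s_1/e\le s_2\log\frac1c$ at $k\approx s_1/e$. This fails once $s_1/s_2\gg c_\mu^{-1}\log\frac1{c_\mu}$, which $s_1<s_2\log(d_2/s_2)$ permits when $d_2/s_2$ is large.

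Linearizing to $\exp(s_2ce^{a_k})$, as your ``$\exp(s_2(e^{a_k}-1)+\dots)$'' suggests, is worse still when $a_k$ is large. What is needed is $\log(1+ce^{a})\le a$, which holds for $c\le 1-e^{-a}$; that is the actual role of the factor $1-e^{-c_\mu}$.

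The missing idea is that the slack $(1-\frac{k}{s_1})\log(c\,d_2/s_2)$ you discarded from $b_k$ is exactly what pays for the excess. One has $(\frac1k-\frac1{s_1})\log(c\frac{d_2}{s_2})\ge\frac{c_\mu}{s_2}\log\frac{s_1}{k}$ for all $k\le s_1$ as soon as $s_1\le k^*:=c_\mu^{-1}s_2\log(c\,d_2/s_2)$. This is where $s_1<s_2\log(d_2/s_2)$ and $c\,d_2/s_2\ge 2$ are really used, for $c_\mu$ small; they are not there merely to keep logarithms positive. The paper phrases it as follows: on $\cA$, $g(k)\ge f(k)=\frac1k\log(c\frac{d_2}{s_2})+\frac{c_\mu}{s_2}\log\frac{kd_1}{2es_1^2}$; $f$ decreases on $(0,k^*]$, so $\min_{\cA}f=f(s_1)$, which is exactly the stated bound on $\mu^2$.

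For the first claim, splitting $\cA$ by the size of $k\mu^2$ is a legitimate alternative to the paper's route. The paper lower-bounds $g$ by the same $f$ and evaluates $f$ at its global minimizer $k^*$, which produces the $\log(\frac{s_2d_1}{2es_1^2}\log\frac{d_2}{s_2})$ term. But your threshold does not work as placed. At the bottom of $\cA$ the admissible exponent $c_\mu k\log\frac{kd_1}{2es_1^2}$ is only of order $c_\mu s_2$, so an ``$\exp(s_2)$-type'' bound on the range $k\mu^2\le\log(d_2/s_2)$ is too weak.

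Split instead at $k\mu^2=\log(c'd_2/s_2)$ with $c'\le c_\mu/4$.
\begin{itemize}
\item Below the threshold, $s_2\log\bigl(1+\frac{s_2}{d_2}(e^{k\mu^2}-1)\bigr)\le c's_2\le c_\mu k\log\frac{kd_1}{2es_1^2}$. This uses $k\log\frac{kd_1}{2es_1^2}\ge s_2/4$ on $\cA$, which is the same computation behind the paper's lower bound $c_\mu/4$, and it needs no condition on $\mu$.
\item Above the threshold, with $d_2/s_2\ge (c')^{-2}$, use $Y\le s_2$ and $k\ge\log(c'd_2/s_2)/\mu^2$. Under the hypothesis on $\mu^2$ these give $\log\frac{kd_1}{2es_1^2}\ge\frac12\log Z$, where $Z=\frac{s_2d_1}{2es_1^2}\log\frac{d_2}{s_2}$. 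This dominates $\mu^2s_2/c_\mu$ once the constant in the hypothesis is a small multiple of the geometric-series constant.
\item When $d_2/s_2<(c')^{-2}$, the factor $\log(d_2/s_2)$ is bounded, and the crude bound at the smallest $k$ suffices.
\end{itemize}
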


\begin{proof}[Proof of Lemma~\ref{lem_max_test_lower_bound}]
     Note that the set $\cA = \{1 \vee \big\lceil \frac{s_2}{\log\big((d_1s_2)/s_1^2\big)}\big\rceil \vee \big\lceil \Ct \frac{s^2_1}{d_1}\big\rceil, ..., s_1\}$ is a subset of $\bar{\cA}_{\Ct}$, and to prove the claim it suffices to show
    \[\mu^2 \leq \min_{k \in \cA}\frac{1}{k}\log\Bigg(1 + \frac{d_2}{s_2}\bigg(\exp\Big[\frac{c_\mu k}{s_2}\log\big(\frac{kd_1}{2es_1^2}\big)\Big] - 1\bigg)\Bigg)\]
    and invoke Lemma \ref{lemma:geom-series}. To this end, for $k \in \cA$ we define 
    \begin{equation}\label{eq:g}
    g(k) = \frac{1}{k}\log\Bigg(1 + \frac{d_2}{s_2}\bigg(\exp\Big[\frac{c_\mu k}{s_2}\log\big(\frac{kd_1}{2es_1^2}\big)\Big] - 1\bigg)\Bigg).
    \end{equation}
    Notice that for $k \in \cA$, we have 
    \begin{align*}
        \frac{c_\mu k}{s_2}\log\big(\frac{kd_1}{2es_1^2}\big) &\geq \frac{c_\mu}{\log\big(\frac{s_2d_1}{s_1^2}\big)}\log\Big(\frac{s_2d_1}{2es_1^2\log\big(\frac{s_2d_1}{s_1^2}\big)}\Big) \\
        &= c_\mu\Big(1 - \frac{\log\big(2e\log(\frac{d_1s_2}{s_1^2})\big)}{\log\big(\frac{d_1s_2}{s_1^2}\big)}\Big).
    \end{align*}
    Recall that there exists a constant $\bar{c} \in (0, (2e)^{-4})$ such that $s_1^2 \leq \bar{c}d_1s_2$. In particular, this implies that $d_1s_2/s_1^2 \geq 16e^4$. Using this, as well as the bound $\log (x) / x \leq 1/2$ for $x > 1$, we have 
    \begin{align*}
        \frac{\log\big(2e\log(\frac{d_1s_2}{s_1^2})\big)}{\log\big(\frac{d_1s_2}{s_1^2}\big)} &= \frac{\log(2e)}{\log\big(\frac{d_1s_2}{s_1^2}\big)} + \frac{\log\big(\log(\frac{d_1s_2}{s_1^2})\big)}{\log\big(\frac{d_1s_2}{s_1^2}\big)} \\
        &\leq \frac{\log(2e)}{4\log(2e)} + \frac12 \\
        &= \frac34.
    \end{align*}
    Combining this with our calculation above, we have that, for any $k \in \cA$,
    \begin{align*}
        \frac{c_\mu k}{s_2}\log\big(\frac{kd_1}{2es_1^2}\big) &\geq \frac{c_\mu}{4} \\ 
        &> 0.
    \end{align*}
    Thus, $\exp\big(\frac{c_\mu k}{s_2}\log\big(\frac{kd_1}{2es_1^2}\big)\big)$ is bounded away from 1, and hence  for $c = 1 - e^{-c_\mu}$ we have
    \[f(k) =: \frac{1}{k}\log\Bigg(c\frac{d_2}{s_2}\exp\Big[\frac{c_\mu k}{s_2}\log\big(\frac{kd_1}{2es_1^2}\big)\Big]\Bigg) \leq g(k)\]
    for each $k \in \cA$. So the prove the claim, it suffices to show that $\mu^2$ is at most the minimum value of $f(k)$ over $\cA$. We can write $f(k)$ as 
    \[f(k) = \frac{1}{k}\log\Big(c\frac{d_2}{s_2}\Big) + \frac{c_\mu}{s_2}\log\Big(\frac{kd_1}{2es_1^2}\Big).\]
    By direct calculation, we have
    \[\frac{\D f}{\D k}(k) = -\frac{\log\big(c\frac{d_2}{s_2}\big)}{k^2} + \frac{c_\mu}{s_2k}\]
    from which we deduce that $f$ is minimized at $k^* = c_\mu^{-1}s_2\log\big(c\frac{d_2}{s_2}\big)$ and decreasing for $k < k^*$. Then if $\mu^2 < c_\mu\frac{1}{s_2}\log\big(\frac{s_2d_1\log(d_2/s_2)}{s_1^2}\big)$, it follows that 
    \begin{align*}
        \mu^2 &< c_\mu\frac{1}{s_2}\log\big(\frac{s_2d_1\log(d_2/s_2)}{2es_1^2}\big) \\
        &\leq c_\mu\frac{1}{s_2}\Big(\log\big(\frac{s_2d_1\log(d_2/s_2)}{2e c_\mu s_1^2}\big) + 1\Big) \\
        &= f(k^*) \\
        &\leq \min_{k \in \cA}f(k) \\
        &\leq \min_{k\in \cA}g(k)
    \end{align*}
    and we may apply Lemma \ref{lemma:geom-series} to complete the proof. Now if $c\frac{d_2}{s_2} \geq 2$ and $s_1 < s_2\log(d_2/s_2)$, then, for some sufficiently small $c_\mu$, we have
    \begin{align*}
        s_1 &< s_2\log(d_2/s_2) \\
        &< \frac{s_2}{c_\mu}\frac{\log (2)}{\log (2) + \log \big(\frac{2e}{c}\big)}\log\big(\frac{d_2}{s_2}\big) \\
        &\leq \frac{s_2\log\big(\frac{cd_2}{s_2}\big)}{c_\mu\log\big(\frac{2ed_2}{s_2}\big)}\log\big(\frac{d_2}{s_2}\big) \\
        &\leq \frac{s_2}{c_\mu}\log\big(c\frac{d_2}{s_2}\big) \\
        &= k^*
    \end{align*}
    where the third inequality uses the fact that $x \mapsto \frac{x}{x + c}$ is increasing in $x$. Therefore since $s_1 < k^*$, the minimum value of $f(k)$ over the set $\cA$ is attained at $k = s_1$. Thus for $\mu^2 < \frac{c_\mu}{s_2}\log\big(\frac{d_1}{2es_1}\big) + \frac{1}{s_1}\log\big(c\frac{d_2}{s_2}\big)$, we have 
    \begin{align*}
        \mu^2 &< \frac{c_\mu}{s_2}\log\big(\frac{d_1}{2es_1}\big) + \frac{1}{s_1}\log\big(c\frac{d_2}{s_2}\big) \\ 
        &= f(s_1) \\
        &= \min_{k \in \cA}f(k) \\
        &\leq \min_{k\in \cA}g(k).
    \end{align*}
    Applying Lemma \ref{lemma:geom-series} completes the proof.    
\end{proof}

%
%
%
\begin{lemma}\label{lem_lb_s1d1big_truncchi2}
    Suppose that $s_1^2 \leq \bar{c}d_1s_2$ for some $\bar{c} \in (0,(2e)^{-4})$ and $s_1 \leq \frac12d_1$. Furthermore, suppose that $\frac{s^2_2}{d_2} \leq 2e\frac{s_1^2}{d_1}$. For any small constant $c > 0$, we define the quantity
    \begin{align*}
        M(c) = \begin{cases}
            \frac{1}{s_1}\log\Big(1 + c \frac{d_2s_1}{s_2^2}\log(\frac{d_1}{2es_1})\Big) & \text{ if } s_1 \leq \frac{s_2}{\log\big(\frac{d_1s_2}{s_1^2}\big)}\\
            \frac{1}{s_2}\log\big(\frac{d_1s_2}{s_1^2}\big)\log\Big(1 + c \frac{d_2}{s_2\log\big(\frac{d_1s_2}{s_1^2}\big)}\log\big(\frac{d_1s_2}{2es_1^2\log(\frac{d_1s_2}{s_1^2})}\big)\Big) & \text{ otherwise.}
        \end{cases}
    \end{align*}
    Then for any $\alpha > 0$, there exist constants $c_\mu > 0$ and $C_* \geq 1$ such that if $\mu \leq M(c_\mu)$ and $1 \vee \Ct \frac{s_1^2}{d_1}\leq s_1 \wedge\frac{s_2}{\log\big(\frac{d_1s_2}{s_1^2}\big)}$, then 
    \[\EE\Big[\exp(\mu^2XY)\one\Big( 1 \vee \Ct \frac{s_1^2}{d_1} \leq X \leq s_1\wedge\frac{s_2}{\log\big(\frac{d_1s_2}{s_1^2}\big)}\Big)\Big] < \alpha.\]
    Moreover, it holds that    
    $$2M(c_\mu) \geq \frac{1}{s_1}\log\Big(1 + c_\mu \frac{d_2s_1}{s_2^2}\log(\frac{d_1}{2es_1})\Big) + \frac{\log\big(\frac{d_1s_2}{s_1^2}\big)}{s_2}\log\Big(1 + c_\mu \frac{d_2}{s_2\log\big(\frac{d_1s_2}{s_1^2}\big)}\log\big(\frac{d_1s_2}{2es_1^2\log(\frac{d_1s_2}{s_1^2})}\big)\Big).$$
\end{lemma}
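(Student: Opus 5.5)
The plan is to reduce the claim to Lemma~\ref{lemma:geom-series} with the index set
\[
\cA=\Big\{\,1\vee\Big\lceil \Ct\tfrac{s_1^2}{d_1}\Big\rceil,\dots,\Big\lfloor s_1\wedge \tfrac{s_2}{L}\Big\rfloor\Big\}\subset\bar\cA_{\Ct},
\qquad L:=\log\Big(\tfrac{d_1s_2}{s_1^2}\Big),
\]
exactly as in the proof of Lemma~\ref{lem_max_test_lower_bound}. It then suffices to show $\mu^2\le \min_{k\in\cA} g(k)$, with $g$ as in \eqref{eq:g}. I take $\Ct$ from Lemma~\ref{lemma:geom-series}, enlarged if necessary so that $\Ct\ge 2e^2$. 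Then every $k\in\cA$ satisfies $\ell(k):=\log\big(\frac{kd_1}{2es_1^2}\big)\ge 1$.

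The first step is to linearize the inner exponential with $e^x-1\ge x$. This gives
\[
g(k)\ \ge\ F(k):=\frac1k\log\Big(1+c_\mu\frac{d_2}{s_2^2}\,k\,\ell(k)\Big).
\]
Next, let $K=s_1\wedge (s_2/L)$ denote the largest admissible index. Evaluating $F$ at $K$ reproduces the two branches of $M(c_\mu)$. At $K=s_1$ we have $\ell(s_1)=\log(d_1/(2es_1))$, which is the first case. At $K=s_2/L$ we have $K\ell(K)=\frac{s_2}{L}\log\big(\frac{d_1s_2}{2es_1^2L}\big)$, which is the second case. So the core claim is
\[
\min_{k\in\cA}F(k)\ \gtrsim\ F(K),
\]
with constants absorbed into $c_\mu$. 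Given this, $\mu^2\le M(c_\mu)$ (read as a bound on $\mu^2$, after shrinking $c_\mu$) implies $\mu^2\le\min_{\cA}g$. Lemma~\ref{lemma:geom-series} then gives the bound $<\alpha$.

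For the comparison I would write $F(k)=\frac1k\log(1+y\,k\ell(k))$ with $y=c_\mu d_2/s_2^2$. Since $t\mapsto \log(1+t)$ is concave with value $0$ at $0$, we have $\log(1+\lambda z)\ge\lambda\log(1+z)$ for $\lambda\in[0,1]$. Apply this with $z=yK\ell(K)$ and $\lambda=k\ell(k)/(K\ell(K))\le 1$. This yields
\[
F(k)\ \ge\ \frac{\ell(k)}{\ell(K)}\,F(K).
\]
The difficulty is that $\ell(k)/\ell(K)$ is not bounded below uniformly. When $k$ is near $\Ct s_1^2/d_1$, $\ell(k)$ is $O(1)$, while $\ell(K)$ can be of order $L$. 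I expect this to be the main obstacle.

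To overcome it, I would not linearize for small $k$. Instead I would use the complementary bound from Lemma~\ref{lem_max_test_lower_bound}:
\[
g(k)\ \ge\ \frac1k\log\Big(\tfrac{d_2}{s_2}\Big)\ \ge\ \frac{1}{K}\log\Big(\tfrac{d_2}{s_2}\Big),
\]
valid when the exponent $\frac{c_\mu k}{s_2}\ell(k)$ is bounded away from $0$. When the exponent is small, I would instead use the elementary bound $\log(1+ab)\ge\ldots$ together with the hypothesis $\frac{s_2^2}{d_2}\le 2e\frac{s_1^2}{d_1}$. That hypothesis gives $\frac{d_2}{s_2^2}k\ge \frac{d_1k}{2es_1^2}$, so $y\,k\ell(k)\ge c_\mu e^{\ell(k)}\ell(k)$. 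Hence $\log(1+yk\ell(k))$ is itself at least of order $\ell(k)$, which recovers the lost factor $\ell(K)/\ell(k)$ up to constants, because $F(K)\le \frac{1}{K}\log(1+yK\ell(K))$ is at most of order $\ell(K)/K$ plus $\log(y)/K$. Concretely, I plan to split $\cA$ at the point where $y\,k\ell(k)\asymp 1$ and apply the concavity comparison above that point, and the exponential lower bound below it. If the constants do not close cleanly, I would fall back to a dyadic decomposition of $\cA$ and sum the geometric tails directly, as in the proof of Lemma~\ref{lemma:geom-series}.

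The final claim, $2M(c_\mu)\ge$ the sum of the two expressions, follows once each expression is shown to be $\le M(c_\mu)$. One of them equals $M$ by definition. The other is $F$ evaluated at the non-binding endpoint ($s_1$ or $s_2/L$) of the two candidates, extended beyond $\cA$ when needed. So I would show it is at most $F(K)$ by the same monotone comparison: in the first case, $s_2/L\ge s_1=K$, and $F$ is essentially decreasing beyond $K$ by concavity; in the second case, symmetrically.
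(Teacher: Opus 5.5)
Your reduction to Lemma~\ref{lemma:geom-series}, the linearization $g\ge F$, and your identification of the two branches of $M(c_\mu)$ as $F(K)$ with $K=s_1\wedge (s_2/L)$ all agree with the paper.

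\textbf{The gap: comparing $\min_{\cA}F$ with $F(K)$.} Your concavity bound $F(k)\ge \frac{\ell(k)}{\ell(K)}F(K)$ loses an unbounded factor, and the repair you sketch does not close.
\begin{itemize}
\item Splitting $\cA$ where $y\,k\ell(k)\asymp 1$ is vacuous. As you yourself observe, the hypothesis $\frac{s_2^2}{d_2}\le 2e\frac{s_1^2}{d_1}$ forces $y\,k\ell(k)\ge \frac{c_\mu\Ct}{2e}\log\frac{\Ct}{2e}$ on all of $\cA$.
\item The bound $g(k)\gtrsim\frac1k\log(d_2/s_2)$ is unavailable here. For $k\le s_2/L$ the exponent satisfies $\frac{c_\mu k}{s_2}\ell(k)\le c_\mu\ell(K)/L\le c_\mu$, and it is arbitrarily small near the lower end of $\cA$.
\item Your ingredients could give $\min_{\cA}F\ge F(K)/C$ for some $C>1$, e.g.\ via $1+yK\ell(K)\le (1+yk\ell(k))\frac{K\ell(K)}{k\ell(k)}$. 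But such a constant cannot be absorbed by shrinking $c_\mu$, because $c_\mu$ sits inside the logarithm and $(1+c_\mu z)^C\le 1+cz$ fails for large $z$. You would only obtain the conclusion under $\mu^2\le M(c_\mu)/C$.
\end{itemize}

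\textbf{The second claim is worse off.} The inequality $2M(c_\mu)\ge F(s_1)+F(s_2/L)$, with the same $c_\mu$ on both sides, requires the exact inequality $F(k')\le F(K)$ at the non-binding endpoint $k'\ge K$. Concavity only gives $F(k')\le \frac{\ell(k')}{\ell(K)}F(K)$, and this ratio is unbounded (e.g.\ when $s_2/(Ls_1)\to\infty$ with $d_1/s_1$ fixed). So ``essentially decreasing beyond $K$ by concavity'' is not a valid step.

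\textbf{The missing idea: exact monotonicity of $F$.} The same hypothesis you already used gives it. With $y=c_\mu d_2/s_2^2$ and $h(k)=\frac1k\log\big(y\,k\ell(k)\big)$, one computes
\[
h'(k)=\frac{1}{k^2}\Big(1+\frac{1}{\ell(k)}-\log\big(y\,k\ell(k)\big)\Big).
\]
Take any real $k\ge 1\vee\Ct\frac{s_1^2}{d_1}$, with $\Ct$ large in terms of $c_\mu$. Then $\ell(k)\ge\log\frac{\Ct}{2e}>1$ and $y\,k\ell(k)\ge\frac{c_\mu\Ct}{2e}\log\frac{\Ct}{2e}\ge e^2$, so $h'<0$. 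Lemma~\ref{lemma:log-plus-one} transfers this to $F(k)=\frac1k\log\big(1+y\,k\ell(k)\big)$.

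Hence $F$ is decreasing on the whole half-line $[1\vee\Ct s_1^2/d_1,\infty)$, not only on $\cA$. This gives $\min_{\cA}F=F(K)=M(c_\mu)$ with no loss. Since both $s_1$ and $s_2/L$ lie on that half-line, $M(c_\mu)=\max\{F(s_1),F(s_2/L)\}\ge\frac12\big(F(s_1)+F(s_2/L)\big)$, which is exactly the second claim.

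This is the paper's argument. Note that $\Ct$ is chosen after $c_\mu$, which is compatible with Lemma~\ref{lemma:geom-series}.
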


\begin{proof}[Proof of Lemma~\ref{lem_lb_s1d1big_truncchi2}]
    Define $\cA = \{ 1 \vee \lceil\Ct \frac{s_1^2}{d_1} \rceil, ...s_1 \wedge \big\lfloor\frac{s_2}{\log\big(\frac{d_1s_2}{s_1^2}\big)} \big\rfloor \}$. Clearly $\cA \subset \bar{\cA}_{\Ct}$, and hence it suffices to show that $\mu^2 \leq \min_{k \in \cA}g(k)$ where $g(k)$ is defined as in (\ref{eq:g}). Using the inequality $e^x \geq 1 + x $, we have 
    \begin{align*}
        g(k) &= \frac{1}{k}\log\Bigg(1 + \frac{d_2}{s_2}\bigg(\exp\Big[\frac{c_\mu k}{s_2}\log\big(\frac{kd_1}{2es_1^2}\big)\Big] - 1\bigg)\Bigg) \\
        &\geq \frac{1}{k}\log\Bigg(1 + c_\mu\frac{d_2}{s^2_2}k\log\Big(\frac{kd_1}{2es_1^2}\Big)\Bigg) \\
        &=: f^{(1)}(k).
    \end{align*}
    
%
%
    We define $\widetilde{\mathcal{A}} = \{ 1 \vee \lceil\Ct \frac{s_1^2}{d_1} \rceil, \dots \}$ and we will now show that $f^{(1)}(k)$ is decreasing in $k$ over the set $\widetilde{\mathcal{A}}$. To do so, it suffices to consider the function $f(k) = \frac{1}{k}\log\Big(c_\mu\frac{d_2}{s^2_2}k\log\big(\frac{kd_1}{2es_1^2}\big)\Big)$ and apply Lemma \ref{lemma:log-plus-one}. Direct calculation reveals
    \[\frac{\D f}{\D k}(k) = -\frac{\log\big(c_\mu\frac{d_2}{s_2^2}\big)}{k^2} - \frac{\log(k)}{k^2} + \frac{1}{k^2} - \frac{\log \log \big(\frac{kd_1}{2es_1^2}\big)}{k^2} + \frac{1}{k^2\log\big(\frac{kd_1}{2es_1^2}\big)}.\]
    The right-hand side is less than zero if   
    \[1 + \frac{1}{\log\big(\frac{kd_1}{2es_1^2}\big)} < \log\Big(c_\mu\frac{d_2k}{s_2^2} \log \big(\frac{kd_1}{2es_1^2}\big)\Big).\]
    
    For $k \in \widetilde{\mathcal{A}}$, it holds 
    \begin{align*}
        \log\Big(c_\mu\frac{d_2k}{s_2^2}\log \big(\frac{kd_1}{2es_1^2}\big)\Big) 
        &\geq \log\Big(c_\mu \Ct \frac{d_2s_1^2}{s_2^2d_1}\log \big(\frac{\Ct}{2e}\big)\Big) \\
        &\geq \log\Big(\frac{c_\mu C_*}{2e} \log \big(\frac{\Ct}{2e}\big)\Big)\\
        &\geq 2
    \end{align*}
    where the second inequality uses the assumption $\frac{s_2^2}{d_2} \leq 2e\frac{s_1^2}{d_1}$ and the third holds as long as $\Ct$ is taken sufficiently large. Furthermore, for $k \in \widetilde{\mathcal{A}}$ we have 

    \begin{align*}
        1 + \frac{1}{\log \big(\frac{kd_1}{2es_1^2}\big)} &\leq 1 + \frac{1}{\log\big(\frac{\Ct}{2e}\big)} \\
        &< 2.
    \end{align*}

    Therefore, $f(k)$ is decreasing over $\widetilde{\mathcal{A}}$, and by Lemma \ref{lemma:log-plus-one}, $f^{(1)}(k)$ is decreasing over $\widetilde{\mathcal{A}}$ as well. 
    We also observe that 
    \begin{align*}
        M = \begin{cases} f^{(1)}(s_1) & \text{ if } s_1 \leq \frac{s_2}{\log(\frac{d_1s_2}{s_1^2})}\\
        f^{(1)}(\frac{s_2}{\log(\frac{d_1s_2}{s_1^2})}) & \text{ otherwise,}
        \end{cases}
    \end{align*}
    that is, $M= f^{(1)}\Big(s_1 \land \frac{s_2}{\log(\frac{d_1s_2}{s_1^2})}\Big)$. 
    Moreover, since $f^{(1)}$ is decreasing over $\widetilde{\mathcal{A}}$, we also have
    \begin{align*}
        M &= \max\bigg(f^{(1)}(s_1), ~f^{(1)}\bigg(\frac{s_2}{\log\big(\frac{d_1s_2}{s_1^2}\big)}\bigg)\bigg) \\
        &\geq \frac{1}{2} \left[f^{(1)}(s_1) + ~f^{(1)}\bigg(\frac{s_2}{\log\big(\frac{d_1s_2}{s_1^2}\big)}\bigg)\right]\\
        & = \frac{1}{2} \left[\frac{1}{s_1}\log\Big(1 + c_\mu \frac{d_2s_1}{s_2^2}\log(\frac{d_1}{2es_1})\Big) + \frac{\log\big(\frac{d_1s_2}{s_1^2}\big)}{s_2}\log\Big(1 + c_\mu \frac{d_2}{s_2\log\big(\frac{d_1s_2}{s_1^2}\big)}\log\big(\frac{d_1s_2}{2es_1^2\log(\frac{d_1s_2}{s_1^2})}\big)\Big)\right]
    \end{align*}
    as claimed. 
    Using our assumed upper bound on $\mu^2$, we have 
    \begin{align*}
        \mu^2 &\leq M =  f^{(1)}\bigg(s_1 \land \frac{s_2}{\log(\frac{d_1s_2}{s_1^2})}\bigg) \\
        &\leq \min_{k \in \cA}f^{(1)}(k) \\
        &\leq \min_{k\in \cA}g(k).
    \end{align*}
    We conclude the proof by invoking Lemma \ref{lemma:geom-series}. 
\end{proof}

\begin{lemma}\label{lem_lb_s2d2big_maxlin}
    Suppose that $s_1^2 \leq \bar{c}d_1s_2$ for some $\bar{c} \in (0, (2e)^{-4})$ and $s_1 < \frac12 d_1$. 
    Furthermore, suppose that $\frac{s^2_2}{d_2} \geq 2e\frac{s_1^2}{d_1}$ . Then for any $\alpha > 0$, there exist constants $c'_{\mu} > 0$ and $\Ct > 0$ such that if $1 \vee \lceil \Ct \frac{s_1^2}{d_1}\rceil \leq \big\lfloor \frac{\Ct s_2^2/d_2}{\log\big(\frac{d_1s_2^2}{s_1^2d_2}\big)}\big\rfloor\wedge s_1$ and
    \[\mu^2 \leq c'_\mu \frac{d_2}{s_2^2}\log\big(\frac{\Ct}{2e}\vee \frac{d_1}{2es^2_1}\big),\]
    then
    \[\EE\left[\exp\big(\mu^2XY\big)\one\bigg(1 \vee \Ct \frac{s_1^2}{d_1} \leq X \leq \frac{\Ct s_2^2/d_2}{\log\big(\frac{d_1s_2^2}{s_1^2d_2}\big)}\wedge s_1\bigg)\right] < \alpha.\]
\end{lemma}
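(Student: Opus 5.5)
We follow the same template as the proofs of Lemmas \ref{lem_max_test_lower_bound} and \ref{lem_lb_s1d1big_truncchi2}. Set
\[\cA = \Big\{1 \vee \big\lceil \Ct \tfrac{s_1^2}{d_1}\big\rceil, \dots, \Big\lfloor \tfrac{\Ct s_2^2/d_2}{\log(d_1s_2^2/(s_1^2d_2))}\Big\rfloor \wedge s_1\Big\},\]
which is nonempty by hypothesis and contained in $\bar{\cA}_{\Ct}$. By Lemma \ref{lemma:geom-series} (applied with the same $\alpha$, which yields $\Ct$ and a constant $c_\mu$), it suffices to show $\mu^2 \leq \min_{k\in\cA} g(k)$, where $g$ is defined in \eqref{eq:g}. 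Note that $\log(d_1s_2^2/(s_1^2d_2)) \geq \log(2e) > 1$ by the assumption $s_2^2/d_2 \geq 2e s_1^2/d_1$, so the upper endpoint is well defined.

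\textbf{Step 1: lower bound on $g$.} Since $e^x-1\geq x$, we have
\[g(k) \geq \frac1k\log\Big(1 + c_\mu\frac{d_2}{s_2^2}k\log\big(\tfrac{kd_1}{2es_1^2}\big)\Big).\]
The key difference from Lemma \ref{lem_lb_s1d1big_truncchi2} is that here $d_2/s_2^2$ is small on $\cA$. Write $x_k = c_\mu\frac{d_2}{s_2^2}k\log(\frac{kd_1}{2es_1^2})$. We aim to show $x_k \lesssim 1$ uniformly on $\cA$, so that $\log(1+x)\geq x\log 2$ for $x\leq 1$ linearizes the bound. Using $k \leq \Ct (s_2^2/d_2)/L$ with $L=\log(d_1s_2^2/(s_1^2d_2))$, we get
\[\frac{kd_1}{2es_1^2} \leq \frac{\Ct}{2e}\cdot\frac{d_1s_2^2}{s_1^2d_2L},\]
so $\log(\frac{kd_1}{2es_1^2}) \leq L + \log\Ct$. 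Hence $x_k \leq c_\mu \Ct (L+\log\Ct)/L \leq c_\mu\Ct(1+\log\Ct)$, which is at most $1$ once $c_\mu$ is small relative to $\Ct$. This is the main point to check carefully: $\Ct$ must be fixed first, from Lemma \ref{lemma:geom-series}, and only then $c_\mu$ shrunk. Since the geometric-series bound in that lemma only improves as $c_\mu$ decreases, this ordering is legitimate.

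\textbf{Step 2: minimizing over $k$.} On $\cA$ we then have
\[g(k)\geq \log 2\cdot c_\mu \frac{d_2}{s_2^2}\log\big(\tfrac{kd_1}{2es_1^2}\big),\]
which is increasing in $k$. Its minimum is therefore attained at $k_0 = 1\vee\lceil\Ct s_1^2/d_1\rceil$, where $\log(\frac{k_0d_1}{2es_1^2}) \geq \log(\frac{\Ct}{2e}\vee\frac{d_1}{2es_1^2})$, using $k_0\geq 1$ and $k_0\geq \Ct s_1^2/d_1$ respectively. Taking $c'_\mu = c_\mu\log 2$, the hypothesis
\[\mu^2\leq c'_\mu\frac{d_2}{s_2^2}\log\big(\tfrac{\Ct}{2e}\vee\tfrac{d_1}{2es_1^2}\big)\]
gives $\mu^2\leq\min_{\cA}g$, and Lemma \ref{lemma:geom-series} concludes.

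The expected obstacle is the uniform bound $x_k\lesssim1$ in Step 1, specifically controlling $\log(kd_1/s_1^2)$ by $L$ at the top of $\cA$. If the additive $\log\Ct$ term caused trouble, we would absorb it using $L\geq\log(2e)$, which holds by assumption.
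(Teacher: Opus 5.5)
Your proposal is correct and follows essentially the same route as the paper. Both arguments restrict to $\cA\subset\bar{\cA}_{\Ct}$, lower bound $g$ via $e^x-1\ge x$, show the argument of the logarithm is bounded by a constant on $\cA$ so the logarithm can be linearized, then use monotonicity in $k$ (minimum at $1\vee\lceil \Ct s_1^2/d_1\rceil$) and invoke Lemma \ref{lemma:geom-series}. The differences are cosmetic. You fix $\Ct$ first and shrink $c_\mu$ to force $x_k\le 1$, use $\log(1+x)\ge x\log 2$, and bound via $\log(kd_1/(2es_1^2))\le L+\log\Ct$. The paper instead keeps $x_k\le 2c_\mu\Ct\log\Ct$ and absorbs this into a constant $c$. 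Your handling of the constant ordering is valid, since decreasing $c_\mu$ only improves the geometric series bound.
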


\begin{proof}[Proof of Lemma~\ref{lem_lb_s2d2big_maxlin}]
    Let $g(k)$ be defined as in (\ref{eq:g}). For $\cA = \{1 \vee \lceil \Ct \frac{s_1^2}{d_1}\rceil, \dots ,  \Big\lfloor \frac{s_2^2/d_2}{\log\big(\frac{d_1s_2^2}{s_1^2d_2}\big)}\Big\rfloor\wedge s_1\}$, it suffices to show that $\mu^2 \leq \min_{k\in\cA}g(k)$. Using $e^x \geq x + 1,\, \forall x \in \R$, we have 
    \begin{align*}
        g(k) &= \frac{1}{k}\log\Bigg(1 + \frac{d_2}{s_2}\bigg(\exp\Big[\frac{c_\mu k}{s_2}\log\big(\frac{kd_1}{2es_1^2}\big)\Big] - 1\bigg)\Bigg) \\
        &\geq \frac{1}{k}\log\Bigg(1 + c_\mu\frac{d_2}{s^2_2}k\log\Big(\frac{kd_1}{2es_1^2}\Big)\Bigg).
    \end{align*}
    For $k \in \cA$,  we can control the term within the logarithm as
    \begin{align*}
        c_\mu\frac{d_2}{s^2_2}k\log\Big(\frac{kd_1}{2es_1^2}\Big) 
        &\leq c_\mu \Ct \frac{\log\Big(\frac{\Ct d_1s_2^2}{2es_1^2d_2\log\big(d_1s_2^2/(s_1^2d_2)\big)} \Big)}{\log\big(\frac{d_1s_2^2}{s_1^2d_2}\big)} \\
        &\leq c_\mu\Ct \big(\log (\Ct) + 1 \big) \\
        &< 2c_\mu\Ct \log (\Ct)
    \end{align*}
    where the second inequality follows from $\log(x/\log(x))/\log(x) \leq 1$ for $x \geq 2e$, and the final inequality holds for $\Ct > e$. Therefore, there exists a constant $c \in (0,1)$ which depends on $c_\mu$ and $\Ct$ such that 
    \begin{align*}
        g(k) &\geq \frac{1}{k}\log\Bigg(1 + c_\mu\frac{d_2}{s^2_2}k\log\Big(\frac{kd_1}{2es_1^2}\Big)\Bigg) \\
        &\geq cc_\mu \Ct \frac1k\frac{d_2}{s_2^2}k\log\big(\frac{kd_1}{2es_1^2}\big) \\
        &= c'_\mu\frac{d_2}{s_2^2}\log\big(\frac{kd_1}{2es_1^2}\big) \\
        &=: f(k)
    \end{align*}
    for $c'_\mu = c c_\mu \Ct$. The function $f(k)$ is clearly increasing in $k$, and hence is minimized at the smallest value of $k$ in $\cA$. 
    Now for $\mu^2 \leq c'_\mu \frac{d_2}{s_2^2}\log\big(\frac{\Ct}{2e}\vee \frac{d_1}{2es^2_1}\big)$, we have
    \begin{align*}
        \mu^2 &\leq c'_\mu \frac{d_2}{s_2^2}\log\big(\frac{\Ct}{2e}\vee \frac{d_1}{2es^2_1}\big) \\
        &\leq \min_{k \in \cA}f(k) \\
        &\leq \min_{k \in \cA}g(k).
    \end{align*}
    We apply Lemma \ref{lemma:geom-series} to complete the proof.
\end{proof}
\begin{lemma}\label{lem_lb_s2d2big_truncchi2}
    Suppose that $s_1^2 \leq \bar{c}d_1s_2$ for some $\bar{c} \in (0,(2e)^{-4})$ and $s_1 \leq \frac{1}{2e}d_1$. Furthermore, suppose that $\frac{s^2_2}{d_2} \geq 2e\frac{s_1^2}{d_1}$. Then for any $\alpha > 0$, there exist constants $c_\mu > 0$ and $C_* \geq 1$ such that if $1 \vee \Ct \frac{s_1^2}{d_1} \vee  \frac{\Ct s_2^2/d_2}{\log\big(\frac{d_1s_2^2}{s_1^2d_2}\big)} \leq s_1\wedge\frac{s_2}{\log\big(\frac{d_1s_2}{s_1^2}\big)}$ and
    $$\mu^2 \leq \frac12\bigg(\frac{1}{s_1}\log\Big(1 + c_\mu \frac{d_2s_1}{s_2^2}\log(\frac{d_1}{2es_1})\Big) + \frac{\log\big(\frac{d_1s_2}{s_1^2}\big)}{s_2}\log\Big(1 + c_\mu \frac{d_2}{s_2\log\big(\frac{d_1s_2}{s_1^2}\big)}\log\big(\frac{d_1s_2}{2es_1^2\log(\frac{d_1s_2}{s_1^2})}\big)\Big)\bigg),$$
    then 
    \[\EE\bigg[\exp(\mu^2XY)\one\bigg( 1 \vee \Ct \frac{s_1^2}{d_1} \vee  \frac{\Ct s_2^2/d_2}{\log\big(\frac{d_1s_2^2}{s_1^2d_2}\big)} \leq X \leq s_1\wedge \frac{s_2}{\log\big(\frac{d_1s_2}{s_1^2}\big)}\bigg)\bigg] < \alpha.\]
\end{lemma}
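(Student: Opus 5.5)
Like the preceding lemmas, this proof reduces to Lemma~\ref{lemma:geom-series}. Set
\[\cA = \Big\{1 \vee \Big\lceil \Ct \tfrac{s_1^2}{d_1}\Big\rceil \vee \Big\lceil \tfrac{\Ct s_2^2/d_2}{\log(d_1s_2^2/(s_1^2d_2))}\Big\rceil, \dots, s_1 \wedge \Big\lfloor \tfrac{s_2}{\log(d_1s_2/s_1^2)}\Big\rfloor\Big\},\]
which is a subset of $\bar{\cA}_{\Ct}$. It therefore suffices to show $\mu^2 \le \min_{k\in\cA} g(k)$, with $g$ as in \eqref{eq:g}. As in the proof of Lemma~\ref{lem_lb_s1d1big_truncchi2}, I would lower bound $g$ using $e^x\ge 1+x$:
\[g(k)\ge f^{(1)}(k) := \frac1k\log\Big(1+c_\mu\frac{d_2}{s_2^2}k\log\big(\tfrac{kd_1}{2es_1^2}\big)\Big).\]
The task is then to show that $f^{(1)}$ is decreasing on $\cA$. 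Its minimum over $\cA$ is then attained at the right endpoint $k_{\max}=s_1\wedge s_2/\log(d_1s_2/s_1^2)$. Finally, $f^{(1)}(k_{\max})=\max\big(f^{(1)}(s_1),f^{(1)}(s_2/\log(d_1s_2/s_1^2))\big)$ is at least half the sum of these two values, and that half-sum is exactly the assumed upper bound on $\mu^2$.

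The main obstacle is monotonicity. In Lemma~\ref{lem_lb_s1d1big_truncchi2}, the assumption $s_2^2/d_2\le 2e s_1^2/d_1$ made the argument of the logarithm large already at $k=\Ct s_1^2/d_1$. Here the assumption is reversed, and this is why the extra lower cutoff $k\ge \Ct (s_2^2/d_2)/\log(d_1s_2^2/(s_1^2d_2))$ is included in $\cA$. Writing $L=\log(d_1s_2^2/(s_1^2d_2))\ge\log(2e)$ and $k\ge \Ct (s_2^2/d_2)/L$, I would bound
\[\frac{kd_1}{2es_1^2}\ge \frac{\Ct}{2e}\cdot\frac{d_1s_2^2}{s_1^2d_2 L},\]
and, using $\log(x/\log x)\gtrsim \log x$ for $x\ge 2e$, get $\log\big(\tfrac{kd_1}{2es_1^2}\big)\gtrsim L+\log(\Ct/2e)$. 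The argument of the logarithm is then at least of order $c_\mu\Ct\big(L+\log(\Ct/2e)\big)/L\gtrsim c_\mu\Ct\log\Ct$. Taking $\Ct$ large relative to $c_\mu^{-1}$ makes this at least $e^2$.

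The remaining derivative computation is identical to the one in Lemma~\ref{lem_lb_s1d1big_truncchi2}. The function $f(k)=\frac1k\log\big(c_\mu\frac{d_2}{s_2^2}k\log\frac{kd_1}{2es_1^2}\big)$ has negative derivative as long as $\log(\cdot)>1+1/\log\frac{kd_1}{2es_1^2}$. The right-hand side is below $2$ because $k\ge\Ct s_1^2/d_1$ gives $\log\frac{kd_1}{2es_1^2}\ge\log(\Ct/2e)>1$, and the left-hand side is at least $2$ by the previous paragraph. Lemma~\ref{lemma:log-plus-one} then transfers monotonicity from $f$ to $f^{(1)}$.

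To close, I check that the two endpoint evaluations match the displayed quantities. At $k=s_1$, $f^{(1)}(s_1)$ is the first term. At $k=s_2/\log(d_1s_2/s_1^2)$, $f^{(1)}$ gives the second term, with the $\log\big(\frac{d_1s_2}{2es_1^2\log(d_1s_2/s_1^2)}\big)$ factor. The floor in the definition of $k_{\max}$ is harmless because $f^{(1)}$ is decreasing, so $f^{(1)}(\lfloor x\rfloor)\ge f^{(1)}(x)$. Lemma~\ref{lemma:geom-series} with $\alpha$ then yields the bound, with $c_\mu$ small and $\Ct$ large chosen compatibly.
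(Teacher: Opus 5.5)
Your proposal is correct and follows essentially the same route as the paper: the same set $\cA\subset\bar{\cA}_{\Ct}$, the bound $g\ge f^{(1)}$, monotonicity of $f^{(1)}$ via the derivative criterion and Lemma~\ref{lemma:log-plus-one} (using the extra cutoff $k\ge \Ct (s_2^2/d_2)/L$ to make the inner logarithm at least $2$), and then half-sum $\le$ maximum of the two endpoint values $\le \min_{k\in\cA}f^{(1)}(k)$, followed by Lemma~\ref{lemma:geom-series}. One small slip: because $L$ is unbounded, the argument of the logarithm is only $\gtrsim c_\mu\Ct$, not $c_\mu\Ct\log\Ct$; this is harmless, since it still exceeds $e^2$ once $\Ct\gtrsim c_\mu^{-1}$, which is exactly the paper's requirement $\Ct\ge 2e^2/c_\mu$.
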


\begin{proof}[Proof of Lemma~\ref{lem_lb_s2d2big_truncchi2}]
    The structure of this proof is similar to that of Lemma \ref{lem_lb_s1d1big_truncchi2}. Define $\cA = \{ 1 \vee \lceil\Ct \frac{s_1^2}{d_1} \rceil \vee \Big\lceil \frac{\Ct s_2^2/d_2}{\log\big(\frac{d_1s_2^2}{s_1^2d_2}\big)}\Big\rceil, \dots , s_1 \wedge \lfloor\frac{s_2}{\log\big(\frac{d_1s_2}{s_1^2}\big)} \rfloor \}$. With $g(k)$ defined as in (\ref{eq:g}), we have 

    \begin{align*}
        g(k) &= \frac{1}{k}\log\Bigg(1 + \frac{d_2}{s_2}\bigg(\exp\Big[\frac{c_\mu k}{s_2}\log\big(\frac{kd_1}{2es_1^2}\big)\Big] - 1\bigg)\Bigg) \\
        &\geq \frac{1}{k}\log\Bigg(1 + c_\mu\frac{d_2}{s^2_2}k\log\Big(\frac{kd_1}{2es_1^2}\Big)\Bigg) \\
        &=: f^{(1)}(k).
    \end{align*}

    As in the proof of Lemma \ref{lem_lb_s1d1big_truncchi2}, will now show that $f^{(1)}(k)$ is decreasing in $k$ over the set $\cA$. It suffices to consider the function $f(k) =: \frac{1}{k}\log\Big(c_\mu\frac{d_2}{s^2_2}k\log\big(\frac{kd_1}{2es_1^2}\big)\Big)$ and apply Lemma \ref{lemma:log-plus-one}. By direct calculation, we have
    \[\frac{\D f}{\D k}(k) = -\frac{\log\big(c_\mu\frac{d_2}{s_2^2}\big)}{k^2} - \frac{\log(k)}{k^2} + \frac{1}{k^2} - \frac{\log \log \big(\frac{kd_1}{2es_1^2}\big)}{k^2} + \frac{1}{k^2\log\big(\frac{kd_1}{2es_1^2}\big)}.\]
    The right hand side is less than zero if 
    \[1 + \frac{1}{\log\big(\frac{kd_1}{2es_1^2}\big)} < \log\Big(c_\mu\frac{d_2k}{s_2^2}\log \big(\frac{kd_1}{2es_1^2}\big)\Big)\]
    For $k \in \cA$, it holds 
    \begin{align*}
        \log\Big(c_\mu\frac{d_2k}{s_2^2} \log \big(\frac{kd_1}{2es_1^2}\big)\Big) 
        &\geq \log\Bigg(\frac{c_\mu \Ct}{\log\big(\frac{d_1s_2^2}{s_1^2d_2}\big)} \log \Big(\frac{\Ct s_2^2d_1}{2es_1^2d_2\log\big(\frac{s_2^2d_1}{s_1^2d_2}\big)}\Big)\Bigg) \\
        &\geq \log\Big(\frac{c_\mu \Ct }{2} \Big) \\
        &\geq 2
    \end{align*}
    where the second inequality uses that $\frac{\log \log x}{\log x} < \frac12$ for $x > 1$ to conclude 
    $$\frac{\log\big(\frac{\Ct s_2^2d_1}{2ed_2s_1^2}\big) - \log \log \big(\frac{s_2^2d_1}{d_2s_1^2}\big)}{\log \big(\frac{d_1s_2^2}{s_1^2d_2}\big)} \geq \frac12,$$
    and the final inequality holds as long as $\Ct$ is taken sufficiently large such that $\Ct \geq 2e^2 / c_\mu$. Furthermore, for $k \in \cA$ we have 
    \begin{align*}
        1 + \frac{1}{\log \big(\frac{kd_1}{2es_1^2}\big)} &\leq 1 + \frac{1}{\log\big(\frac{\Ct}{2e}\big)} \\
        &< 2
    \end{align*}
    Therefore, $f(k)$ is decreasing over $\cA$, and by Lemma \ref{lemma:log-plus-one}, $f^{(1)}(k)$ is decreasing over $\cA$ as well. Using our assumed upper bound on $\mu^2$, we have 
    \begin{align*}
        \mu^2 &\leq \frac12\bigg(\frac{1}{s_1}\log\Big(1 + c_\mu \frac{d_2s_1}{s_2^2}\log(\frac{d_1}{2es_1})\Big) + \frac{\log\big(\frac{d_1s_2}{s_1^2}\big)}{s_2}\log\Big(1 + c_\mu \frac{d_2}{s_2\log\big(\frac{d_1s_2}{s_1^2}\big)}\log\big(\frac{d_1s_2}{2es_1^2\log(\frac{d_1s_2}{s_1^2})}\big)\Big)\bigg) \\
        &= \frac12\Big(f^{(1)}(s_1) + f^{(1)}\big(\frac{s_2}{\log(\frac{d_1s_2}{s_1^2})}\big)\Big) \\
        &\leq \max\Big(f^{(1)}(s_1), f^{(1)}\big(\frac{s_2}{\log(\frac{d_1s_2}{s_1^2})}\big)\Big) \\
        &\leq \min_{k \in \cA}f^{(1)}(k) \\
        &\leq \min_{k\in \cA}g(k).
    \end{align*}
    We conclude the proof by invoking Lemma~\ref{lemma:geom-series}. 
\end{proof}

\begin{lemma}\label{lem_lb_dense_allrates}
    Suppose that $s_1^2 \geq (2e)^{-4}d_1s_2$ and that $s_1 < c'd_1$ for some small enough constant $c'>0$.
    Then for any $\alpha > 0$, there exist constants $c_\mu > 0$ and $\Ct \geq 1$ such that the following claims hold. In what follows, we define $c = 1 - \exp\Big(-c_\mu \Ct (2e)^{-4}\log \Big(\frac{\Ct}{2e}\Big)\Big)$.
    \begin{enumerate}
        \item Suppose that $c_\mu^{-1}s_2\log\big(c\frac{d_2}{s_2}\big) \leq \Ct \frac{s_1^2}{d_1}$. Then if 
        \[\mu^2 < \Ct^{-1}\frac{d_1}{s_1^2}\log\Big(c \frac{d_2}{s_2}\Big) + \frac{c_\mu}{s_2}\log\Big(\frac{\Ct}{2e}\Big),\]
         it holds 
        \[\EE\Big[\exp(\mu^2XY)\one\big(X \geq \lceil  \Ct \frac{s_1^2}{d_1}\rceil\big)\Big] < \alpha.\]
        \item Suppose that $\Ct \frac{s_1^2}{d_1} < c_\mu^{-1}s_2\log\big(c\frac{d_2}{s_2}\big) \leq s_1$. Then if
        \[\mu^2 < \frac{c_\mu}{s_2}\log\Big(\frac{s_2d_1\log(c \frac{d_2}{s_2})}{c_\mu 2e s_1^2}\Big),\]
        it holds 
        \[\EE\Big[\exp(\mu^2XY)\one\big(X \geq \lceil  \Ct \frac{s_1^2}{d_1}\rceil\big)\Big] < \alpha.\]
        \item Suppose that $s_1 < c_\mu^{-1}s_2\log\big(c\frac{d_2}{s_2}\big)$.Then if
        \[\mu^2 < \frac{1}{s_1}\log\big(c \frac{d_2}{s_2}\big) + \frac{c_\mu}{s_2}\log\big(\frac{d_1}{2es_1}\big),\]
        it holds 
        \[\EE\Big[\exp(\mu^2XY)\one\big(X \geq \lceil  \Ct \frac{s_1^2}{d_1}\rceil\big)\Big] < \alpha.\]
    \end{enumerate}
\end{lemma}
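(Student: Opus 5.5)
The plan is to reduce all three claims to Lemma~\ref{lemma:geom-series} with $\cA = \bar{\cA}_{\Ct} = \{\lceil \Ct s_1^2/d_1\rceil,\dots,s_1\}$. It then suffices to show that in each case the stated bound on $\mu^2$ is at most $\min_{k\in\cA} g(k)$, with $g$ as in \eqref{eq:g}.

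\textbf{Step 1: lower bound the exponent and pass to $f$.} I first lower bound the exponent inside $g$ uniformly over $\cA$. The map $k\mapsto \frac{c_\mu k}{s_2}\log\big(\frac{kd_1}{2es_1^2}\big)$ is increasing, so on $\cA$ it is at least its value at $k=\Ct s_1^2/d_1$, namely
\[
\frac{c_\mu \Ct s_1^2}{d_1 s_2}\log\Big(\frac{\Ct}{2e}\Big)\;\ge\; c_\mu\Ct(2e)^{-4}\log\Big(\frac{\Ct}{2e}\Big),
\]
using the hypothesis $s_1^2\ge (2e)^{-4}d_1 s_2$. This is why $c$ is defined as in the statement. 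Using $e^x-1\ge(1-e^{-x_0})e^x$ for $x\ge x_0$, we get $g(k)\ge f(k)$ on $\cA$, where
\[
f(k) := \frac1k\log\Big(c\,\frac{d_2}{s_2}\Big)+\frac{c_\mu}{s_2}\log\Big(\frac{kd_1}{2es_1^2}\Big).
\]
If $c\,d_2/s_2<1$, the first term is negative. I would dispose of this sub-case separately, either by noting that the claimed bounds are then vacuous or small, or by assuming $d_2/s_2$ is large, which is justified by $s_2\le c_2 d_2$ in the general-sparsity setting.

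\textbf{Step 2: minimize $f$ over $\cA$.} As in the proof of Lemma~\ref{lem_max_test_lower_bound}, $f'(k)=-\log(cd_2/s_2)/k^2+c_\mu/(s_2k)$. Hence $f$ decreases on $(0,k^*]$ and increases afterwards, where $k^*=c_\mu^{-1}s_2\log(cd_2/s_2)$. The three claims correspond to the position of $k^*$ relative to the interval $[\Ct s_1^2/d_1,\,s_1]$.

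In case 1, $k^*\le \Ct s_1^2/d_1$, so the minimum over $\cA$ is at the left endpoint $k_0=\lceil \Ct s_1^2/d_1\rceil$. This gives
\[
\min_{k\in\cA} f \;\ge\; \frac{1}{k_0}\log\Big(c\,\frac{d_2}{s_2}\Big)+\frac{c_\mu}{s_2}\log\Big(\frac{\Ct}{2e}\Big).
\]
The ceiling must be handled: $k_0\le 2\Ct s_1^2/d_1$ when $\Ct s_1^2/d_1\ge1$. If instead $\Ct s_1^2/d_1<1$, then $k_0=1$ and the bound is even better. This costs at most a factor $2$, which I would absorb by adjusting $\Ct$ or $c_\mu$ in the first term, or simply by noting that the stated bound has $\Ct^{-1}$ and should be compared to $1/k_0$ carefully.

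In case 2, $k^*$ lies in the range, so
\[
\min_{k\in\cA} f \;\ge\; f(k^*) = \frac{c_\mu}{s_2}\Big(1+\log\frac{s_2 d_1\log(cd_2/s_2)}{c_\mu 2e s_1^2}\Big),
\]
which exceeds the stated bound.

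In case 3, $s_1<k^*$, so $f$ is decreasing on $\cA$ and the minimum is $f(s_1)$, which equals the stated bound exactly.

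\textbf{Step 3: conclude.} With $\mu^2\le\min_{\cA} f\le\min_{\cA} g$, Lemma~\ref{lemma:geom-series} (valid since $s_1\le c'd_1\le d_1/2$) gives the tail bound $<\alpha$ once $\Ct$ is large and $c_\mu$ small. There is a subtlety in the order of choosing constants: $c$ depends on $c_\mu$ and $\Ct$, and Lemma~\ref{lemma:geom-series} requires $\Ct$ large and $c_\mu$ small. So I would fix $c_\mu,\Ct$ from that lemma first and then define $c$. The main obstacle I expect is bookkeeping at the left endpoint in case 1, namely the ceiling and the regime $\Ct s_1^2/d_1<1$, together with ensuring $c\,d_2/s_2>1$ so that the logarithmic term in $f$ has the right sign.
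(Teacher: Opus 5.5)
Your route is the paper's route: take $\cA=\{\lceil \Ct s_1^2/d_1\rceil,\dots,s_1\}$, bound $g\ge f$ using the uniform lower bound $c_\mu\Ct(2e)^{-4}\log(\Ct/(2e))$ on the exponent, locate the minimizer $k^*=c_\mu^{-1}s_2\log(c\,d_2/s_2)$ of $f$, and conclude with Lemma~\ref{lemma:geom-series}. Your cases 2 and 3 are correct as written.

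\textbf{Case 1.} This step does not go through as you wrote it. You leave a factor-$2$ loss and propose to absorb it by changing the constant in the first term. That would prove a weaker inequality than the one stated.

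The loss is an artifact of your bookkeeping. You compare $1/k_0$ with $\Ct^{-1}d_1/s_1^2$ term by term, and at the same time you lower the second term $\frac{c_\mu}{s_2}\log\big(k_0d_1/(2es_1^2)\big)$ to its value at $\Ct s_1^2/d_1$. Instead, observe that the stated bound is exactly $f$ evaluated at the real point $x_0=\Ct s_1^2/d_1$:
\[
f(x_0)=\Ct^{-1}\frac{d_1}{s_1^2}\log\Big(c\,\frac{d_2}{s_2}\Big)+\frac{c_\mu}{s_2}\log\Big(\frac{\Ct}{2e}\Big).
\]
Moreover, $f'(x)=x^{-2}\big(c_\mu x/s_2-\log(c\,d_2/s_2)\big)\ge 0$ for $x\ge k^*$. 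Case 1 is precisely the assumption $k^*\le x_0$, so $f$ is nondecreasing on $[x_0,\infty)$. That interval contains every element of $\cA$, since each is at least $\lceil x_0\rceil\ge x_0$. Hence $\min_{k\in\cA}f(k)\ge f(x_0)$, and the ceiling costs nothing.

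\textbf{Two side remarks.} First, your claim about $\Ct s_1^2/d_1<1$ is backwards. In that case $1/k_0=1<\Ct^{-1}d_1/s_1^2$, so the term-by-term comparison gets worse, not better. The regime is empty anyway once $\Ct\ge(2e)^4$, because the hypothesis gives $\Ct s_1^2/d_1\ge \Ct(2e)^{-4}s_2$.

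Second, the sub-case $c\,d_2/s_2\le 1$ needs no extra assumption such as $d_2/s_2$ being large, which the lemma does not assume.
\begin{itemize}
\item The bound $g\ge f$ holds regardless of sign, since $\log(1+y)\ge\log y$ for all $y>0$.
\item The hypotheses of cases 2 and 3 force $\log(c\,d_2/s_2)>0$.
\item In case 1 with $\log(c\,d_2/s_2)\le 0$, $f$ is increasing on all of $(0,\infty)$, so the monotonicity argument above applies unchanged.
\end{itemize}
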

\begin{proof}
    Define $\cA = \{\lceil  \Ct \frac{s_1^2}{d_1}\rceil, ..., s_1 \}$ and $g$ as in (\ref{eq:g}). For $k \in \cA$, it holds 
    \begin{align*}
        \frac{c_\mu k}{s_2}\log\Big(\frac{kd_1}{2es_1^2}\Big) &\geq \frac{c_\mu \Ct s_1^2}{s_2d_1}\log\Big(\frac{\Ct}{2e}\Big) \\
        &\geq c_\mu \Ct (2e)^{-4}\log \Big(\frac{\Ct}{2e}\Big) \\
        &> 0
    \end{align*}
    where the second inequality follows from the assumption $s_1^2 \geq (2e)^{-4}d_1s_2$ and the final inequality holds for $\Ct > 2e$. This implies that $\exp\Big(\frac{c_\mu k}{s_2}\log\Big(\frac{kd_1}{2es_1^2}\Big)\Big)$ is bounded away from 1 for $k \in \cA$. Therefore, for $c = 1 - \exp\Big(-c_\mu \Ct (2e)^{-4}\log \Big(\frac{\Ct}{2e}\Big)\Big)$, it holds
    \begin{align*}
        g(k) &= \frac{1}{k}\log\Bigg(1 + \frac{d_2}{s_2}\bigg(\exp\Big[\frac{c_\mu k}{s_2}\log\big(\frac{kd_1}{2es_1^2}\big)\Big] - 1\bigg)\Bigg)\\
        &\geq \frac{1}{k}\log\Bigg(1 + c\frac{d_2}{s_2}\exp\Big[\frac{c_\mu k}{s_2}\log\big(\frac{kd_1}{2es_1^2}\big)\Big]\Bigg)\\
        &\geq \frac{1}{k}\log\Bigg(c\frac{d_2}{s_2}\exp\Big[\frac{c_\mu k}{s_2}\log\big(\frac{kd_1}{2es_1^2}\big)\Big]\Bigg) \\
        &=: f(k).
    \end{align*}
    Therefore, it suffices to show that $\mu^2$ is at most the minimum of $f$ over $\cA$. We write $f(k)$ as 
    \[f(k) = \frac{1}{k}\log\Big(c\frac{d_2}{s_2}\Big) + \frac{c_\mu}{s_2}\log\Big(\frac{kd_1}{2es_1^2}\Big).\]
    By direct calculation, we have
    \[\frac{\D f}{\D k}(k) = -\frac{\log\big(c\frac{d_2}{s_2}\big)}{k^2} + \frac{c_\mu}{s_2k}\]
    from which we deduce that $f$ is minimized at $k^* = c_\mu^{-1}s_2\log\big(c\frac{d_2}{s_2}\big)$ over $\R^+$, decreasing for $k < k^*$, and increasing for $k > k^*$. We now proceed by cases.

    \begin{enumerate}
        \item Suppose that $c_\mu^{-1}s_2\log\big(c\frac{d_2}{s_2}\big) \leq \Ct \frac{s_1^2}{d_1}$. Then $f$ is increasing over $\mathcal{A}$, and by our assumption on $\mu^2$, it holds
        \begin{align*}\mu^2 &< \Ct^{-1}\frac{d_1}{s_1^2}\log\Big(c \frac{d_2}{s_2}\Big) + \frac{c_\mu}{s_2}\log\Big(\frac{\Ct}{2e}\Big) \\
        &\leq \min_{k \in \cA}f(k) \\
        &\leq \min_{k \in \cA}g(k).
        \end{align*}
        \item Suppose that $\Ct \frac{s_1^2}{d_1} < c_\mu^{-1}s_2\log\big(c\frac{d_2}{s_2}\big) \leq s_1$. Then by our assumption on $\mu^2$, we have
        \begin{align*}
            \mu^2 &< \frac{c_\mu}{s_2}\log\Big(\frac{s_2d_1\log(c \frac{d_2}{s_2})}{c_\mu 2e s_1^2}\Big)\\
        &\leq \min_{k \in \cA}f(k) \\
        &\leq \min_{k \in \cA}g(k).
        \end{align*}
        \item Suppose that $s_1 < c_\mu^{-1}s_2\log\big(c\frac{d_2}{s_2}\big)$. Then $f$ is decreasing over $\mathcal{A}$ and is minimized at $k = s_1$. Therefore by our assumption on $\mu^2$, we have 
        \begin{align*}
            \mu^2 &< \frac{1}{s_1}\log\big(c \frac{d_2}{s_2}\big) + \frac{c_\mu}{s_2}\log\big(\frac{d_1}{2es_1}\big)\\
        &= \min_{k \in \cA}f(k) \\
        &\leq \min_{k \in \cA}g(k).
        \end{align*}
    \end{enumerate}
    By Lemma \ref{lemma:geom-series}, the proof is complete.
\end{proof}

\subsubsection{Simplification of the rate}
For any $s_1,s_2, d_1, d_2 \in \mathbb N$, we define the following quantities
\begin{align}
    \psi(s_1,s_2,d_1,d_2) &= \frac{1}{s_1} \log\left(1+ \frac{d_2s_1}{s_2^2} \log\left(\frac{d_1}{s_1}\right)\right) \label{eq_def_psi}\\[10pt]
    \phi(s_1,s_2, d_1,d_2) &= \begin{cases}
        \frac{d_1}{s_1^2} \log\left(1+\frac{d_2}{s_2^2}\right)&  \text{ if } \frac{d_1}{s_1^2} \leq C\\
         \infty & \text{ otherwise.}
    \end{cases}\label{eq_def_phi}\\[10pt]
    \beta(s_1,s_2,d_1,d_2) &= \frac{1}{s_1} \log\left(\frac{d_2}{s_2}\right) \mathbf{1}_{\left\{\frac{d_1s_2}{s_1^2} \log\left(\frac{d_2}{s_2}\right)>1\right\}}.\label{eq_def_nu}
\end{align}
To alleviate the notation, we will write
\begin{align*}
    \phi_{12} &= \phi(s_1,s_2, d_1,d_2)\\
    \phi_{21} &= \phi(s_2,s_1, d_2,d_1)\\
    \psi_{12} &= \psi(s_1,s_2, d_1,d_2)\\
    \psi_{21} &= \psi(s_2,s_1, d_2,d_1)\\
    \beta_{12} &= \beta(s_1,s_2, d_1,d_2)\\
    \beta_{21} &= \beta(s_2,s_1, d_2,d_1).
\end{align*}
Below, we show that the lower bound can be rewritten as
\[R := R(s_1, s_2, d_1, d_2) = \Big(\psi_{12} + \psi_{21}\Big) \land \phi_{12}  \land \phi_{21}.\]

The following lemma simplifies the rate emerging from Lemma ~\ref{lem_lb_dense_allrates}.
\begin{lemma}\label{lem_simplify_dense_allrates}
    Assume that $s_1^2 \geq \bar c d_1 s_2$ for some constant $\bar c>0$ and $s_j \leq c' d_j, \,\forall j \in \{1,2\}$ for some sufficiently small constant $c'>0$. 
    Assume also that $\frac{d_1}{s_1} \geq e\log\left(\frac{d_2}{s_2}\right)$. 
    \begin{enumerate}
        \item 
        Then the following two properties hold    
    \begin{align}
        & \Big(\psi_{12} + \psi_{21}\Big) \land \phi_{12}
        \asymp \psi_{21} \land \phi_{12}\label{eq_simplification_dense_case}\\
        & \psi_{21} \land \phi_{12}
        \asymp \Big(\psi_{21} + \beta_{12}\Big) \land \phi_{12}.\label{eq_simplification_dense_case_upper_bound}
    \end{align}
    \item
    It follows that, for any $\alpha>0$, there exists a constant $c_\mu>0$ such that, whenever $\mu^2 \leq c_\mu R$, we have
    \begin{align*}
    \EE\Big[\exp(\mu^2XY)\; \one\Big(X \geq \Ct \frac{s_1^2}{d_1}\Big)\Big] < \alpha.
    \end{align*}
    \end{enumerate}
\end{lemma}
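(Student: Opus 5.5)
Part 2 follows from part 1 combined with Lemma~\ref{lem_lb_dense_allrates}, so the plan is to prove the two equivalences of part 1 by direct comparison of terms, using the simplified definitions of $\psi,\phi,\beta$ from this subsection. Then, in each of the three cases of Lemma~\ref{lem_lb_dense_allrates}, I show that $c_\mu R$ lies below the stated threshold.

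For \eqref{eq_simplification_dense_case}, the direction $\gtrsim$ is trivial. For $\lesssim$ it suffices to show $\psi_{12}\wedge\phi_{12}\lesssim \psi_{21}\wedge \phi_{12}$, or more simply that $\psi_{12}\lesssim \psi_{21}\vee$ (something $\gtrsim\phi_{12}$).

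Since $s_1^2\ge \bar c d_1 s_2$, we have $d_1/s_1^2\le 1/(\bar c s_2)$. Hence $\phi_{12}\lesssim \frac{1}{s_2}\log(1+d_2/s_2^2)$.

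For $\psi_{12}=\frac1{s_1}\log(1+\frac{d_2 s_1}{s_2^2}\log\frac{d_1}{s_1})$, split on whether the argument of the log is small or large.

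If $\frac{d_2s_1}{s_2^2}\log\frac{d_1}{s_1}\lesssim 1$, linearize: $\psi_{12}\asymp \frac{d_2}{s_2^2}\log\frac{d_1}{s_1}$. Using $d_1/s_1\le s_1/(\bar c s_2)$, this is comparable to a term of the form $\frac{d_2}{s_2^2}\log(1+\frac{d_1}{s_1^2})$-type, i.e. controlled by $\psi_{21}$ after linearization.

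Otherwise, use $\log(1+xy)\le \log(1+x)+\log(1+y)$ together with $s_1\ge \sqrt{\bar c d_1 s_2}\gtrsim s_2$ (up to constants in most subcases). This gives $\psi_{12}\lesssim \frac{1}{s_2}\log(\cdot)$ dominated by $\psi_{21}=\frac1{s_2}\log(1+\frac{d_1s_2}{s_1^2}\log\frac{d_2}{s_2})$, or by $\phi_{12}$.

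The assumption $d_1/s_1\ge e\log(d_2/s_2)$ is used to compare $\log\frac{d_1}{s_1}$ against $\log\log\frac{d_2}{s_2}$ terms.

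For \eqref{eq_simplification_dense_case_upper_bound}, I only need $\beta_{12}\wedge\phi_{12}\lesssim \psi_{21}\vee\phi_{12}$. When the indicator in $\beta_{12}$ is active, $\frac{d_1s_2}{s_1^2}\log\frac{d_2}{s_2}>1$, so $\psi_{21}\ge \frac{\log 2}{s_2}$. Also $\beta_{12}=\frac{1}{s_1}\log\frac{d_2}{s_2}$, and from $s_1^2\ge\bar c d_1 s_2$ together with the hypothesis $d_1/s_1\ge e\log(d_2/s_2)$, we get $\frac{s_2}{s_1}\log\frac{d_2}{s_2}\le \frac{s_2 d_1}{e s_1^2}\le \frac{1}{e\bar c}$. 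Hence $\beta_{12}\lesssim 1/s_2\lesssim\psi_{21}$.

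For part 2, with $c=c(c_\mu,\Ct)$ as in Lemma~\ref{lem_lb_dense_allrates}, I treat its three cases in turn.

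\textbf{Case 1} ($k^*\le \Ct s_1^2/d_1$). The threshold contains $\Ct^{-1}\frac{d_1}{s_1^2}\log(c\frac{d_2}{s_2})$. I show this is $\gtrsim \phi_{12}$, which in turn is $\ge R$, using that $\log(cd_2/s_2)\gtrsim\log(1+d_2/s_2^2)$ when $s_2\le c'd_2$. Where this fails, the extra $\frac{c_\mu}{s_2}\log(\Ct/2e)$ term covers the gap, since $R\lesssim\phi_{12}\lesssim 1/s_2$ there.

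\textbf{Case 2}. The threshold $\frac{c_\mu}{s_2}\log(\frac{s_2d_1\log(cd_2/s_2)}{2ec_\mu s_1^2})$ is compared directly with $\psi_{21}$. Here the argument is at least $\Ct/(2e)$ times a constant, because $k^*>\Ct s_1^2/d_1$, so this $\log$ is comparable to $\log(1+\cdot)$.

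\textbf{Case 3}. The threshold $\ge \frac1{s_1}\log(c\frac{d_2}{s_2})$ is compared with $\psi_{21}$ in the same way, using $s_1<k^*$.

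In all three cases the threshold is $\gtrsim \psi_{21}\wedge\phi_{12}\asymp R$ by part 1, so taking $c_\mu$ in the statement small enough, Lemma~\ref{lem_lb_dense_allrates} applies.

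I expect the main obstacle to be the case analysis in \eqref{eq_simplification_dense_case}, specifically bounding $\psi_{12}$ by $\psi_{21}$ in intermediate regimes where neither logarithm can be linearized. My first approach there is the elementary inequality $\log(1+ab)\le\log(1+a)+\log(1+b)$ combined with the substitution $d_1\le s_1^2/(\bar c s_2)$.
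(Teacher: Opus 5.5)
Your skeleton matches the paper's. Part 2 is done by checking, in each case of Lemma~\ref{lem_lb_dense_allrates}, that the threshold dominates a multiple of $R$. The second equivalence \eqref{eq_simplification_dense_case_upper_bound} is done via $\beta_{12}\le\frac{1}{e\bar c s_2}\lesssim\psi_{21}$ when the indicator is active. That $\beta_{12}$ argument and your Cases 1 and 2 are correct. (The reduction you state there should read $\beta_{12}\wedge\phi_{12}\lesssim\psi_{21}\wedge\phi_{12}$; with $\vee$ it is trivial, though your argument proves the right thing.) Two steps, however, do not go through.

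\textbf{The core comparison in \eqref{eq_simplification_dense_case}.}
\begin{itemize}
\item \emph{Wrong split.} Your argument splits on the argument of $\psi_{12}$'s logarithm, but that split is vacuous here: $\frac{d_2s_1}{s_2^2}\log\frac{d_1}{s_1}\ge\frac{\bar c}{c'^2}\log\frac1{c'}$. The split that matters is on $x:=\frac{d_1s_2}{s_1^2}\log\frac{d_2}{s_2}$, the argument of $\psi_{21}$.
\item \emph{Case $x\le 1$.} Here $\psi_{12}$ can be far larger than $\psi_{21}$. For example, $s_2=1$, $d_2=1/c'$, $s_1=c'd_1$, $d_1\to\infty$ gives $\psi_{12}\asymp\frac{\log d_1}{c'd_1}$ against $\psi_{21}\asymp\frac{\log(1/c')}{c'^2d_1}$. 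So no bound on $\psi_{12}$ alone can work; you must show $\phi_{12}\lesssim\psi_{21}$. This follows from $\psi_{21}\ge\log 2\cdot\frac{d_1}{s_1^2}\log\frac{d_2}{s_2}$ together with $\log(1+\frac{d_2}{s_2^2})\le 2\log\frac{d_2}{s_2}$.
\item \emph{Your fallback points the wrong way.} ``$\psi_{12}$ dominated by $\phi_{12}$'' gives nothing toward $\psi_{12}\wedge\phi_{12}\lesssim\psi_{21}$. Likewise, the reduction to ``$\psi_{12}\lesssim\psi_{21}\vee(\text{something}\gtrsim\phi_{12})$'' is not sufficient.
\item \emph{Case $x>1$.} Here $\psi_{21}\ge\frac{\log 2}{s_2}$, so you need $\psi_{12}\lesssim\frac1{s_2}$. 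The tools you list ($s_1\gtrsim s_2$, splitting the log, substituting $d_1\le s_1^2/(\bar c s_2)$ into $\log\frac{d_1}{s_1}$) leave a term $\frac1{s_2}\log\frac{d_2}{s_2}$. That term is not $\lesssim\psi_{21}\le\frac1{s_2}\log(1+\bar c^{-1}\log\frac{d_2}{s_2})$.
\item \emph{The missing step.} Write $\frac1{s_1}\le\frac{1}{\bar c s_2}\cdot\frac{s_1}{d_1}$ and use the hypothesis $\frac{d_1}{s_1}\ge e\log\frac{d_2}{s_2}$ to kill $\frac1{s_1}\log\frac{d_2}{s_2}$. This is exactly your inequality $\frac{s_2}{s_1}\log\frac{d_2}{s_2}\le\frac1{e\bar c}$ from the $\beta_{12}$ step; that is where the hypothesis enters, not in comparing $\log\frac{d_1}{s_1}$ with $\log\log\frac{d_2}{s_2}$. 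The remaining pieces are immediate: $\frac1{s_1}\log\frac{s_1}{s_2}\le\frac1{s_2}$ and $\frac1{s_1}\log\log\frac{d_1}{s_1}\le\frac{1}{\bar c s_2}\frac{s_1}{d_1}\log\frac{d_1}{s_1}\le\frac{1}{e\bar c s_2}$. This is essentially the paper's route.
\end{itemize}

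\textbf{Case 3 of part 2.} Lower-bounding the threshold by its first term $\frac1{s_1}\log\frac{cd_2}{s_2}$ is not enough. That term is of order $\frac1{s_2}$: it exceeds $\frac{c_\mu}{s_2}$ by $s_1<k^*$, and is at most $\frac{1}{e\bar c s_2}$ by the inequality above. Meanwhile $\psi_{21}$ can be of order $\frac1{s_2}\log\frac{d_1}{s_1}$.

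For instance, take $s_2=1$, $d_2=e^L$, $s_1=L$, $d_1=eL^2$ (with $\bar c\le 1/e$). This satisfies all hypotheses and falls in Case 3 for large $L$. Then $R\asymp\psi_{21}\asymp\log L$, while the first term is $\asymp 1$.

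You need the second term of the threshold. By $\frac{s_2}{s_1}\log\frac{d_2}{s_2}\le\frac1{e\bar c}$,
\[
\psi_{21}\le\frac1{s_2}\log\Big(1+\frac{d_1}{e\bar c s_1}\Big)\lesssim\frac1{s_2}\log\frac{d_1}{2es_1},
\]
using $d_1/s_1\ge 1/c'$. This is matched by $\frac{c_\mu}{s_2}\log\frac{d_1}{2es_1}$, which is how the paper closes this case.
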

\begin{proof}[Proof of Lemma ~\ref{lem_simplify_dense_allrates}]
\phantom{}

\begin{enumerate}
    \item 
For any $s_1,s_2, d_1, d_2$, we let 
\begin{align*}
    \widetilde \phi_{12} = \frac{d_1}{s_1^2} \log\left(1+\frac{d_2}{s_2^2}\right).
\end{align*}
We start by showing that 
\begin{align*}
    \psi_{21} \land \phi_{12} \asymp \psi_{21} \land \widetilde \phi_{12}.
\end{align*}
This is clear if $\frac{d_1}{s_1^2} \leq C$ by definition of $\phi_{12}$.  
Assume now that $\frac{d_1}{s_1^2} > C$, which implies that $\phi_{12} = \infty$ and $\psi_{21} \land \phi_{12} = \psi_{21}$. 
By the assumption $s_1^2 \geq \bar c d_1 s_2$, we obtain $s_2 \leq \frac{s_1^2}{d_1\bar c} \leq \frac{1}{\bar c}$. 
Therefore, we have
\begin{align*}
    \phi_{12} = \frac{d_1}{s_1^2} \log\left(1+\frac{d_2}{s_2^2}\right) &\geq \frac{d_1}{s_1^2} \log\left(1+\bar c^2 d_2\right)& \\
    &\geq \bar c^2\frac{d_1}{s_1^2} \log\left(1+d_2\right) &\text{ by Lemma~\ref{lem_logs_constants}.(i)}\\
    & \geq \bar c^2\log\left(1+\frac{d_1}{s_1^2} s_2\log\left(\frac{d_2}{s_2}\right)\right)  & \text{ by Lemma~\ref{lem_logs_constants}.(ii)}\\
    & \geq \bar c^2\frac{1}{s_2}\log\left(1+\frac{d_1}{s_1^2} s_2\log\left(\frac{d_2}{s_2}\right)\right) &\\
    & = \bar c^2 \psi_{21}.&
\end{align*}
Therefore, it follows that
\begin{align*}
    \psi_{21} \land \phi_{12} &\asymp \psi_{21} \land \widetilde \phi_{12}\\
    \Big(\psi_{12} + \psi_{21}\Big) \land \phi_{12} &\asymp\Big(\psi_{12} + \psi_{21}\Big) \land \widetilde \phi_{12}\\
    \Big(\psi_{21} + \beta_{12}\Big) \land \phi_{12} & \asymp \Big(\psi_{21} + \beta_{12}\Big) \land \widetilde \phi_{12} 
\end{align*}
and the equations~\eqref{eq_simplification_dense_case} and~\eqref{eq_simplification_dense_case_upper_bound} become equivalent to proving
\begin{align}
\Big(\psi_{12} + \psi_{21}\Big) \land \widetilde \phi_{12}
        \asymp \psi_{21} \land \widetilde \phi_{12}.\label{eq_simplification_dense_case_tilde}\\
\psi_{21} \land \widetilde \phi_{12}
        \asymp \Big(\psi_{21} + \beta_{12}\Big) \land \widetilde \phi_{12}\label{eq_simplification_dense_case_upper_bound_tilde}
\end{align}
In the rest of the proof, we focus on proving~\eqref{eq_simplification_dense_case_tilde} and~\eqref{eq_simplification_dense_case_upper_bound_tilde}.
    
    Assume first that $\frac{d_1s_2}{s_1^2} \log\left(\frac{d_2}{s_2}\right) \leq 1$. 
    Using the inequality $\log(1+x) \geq x\log(2) $ for any $x \leq 1$, we have 
    \begin{align*}
        \psi_{21} = \frac{1}{s_2} \log\left(1+\frac{d_1s_2}{s_1^2} \log\left(\frac{d_2}{s_2}\right)\right) &\geq \log(2) \frac{d_1}{s_1^2} \log\left(\frac{d_2}{s_2}\right)\\
        & \geq \frac{\log(2)}{2} \frac{d_1}{s_1^2} \log\left(1+\frac{d_2}{s_2^2}\right) \quad \text{ if $c'$ is small enough}\\
        & = \frac{\log(2)}{2} \widetilde \phi_{12},
    \end{align*}
    which guarantees that both~\eqref{eq_simplification_dense_case_tilde} and~\eqref{eq_simplification_dense_case_upper_bound_tilde} hold. 
    From now on, we will therefore assume that $\frac{d_1s_2}{s_1^2} \log\left(\frac{d_2}{s_2}\right) > 1$. 
    Now, assume that $\frac{d_2}{s_2^2} \leq 1$. 
    This implies the following inequalities   
    \begin{align*}
        \psi_{21} = \frac{1}{s_2} \log\left(1+\frac{d_1s_2}{s_1^2} \log\left(\frac{d_2}{s_2}\right)\right) &>\frac{\log(2)}{s_2} \\
        & \geq \frac{\log(2)}{s_2} \, \frac{\bar c d_1s_2}{s_1^2} \, \frac{d_2}{s_2^2} \\
        & = \bar c \log(2) \frac{d_1d_2}{s_1^2 s_2^2} \\
        & \geq \bar c \log(2) \frac{d_1}{s_1^2} \log\left(1+\frac{d_2}{s_2^2}\right)\\
        & = \bar c \log(2) \widetilde \phi_{12},
    \end{align*}
    which ensures that~\eqref{eq_simplification_dense_case_tilde} and~\eqref{eq_simplification_dense_case_upper_bound_tilde} hold as well and yields the result. 
    Thus, we will now assume that $\frac{d_2}{s_2^2}>1$, which also implies $\frac{d_2s_1}{s_2^2} \log\left(\frac{d_1}{s_1}\right)>1$.

    Now, suppose for the sake of contradiction that $s_1 > \frac{d_1}{\log(d_1)}$. 
    Note that the assumption $\frac{d_1}{s_1} \geq e \log\left(\frac{d_2}{s_2}\right)$ implies that
    \begin{align*}
        \log(d_1) > e\log\left(\frac{d_2}{s_2}\right) > e\log(\sqrt{d_2}) = \frac{e}{2} \log(d_2),
    \end{align*}
    that is $d_1 > d_2^{e/2}.$ Therefore, 
    \begin{align*}
        \frac{s_2 d_1}{s_1^2} \log\left(\frac{d_2}{s_2}\right) &< \frac{s_2d_1}{d_1^2/\log^2(d_1)}\log\left(d_2\right) < \frac{\sqrt{d_2}}{d_1 }\cdot \log(d_2)\log^2(d_1)\\
        & < \frac{2}{e}\frac{\log^3(d_1)}{d_1^{1-1/e}} <1,
    \end{align*}
    which contradicts the assumed inequality $ \frac{s_2 d_1}{s_1^2} \log\left(\frac{d_2}{s_2}\right)>1$. 

    Therefore, it holds that $ s_1 \leq \frac{d_1}{\log(d_1)}$. 
    Since $\frac{d_1 s_2}{s_1^2} \log\left(\frac{d_2}{s_2}\right) > 1$, 
    we have
    \begin{align*}
        \psi_{12} = \frac{1}{s_1} \log\left(1+ \frac{d_2s_1}{s_2^2} \log\left(\frac{d_1}{s_1}\right)\right) &\leq \frac{1}{s_1} \log\left(2 \frac{d_2s_1}{s_2^2} \log\left(\frac{d_1}{s_1}\right)\right)\\
        & \leq \frac{\log(d_2/s_2) + 2\log(d_1)}{s_1}  \quad \text{ provided } s_1 \leq c' d_1\\
        & \leq \frac{1}{\bar cs_2} \frac{s_1}{ d_1}\left[\log\left(\frac{d_2}{s_2}\right) + 2\log(d_1)\right]\\
        & \leq \frac{1}{\bar c s_2} \left(1/e + 2\right)\\
        & \leq \frac{e^{-1} + 2}{\bar c \log(2)} \cdot \frac{1}{s_2} \log\left(1+ \frac{d_1 s_2}{s_1^2} \log\left(\frac{d_2}{s_2}\right)\right),\\
        & = \frac{e^{-1} + 2}{\bar c \log(2)} \psi_{21},
    \end{align*}
    which ensures that~\eqref{eq_simplification_dense_case_tilde} holds. 
    Moreover, we verify that equation~\eqref{eq_simplification_dense_case_upper_bound_tilde} holds as well. This is clear if $\frac{d_1 s_2}{s_1^2} \log\left(\frac{d_2}{s_2}\right) \leq 1$ since we have $\beta_{12} = 0$. Otherwise, we have
    \begin{align*}
        \psi_{21} = \frac{1}{s_2} \log\left(1+ \frac{d_1 s_2}{s_1^2} \log\left(\frac{d_2}{s_2}\right)\right) > \log(2) \frac{1}{s_2} \geq \frac{\log(2)\bar c}{s_1} \frac{d_1}{s_1} \geq \frac{e\log(2) \bar c}{s_1} \log\left(\frac{d_2}{s_2}\right) \geq e \bar c \log(2) \beta_{12},
    \end{align*}
    which ensures~\eqref{eq_simplification_dense_case_upper_bound_tilde} holds and concludes the proof of the first claim. 

    \item Now, let $\alpha>0$ and let $c_\mu, c>0$ be the constants defined in Lemma~\ref{lem_lb_dense_allrates}. 

    Assume first that $c_\mu^{-1}s_2\log\big(c\frac{d_2}{s_2}\big) \leq \Ct \frac{s_1^2}{d_1}$. 
    Recalling that $\frac{d_2}{s_2}\geq \frac{1}{c'}$ where $c'$ can be taken arbitrarily small, we have
    \begin{align*}
        R &\leq \phi_{12} = \frac{d_1}{s_1^2} \log\left(1+\frac{d_2}{s_2^2}\right) &\\
        & \leq \frac{d_1}{s_1^2} \log\left(2\frac{d_2}{s_2}\right) & \text{ provided $c'$ is small enough}\\
        & \leq 2\frac{d_1}{s_1^2} \log\left(c\frac{d_2}{s_2}\right) &\text{ provided $c'$ is small enough}\\
        &\leq \Ct^{-1}\frac{d_1}{s_1^2}\log\Big(c \frac{d_2}{s_2}\Big) + \frac{c_\mu}{s_2}\log\Big(\frac{\Ct}{2e}\Big),&
    \end{align*}
    which implies that if $\mu^2 \leq R$, then we have by Lemma~\ref{lem_lb_dense_allrates}.1. that 
    \[\EE\Big[\exp(\mu^2XY)\one\big(X \geq  \Ct \frac{s_1^2}{d_1} \big)\Big] < \alpha.\]
    Now, assume $\Ct \frac{s_1^2}{d_1} < c_\mu^{-1}s_2\log\big(c\frac{d_2}{s_2}\big) \leq s_1$. 
    Then we have 
    \begin{align*}
        R &\leq \psi_{21} = \frac{1}{s_2} \log\left( 1+ \frac{d_1s_2}{s_1^2} \log\left(\frac{d_2}{s_2}\right)\right)&\\
        & \leq \frac{1}{s_2} \log\left( 2\frac{d_1s_2}{s_1^2} \log\left(\frac{d_2}{s_2}\right)\right) & \text{ since } \frac{d_1s_2}{s_1^2} \geq 1\\
        &\leq \frac{1}{s_2}\log\Big(\frac{s_2d_1\log(c \frac{d_2}{s_2})}{c_\mu 2e s_1^2}\Big) &\text{ if $c'$ is small enough.}
    \end{align*}
    Therefore, whenever $\mu \leq c_\mu R$, we have by Lemma~\ref{lem_lb_dense_allrates}.2. that
    \[\EE\Big[\exp(\mu^2XY)\one\big(X \geq \lceil  \Ct \frac{s_1^2}{d_1}\rceil\big)\Big] < \alpha.\]

    Finally, assume that $s_1 < c_\mu^{-1}s_2\log\big(c\frac{d_2}{s_2}\big)$. 
    Then we have
    \begin{align*}
        \frac{1}{c_\mu}\log\left(c\frac{d_2}{s_2}\right) > \frac{s_1}{s_2} \geq \frac{1}{16e^4} \frac{d_1}{s_1} \geq \frac{1}{16e^3} \log\left(\frac{d_2}{s_2}\right),
    \end{align*}
    so that $\frac{s_2}{s_1} \log\left(\frac{d_2}{s_2}\right) \leq 16e^3$. 
    Therefore, we have
    \begin{align*}
        R &\leq \psi_{21} = \frac{1}{s_2} \log\left(1+\frac{d_1}{s_1^2}s_2\log\left(\frac{d_2}{s_2}\right)\right)\\
        & \leq \frac{1}{s_2} \log\left(1+16e^3\frac{d_1}{s_1}\right)\\
        & \leq \frac{1}{2s_2} \log\left(\frac{d_1}{2es_1}\right) \qquad \text{ if $c'$ is small enough} \\
        & \leq \frac{1}{c_\mu}\left[\frac{1}{s_1}\log\big(c \frac{d_2}{s_2}\big) + \frac{c_\mu}{s_2}\log\big(\frac{d_1}{2es_1}\big)\right]
    \end{align*}
    By Lemma~\ref{lem_lb_dense_allrates}.3, if $\mu^2 \leq c_\mu R \leq \frac{1}{s_1}\log\big(c \frac{d_2}{s_2}\big) + \frac{c_\mu}{s_2}\log\big(\frac{d_1}{2es_1}\big)$, we obtain
    \[\EE\Big[\exp(\mu^2XY)\one\big(X \geq \lceil  \Ct \frac{s_1^2}{d_1}\rceil\big)\Big] < \alpha,\]
    and the proof is complete.
    \end{enumerate}
\end{proof}
The Lemma below shows that $R$ is a lower bound on the rate emerging from Lemma~\ref{lem_lb_s1d1big_truncchi2}.
\begin{lemma}\label{lem_simplify_s1d1big_truncchi2}
    Suppose that $s_1^2 \leq \bar{c}d_1s_2$ for some $\bar{c} \in (0,(2e)^{-4})$ and $s_1 \leq c'd_1$, $s_2 \leq c' d_2$ for some sufficiently small $c'>0$. Furthermore, suppose that $\frac{s^2_2}{d_2} \leq 2e\frac{s_1^2}{d_1}$. 
    For some small enough constant $c_\mu$, we define the quantity 
    \begin{align*}
        M = \begin{cases}
            \frac{1}{s_1} \log\Big(1 + c_\mu \frac{d_2s_1}{s_2^2}\log(\frac{d_1}{2es_1})\Big) & \text{ if } s_1 \leq \frac{s_2}{\log\big(\frac{d_1s_2}{s_1^2}\big)}\\
            \frac{1}{s_2}\log\big(\frac{d_1s_2}{s_1^2}\big)\log\Big(1 + c_\mu \frac{d_2}{s_2\log\big(\frac{d_1s_2}{s_1^2}\big)}\log\big(\frac{d_1s_2}{2es_1^2\log(\frac{d_1s_2}{s_1^2})}\big)\Big) & \text{ otherwise.}
        \end{cases}
    \end{align*}
    \begin{enumerate}
        \item 
    Then for some sufficiently small constant $c>0$ depending only on $\mu$, it holds that $M \geq cR$. 
    Hence, for any $\alpha>0$, there exists a constant $c'_\mu>0$ such that, whenever $\mu^2 \leq c'_\mu R$, we have
    \[\EE\Big[\exp(\mu^2XY)\one\Big( 1 \vee \Ct \frac{s_1^2}{d_1} \leq X \leq s_1\wedge\frac{s_2}{\log\big(\frac{d_1s_2}{s_1^2}\big)}\Big)\Big] < \alpha.\]
    \item Moreover, we have
    \begin{align}
        R \geq \begin{cases}
            \frac{1}{2}(\psi_{12} + \beta_{21}) \land \phi_{12} \land \phi_{21} & \text{ if } s_1 \leq \frac{s_2}{\log\big(\frac{d_1s_2}{s_1^2}\big)}\\
            \frac{1}{2}(\psi_{21} + \beta_{12}) \land \phi_{12} \land \phi_{21} & \text{ otherwise. }
        \end{cases} \label{eq_lower_bound_M}
    \end{align}
    \end{enumerate}
\end{lemma}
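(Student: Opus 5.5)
Write $L := \log\big(\frac{d_1s_2}{s_1^2}\big)$; this is at least $4\log(2e)$ because $s_1^2\le \bar c d_1s_2$. The plan is to prove the comparison $M \geq cR$ of part 1 first, then obtain the tail bound, and finally prove~\eqref{eq_lower_bound_M}.

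\textbf{Tail bound from $M\ge cR$.} Once $M \ge cR$ holds, the probability claim is immediate. If $\mu^2 \le c'_\mu R$ with $c'_\mu \le c$, then $\mu^2 \le M$. Lemma~\ref{lem_lb_s1d1big_truncchi2}, applied with the same $c_\mu$ and $\Ct$, then gives the bound on the truncated moment. The hypotheses match: $s_1\le c'd_1\le \frac12 d_1$, $\frac{s_2^2}{d_2}\le 2e\frac{s_1^2}{d_1}$, and $s_1^2\le \bar c d_1 s_2$. If the index range is empty, the expectation is zero and nothing needs proving. So everything reduces to comparing $M$ with $R$, and I split into the two branches of $M$.

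\textbf{Branch $s_1 \le s_2/L$.} Here $M = \frac{1}{s_1}\log\big(1 + c_\mu \frac{d_2 s_1}{s_2^2}\log\frac{d_1}{2es_1}\big)$. This is exactly $\psi_{12}$ in the supplement's notation, except that the constant $c_\mu$ and $\frac{d_1}{2es_1}$ replace $1$ and $\frac{d_1}{s_1}$. Since $s_1\le c'd_1$, we have $\log\frac{d_1}{2es_1}\ge \frac12\log\frac{d_1}{s_1}$. I would then use the inequality $\log(1+ax)\ge a\log(1+x)$ for $a\in(0,1]$ (Lemma~\ref{lem_logs_constants}.(i)) to get $M \ge \frac{c_\mu}{2}\psi_{12}$. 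It then remains to show $\psi_{12} \gtrsim R$, which I would do via $\psi_{12} \gtrsim (\psi_{12}+\beta_{21})\land\phi_{21}$. If $\beta_{21}=0$ there is nothing to do. Otherwise $\beta_{21} = \frac{1}{s_2}\log\frac{d_1}{s_1}$, and $s_1\le s_2/L$ gives $\frac{1}{s_2}\le \frac{1}{s_1L}$, so $\beta_{21}\le \frac{\log(d_1/s_1)}{s_1 L}$. This must be compared with $\psi_{12}$. When $\frac{d_2s_1}{s_2^2}\log\frac{d_1}{s_1}\ge 1$, I will lower bound $\psi_{12}$ by $\frac{1}{s_1}\log\big(\frac{d_2s_1}{s_2^2}\log\frac{d_1}{s_1}\big)$. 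The assumption $\frac{s_2^2}{d_2}\le 2e\frac{s_1^2}{d_1}$ gives $\frac{d_2 s_1}{s_2^2}\ge \frac{d_1}{2es_1}$, so $\psi_{12}\gtrsim \frac{1}{s_1}\log\frac{d_1}{s_1}\gtrsim\beta_{21}$ (using $L\gtrsim 1$). In the complementary case, I linearize the logarithm, which makes $\psi_{12}$ comparable to $\frac{d_2}{s_2^2}\log\frac{d_1}{s_1}$ and lets me show that $\phi_{21}$ is dominated. The same computation gives the first line of~\eqref{eq_lower_bound_M}: it shows $\psi_{12}\ge c(\psi_{12}+\beta_{21})$ unless $\phi_{21}$ is smaller, and the factor $\frac12$ absorbs constants once the definitions are normalized.

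\textbf{Branch $s_1 > s_2/L$.} Here $M = \frac{L}{s_2}\log\big(1+c_\mu\frac{d_2}{s_2L}\log\frac{d_1s_2}{2es_1^2L}\big)$. I plan to show $M\gtrsim\psi_{21}+\beta_{12}$ or $M\gtrsim \phi_{21}$. Since $L\ge1$, the factor $\frac{L}{s_2}$ already dominates $\frac{1}{s_2}\log(\cdot)$-type terms only up to the argument of the logarithm, so I compare the arguments. The inner quantity $\log\frac{d_1s_2}{2es_1^2L}$ is at least $\frac14 L$, by the same $\log\log x/\log x\le\frac12$ estimate used in Lemma~\ref{lem_max_test_lower_bound}. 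Hence the argument is of order $\frac{d_2}{s_2}$, and so $M \gtrsim \frac{L}{s_2}\log(1+c\frac{d_2}{s_2})$. I then need to show this is at least $\psi_{21}=\frac1{s_2}\log(1+\frac{d_1s_2}{s_1^2}\log\frac{d_2}{s_2})$, which is at most $\frac{1}{s_2}\big(L+\log\log\frac{d_2}{s_2}+1\big)$. Using $\log(1+c\,d_2/s_2)\gtrsim 1$ (since $s_2\le c'd_2$), the $L$ term is matched. The $\log\log$ term is absorbed because $L\log(d_2/s_2)\gtrsim \log\log(d_2/s_2)$. For $\beta_{12}=\frac{1}{s_1}\log\frac{d_2}{s_2}$, I use $s_1>s_2/L$ to get $\beta_{12}\le\frac{L}{s_2}\log\frac{d_2}{s_2}\lesssim M$. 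This yields the second line of~\eqref{eq_lower_bound_M}, and in turn $M\gtrsim R$.

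\textbf{Main obstacle.} The hard part is the first branch when $\frac{d_2s_1}{s_2^2}\log\frac{d_1}{s_1}<1$, in the small-argument regime. There one must check carefully that $\phi_{21}$, and not $\psi_{12}+\beta_{21}$, controls $R$. I would attack this using the two constraints $s_1^2\le\bar cd_1s_2$ and $\frac{s_2^2}{d_2}\le 2e\frac{s_1^2}{d_1}$, together with the standing reduction $\frac{d_1}{s_1}\ge e\log\frac{d_2}{s_2}$ from Lemma~\ref{lem_2.6}.
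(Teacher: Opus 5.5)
The reduction of the tail bound to $M\ge cR$ is fine, but the comparison of $M$ with $R$ has a genuine gap. Since $R=(\psi_{12}+\psi_{21})\wedge\phi_{12}\wedge\phi_{21}$, you must show $M\gtrsim\psi_{12}+\psi_{21}$, or that a $\phi$ term is smaller. So in \emph{each} branch you need both $M\gtrsim\psi_{12}$ and $M\gtrsim\psi_{21}$. In the branch $s_1\le s_2/L$ you prove only $M\ge\frac{c_\mu}{2}\psi_{12}$ and $\psi_{12}\gtrsim\beta_{21}$. In the branch $s_1>s_2/L$ you prove only $M\gtrsim\psi_{21}$ and $M\gtrsim\beta_{12}$.

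Swapping $\psi_{21}$ for $\beta_{21}$ (resp.\ $\psi_{12}$ for $\beta_{12}$) is not legitimate. Part 2 bounds $R$ from \emph{below} by $\frac12(\psi_{12}+\beta_{21})\wedge\phi_{12}\wedge\phi_{21}$, so $M\gtrsim(\psi_{12}+\beta_{21})\wedge\phi_{21}$ says nothing about $M\gtrsim R$. Moreover, $\beta_{21}$ can be much smaller than $\psi_{21}$. In the first branch $\psi_{21}\ge L/s_2$ while $\beta_{21}\le\log(d_1/s_1)/s_2$, and $L=\log(d_1/s_1)+\log(s_2/s_1)$ is unboundedly larger when $s_2\gg s_1$.

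The missing idea is the monotonicity the paper uses. The function $f^{(1)}(k)=\frac1k\log\big(1+c_\mu\frac{d_2}{s_2^2}k\log\frac{kd_1}{2es_1^2}\big)$ is decreasing for $k\ge 1\vee C_*s_1^2/d_1$; this is where $\frac{s_2^2}{d_2}\le 2e\frac{s_1^2}{d_1}$ is really used (Lemma~\ref{lem_lb_s1d1big_truncchi2}). Since $M=f^{(1)}(s_1\wedge s_2/L)$, in both branches
$M=\max\{f^{(1)}(s_1),f^{(1)}(s_2/L)\}\ge\frac12\big[f^{(1)}(s_1)+f^{(1)}(s_2/L)\big]$.
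Your two computations are exactly $f^{(1)}(s_1)\ge\frac{c_\mu}{2}\psi_{12}$ and $f^{(1)}(s_2/L)\gtrsim\psi_{21}$, and neither uses the branch condition. Monotonicity is what lets you combine them into $M\gtrsim\psi_{12}+\psi_{21}\ge R$.

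Part 2 has the same direction problem in its second branch. What must be shown there is a statement about $R$, namely $\psi_{12}+\psi_{21}\ge\frac12(\psi_{21}+\beta_{12})$, which follows from $\psi_{12}\ge\frac12\beta_{12}$. The paper gets this by converting $s_1>s_2/L$ into $s_2\le 3s_1\log(d_1/s_1)$ via Lemma~\ref{lem_logs}(i). Then the argument of the logarithm in $\psi_{12}$ is at least $\frac{d_2}{3s_2}$. Your estimate $\beta_{12}\lesssim M$ does not address this. Your first-branch argument giving $\beta_{21}\le\psi_{12}$ is fine.

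Finally, the regime you flag as the main obstacle is empty. The hypotheses give $\frac{d_2s_1}{s_2^2}\log\frac{d_1}{s_1}\ge\frac{d_1}{2es_1}\log\frac{d_1}{s_1}\ge\frac{1}{2ec'}\log\frac{1}{c'}>1$, so the logarithm in $\psi_{12}$ never needs linearizing. The real work is the cross comparison described above.
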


\begin{proof}[Proof of Lemma ~\ref{lem_simplify_s1d1big_truncchi2}]
\phantom{ }

\begin{enumerate}
\item We start by showing that $M \geq c(\psi_{21} + \psi_{12})$. 
    Using  Lemma~\ref{lem_lb_s1d1big_truncchi2} and Lemma~\ref{lem_logs_constants}.(i), we obtain
\begin{align*}
    2M &\geq\frac{1}{s_1}\log\Big(1 + c_\mu \frac{d_2s_1}{s_2^2}\log(\frac{d_1}{2es_1})\Big) + \frac{\log\big(\frac{d_1s_2}{s_1^2}\big)}{s_2}\log\Big(1 + c_\mu \frac{d_2}{s_2\log\big(\frac{d_1s_2}{s_1^2}\big)}\log \big(\frac{d_1s_2}{2es_1^2\log(\frac{d_1s_2}{s_1^2})}\big)\Big) \\
    &\geq\frac{c_\mu}{s_1}\log\Big(1 + \frac{d_2s_1}{s_2^2}\log(\frac{d_1}{2es_1})\Big) + c_\mu\frac{\log\big(\frac{d_1s_2}{s_1^2}\big)}{s_2}\log\Big(1 +  \frac{d_2}{s_2\log\big(\frac{d_1s_2}{s_1^2}\big)}\log \big(\frac{d_1s_2}{2es_1^2\log(\frac{d_1s_2}{s_1^2})}\big)\Big) \\
    &\geq c_\mu \left[\frac{1}{2}\psi_{12} + \frac{\log\big(\frac{d_1s_2}{s_1^2}\big)}{s_2}\log\Big(1 + \frac{d_2}{s_2\log\big(\frac{d_1s_2}{s_1^2}\big)}\log \big(\frac{d_1s_2}{2es_1^2\log(\frac{d_1s_2}{s_1^2})}\big)\Big)\right].
\end{align*}
To obtain $M \geq c(\psi_{12} + \psi_{21})$, it now remains to show
\[c\psi_{21} \leq \frac{\log\big(\frac{d_1s_2}{s_1^2}\big)}{s_2}\log\Big(1 + \frac{d_2}{s_2\log\big(\frac{d_1s_2}{s_1^2}\big)}\log \big(\frac{d_1s_2}{2es_1^2\log(\frac{d_1s_2}{s_1^2})}\big)\Big).\]

 Since $\frac{d_1s_2}{s_1^2} \geq \bar{c}^{-1} \geq 2$, it follows that $\frac{1}{2}\log(1 + \frac{d_1s_2}{s_1^2}) \leq \log(\frac{d_1s_2}{s_1^2})$ by the inequality $\sqrt{1+x} \leq x$ that holds true for any $x \geq 2$. Furthermore, the inequality $\frac{d_1s_2}{s_1^2} \geq \bar{c}^{-1} \geq 16e^4$ implies that 
 \begin{align*}
 \frac{\log\Big(\frac{d_1s_2}{2es_1^2\log\big(\frac{d_1s_2}{s_1^2}\big)}\Big)}{\log \big(\frac{d_1s_2}{s_1^2}\big)} &= 1 - \frac{\log\Big(2e\log\big(\frac{d_1s_2}{s_1^2}\big)\Big)}{\log\big(\frac{d_1s_2}{s_1^2}\big)} \\
 &\geq \frac12.
 \end{align*}
Moreover, since $d_2/s_2 \geq 1/c' \geq 4$ provided $c' \leq 1/4$, we also have that $\log\left(\frac{d_2}{2s_2}\right) \geq \frac{1}{2} \log\left(\frac{d_2}{s_2}\right)$. Combining these observations yields 
\begin{align}
    \frac{\log\big(\frac{d_1s_2}{s_1^2}\big)}{s_2}\log\Big(1 +  \frac{d_2}{s_2\log\big(\frac{d_1s_2}{s_1^2}\big)}\log \big(\frac{d_1s_2}{2es_1^2\log(\frac{d_1s_2}{s_1^2})}\big)\Big) &\geq \frac{\frac{1}{2}\log\big(1 + \frac{d_1s_2}{s_1^2}\big)}{s_2}\log\Big(
\frac12\frac{d_2}{s_2}\Big)\nonumber\\
& \geq \frac14\frac{\log\big(1 + \frac{d_1s_2}{s_1^2}\big)}{s_2} \log\Big(
\frac{d_2}{s_2}\Big)\nonumber\\
& \geq \frac{1}{4} \frac{1}{s_2} \log\left(1 + \frac{d_1s_2}{s_1^2}\log\Big(
\frac{d_2}{s_2}\Big)\right) \text{ by Lemma~\ref{lem_logs_constants}.(ii)}\nonumber\\
& = \frac{1}{4} \psi_{21}.\label{eq_psi12/4}
\end{align}
It remains to invoke Lemma~\ref{lem_lb_s1d1big_truncchi2} to conclude the proof of the first claim. 

    \item 
We now prove equation~\eqref{eq_lower_bound_M}. 
Assume first that $s_1 \leq \frac{s_2}{\log\big(\frac{d_1s_2}{s_1^2}\big)}$. 
Then we obtain
\begin{align*}
    \frac{d_1}{s_1} \leq \frac{\frac{d_1s_2}{s_1^2}}{\log\big(\frac{d_1s_2}{s_1^2}\big)},\quad  \text{ hence } \quad \frac{d_1s_2}{s_1^2} \geq \frac{d_1}{s_1} \log\left(\frac{d_1}{s_1}\right), \quad \text{ i.e. } \quad \frac{1}{s_1} \geq \frac{1}{s_2} \log\left(\frac{d_1}{s_1}\right)
\end{align*}
by Lemma~\ref{lem_logs}.(ii). 
Note that, since $\frac{d_2s_1}{s_2^2} \geq \frac{d_1}{s_1} \geq \frac{1}{c'} $ can be made arbitrarily large by taking $c'$ small enough, we have
\begin{align*}
    \beta_{21} &\leq \frac{1}{s_2} \log\left(\frac{d_1}{s_1}\right) \leq \frac{1}{s_1} \leq \frac{1}{s_1}\log\left(1 + \frac{d_2s_1}{s_2^2}\log\left(\frac{d_1}{s_1}\right)\right)  = \psi_{12}.
\end{align*}
Hence,
\begin{align*}
    R &= (\psi_{12} + \psi_{21}) \land \phi_{12} \land \phi_{21} \\
    &\geq \psi_{12}  \land \phi_{12} \land \phi_{21} \\
    &\geq \frac{1}{2}(\psi_{12} + \beta_{21}) \land \phi_{12} \land \phi_{21}
\end{align*}
as claimed.
Assume now that $s_1 > \frac{s_2}{\log\big(\frac{d_1s_2}{s_1^2}\big)}$. 
By assumption, we have that $\frac{d_1s_2}{s_1^2} \geq \bar{c}^{-1} \geq 2e$.
Therefore, 
\begin{align*}
    \frac{\frac{d_1 s_2}{s_1^2}}{\log\left(\frac{d_1 s_2}{s_1^2}\right)}\leq \frac{d_1}{s_1}, \quad  \text{ hence } \quad \frac{d_1 s_2}{s_1^2} \leq 2 \frac{d_1}{s_1} \log\left(\frac{2d_1}{s_1}\right), \quad \text{ i.e. } \quad s_2 \leq 3 s_1 \log\left(\frac{d_1}{s_1}\right)
\end{align*}
by Lemma~\ref{lem_logs}.(i) and for $c'$ sufficiently small.
This yields
\begin{align*}
    \psi_{12} = \frac{1}{s_1} \log\Big(1 +  \frac{d_2s_1}{s_2^2}\log\left(\frac{d_1}{s_1}\right)\Big)
    & \geq \frac{1}{s_1} \log\left(1+\frac{d_2}{s_2}\right)\geq \frac{1}{s_1} \log\left(\frac{d_2}{s_2}\right) \geq \beta_{12}.
\end{align*}
Therefore, we obtain
\begin{align*}
    R &= (\psi_{12} + \psi_{21}) \land \phi_{12} \land \phi_{21} \\
    &\geq \psi_{21}  \land \phi_{12} \land \phi_{21} \\
    &\geq \frac{1}{2}(\psi_{21} + \beta_{12}) \land \phi_{12} \land \phi_{21}.
\end{align*}
This concludes the proof of equation~\eqref{eq_lower_bound_M}.
    \end{enumerate}
\end{proof}

The lemma below shows that $R$ is a lower bound on the rate emerging from Lemma~\ref{lem_lb_s2d2big_maxlin}.

\begin{lemma}\label{lem_simplify_s2d2big_maxlin}
    Suppose that $s_1^2 \leq \bar{c}d_1s_2$ for some $\bar{c} \in (0, (2e)^{-4})$ and $s_1 < \frac12 d_1$. 
    Furthermore, suppose that $\frac{s^2_2}{d_2} \geq 2e\frac{s_1^2}{d_1}$ and that $\lceil \Ct \frac{s_1^2}{d_1}\rceil \leq \Big\lfloor \frac{\Ct s_2^2/d_2}{\log\big(\frac{d_1s_2^2}{s_1^2d_2}\big)}\Big\rfloor\wedge s_1$. Then for any $\alpha > 0$, there exists a constant $c > 0$ such that if $\mu^2 \leq cR$, 
    \[\EE\left[\exp\big(\mu^2XY\big)\one\bigg(\Ct \frac{s_1^2}{d_1}  \leq X \leq  \frac{\Ct s_2^2/d_2}{\log\big(\frac{d_1s_2^2}{s_1^2d_2}\big)}\wedge s_1\bigg)\right] < \alpha.\]
\end{lemma}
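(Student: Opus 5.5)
The proof reduces this statement to Lemma~\ref{lem_lb_s2d2big_maxlin}. That lemma already gives the desired bound on the truncated expectation whenever
\[\mu^2 \leq c'_\mu \frac{d_2}{s_2^2}\log\Big(\frac{\Ct}{2e}\vee \frac{d_1}{2es_1^2}\Big).\]
It therefore suffices to show that
\[R \lesssim \frac{d_2}{s_2^2}\log\Big(\frac{\Ct}{2e}\vee \frac{d_1}{2es_1^2}\Big).\]
Then taking $c = c'_\mu/C$, with $C$ the implicit constant, gives $\mu^2\le cR\le c'_\mu \frac{d_2}{s_2^2}\log(\cdots)$, and the lemma applies. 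Since $R\le \phi_{21}$ by definition, I compare this quantity with
\[\phi_{21} = \frac{d_2}{s_2^2}\log\Big(1+\frac{d_1}{s_1^2}\Big).\]

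The comparison splits into two cases according to the size of $d_1/s_1^2$.

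\emph{Case $d_1/s_1^2 \ge C_0$ for a large constant $C_0$.} Then $\log(1+d_1/s_1^2)\le 2\log\big(\frac{d_1}{2es_1^2}\big)$, provided $C_0$ is large enough that $1+x\le (x/2e)^2$ for $x\ge C_0$. Hence $\phi_{21}\le 2\frac{d_2}{s_2^2}\log\big(\frac{d_1}{2es_1^2}\vee\frac{\Ct}{2e}\big)$.

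\emph{Case $d_1/s_1^2 < C_0$.} Here $\log(1+d_1/s_1^2)\le \log(1+C_0)$, which is at most a constant multiple of $\log(\Ct/2e)$ because $\Ct> 2e^2$ may be assumed. (Enlarging $\Ct$ only helps: it appears in the lemma's conditions and we may fix it first.) Hence again $\phi_{21}\lesssim \frac{d_2}{s_2^2}\log\big(\frac{\Ct}{2e}\vee\frac{d_1}{2es_1^2}\big)$.

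In both cases $R\le\phi_{21}\lesssim \frac{d_2}{s_2^2}\log\big(\frac{\Ct}{2e}\vee\frac{d_1}{2es_1^2}\big)$, as needed.

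The main obstacle is bookkeeping of constants and hypotheses. The hypothesis on the index range in the present statement must match the one in Lemma~\ref{lem_lb_s2d2big_maxlin}: the lower endpoint should be $1\vee\lceil\Ct s_1^2/d_1\rceil$. If $\Ct s_1^2/d_1<1$, the event additionally includes $X=0$ in principle. I handle that by noting that the present statement's event starts at $\Ct s_1^2/d_1$, and I restrict to integers $X\ge 1$, since $X=0$ contributes $\bbP(X=0)\le 1$ and is accounted for in term I of the main proof.

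A further subtlety is that $\Ct$ in Lemma~\ref{lem_lb_s2d2big_maxlin} is produced by that lemma itself. The $\Ct$ used here must be the same one, and the constant $\log(\Ct/2e)$ in the second case is then fixed. The final $c$ depends on it only through absolute quantities, so no circularity arises.
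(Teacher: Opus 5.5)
Your proof is correct and is the paper's argument: reduce to Lemma~\ref{lem_lb_s2d2big_maxlin} via $R\le\phi_{21}\lesssim \frac{d_2}{s_2^2}\log\big(\frac{\Ct}{2e}\vee\frac{d_1}{2es_1^2}\big)$. Your direct two-case bound on $\log(1+d_1/s_1^2)$ is actually cleaner than the paper's chain, which passes through $d_1/s_1^2\le 2e^{\Ct}d_2/s_2^2$; the nonemptiness hypothesis only yields that bound when $s_2^2\lesssim d_2$, and your case split does not need it.

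Two small remarks. First, your $X=0$ discussion is unnecessary: since $\Ct s_1^2/d_1>0$, the event already forces $X\ge 1$, so no appeal to term I is needed. Second, if $R$ is read with the appendix's redefinition of $\phi$ (equal to $\infty$ when $d_2/s_2^2>C$), you should add the paper's observation that nonemptiness gives $d_2/s_2^2\le \Ct/\log(2e)$, which keeps $\phi_{21}$ finite.
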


\begin{proof}[Proof of Lemma ~\ref{lem_simplify_s2d2big_maxlin}]
    First, observe that the assumption $\lceil \Ct \frac{s_1^2}{d_1}\rceil \leq \big\lfloor \frac{\Ct s_2^2/d_2}{\log\big(\frac{d_1s_2^2}{s_1^2d_2}\big)}\big\rfloor\wedge s_1$ implies 
    $1 \leq \frac{\Ct s_2^2/d_2}{\log\big(\frac{d_1s_2^2}{s_1^2d_2}\big)}$. Rearranging terms, this implies 
    $$\frac{d_2}{s_2^2} \leq \Ct\log^{-1} \big(\frac{d_1s_2^2}{d_2s_1^2}\big) \leq \frac{\Ct}{\log(2e)},$$
     and therefore $\phi_{21} = \frac{d_2}{s_2^2}\log\big(1 + \frac{d_1}{s_1^2}\big)$ by adjusting the constant the definition of $\phi_{21}$ accordingly. Now, assume that $\mu^2 \leq \widetilde c_\mu R$ for some $\widetilde c_\mu>0$. Then, we have
        \begin{align*}
            \frac{1}{\widetilde c_\mu}\mu^2 &\leq R \\
            &\leq \phi_{21} \\
            &= \frac{d_2}{s_2^2}\log\big(1 + \frac{d_1}{s_1^2}\big) \\
            &\leq \Ct\frac{d_2}{s_2^2}\log\big(4e\big) \quad \text{(this follows from $\frac{d_1}{s_1^2}\leq 2e^{\Ct} \frac{d_2}{s_2^2}$)} \\
            &\leq \Ct\frac{d_2}{s_2^2}\log\big(4e\big)\log\big(\frac{\Ct}{2e}\vee \frac{d_1}{2es^2_1}\big).
        \end{align*}
        Letting $c'_\mu$ denote the constant from Lemma~\ref{lem_lb_s2d2big_maxlin}, we can now choose $\widetilde c_\mu = \frac{c_\mu'}{\Ct \log(4e)} $ sufficiently small so that, whenever $\mu^2 \leq \widetilde c_\mu R$, we have $\mu^2 \leq c_\mu' \frac{d_2}{s_2^2}\log\big(\frac{\Ct}{2e}\vee \frac{d_1}{2es^2_1}\big)$.         
        The conclusion follows by Lemma ~\ref{lem_lb_s2d2big_maxlin}, and the proof is complete. 
\end{proof}
The lemma below ensure that $R$ is a lower bound on the rate obtained in Lemma~\ref{lem_lb_s2d2big_truncchi2}.
\begin{lemma}\label{lem_simplify_s2d2big_truncchi2}
Suppose that $s_1^2 \leq \bar{c}d_1s_2$ for some $\bar{c} \in (0, (2e)^{-4})$ and $s_1 < c' d_1$, $s_2 \leq c' d_2$ for some sufficiently small constant $c'$. 
    Furthermore, suppose that $\frac{s^2_2}{d_2} \geq 2e\frac{s_1^2}{d_1}$. 
    \begin{enumerate}
        \item Then for any $\alpha > 0$, there exist constants $c_\mu > 0$ and $C_* \geq 1$ such that if $1 \vee \big\lceil \Ct \frac{s_1^2}{d_1}\big\rceil \vee \Big\lceil \frac{\Ct s_2^2/d_2}{\log\big(\frac{d_1s_2^2}{s_1^2d_2}\big)}\Big\rceil \leq s_1\wedge\big\lfloor\frac{s_2}{\log\big(\frac{d_1s_2}{s_1^2}\big)}\big\rfloor$and $\mu^2 \leq c_\mu R$, 
    \[\EE\Big[\exp(\mu^2XY)\one\Big( \Ct \frac{s_1^2}{d_1} \vee \frac{\Ct s_2^2/d_2}{\log\big(\frac{d_1s_2^2}{s_1^2d_2}\big)} \leq X \leq s_1\wedge\frac{s_2}{\log\big(\frac{d_1s_2}{s_1^2}\big)}\Big)\Big] < \alpha.\]
    \item Moreover, we have
    \begin{align*}
        R \geq \begin{cases}
            \big(\psi_{21} + \beta_{12}\big)\land \phi_{12} \land \phi_{21} & \text{ if } s_2 \leq s_1 \log\left(\frac{d_1}{s_1}\right)\\
            \big(\psi_{12} + \beta_{21}\big)\land \phi_{12} \land \phi_{21} & \text{ otherwise. } 
        \end{cases}
    \end{align*}
    \end{enumerate}
\end{lemma}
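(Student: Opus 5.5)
The plan mirrors the proof of Lemma~\ref{lem_simplify_s1d1big_truncchi2}: the first claim reduces to Lemma~\ref{lem_lb_s2d2big_truncchi2}, and the second is a case comparison of $\beta$ against $\psi$.

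\textbf{First claim.} Lemma~\ref{lem_lb_s2d2big_truncchi2} already controls the expectation provided $\mu^2 \le \tfrac12\bigl(f^{(1)}(s_1)+f^{(1)}(s_2/\log(d_1s_2/s_1^2))\bigr)$. Here $f^{(1)}(k)=\frac1k\log\bigl(1+c_\mu\frac{d_2}{s_2^2}k\log(\frac{kd_1}{2es_1^2})\bigr)$. So it suffices to show this quantity is at least $c(\psi_{12}+\psi_{21})\ge cR$ for a small constant $c$.

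\emph{Term $f^{(1)}(s_1)$.} Use Lemma~\ref{lem_logs_constants}.(i) to pull $c_\mu$ out of the logarithm. Since $s_1\le c'd_1$, we have $\log(\frac{d_1}{2es_1})\ge\frac12\log(\frac{d_1}{s_1})$, which gives $f^{(1)}(s_1)\gtrsim \psi_{12}$.

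\emph{Term $f^{(1)}(s_2/\log(d_1s_2/s_1^2))$.} Reuse verbatim the chain leading to \eqref{eq_psi12/4}. That chain needs only $d_1s_2/s_1^2\ge\bar c^{-1}\ge16e^4$ and $d_2/s_2\ge 1/c'$, both of which hold here. It yields $f^{(1)}(s_2/\log(d_1s_2/s_1^2))\gtrsim\psi_{21}$.

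Then take $c_\mu$ in the statement to be $c\,c_\mu^{\mathrm{Lem}}/2$ and invoke Lemma~\ref{lem_lb_s2d2big_truncchi2}.

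\textbf{Second claim.} Since $R\ge \psi_{21}\wedge\phi_{12}\wedge\phi_{21}$ and $R\ge\psi_{12}\wedge\phi_{12}\wedge\phi_{21}$, it suffices to prove the following. If $s_2\le s_1\log(d_1/s_1)$, then $\beta_{12}\lesssim\psi_{21}$; otherwise, $\beta_{21}\lesssim\psi_{12}$. The constant lost is absorbed (as in Lemma~\ref{lem_simplify_s1d1big_truncchi2}, possibly with a factor $\tfrac12$).

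\emph{Case $s_2> s_1\log(d_1/s_1)$.} Here $\beta_{21}\le\frac1{s_2}\log(d_1/s_1)<\frac1{s_1}$. Moreover, $\frac{d_2s_1}{s_2^2}\log(d_1/s_1)\ge \frac{d_2}{s_2}\cdot\frac{s_1\log(d_1/s_1)}{s_2}$. If this quantity is at least $1$, then $\psi_{12}\ge\log 2/s_1\ge\log2\,\beta_{21}$. If it is less than $1$, then linearizing the logarithm gives $\psi_{12}\asymp\frac{d_2}{s_2^2}\log(d_1/s_1)\ge\frac{d_2}{s_2}\beta_{21}\ge\beta_{21}$.

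\emph{Case $s_2\le s_1\log(d_1/s_1)$.} We have $\beta_{12}\le\frac1{s_1}\log(d_2/s_2)$, and it is nonzero only when $\frac{d_1s_2}{s_1^2}\log(d_2/s_2)>1$. In that event $\psi_{21}\ge\log2/s_2$. So it remains to show $\frac{s_2}{s_1}\log(d_2/s_2)\lesssim1$, or else to fall back on the linearized form $\psi_{21}\asymp\frac{d_1}{s_1^2}\log(d_2/s_2)\ge\frac{d_1}{s_1}\beta_{12}$.

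The main obstacle is that $\frac{s_2}{s_1}\log(d_2/s_2)$ need not be bounded; the case hypothesis only gives $s_2/s_1\le\log(d_1/s_1)$. When it is not bounded, I would compare directly: $\psi_{21}=\frac1{s_2}\log(1+\frac{d_1s_2}{s_1^2}\log\frac{d_2}{s_2})$ versus $\frac1{s_1}\log\frac{d_2}{s_2}$. Using $s_2^2/d_2\ge 2e s_1^2/d_1$, the argument of the logarithm in $\psi_{21}$ is at least of order $(d_2/s_2)(s_2/s_1)^{2}\cdot(s_1/s_2)\log\frac{d_2}{s_2}$. Combined with $s_2\le s_1\log(d_1/s_1)$ and the standing assumption $\frac{d_1}{s_1}\ge e\log\frac{d_2}{s_2}$, this should give $\psi_{21}\gtrsim\frac1{s_1}\log(d_2/s_2)$ up to constants, possibly via Lemma~\ref{lem_logs}. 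If that fails, the fallback is to bound by $\phi_{21}$ instead, using $\frac{d_2}{s_2^2}\le\frac{d_1}{2es_1^2}$.
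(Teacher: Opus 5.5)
Your treatment of the first claim is the same as the paper's. You reduce to Lemma~\ref{lem_lb_s2d2big_truncchi2} by showing $\tfrac12\bigl(f^{(1)}(s_1)+f^{(1)}(s_2/\log(d_1s_2/s_1^2))\bigr)\gtrsim\psi_{12}+\psi_{21}$, reusing the chain behind \eqref{eq_psi12/4}. The second claim, however, has a genuine gap. Your reduction throws away one $\psi$ via $R\ge\psi_{21}\wedge\phi_{12}\wedge\phi_{21}$, and then tries to show $\beta_{12}\lesssim\psi_{21}$ when $s_2\le s_1\log(d_1/s_1)$. That inequality is false in this regime, so neither your direct comparison nor the $\phi_{21}$ fallback can close it.

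Take $s_1=1$, $d_1=d_2=D$, $s_2=\lfloor\log D\rfloor$ with $D$ large. All hypotheses hold, including $s_2^2/d_2\ge 2e\,s_1^2/d_1$ and $d_1/s_1\ge e\log(d_2/s_2)$, and $s_2\le s_1\log(d_1/s_1)$. Yet $\beta_{12}=\log(D/s_2)\asymp\log D$, while $\psi_{21}=\frac1{s_2}\log\bigl(1+Ds_2\log(D/s_2)\bigr)\asymp 1$. Here $\phi_{21}\asymp D/\log D$ and $\phi_{12}\asymp D\log D$. So $\psi_{21}\wedge\phi_{12}\wedge\phi_{21}\asymp1$, whereas the target $(\psi_{21}+\beta_{12})\wedge\phi_{12}\wedge\phi_{21}\asymp\log D$.

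The missing idea is to keep the sum $\psi_{12}+\psi_{21}$ in $R$ and dominate $\beta_{12}$ by $\psi_{12}$, which is exactly the term your reduction discards. When $s_2\le s_1\log(d_1/s_1)$ you have $\frac{d_2s_1}{s_2^2}\log\frac{d_1}{s_1}\ge\frac{d_2}{s_2}$. Hence $\psi_{12}\ge\frac1{s_1}\log\bigl(1+\frac{d_2}{s_2}\bigr)\ge\beta_{12}$, so $\psi_{12}+\psi_{21}\ge\psi_{21}+\beta_{12}$ directly.

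Symmetrically, when $s_2>s_1\log(d_1/s_1)$, you have $\frac{d_1s_2}{s_1^2}\log\frac{d_2}{s_2}>\frac{d_1}{s_1}\log\frac{d_1}{s_1}\log\frac{d_2}{s_2}\ge\frac{d_1}{s_1}$ for small $c'$. This gives $\psi_{21}\ge\frac1{s_2}\log\frac{d_1}{s_1}\ge\beta_{21}$. This is the paper's argument, and it gives the inequality exactly as stated, with no constant lost.

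Your treatment of the second case ($\beta_{21}\lesssim\psi_{12}$) is actually correct. However, it yields the bound only up to a multiplicative constant, which is weaker than the statement as written. That loss is harmless for the later $\gtrsim$ use in Lemma~\ref{lem_simplified_rate_for_UB}.
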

\begin{proof}[Proof of Lemma ~\ref{lem_simplify_s2d2big_truncchi2}]
\phantom{}

\begin{enumerate}
    \item 
    By the assumption $\frac{d_1s_2}{s_1^2} \geq \bar{c}^{-1} \geq 16e^4$, 
    we can repeat the steps leading to equation~\eqref{eq_psi12/4} to obtain 
    \begin{align*}
         \frac{\log\big(\frac{d_1s_2}{s_1^2}\big)}{s_2}\log\Big(1 + c_\mu \frac{d_2}{s_2\log\big(\frac{d_1s_2}{s_1^2}\big)}\log \big(\frac{d_1s_2}{2es_1^2\log(\frac{d_1s_2}{s_1^2})}\big)\Big)&\geq \frac{c_\mu}{4}\frac{\log\Big(1 + \frac{d_1s_2}{s_1^2}\log\big(\frac{d_2}{s_2}\big)\Big)}{s_2} \\
    &= \frac{c_\mu}{4} \psi_{21}.
    \end{align*}
    It immediately follows that, for some small enough constant $c$, we have 
    \begin{align*}
        \mu^2 &\leq cR \\
        &\leq c(\psi_{12} + \psi_{21}) \\
        &\leq \frac{1}{2}\frac{1}{s_1}\log\Big(1 + c_\mu \frac{d_2s_1}{s_2^2}\log(\frac{d_1}{2es_1})\Big) + \frac12 \frac{\log\big(\frac{d_1s_2}{s_1^2}\big)}{s_2}\log\Big(1 + c_\mu \frac{d_2}{s_2\log\big(\frac{d_1s_2}{s_1^2}\big)}\log \big(\frac{d_1s_2}{2es_1^2\log(\frac{d_1s_2}{s_1^2})}\big)\Big).
    \end{align*}
    The conclusion follows from Lemma ~\ref{lem_lb_s2d2big_truncchi2}.
    \item Assume now that $s_2 \leq s_1 \log\left(\frac{d_1}{s_1}\right)$. 
    Then we have
    \begin{align*}
        \psi_{12} &= \frac{1}{s_1} \log\left(1+\frac{d_2s_1}{s_2^2} \log\left(\frac{d_1}{s_1}\right)\right)\\
        & \geq \frac{1}{s_1} \log\left(1+\frac{d_2}{s_2} \right)\\
        & \geq \beta_{12}, 
    \end{align*}
    which yields the desired result. 
    Assume now that $s_2 > s_1 \log\left(\frac{d_1}{s_1}\right)$. 
    Then
    \begin{align*}
        \psi_{21} &= \frac{1}{s_2} \log\left(1+\frac{d_1s_2}{s_1^2} \log\left(\frac{d_2}{s_2}\right)\right)\\
        & > \frac{1}{s_2} \log\left(\frac{d_1}{s_1}\log\left(\frac{d_1}{s_1}\right)\log\left(\frac{d_2}{s_2}\right)\right)\\
        & \geq \frac{1}{s_2}\log\left(\frac{d_1}{s_1}\right)\\
        & \geq \beta_{21}.
    \end{align*}
    This concludes the proof.
    \end{enumerate}
\end{proof}

The lemma below ensure that $R$ is a lower bound on the rate obtained in Lemma~\ref{lem_max_test_lower_bound}.
\begin{lemma}\label{lem_simplification_rate_max_test}
    Suppose that $s_1^2 \leq \bar{c}d_1s_2$ for some $\bar{c} \in (0,(2e)^{-4})$ and $s_1 \leq c'd_1$ and $s_2 \leq c' d_2$ for some small enough constant $c'>0$. Furthermore, suppose that $s_2 < s_1\log\big((d_1s_2)/s_1^2\big)$ and $\frac{d_1}{s_1} \geq e \log\left(\frac{d_2}{s_2}\right)$. 
    \begin{enumerate}
        \item Then for any $\alpha_4 > 0$, there exist constants $c_\mu > 0$ and $C_* \geq 1$ such that if $\mu^2 \leq c_\mu R,$ then 
    \[\EE\Big[\exp(\mu^2XY)\one\Big(X \geq \frac{s_2}{\log\big(\frac{d_1s_2}{s_1^2}\big)} \lor  \Ct \frac{s_1^2}{d_1}\Big\rceil\Big)\Big] < \alpha_4.\]
    \item Moreover, it holds that 
    \begin{align*}
        R \geq \begin{cases}
            \frac{1}{4}\big((\psi_{21}+\beta_{12}) + (\psi_{12} + \beta_{21})\big) \land \phi_{12} \land \phi_{21}& \text{ if } s_1 \leq s_2\log\left(\frac{d_2}{s_2}\right)\\
            \frac{1}{2}\big(\psi_{21} + \beta_{12}\big) \land \phi_{12} \land \phi_{21} & \text{ otherwise.}
        \end{cases}
    \end{align*}
    \end{enumerate}
\end{lemma}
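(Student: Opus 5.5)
Part 1 will be derived from the first claim of Lemma~\ref{lem_max_test_lower_bound}, and part 2 from its second claim, in each case by checking that $\mu^2\le c_\mu R$ implies the hypothesis on $\mu^2$ required there. Throughout, $R\le \psi_{12}+\psi_{21}$ is the only upper bound on $R$ I will use for part 1. Since $s_2<s_1\log(d_1s_2/s_1^2)$, the lower cutoff $\frac{s_2}{\log(d_1s_2/s_1^2)}$ is below $s_1$, so the summation range is nonempty.

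\textbf{Step 1 (part 1).} I aim to show $\psi_{12}+\psi_{21}\lesssim \frac{1}{s_2}\log\big(\frac{s_2d_1}{2es_1^2}\log(d_2/s_2)\big)$.

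For $\psi_{21}$: we have $\frac{d_1s_2}{s_1^2}\ge \bar c^{-1}\ge 16e^4$ and $\log(d_2/s_2)\ge \log(1/c')$ is large. Hence $1+\frac{d_1s_2}{s_1^2}\log(d_2/s_2)\le 2\frac{d_1s_2}{s_1^2}\log(d_2/s_2)$. Also $\log(2x)\le C\log(x/(2e))$ for large $x$, so $\psi_{21}$ is bounded by a constant times the target.

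For $\psi_{12}$: I will use the assumption $s_2<s_1\log(d_1s_2/s_1^2)$, i.e.\ $1/s_1<\log(d_1s_2/s_1^2)/s_2$. I bound
\[
\psi_{12}\le \frac{1}{s_1}\Big[\log\big(2\tfrac{d_2}{s_2}\big)+\log\big(\tfrac{s_1}{s_2}\log\tfrac{d_1}{s_1}\big)\Big].
\]
The first term is $\le \frac{\log(2d_2/s_2)}{s_2}\log(d_1s_2/s_1^2)$, which is not obviously comparable to the target. So instead I will exploit $\frac{d_1}{s_1}\ge e\log(d_2/s_2)$ together with $s_1^2\le\bar c d_1s_2$, giving $\frac{d_1s_2}{s_1^2}\ge \frac{s_2}{s_1}e\log(d_2/s_2)$. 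I will split according to whether $s_1\le s_2\log(d_2/s_2)$.
\begin{itemize}
\item If $s_1\le s_2\log(d_2/s_2)$, then $\frac{1}{s_1}\log\frac{d_2}{s_2}\gtrsim$ corresponds to the regime of the second claim, handled in Step 2; for part 1 I need to show $\psi_{12}\lesssim \frac{1}{s_2}\log(\cdots)$. Here I plan to use that $\frac{1}{s_1}\log(1+x)$ with $x=\frac{d_2s_1}{s_2^2}\log\frac{d_1}{s_1}$ can be compared via the monotonicity argument of Lemma~\ref{lem_max_test_lower_bound}: the function $f(k)$ there has minimum $f(k^*)$, and $f(s_1)\ge f(k^*)$. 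So the cleaner route is to show directly that $R\lesssim f(k^*)$, where $f(k^*)\asymp\frac1{s_2}\log\big(\frac{s_2d_1\log(d_2/s_2)}{s_1^2}\big)$, by bounding $R\le\psi_{21}$ as above.
\item If $s_1> s_2\log(d_2/s_2)$, the same bound $R\le\psi_{21}$ suffices.
\end{itemize}
So part 1 really only needs $R\le\psi_{21}\lesssim$ target, and $\psi_{12}$ is irrelevant there. Then choose $c_\mu$ small and apply Lemma~\ref{lem_max_test_lower_bound}.

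\textbf{Step 2 (part 2, the main obstacle).} Case $s_1> s_2\log(d_2/s_2)$: I will show $\beta_{12}\le\psi_{21}$. We have $\beta_{12}\le\frac{1}{s_1}\log\frac{d_2}{s_2}<\frac1{s_2}$, and $\psi_{21}\ge \frac{\log 2}{s_2}$ since the argument of its logarithm exceeds $2$. Therefore $R\ge\psi_{21}\wedge\phi_{12}\wedge\phi_{21}\ge\frac12(\psi_{21}+\beta_{12})\wedge\phi_{12}\wedge\phi_{21}$, up to constants.

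Case $s_1\le s_2\log(d_2/s_2)$: I need $\psi_{12}+\psi_{21}\gtrsim (\psi_{21}+\beta_{12})+(\psi_{12}+\beta_{21})$, i.e.\ $\beta_{12}\lesssim\psi_{12}$ and $\beta_{21}\lesssim \psi_{12}+\psi_{21}$.
\begin{itemize}
\item Since $s_2<s_1\log(d_1s_2/s_1^2)$ and $d_1/s_1$ is large, I expect $\frac{d_2s_1}{s_2^2}\log\frac{d_1}{s_1}\ge \frac{d_2}{s_2}\cdot\frac{\log(d_1/s_1)}{\log(d_1s_2/s_1^2)}\gtrsim \frac{d_2}{s_2}$. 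This needs $\log(d_1s_2/s_1^2)\lesssim\log(d_1/s_1)$, which follows from $s_2/s_1\le\log(d_1 s_2/s_1^2)$ via Lemma~\ref{lem_logs}. It then gives $\psi_{12}\gtrsim\frac1{s_1}\log\frac{d_2}{s_2}\ge\beta_{12}$.
\item For $\beta_{21}\le\frac1{s_2}\log\frac{d_1}{s_1}$, I use $\psi_{21}\ge\frac1{s_2}\log\big(\frac{d_1}{s_1}\cdot\frac{s_2}{s_1}\log\frac{d_2}{s_2}\big)$. I need $\frac{s_2}{s_1}\log\frac{d_2}{s_2}\gtrsim 1$, which holds in this case ($s_1\le s_2\log(d_2/s_2)$).
\end{itemize}
Checking this log-comparison carefully (the $\log(d_1s_2/s_1^2)$ versus $\log(d_1/s_1)$ step, with explicit constants) is the delicate point.
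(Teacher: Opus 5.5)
\textbf{Part 1 has a genuine gap.} Your Step 1 rests on the claim that part 1 ``only needs $R\le\psi_{21}$'' and that $\psi_{12}$ is irrelevant; this is false. Since $R=(\psi_{12}+\psi_{21})\wedge\phi_{12}\wedge\phi_{21}$, the $\psi$-part only gives $R\le\psi_{12}+\psi_{21}$. When $s_1\le s_2\log(d_2/s_2)$, the term $\psi_{12}$ can exceed $\psi_{21}$ by an unbounded factor. For instance, take $s_1=1$, $s_2=2$, $d_1=D$, $d_2=2e^{D/e}$ with $D$ large. This satisfies every hypothesis of the lemma, and $\phi_{12}\wedge\phi_{21}\gtrsim D^2$, so $R\asymp\psi_{12}\asymp D$. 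Yet both $\psi_{21}$ and the threshold $\frac{1}{s_2}\log\big(\frac{s_2d_1\log(d_2/s_2)}{2es_1^2}\big)$ of the first claim of Lemma~\ref{lem_max_test_lower_bound} are of order $\log D$.

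In this regime the minimiser $k^*$ of $f$ exceeds $s_1$, so $\min_{\mathcal{A}}f=f(s_1)\gg f(k^*)$, and no bound of the form $R\lesssim f(k^*)$ can hold. The term $\frac1{s_1}\log(2d_2/s_2)$ that you found ``not obviously comparable to the target'' is exactly this obstruction. What is needed here is the \emph{second} claim of Lemma~\ref{lem_max_test_lower_bound}, whose threshold $\frac{c_\mu}{s_2}\log\frac{d_1}{2es_1}+\frac1{s_1}\log\big(c\frac{d_2}{s_2}\big)$ is precisely $f(s_1)$. So that claim is used inside part 1, not for part 2 as your opening sentence says. The paper first derives $s_2\le 4s_1\log(d_1/s_1)$. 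Using also $\log(d_2/s_2)\le d_1/(es_1)$, it then shows $\psi_{12}+\psi_{21}\le\frac{1}{s_1}\log\frac{d_2}{s_2}+\frac{6}{s_2}\log\frac{d_1}{s_1}$, which is of the same order as that threshold.

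\textbf{The complementary subcase also needs an argument.} When $s_1>s_2\log(d_2/s_2)$, the second claim is unavailable. There $R\lesssim\psi_{21}$ does hold, but only because $\psi_{12}\lesssim\psi_{21}$, and this is the substantive content of the paper's part 1; you do not prove it. The paper obtains it from the concavity of $x\mapsto\log(a_1x\log a_2)-x\log\big(a_2\log(a_1)/x\big)$ on $\big[\frac{1}{\bar c a_1},\frac{1}{\log a_2}\big]$, with $x=s_2/s_1$ and $a_j=d_j/s_j$.

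A direct argument also works. Start from $\psi_{12}\le\frac{2}{s_1}\big[\log\frac{d_2}{s_2}+\log\frac{s_1}{s_2}+\log\log\frac{d_1}{s_1}\big]$. The first two terms are $O(1/s_2)$, because $s_1>s_2\log(d_2/s_2)$ and $\log t\le t/e$, and $1/s_2<\psi_{21}$. The last term is at most $2/s_2$ if $s_1/s_2\ge\log\log(d_1/s_1)$. Otherwise $\frac{d_1s_2}{s_1^2}\ge\frac{d_1/s_1}{\log\log(d_1/s_1)}$, which gives $\psi_{21}\ge\frac{1}{2s_2}\log\frac{d_1}{s_1}$; this dominates the last term up to a factor $4$, since $s_1\ge s_2$. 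Either way, this step has to be supplied.

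\textbf{Part 2 is fine.} For $s_1\le s_2\log(d_2/s_2)$ it is the paper's argument: your comparison of $\log(d_1s_2/s_1^2)$ with $\log(d_1/s_1)$ is exactly how the paper gets $s_2\le4s_1\log(d_1/s_1)$ from Lemma~\ref{lem_logs}(i). For $s_1>s_2\log(d_2/s_2)$, your bound $\beta_{12}<1/s_2<\psi_{21}$ is simpler than the paper's route through the concavity inequality. It even recovers the factor $\frac12$, since the argument of the logarithm in $\psi_{21}$ is at least $\bar c^{-1}\ge16e^4$.
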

\begin{proof}[Proof of Lemma~\ref{lem_simplification_rate_max_test}]
\phantom{ }

\begin{enumerate}
    \item 

Note that the relation $s_2 < s_1\log\big((d_1s_2)/s_1^2\big)$ implies that 
\begin{align*}
    \frac{d_1s_2}{s_1^2} &< \frac{d_1}{s_1} \log\left(\frac{d_1s_2}{s_1^2}\right) \leq \frac{2d_1}{s_1} \log\left(\frac{2d_1}{s_1}\right) \quad \text{ by Lemma~\ref{lem_logs}.(i)}
\end{align*}
which yields 
\begin{align}
    s_2 \leq 4 s_1 \log(d_1/s_1). \label{eq_s2<s1_log(d1/s1)}
\end{align}
Now, we obtain 
\begin{align*}
    \frac{d_2s_1}{s_2^2} \log\left(\frac{d_1}{s_1}\right) = \frac{d_2}{s_2} \cdot \frac{s_1}{s_2} \log\left(\frac{d_1}{s_1}\right) \geq \frac{1}{4c'} \geq 1 
\end{align*}
provided $c' \leq 1/4$, and 
\begin{align*}
    \frac{d_1s_2}{s_1^2} \log\left(\frac{d_2}{s_2}\right) \geq \frac{1}{\bar c} \log\left(\frac{1}{c'}\right) \geq 1 
\end{align*}
provided $c'$ and $\bar c$ are small enough, which ensures that $\beta_{12 } = \frac{1}{s_1} \log\left(\frac{d_2}{s_2}\right)$ and $\beta_{21} = \frac{1}{s_2} \log\left(\frac{d_1}{s_1}\right)$. 

Assume first that  $s_1 < s_2\log(d_2/s_2)$. 
Then, we have 
\begin{align*}
    R &\leq \psi_{12} + \psi_{21}\\
    & = \frac{1}{s_1} \log\left(1+ \frac{d_2s_1}{s_2^2} \log\left(\frac{d_1}{s_1}\right)\right) + \frac{1}{s_2} \log\left(1+ \frac{d_1s_2}{s_1^2} \log\left(\frac{d_2}{s_2}\right)\right)\\
    & \leq \frac{1}{s_1} \log\left(2\frac{d_2}{s_2} \frac{s_1}{s_2} \log\left(\frac{d_1}{s_1}\right)\right) + \frac{1}{s_2} \log\left(1+ \frac{d_1}{s_1} 4\log\left(\frac{d_1}{s_1}\right)\log\left(\frac{d_2}{s_2}\right)\right)\\
    & \leq \frac{1}{s_1} \log\left(  \frac{d_2}{s_2} \right) + \frac{1}{s_1} \log\left(\frac{2s_1}{s_2} \log\left(\frac{d_1}{s_1}\right)\right) + \frac{1}{s_2} \log\left(\frac{8}{e} \left(\frac{d_1}{s_1} \right)^2\log\left(\frac{d_1}{s_1}\right)\right)\\
    & \leq \frac{1}{s_1} \log\left(  \frac{d_2}{s_2} \right) + \frac{1}{s_1} \log\left(\frac{2s_1}{s_2} \log\left(\frac{d_1}{s_1}\right)\right) + \frac{4}{s_2} \log\left(\frac{d_1}{s_1} \right)\\
    & \leq \frac{1}{s_1} \log\left(  \frac{d_2}{s_2} \right) + \frac{6}{s_2} \log\left(\frac{d_1}{s_1} \right)
    \\ &\leq 6 \beta_{12} + 6 \beta_{21}.
\end{align*}
In the second to last inequality, we used the fact that $\log\left(\frac{2s_1}{s_2} \log\left(\frac{d_1}{s_1}\right)\right) \leq \frac{2s_1}{s_2} \log\left(\frac{d_1}{s_1} \right)$ due to the inequality $\log(x) \leq x$ that holds true for any $x>0$. 
It follows from  Lemma~\ref{lem_max_test_lower_bound} that if $\mu^2 \leq c_\mu R$ for some sufficiently small constant $c_\mu>0$, then  
    $$\EE\Big[\exp(\mu^2XY)\one\Big(X \geq \frac{s_2}{\log\big(\frac{d_1s_2}{s_1^2}\big)}\lor  \Ct \frac{s_1^2}{d_1}\Big)\Big] < \alpha.$$

    Assume now that $s_1 > s_2\log\left(\frac{d_2}{s_2}\right)$. 
    Again, we have $R \leq \psi_{12} + \psi_{21}$. 
    We aim to show that
    \begin{align}
        \psi_{12} + \psi_{21} \leq \frac{4}{s_2}\log\big(\frac{s_2d_1\log(d_2/s_2)}{2es_1^2}\big),\label{eq_target_simplification_max_rate}
    \end{align}
    which will conclude the proof since it ensures that, if $\mu^2 \leq c_\mu R$ for some small enough $c_\mu>0$, then 
    $$\EE\Big[\exp(\mu^2XY)\one\Big(X \geq \frac{s_2}{\log\big(\frac{d_1s_2}{s_1^2}\big)}\lor  \Ct \frac{s_1^2}{d_1}\Big)\Big] < \alpha.$$ 
    by Lemma~\ref{lem_max_test_lower_bound}.
    We observe that the assumption $s_1^2 \leq \bar c d_1 s_2$ ensures that $\frac{d_1s_2}{s_1^2}\log(d_2/s_2) > 2$, which yields
    \begin{align*}
        \psi_{21} &= \frac{1}{s_2} \log\left(1+ \frac{d_1s_2}{s_1^2} \log\left(\frac{d_2}{s_2}\right)\right) \\
        & \leq \frac{2}{s_2} \log\left(\frac{d_1s_2}{s_1^2} \log\left(\frac{d_2}{s_2}\right)\right),
    \end{align*}   
    and further implies that 
    \begin{align}
        \frac{1}{s_2} \log\left(\frac{d_1s_2}{s_1^2} \log\left(\frac{d_2}{s_2}\right)\right) \leq \psi_{21}  \leq \frac{2}{s_2} \log\left(\frac{d_1s_2}{s_1^2} \log\left(\frac{d_2}{s_2}\right)\right).\label{eq_equiv_psi21}
    \end{align}
    Now, by the relations $s_2 \leq 4 s_1\log(d_1/s_1)$ and $s_j \leq c' d_j$ for some small enough constant $c'>0$, we also have $\frac{d_2s_1}{s_2^2} \log\left(\frac{d_1}{s_1}\right)>2$, which yields
    \begin{align*}
        \psi_{12} = \frac{1}{s_1} \log\left(1+ \frac{d_2s_1}{s_2^2} \log\left(\frac{d_1}{s_1}\right)\right) \leq \frac{2}{s_1} \log\left( \frac{d_2s_1}{s_2^2} \log\left(\frac{d_1}{s_1}\right)\right),
    \end{align*}
    so that
    \begin{align}
        \frac{1}{s_1} \log\left( \frac{d_2s_1}{s_2^2} \log\left(\frac{d_1}{s_1}\right)\right) \leq \psi_{12} \leq \frac{2}{s_1} \log\left( \frac{d_2s_1}{s_2^2} \log\left(\frac{d_1}{s_1}\right)\right) \label{eq_equiv_psi12}
    \end{align}
    To obtain the desired inequality~\eqref{eq_target_simplification_max_rate}, it therefore suffices to show that 
    \begin{align}
        \frac{1}{s_1} \log\left( \frac{d_2s_1}{s_2^2} \log\left(\frac{d_1}{s_1}\right)\right) \leq \frac{1}{s_2} \log\left( \frac{d_1s_2}{s_1^2} \log\left(\frac{d_2}{s_2}\right)\right)\label{eq_new_target}
    \end{align}
    under the assumptions $\frac{s_1}{\bar c d_1} \leq \frac{s_2}{s_1} \leq \frac{1}{\log(d_2/s_2)}$ and $\frac{d_j}{s_j} \geq \frac{1}{c'}$ for $j\in\{1,2\}$, as well as $\frac{d_1}{s_1} \geq e \log\left(\frac{d_2}{s_2}\right)$. 
    
    To prove this, we let $x = \frac{s_2}{s_1}$, and $a_j = \frac{d_j}{s_j}$ for $j=1,2$ and introduce the function 
\begin{align*}
    f(x) = \log\left(a_1 x \log(a_2)\right) - x \log\left(\frac{a_2\log(a_1)}{x}\right).
\end{align*}
We note that the desired inequality~\eqref{eq_new_target} is equivalent to showing that $f(x) \geq 0$ for any $x \in \left[\frac{1}{\bar c a_1}, \frac{1}{\log(a_2)}\right]$ where $a_1 \geq e \log(a_2)$ and $a_1, a_2 \geq \frac{1}{c'}$.
We have, for any $x \in \left[\frac{1}{\bar c a_1}, \frac{1}{\log(a_2)}\right]$
\begin{align*}
    f'(x) &= \frac{1}{x} - \log(a_2 \log(a_1)) + \log(x) + 1 \\
    & = \log(x) + \frac{1}{x} - \log\left(\frac{a_2 \log(a_1)}{e}\right),\\
    f''(x) &= \frac{1}{x} - \frac{1}{x^2} \leq 0,
\end{align*}
since $x \leq \frac{1}{\log(a_2)} \leq 1$. 
Therefore, $f$ is concave, and to prove that $f \geq 0$ over $\left[\frac{1}{\bar c a_1}, \frac{1}{\log(a_2)}\right]$, it suffice to show that $f\left(\frac{1}{\bar c a_1}\right) \geq 0$ and $f \left(\frac{1}{\log(a_2)}\right)\geq 0$. 
We have
\begin{align*}
    f\left(\frac{1}{\bar c a_1}\right) &= \log\left(\frac{\log(a_2)}{\bar c}\right) - \frac{\log(\bar c a_1)}{\bar c a_1}  - \frac{\log(a_2)}{\bar c a_1} - \frac{\log\log(a_1)}{\bar c a_1}\\
    &\geq \log\left(\frac{\log(a_2)}{\bar c}\right)   - \frac{\log(a_2)}{\bar c a_1} - 1\\
    & \geq \frac{1}{2}\log\left(\frac{\log(a_2)}{\bar c}\right) - 1 \geq 0.
\end{align*}
In the last step, we used the fact that, since $a_1 \geq e \log(a_2)$, we also have $\bar c a_1 \geq \frac{1}{2} \frac{\log(a_2)}{\log \log(a_2/\bar c)}$
provided $a_2$ is large enough, which can be enforced by choosing $c'>0$ small enough. 
Similarly, we have
\begin{align*}
    f\left(\frac{1}{\log(a_2)}\right) &= \log(a_1) - \frac{1}{\log(a_2)} \log\Big(a_2\log(a_2)\log(a_1)\Big)\\
    & = \log(a_1) - 1 - \frac{\log \log(a_2)}{\log(a_2)} - \frac{\log(a_1)}{\log(a_2)}\\
    & \geq \log(a_1) - 2 - \frac{\log(a_1)}{\log(a_2)}\\
    &
    \geq 0,
\end{align*}
provided $a_1$ and $a_2$ are larger than suitably large constants, which can be enforced by choosing $c'$ small enough. 
This concludes the proof of the first claim.
\item Assume first that $s_1 < s_2 \log \left(\frac{d_2}{s_2}\right)$. 
Then we have 
    $$\begin{aligned}
    \psi_{12} + \psi_{21}
    & = \frac{1}{s_1} \log\left(1+ \frac{d_2s_1}{s_2^2} \log\left(\frac{d_1}{s_1}\right)\right) + \frac{1}{s_2} \log\left(1+ \frac{d_1s_2}{s_1^2} \log\left(\frac{d_2}{s_2}\right)\right)\\
    & \geq \frac{1}{s_1} \log\left( 1+ \frac{d_2}{4s_2} \right) + \frac{1}{s_2} \log\left(  \frac{d_1s_2}{s_1^2} \log\left(\frac{d_2}{s_2}\right)\right)\\
    & > \frac{1}{4s_1} \log\left( 1+ \frac{d_2}{s_2} \right) + \frac{1}{s_2} \log\left(\frac{d_1}{s_1}\right)\\
    & \geq \frac{1}{4} \big(\beta_{12} + \beta_{21}\big),
\end{aligned}
$$
which yields $R \geq \frac{1}{8} \big((\psi_{21} + \beta_{12}) + (\psi_{12}+\beta_{21})\big) \land \phi_{12} \land \phi_{21}$, as desired. 

Assume now that $s_1 \geq s_2 \log \left(\frac{d_2}{s_2}\right)$. 
Combining~\eqref{eq_equiv_psi21}, \eqref{eq_equiv_psi12} and~\eqref{eq_new_target} yields $\psi_{12} + \psi_{21} \asymp \psi_{21}$. 
We now show that $\psi_{21} \asymp \psi_{21} + \beta_{12}$. 
By assumption, we have
\begin{align*}
    \frac{d_1s_2}{s_1^2} \log\left(\frac{d_2}{s_2}\right)\geq \frac{1}{\bar c} >1,
\end{align*}
which implies that $\beta_{12} = \frac{1}{s_1} \log\left(\frac{d_2}{s_2}\right)$. 
Therefore, we obtain
\begin{align*}
        \beta_{12} & =\frac{1}{s_1} \log\left( \frac{d_2}{s_2} \right) & \\
        & \leq \frac{1}{s_1} \log\left( \frac{d_2}{s_2} \right) + \frac{1}{s_1} \log\left(\frac{4s_1\log\left(\frac{d_1}{s_1}\right)}{s_2}\right) & \text{by equation~\eqref{eq_s2<s1_log(d1/s1)}}\\
        &\leq \frac{2}{s_1} \log\left( \frac{d_2s_1}{s_2^2} \log\left(\frac{d_1}{s_1}\right)\right) & \text{ using $\log(4x) \leq 2 \log(x), ~\forall x\geq 4$}\\
        &\leq \frac{2}{s_2} \log\left( \frac{d_1s_2}{s_1^2} \log\left(\frac{d_2}{s_2}\right)\right) & \text{by equation~\eqref{eq_new_target}}\\
        & \leq 2 \psi_{21} & \text{ by equation~\eqref{eq_equiv_psi21}.}
    \end{align*}
    Therefore, we have $\psi_{12} + \psi_{21} \geq \psi_{21} + \beta_{12}/2$, hence $R \geq \frac{1}{2}\big(\psi_{21} + \beta_{12} \big) \land \phi_{12} \land \phi_{21}$ and the proof is complete.
\end{enumerate}
\end{proof}

\begin{lemma}\label{lem_simplified_rate_for_UB}
        It holds that $R \gtrsim \big(\psi_{12} + \beta_{21}\big) \land \big(\psi_{21} + \beta_{12}\big) \land \phi_{12} \land \phi_{21}$.
    \end{lemma}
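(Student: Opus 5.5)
It suffices to show $\psi_{12}+\psi_{21} \gtrsim (\psi_{12}+\beta_{21})\land(\psi_{21}+\beta_{12})$, since the $\phi$ terms appear identically on both sides. In fact I will aim for the stronger claim that at least one of $\beta_{21}\lesssim \psi_{12}$ or $\beta_{12}\lesssim\psi_{21}$ holds. Then either $\psi_{12}+\psi_{21}\gtrsim \psi_{12}+\beta_{21}$ or $\psi_{12}+\psi_{21}\gtrsim\psi_{21}+\beta_{12}$, and the minimum is dominated in both cases. We reduce to the general sparsity setting $s_j\le c'd_j$, using the versions of $\psi,\beta$ from the simplification subsection. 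In the very dense case the direct argument of Section \ref{sec_pf_lb_dense} already shows $R\asymp \frac{d_1}{s_1^2}\log(1+c'\frac{d_2}{s_2^2})\ge$ a constant times $\phi_{12}$, which bounds the right-hand side.

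The main step is a dichotomy on the comparison of $s_2$ with $s_1\log(d_1/s_1)$, following part 2 of Lemma \ref{lem_simplify_s2d2big_truncchi2}.

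First, suppose $s_2\le s_1\log(d_1/s_1)$. Then $\frac{d_2s_1}{s_2^2}\log(d_1/s_1)\ge d_2/s_2$, so
\[
\psi_{12}\ge \tfrac{1}{s_1}\log(1+d_2/s_2)\ge \beta_{12}.
\]
This is the wrong pairing, so instead I compare $\beta_{21}=\frac1{s_2}\log(d_1/s_1)\mathbf 1\{\cdot\}$ with $\psi_{21}$. If the indicator in $\beta_{21}$ is active, then $\frac{d_2s_1}{s_2^2}\log(d_1/s_1)>1$. I then split once more, on whether $\frac{d_1s_2}{s_1^2}\log(d_2/s_2)\ge \frac{d_1}{s_1}$.

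In the affirmative case, $\psi_{21}\ge \frac1{s_2}\log(1+d_1/s_1)\ge\beta_{21}$.

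In the negative case we have $s_2\log(d_2/s_2)<s_1$. Then $\beta_{12}\le\frac1{s_1}\log(d_2/s_2)\le \frac{1}{s_2}\cdot\frac{s_2\log(d_2/s_2)}{s_1}$. I plan to compare this with $\psi_{21}$ using $\log(1+x)\ge x\log 2$ for $x\le1$ and $\log(1+x)\ge \log 2$ otherwise. In both subcases this gives $\beta_{12}\lesssim\psi_{21}$ provided $\frac{d_1 s_2}{s_1^2}\gtrsim \frac{s_2}{s_1}$, i.e. $d_1\gtrsim s_1$, which holds.

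Second, suppose $s_2> s_1\log(d_1/s_1)$. Exactly as in Lemma \ref{lem_simplify_s2d2big_truncchi2}, we get
\[
\psi_{21}>\tfrac1{s_2}\log\!\big(\tfrac{d_1}{s_1}\log\tfrac{d_1}{s_1}\log\tfrac{d_2}{s_2}\big)\ge\beta_{21},
\]
so again one of the two pairings is dominated.

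The obstacle is bookkeeping rather than ideas. The generic comparison "$\beta_{12}\lesssim\psi_{21}$ or $\beta_{21}\lesssim\psi_{12}$" must hold uniformly, including when the indicators vanish, which is trivial, and when the logarithms are near $1$. I expect to use Lemma \ref{lem_logs_constants} and Lemma \ref{lem_logs} to absorb constants. As a cleaner alternative, I will try the symmetric observation that $\beta_{12}\le\frac{1}{s_1}\log(d_2/s_2)$ while $\psi_{21}\ge \frac1{s_2}\log(1+\frac{d_1s_2}{s_1^2}\log\frac{d_2}{s_2})$. If $s_1\ge s_2$, this yields $\beta_{12}\lesssim\psi_{21}$ whenever the indicator is active, by the elementary inequality $\frac{t}{s_1}\lesssim\frac1{s_2}\log(1+\frac{s_2}{s_1}\cdot\frac{d_1}{s_1}t)$ with $d_1/s_1\ge1$. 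By symmetry, if $s_2\ge s_1$, then $\beta_{21}\lesssim\psi_{12}$, which would finish the proof in one line. Verifying this elementary inequality, where $t$ may exceed $\log$ of the argument, is where I would check carefully, falling back on the case split above if it fails.
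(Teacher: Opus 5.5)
Your argument is correct, but the reduction you announce is not the one you actually use, and you set aside a relation that already finishes the proof. Any one of $\beta_{12}\lesssim\psi_{12}$, $\beta_{12}\lesssim\psi_{21}$, $\beta_{21}\lesssim\psi_{12}$, $\beta_{21}\lesssim\psi_{21}$ suffices on its own, because each bounds one of $\psi_{12}+\beta_{21}$, $\psi_{21}+\beta_{12}$ by a constant times $\psi_{12}+\psi_{21}$.

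So $\psi_{12}\ge\beta_{12}$ in your first case is not a ``wrong pairing'': it gives $\psi_{21}+\beta_{12}\le\psi_{12}+\psi_{21}$ immediately. Conversely, your second case and your affirmative sub-case end with $\beta_{21}\le\psi_{21}$. That is not one of the two cross relations in your ``stronger claim''. The conclusion is still valid, but only through the correct, weaker reduction above.

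Once this is straightened out, the proof in the regime $s_j\le c'd_j$ (where $\log(d_1/s_1)\log(d_2/s_2)\ge 1$) takes two lines. If $s_2\le s_1\log(d_1/s_1)$, then $\psi_{12}\ge\frac{1}{s_1}\log(1+d_2/s_2)\ge\beta_{12}$. Otherwise $\psi_{21}\ge\frac{1}{s_2}\log(1+d_1/s_1)\ge\beta_{21}$. Your inner split on $s_2\log(d_2/s_2)\gtrless s_1$ is correct but redundant; it is itself a complete proof, giving $\beta_{21}\le\psi_{21}$ or $\beta_{12}\le\psi_{21}/\log 2$.

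Drop the ``cleaner alternative'', because it is false. Take $s_1=s_2=s$, $d_1=Ks$ for a fixed large constant $K$, and let $d_2/s\to\infty$. The indicator in $\beta_{12}$ is active, $\beta_{12}=\frac{1}{s}\log(d_2/s)$, and $\psi_{21}=\frac{1}{s}\log\big(1+K\log(d_2/s)\big)$. So $\beta_{12}/\psi_{21}\to\infty$ even though $s_1\ge s_2$. In this example it is precisely $\beta_{12}\le\psi_{12}$ that carries the lemma.

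\medskip

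Your route differs genuinely from the paper's.

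\emph{The paper's route.} It first disposes of $s_1\ge cd_1$ via $\psi_{12}\ge\frac{1}{s_1}\log(1+d_2/s_2^2)\asymp\phi_{12}$. It then assumes $d_1/s_1\ge e\log(d_2/s_2)$ by Lemma \ref{lem_2.6} and splits on $s_1^2\gtrless\bar c d_1 s_2$ and on $s_2^2/d_2\gtrless 2e s_1^2/d_1$. Finally it reads the inequality off the second parts of Lemmas \ref{lem_simplify_dense_allrates}, \ref{lem_simplify_s1d1big_truncchi2}, \ref{lem_simplify_s2d2big_truncchi2} and \ref{lem_simplification_rate_max_test}. In other words, it recycles the regime decomposition of the lower bound.

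\emph{Your route.} Your outer dichotomy is exactly part 2 of Lemma \ref{lem_simplify_s2d2big_truncchi2}, whose proof uses none of that lemma's regime hypotheses beyond $d_j/s_j\ge e$. You are in effect noticing that this single comparison works in every regime. It needs no reduction via Lemma \ref{lem_2.6} and no regime splits. It also never uses the indicators in $\beta_{12},\beta_{21}$. Hence it goes through up to a factor $2$, via $s\log(d/s)\le\log\binom{d}{s}\le s\log(ed/s)$, with the main-text definitions of $\psi,\beta$ under which the lemma is invoked in the proof of Proposition \ref{prop_ub}.

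\emph{Comparison.} What the paper's route buys is a regime-by-regime identification, tied to the lower-bound analysis, of which term of $\tilde R$ is comparable to $R$. The inequality itself does not need this. Your handling of the very dense case through Section \ref{sec_pf_lb_dense} is valid, though the paper's one-line bound $\psi_{12}\gtrsim\phi_{12}$ for $s_1\ge cd_1$ is more direct.
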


    \begin{proof}[Proof of Lemma~\ref{lem_simplified_rate_for_UB}]
    Assume first that, for some constant $c>0$, we have $s_1 \geq c d_1$ or $s_2 \geq c d_2$, and by symmetry, assume we have $s_1 \geq c d_1$. 
    Then 
    \begin{align*}
        \psi_{12} = \frac{1}{s_1} \log\left(1+ \frac{d_2}{s_2^2} \log\left(e {d_1 \choose s_1}\right)\right) \geq \frac{1}{s_1} \log\left(1+ \frac{d_2}{s_2^2} \right) \asymp \phi_{12},
    \end{align*}
    which yields that
    \begin{align*}
        R &= (\psi_{12} + \psi_{21}) \land \phi_{12} \land \phi_{21}\\
        & \asymp \phi_{12} \land \phi_{21} \\
        &\geq \big(\psi_{12} + \beta_{21}\big) \land \big(\psi_{21} + \beta_{12}\big) \land \phi_{12} \land \phi_{21}.
    \end{align*}
    From now on, assume that $s_1 \leq c d_1$ and $s_2 \leq c d_2$, and assume without loss of generality that $\frac{d_1}{s_1} \geq e\log\left(\frac{d_2}{s_2}\right)$, by lemma~\ref{lem_2.6}. 

    If $s_1^2 \geq \bar c d_1 s_2$, then the result follows by Lemma~\ref{lem_simplify_dense_allrates}. Now, assume that $s_1^2 < \bar c d_1 s_2$. If $\frac{k^2_2}{d_2} \leq 2e\frac{s_1^2}{d_1}$, then the result follows by Lemma~\ref{lem_simplify_s1d1big_truncchi2}. 
    Else, we have $s_1^2 \geq \bar c d_1 s_2$ and the result follows by Lemmas~\ref{lem_simplify_s2d2big_truncchi2} and~\ref{lem_simplification_rate_max_test}.
        
    \end{proof}

\subsection{Proof of Corollary 1}
\begin{proof}
        Recall that Theorem 1 establishes 
        \begin{equation}\label{eq_cor1_given}
            (\mu^*)^2 \asymp (\psi_{12} + \psi_{21}) \land \phi_{12} \land \phi_{21}.
        \end{equation}
        We aim to prove that
        \begin{equation}\label{eq_cor1_desired}
            (\mu^*)^2 \asymp \frac{d_1d_2}{s_1^2s_2^2} \land \frac{\log \big(\frac{d_2}{s_2}\big)}{s_1} + \frac{\log \big(\frac{d_1}{s_1}\big)}{s_2},
        \end{equation}
        under the following assumptions:
        $$e \leq \frac{d_1}{s_1} \land \frac{d_2}{s_2},\;\; \frac{\log \log\big( \frac{d_1}{s_1}\big)}{\log \big(\frac{d_2}{s_2}\big)} \lor \frac{\log \log\big( \frac{d_2}{s_2}\big)}{\log \big(\frac{d_1}{s_1}\big)} \leq 1,\;\; s_1 \log \big(\frac{d_1}{s_1}\big) \asymp s_2 \log \big(\frac{d_2}{s_2}\big).$$
        We remark that under the assumption $e \leq \frac{d_1}{s_1} \land \frac{d_2}{s_2}$, it holds
        \[\log {d_1 \choose s_1} \asymp s_1 \log\left(\frac{d_1}{s_1}\right), \;\;\log {d_2 \choose s_2} \asymp s_2 \log\left(\frac{d_2}{s_2}\right),\]
        which we will use throughout the proof to simplify the forms of $\psi_{12}$ and $\psi_{21}$ respectively. We proceed by cases. First, assume that $s_1\log(d_1/s_1) \gtrsim s_2^2 / d_2$. This implies
    \begin{align}
        \psi_{12} &\asymp \frac{1}{s_1} \log\left(\frac{d_2s_1}{s_2^2} \log\left(\frac{d_1}{s_1}\right)\right) \\
        &= \frac{\log \big(\frac{d_2}{s_2}\big)}{s_1} + \frac{\log \big(\frac{s_1}{s_2}\big)}{s_1} + \frac{\log \log \big(\frac{d_1}{s_1}\big)}{s_1} \\
        &\asymp \frac{\log \big(\frac{d_2}{s_2}\big)}{s_1} \quad \text{(since $\log \log \frac{d_1}{s_1} \leq \log \frac{d_2}{s_2}$ and $\frac{s_1}{s_2} \lesssim \log \frac{d_2}{s_2}$)}\\
        &\asymp \frac{\log \big(\frac{d_2}{s_2}\big)}{s_1} + \frac{\log \big(\frac{d_1}{s_1}\big)}{s_2} \quad \text{(since $s_1 \log \frac{d_1}{s_1} \asymp s_2 \log \frac{d_2}{s_2}$)}
    \end{align}
    where the second to last expression uses $\log (s_1/s_2) = o(s_1)$ and $\log \log(d_1/s_1) 
    \leq \log(d_2/s_2)$, and the final line uses $s_2 \log \big(d_2/s_2\big) \asymp s_1 \log \big(d_1/s_1\big)$. If $s_2\log(d_2/s_2) \gtrsim s_1^2 / d_1$, we may derive an analogous expression for $\psi_{21}$. Otherwise if $s_2\log(d_2/s_2) < s_1^2 / d_1$, we linearize the log and derive
    \begin{align}
        \psi_{21} &\asymp \frac{d_1}{s_1^2}\log\big(\frac{d_2}{s_2}\big) \\
        &\leq \frac{1}{s_2} \\
        &\leq \frac{\log \big(\frac{d_1}{s_1}\big)}{s_2} \quad \text{(since $es_1 \leq d_1$)} \\
        &\asymp \frac{\log \big(\frac{d_2}{s_2}\big)}{s_1} + \frac{\log \big(\frac{d_1}{s_1}\big)}{s_2},
    \end{align}
    with the final line using $s_2 \log \big(d_2/s_2\big) \asymp s_1 \log \big(d_1/s_1\big)$. In either case, we have
    \begin{equation}
        \psi_{12} + \psi_{21} \asymp \frac{\log \big(\frac{d_2}{s_2}\big)}{s_1} + \frac{\log \big(\frac{d_1}{s_1}\big)}{s_2},
    \end{equation}
    which matches the second term on the right hand side of~\eqref{eq_cor1_desired}. We now turn our attention to the term $\phi_{12}$; the analysis of $\phi_{21}$ is entirely analogous and we omit the details for brevity. If $d_2 < s_2^2$, then we linearize the logarithmic term and obtain
    \begin{equation}
        \phi_{12} \asymp \frac{d_1d_2}{s_1^2s_2^2},
    \end{equation}
    which clearly matches the first term in~\eqref{eq_cor1_desired}. Otherwise if $d_2 \gtrsim s_2^2$, it holds
    \begin{align}
        \phi_{12} &= \frac{d_1}{s^2_1}\log\Big(1 + \frac{d_2}{s_2^2}\Big) \\
        &\gtrsim \frac{d_1}{s_1^2} \\
        &\geq \frac{\log \big(\frac{d_2}{s_2}\big)}{s_1} \\
        &\asymp \frac{\log \big(\frac{d_2}{s_2}\big)}{s_1} + \frac{\log \big(\frac{d_1}{s_1}\big)}{s_2},
    \end{align}
    where the final two lines follow from $\log \log(d_1/s_1) \leq \log(d_2/s_2)$ and $s_2 \log \big(d_2/s_2\big) \asymp s_1 \log \big(d_1/s_1\big)$ respectively. Therefore, in this case both~\eqref{eq_cor1_given} and~\eqref{eq_cor1_desired} (following an identical calculation) evaluate to the order of $s_1^{-1}\log(d_2/s_2) + s_2^{-1}\log(d_1/s_1)$. This concludes the proof in the case $s_1\log(d_1/s_1) \gtrsim s_2^2 / d_2$.  If $s_1\log(d_1/s_1) < s_2^2 / d_2$ and $s_2\log(d_2/s_2) < s_1^2 / d_1$, it holds
    \begin{align}
        \psi_{12} + \psi_{21} &\asymp \frac{d_2}{s_2^2}\log\big(\frac{d_1}{s_1}\big) + \frac{d_1}{s_1^2}\log\big(\frac{d_2}{s_2}\big) \\
        &\geq \frac{d_1d_2}{s_1^2s_2^2} \\
        &= \phi_{12} \land \phi_{21}.
    \end{align}
    Furthermore, in this regime it holds
    \begin{align}
        \frac{d_1d_2}{s_1^2s_2^2} &\leq \frac{1}{s_1s_2\log(d_1/s_1)\log(d_2/s_2)} \\
        &\leq \frac{\log \big(\frac{d_2}{s_2}\big)}{s_1} + \frac{\log \big(\frac{d_1}{s_1}\big)}{s_2},
    \end{align}
    and therefore by Theorem 1 it holds
    \begin{equation}
        (\mu^{*})^2 \asymp \frac{d_1d_2}{s_1^2s_2^2} = \frac{d_1d_2}{s_1^2s_2^2} \land \frac{\log \big(\frac{d_2}{s_2}\big)}{s_1} + \frac{\log \big(\frac{d_1}{s_1}\big)}{s_2},
    \end{equation}
    and the proof is complete.
    \end{proof}
\section{Proof of upper bound}\label{app_ub}

\subsection{Analysis of the linear test}

\begin{lemma}\label{lem:linear-test}
    Let $\alpha \in (0,1)$ be given. There exist constants $C_\mu, C_h > 0$ such that if
    \[\mu^2 \geq C_\mu \frac{d_1d_2}{s^2_1s_2^2},\]
    then $\Delta^h_\lin$ with $h = C_h$ satisfies
    \[\cR(\Delta^h_\lin, \mu) \leq \alpha.\]
\end{lemma}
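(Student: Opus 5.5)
The linear statistic $t_\lin = (d_1d_2)^{-1/2}\sum_{i,j}\Y_{ij}$ is exactly Gaussian under both hypotheses, so the proof reduces to computing its mean and variance and applying elementary Gaussian tail bounds.

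\textbf{Type I error.} Under $H_0$ the entries $\Y_{ij}=\E_{ij}$ are i.i.d.\ $N(0,1)$, hence $t_\lin\sim N(0,1)$. Choosing $h=C_h$ with $C_h$ the $(1-\alpha/2)$-quantile of $N(0,1)$ gives $\mathbb P_0(\Delta^h_\lin=1)=\mathbb P(Z>C_h)\le \alpha/2$.

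\textbf{Type II error.} Fix any $\X\in\Theta(S_1,S_2,\mu)$. Then
\[
t_\lin=\frac{1}{\sqrt{d_1d_2}}\sum_{i,j}\X_{ij}+Z, \qquad Z\sim N(0,1).
\]
Since $\X_{ij}\ge\mu$ on $S_1\times S_2$ and $\X_{ij}=0$ elsewhere, the mean term satisfies
\[
m(\X)=\frac{1}{\sqrt{d_1d_2}}\sum_{i,j}\X_{ij}\ \ge\ \frac{s_1s_2\mu}{\sqrt{d_1d_2}}.
\]
This bound is uniform over the alternative. Entries larger than $\mu$ only increase the mean, which is the one-sided monotonicity the test uses; this is why $\Delta^h_\lin$ rejects only for large positive values.

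The hypothesis $\mu^2\ge C_\mu \frac{d_1d_2}{s_1^2s_2^2}$ gives $m(\X)\ge \sqrt{C_\mu}$. Therefore
\[
\mathbb P_\X(\Delta^h_\lin=0)=\mathbb P\big(Z\le C_h-m(\X)\big)\le \mathbb P\big(Z\le C_h-\sqrt{C_\mu}\big).
\]
Taking $\sqrt{C_\mu}\ge 2C_h$ makes this at most $\mathbb P(Z\le -C_h)\le\alpha/2$. Taking the supremum over the alternative and adding the type I bound yields $\cR(\Delta^h_\lin,\mu)\le\alpha$.

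No step is a real obstacle. The only point needing care is that the bound on the mean must hold uniformly over all $\X$ in the alternative, including entries strictly larger than $\mu$, and this follows directly from the nonnegativity of the signal.
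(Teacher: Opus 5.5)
Your proof is correct and follows the same route as the paper: $t_{\lin} \sim N(0,1)$ under $H_0$, and under the alternative its mean is at least $\sqrt{C_\mu} \ge 2h$, so both errors reduce to tail bounds. The only differences are that the paper uses a Chernoff bound for the Type I error and Chebyshev's inequality (giving $4/C_\mu$) for the Type II error where you use exact Gaussian quantiles, and your lower bound $m(\X) \ge s_1 s_2 \mu/\sqrt{d_1 d_2}$ is more careful than the paper's statement of the mean as an equality, since entries on the support may exceed $\mu$.
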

\begin{proof}
    We begin by controlling the Type I error. Recall that $t_\lin \sim N(0,1)$ under the null hypothesis. Then by an elementary calculation, for $C_h \geq \sqrt{2\log(2/\alpha)}$ it holds
    \begin{align*}
        \pr_0(\Delta^h_\lin = 1) &= \pr_{Z \sim N(0,1)}(Z  > h) \\
        &\leq e^{-h^2 / 2} \\
        &\leq \frac{\alpha}{2}.
    \end{align*}
    Now we consider the Type 2 error. Suppose that the mean matrix $ \EE[\Y] = \X$ is an arbitrary element of $\Theta(s_1, s_2, \mu)$. In this case, we have $t_\lin \sim N(\frac{s_1s_2}{\sqrt{d_1d_2}}\mu, 1)$. Note that
    \begin{align*}
        \EE_\X[t_\lin] &= \frac{s_1s_2}{\sqrt{d_1d_2}}\mu \\
        &\geq \sqrt{C_\mu} \quad \text{(by our assumption on $\mu$)} \\
        &\geq 2h,
    \end{align*}
    where the final inequality holds for $C_\mu$ taken sufficiently large. With this inequality in hand, we can control the Type II error as follows.
    \begin{align*}
        \pr_\X(\Delta^h_\lin = 0) &= \pr_\X(t_\lin \leq h) \\
        &= \pr_\X(t_\lin - \EE_X [t_\lin] \leq h - \EE_\X [t_\lin]) \\
        &= \pr_\X((t_\lin - \EE_X [t_\lin])^2 \geq (h - \EE_\X [t_\lin])^2) \\
        &\leq \pr_\X((t_\lin - \EE_X [t_\lin])^2 \geq (\EE_\X [t_\lin] / 2)^2) \\
        &\leq \frac{4}{(\EE_\X[t_\lin])^2} \\
        &\leq \frac{4}{C_\mu} \\
        &\leq \frac{\alpha}{2},
    \end{align*}
    where the final inequality holds for $C_\mu$ taken sufficiently large. Combining our bounds on the Type I and Type II errors yields
    \[\cR(\Delta^h_\lin, \mu) \leq \alpha,\]
    and the proof is complete.
\end{proof}

\subsection{Analysis of the Bonferroni corrected linear test}

\begin{lemma}\label{lem_maxlinear_test}
Let $\alpha \in (0,1)$ be given. There exists a constant $C_\mu > 0$ such that if
    \[\mu^2 \geq C_\mu \frac{d_2}{s_1s_2^2}\log\left(e {d_1 \choose s_1}\right),\]
    then the max-linear test with $h = \sqrt{\frac{2}{\alpha}} \vee \sqrt{2\log\Big(\frac{2e}{\alpha}{d_1 \choose s_1}\Big)}$ satisfies
    \[\cR(\Delta_{\linmax, 1}^h, \mu) \leq \alpha\]
\end{lemma}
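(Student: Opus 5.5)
The plan follows the template of Lemma~\ref{lem:linear-test}: a union bound controls the Type~I error, and Chebyshev's inequality applied to the statistic at the true row set controls the Type~II error.

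\textbf{Null distribution.} For each fixed $J_1 \in \mathcal P_{s_1}(d_1)$, write
\[
T_{J_1} := \frac{1}{\sqrt{d_2}}\sum_{j=1}^{d_2}\bar{\Y}_{J_1,j} = \frac{1}{\sqrt{s_1 d_2}}\sum_{i\in J_1}\sum_{j=1}^{d_2}\Y_{ij}.
\]
Under $H_0$, $T_{J_1} \sim N(0,1)$ exactly. The statistic $t_{\linmax,1}$ is the maximum of $T_{J_1}$ over ${d_1 \choose s_1}$ such Gaussians.

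\textbf{Type I error.} A union bound with the Gaussian tail $\pr(Z>h)\le e^{-h^2/2}$ gives
\[
\pr_0(t_{\linmax,1}>h)\le {d_1\choose s_1}e^{-h^2/2}.
\]
Since $h^2\ge 2\log\big(\tfrac{2e}{\alpha}{d_1\choose s_1}\big)$, this is at most $\alpha/(2e)\le\alpha/2$.

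\textbf{Type II error.} Fix $\X\in\Theta(S_1,S_2,\mu)$ and lower bound $t_{\linmax,1}\ge T_{S_1}$. Then $T_{S_1}\sim N(m,1)$ with
\[
m=\frac{1}{\sqrt{s_1d_2}}\sum_{i\in S_1,\,j\in S_2}\X_{ij}\ \ge\ \frac{s_1s_2\mu}{\sqrt{s_1d_2}}=\mu\, s_2\sqrt{s_1/d_2}.
\]
It is important that $\X_{ij}\ge\mu$ on the support, so the mean only increases. By the assumption on $\mu$,
\[
m^2\ge \mu^2\frac{s_1s_2^2}{d_2}\ge C_\mu\log\Big(e{d_1\choose s_1}\Big).
\]
We need $m\ge 2h$. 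Since $h^2 = \tfrac{2}{\alpha}\vee 2\log\big(\tfrac{2e}{\alpha}{d_1\choose s_1}\big)$, we have
\[
h^2\le \tfrac{2}{\alpha}+2\log(2/\alpha)+2\log\Big(e{d_1\choose s_1}\Big)\le C_\alpha\log\Big(e{d_1\choose s_1}\Big),
\]
because $\log(e{d_1\choose s_1})\ge1$. Hence $m\ge 2h$ once $C_\mu\ge 4C_\alpha$. Chebyshev's inequality then gives
\[
\pr_\X(T_{S_1}\le h)\le\pr(|T_{S_1}-m|\ge m/2)\le 4/m^2\le 4/(4h^2)\le \alpha/2,
\]
using $h^2\ge 2/\alpha$. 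This explains the $\sqrt{2/\alpha}$ term in the threshold.

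Adding the two error bounds and taking the supremum over $\X$ gives $\cR\le\alpha$. There is no real obstacle here. The only care needed is to check that the threshold is of order $\sqrt{\log(e{d_1\choose s_1})}$ uniformly, and that $m$ dominates it with constants depending only on $\alpha$.
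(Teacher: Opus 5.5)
Your proposal is correct and follows essentially the same route as the paper: a union bound with the Gaussian tail $e^{-h^2/2}$ controls the Type I error, and Chebyshev's inequality on the statistic at the true row set $S_1$ (which has unit variance and mean at least $\mu s_2\sqrt{s_1/d_2} \geq 2h$) gives a bound of $1/h^2 \leq \alpha/2$ for the Type II error. Two details you spell out are only stated loosely in the paper: that $h^2 \leq C_\alpha \log\big(e{d_1 \choose s_1}\big)$, and that the mean is merely bounded below because $\X_{ij} \geq \mu$ on the support.
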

\begin{proof}
    We begin with the Type I error.
    \begin{align*}
    \bbP_0(\Delta_{\linmax, 1}^h = 1) &= \bbP_0(t_{\linmax, 1} > h) \\
        &\leq {d_1 \choose s_1}\bbP_{Z \sim N(0,1)}(Z > h)\quad \text{(union bound)} \\
        &\leq {d_1 \choose s_1} e^{-h^2/2} \quad \text{(Chernoff bound)} \\
        &\leq {d_1 \choose s_1}\exp\left(-\log\left(\frac{2e}{\alpha}{d_1 \choose s_1}\right)\right) \quad \text{(by our choice of $h$)} \\
        &\leq \frac{\alpha}{2}
    \end{align*}
    Now suppose that $\X$ is an arbitrary matrix in $\Theta(s_1, s_2, \mu)$ with support $S_1 \times S_2$. Define the statistic
    \[t_{S_1} = \sum_{j = 1}^{d_2}\bar{Y}_{S_1, j} = \frac{1}{\sqrt{s_1d_2}}\sum_{j = 1}^{d_2}\sum_{i \in  S_1}Y_{ij}.\]
    Under the alternative hypothesis, $\EE_{\X} [t_{S_1}] = \mu s_2\sqrt{s_1 / d_2}$ and $\text{var}(t_{S_1}) = 1$. By our assumption on $\mu$, we have
    \begin{align*}
        \EE_{\X} [t_{S_1}] &= \mu s_2\sqrt{\frac{s_1}{d_2}} \\
        &\geq \sqrt{C_{\mu}\log\left(e {d_1 \choose s_1}\right)} \\
        &\geq 2h,
    \end{align*}
    where the final inequality holds for $C_\mu$ taken sufficiently large. We then have
    \begin{align*}
        \bbP_{\X}(\Delta_{\linmax}^h = 0)
        &= \bbP_{\X}(t_{\linmax, 1} \leq h) \\
        &\leq \bbP_\X(t_{S_1} \leq h) \\
        &= \bbP_\X(t_{S_1} -  \EE_{\X} [t_{S_1}]\leq h - \EE_{\X} [t_{S_1}]) \\
        &= \bbP_\X((t_{S_1} -  \EE_{\X} [t_{S_1 }])^2\geq (h - \EE_{\X} [t_{S_1 }])^2) \\
        &\leq \bbP_\X((t_{S_1} -  \EE_{\X} [t_{S_1}])^2\geq h^2) \\
        &\leq \frac{1}{h^2} \quad \text{(Markov's inequality)} \\
        &\leq \frac{\alpha}{2} \quad \text{(by our choice of $h$)}
    \end{align*}
    Combining these two bounds gives us 
    \[\cR(\Delta_{\linmax, 1}^h, \mu) \leq \alpha\]
    and the proof is complete.
\end{proof}

\subsection{Analysis of the linear truncated $\chi^2$ test}

\begin{lemma}\label{lem:linear-trunc-test}
    Let $\alpha \in (0,1)$ be given. Suppose that there exists a constant $c > 0$ such that $\frac{d_2}{s_2^2} \geq c$. Define 
    \[\tau^* = \sqrt{C\log\Big(1 + \frac{d_2}{s_2^2}\Big)}\]
    for $C = \big(\log(1 + c)\big)^{-1} \lor 2$. There exist constants $C_\mu, C_h > 0$ such that if 
    \[\mu^2 \geq C_\mu \frac{d_1}{s_1^2}\log\Big(1 + \frac{d_2}{s_2^2}\Big),\]
    then $\Delta_{\chisqlin, 1}^h$ with $\tau = \tau^*$ and $h = C_h s_2 \log\big(1 + \frac{d_2}{s_2^2}\big)$
    \[\cR(\Delta_{\chisqlin, 1}^h, \mu) \leq \alpha.\]
\end{lemma}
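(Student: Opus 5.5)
The plan reduces the matrix problem to the Gaussian sequence model and then reproduces the Collier--Comminges--Tsybakov analysis of the truncated $\chi^2$ statistic. The column averages $\bar{\Y}_j = d_1^{-1/2}\sum_i \Y_{ij}$, $j\in[d_2]$, are independent $N(\theta_j,1)$ with $\theta_j = 0$ for $j\notin S_2$ and $\theta_j \geq s_1\mu/\sqrt{d_1}$ for $j\in S_2$. The assumption on $\mu$ becomes $\theta_j^2 \geq C_\mu \log(1+d_2/s_2^2) = (C_\mu/C)(\tau^*)^2$ on the $s_2$ signal coordinates. So $t_{\chisqlin,1}$ is exactly the vector truncated $\chi^2$ statistic applied to an $s_2$-sparse vector in dimension $d_2$ whose nonzero entries exceed a large multiple of the threshold $\tau^*$. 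I will control the type I and type II errors separately by Chebyshev's inequality, each at level $\alpha/2$.

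\emph{Type I error.} Each summand $\xi_j = (Z_j^2-\nu_{\tau})\one(|Z_j|>\tau)$ has mean zero by the definition of $\nu_\tau$. Standard Gaussian tail computations give $\EE[\xi_j^2] \leq \EE[Z^4\one(|Z|>\tau)] \lesssim (\tau^3+\tau)e^{-\tau^2/2}$. Hence $\var_0(t) \lesssim d_2 \tau^3 e^{-\tau^2/2}$. With $\tau^2 = C\log(1+d_2/s_2^2)$ and $C\geq 2$, this gives $e^{-\tau^2/2} \leq (1+d_2/s_2^2)^{-1} \leq s_2^2/d_2$, so $\var_0(t)\lesssim s_2^2\tau^3 \lesssim s_2^2 \log^{3/2}(1+d_2/s_2^2)$.

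This bound does not quite suffice: we need $\var_0(t) \ll h^2 = C_h^2 s_2^2\log^2(1+d_2/s_2^2)$. The ratio is of order $\log^{-1/2}(1+d_2/s_2^2)$. It is small when $d_2/s_2^2$ is large, and bounded by a constant depending on $c$ otherwise, because $d_2/s_2^2\geq c$ gives $\log(1+d_2/s_2^2)\geq\log(1+c)$. The clean fix is to use the factor $e^{-\tau^2/2}$ more aggressively. Since $C\geq 2$ (and may be enlarged), $e^{-\tau^2/2} \leq (1+d_2/s_2^2)^{-C/2}$, which absorbs the polynomial factor $\tau^3$. This yields $\var_0(t)\lesssim s_2^2$, while $h^2 \geq C_h^2 s_2^2\log^2(1+c)$. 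The condition $C \ge (\log(1+c))^{-1}$ guarantees $\tau^2\geq 1$, so the tail estimates are in their valid regime. Chebyshev with $C_h$ large then gives type I error at most $\alpha/2$.

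\emph{Type II error.} The noise coordinates $j\notin S_2$ contribute mean zero and total variance $\lesssim s_2^2$ as above. For $j\in S_2$ write $\bar{\Y}_j = \theta_j+Z_j$ with $\theta_j \geq 2\tau$ (taking $C_\mu \geq 4C$). On the event $|Z_j|\leq \theta_j/2$ we have $|\bar{\Y}_j|>\tau$ and $\bar{\Y}_j^2\geq \theta_j^2/4$. This gives $\EE[(\bar{\Y}_j^2-\nu_\tau)\one(|\bar{\Y}_j|>\tau)] \gtrsim \theta_j^2$, using $\nu_\tau \lesssim \tau^2+1 \ll \theta_j^2$. The variance of the same summand is $\lesssim \theta_j^2 + \tau^4 \lesssim \theta_j^2 + \theta_j^4/C_\mu$ — the key point is that it is $\lesssim 1+\theta_j^2$ up to the truncation contributions. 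Summing, $\EE_\X t \gtrsim \sum_{j\in S_2}\theta_j^2 =: A$, and $\var_\X t \lesssim s_2^2 + A + s_2\tau^4$. We have $A\geq s_2 C_\mu \log(1+d_2/s_2^2)\geq 2h$ once $C_\mu \gg C_h$. Moreover $A^2 \gg s_2^2 + A + s_2\tau^4$, since $A^2\geq C_\mu^2 s_2^2\log^2(\cdot)$, $s_2\tau^4\leq s_2^2 C^2\log^2(\cdot)$, and $\log(\cdot)\geq\log(1+c)$. Chebyshev then bounds $\bbP_\X(t\leq h)$ by $\alpha/2$.

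The main obstacle I anticipate is the variance bookkeeping on the signal coordinates. A naive bound $\var \lesssim \theta_j^2$ must be checked against $A^2$ when the $\theta_j$ are very heterogeneous; it works because $\sum\theta_j^2 \le (\sum\theta_j^2)^2/\min\theta_j^2$. The other delicate points are the uniformity in the regime $d_2/s_2^2\asymp c$ and the precise Gaussian moment bounds for $\nu_\tau$ and $\EE[Z^4\one(|Z|>\tau)]$.
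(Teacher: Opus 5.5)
Your proof is correct, but it reaches the type I bound by a different route from the paper. The paper checks that $h \geq 9\bigl(\sqrt{d_2 e^{-(\tau^*)^2/2}\log(2/\alpha)} + \log(2/\alpha)\bigr)$ and applies the Bernstein-type null tail bound of Lemma~5 in \citet{liu2021minimax}. For the type II error it cites Lemmas~6 and~7 there, which give $\EE_\X t_{\chisqlin,1} \geq \frac{s_2 s_1^2}{2d_1}\mu^2$ and $\var_\X(t_{\chisqlin,1}) \lesssim s_2\frac{s_1^2}{d_1}\mu^2 + (\tau^*)^3 e^{-(\tau^*)^2/2} d_2$, and then uses Chebyshev exactly as you do.

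You instead use Chebyshev under the null as well and derive every moment by hand. For a single, unscanned statistic this is enough. The cost is that $C_h$ depends polynomially rather than logarithmically on $1/\alpha$. The approach also would not carry over to the Bonferroni-corrected or adaptive tests, whose union bounds over $\binom{d_1}{s_1}$ subsets or over dyadic scales need exponentially small null errors; this is why the paper uses the exponential bound throughout. In exchange, your argument is self-contained. It also handles the heterogeneous column means $\theta_j \geq s_1\mu/\sqrt{d_1}$ explicitly, which the paper glosses over by writing $\bar{\Y}_j \sim N(\mu s_1/\sqrt{d_1},1)$.

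One step needs repair. Your first null bound already suffices: $\var_0(t_{\chisqlin,1}) \lesssim C^{3/2}s_2^2\log^{3/2}(1+d_2/s_2^2)$ gives $\bbP_0(t_{\chisqlin,1} > h) \lesssim C^{3/2}/\bigl(C_h^2\sqrt{\log(1+c)}\bigr)$, which is at most $\alpha/2$ for $C_h$ large. This is exactly how the paper disposes of its term II.

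The ``clean fix'' is therefore unnecessary, and as written it is not legitimate. The lemma fixes $C = (\log(1+c))^{-1}\vee 2$, which equals $2$ whenever $c \geq e^{1/2}-1$; this includes $c=1$, the case used in Proposition~\ref{prop_ub}. At $C=2$ the factor $e^{-(\tau^*)^2/2} = (1+d_2/s_2^2)^{-1}$ cancels $d_2$ but not $(\tau^*)^3$, and enlarging $C$ would change the test. The conclusion $\var_0(t_{\chisqlin,1}) \lesssim s_2^2$ does in fact hold at $C=2$, because $\var(Z^2 \mid |Z|>\tau)$ is bounded uniformly in $\tau$, but not by the route you give.

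With the crude bound, the null-coordinate term in your type II variance becomes $s_2^2\log^{3/2}(1+d_2/s_2^2)$ rather than $s_2^2$. This is still negligible against $A^2 \geq C_\mu^2 s_2^2\log^2(1+d_2/s_2^2)$: the ratio is at most $C^{3/2}/\bigl(C_\mu^2\sqrt{\log(1+c)}\bigr)$ up to an absolute constant.
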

\begin{proof}
    We begin by considering the Type I error. Note that for any $x$ satisfying $x \geq b$ for a constant $b > 0$, there exists a constant $B > 0$ such that $B \log(1 + x) \geq \sqrt{x / (1 + x)}$. Therefore, for $C_h$ taken sufficiently large, it holds
    \begin{align*}
        h &= C_h s_2 \log\big(1 + \frac{d_2}{s_2^2}\big) \\
        &\geq 9\Big(\sqrt{\frac{d_2\log(2/\alpha)}{1 + \frac{d_2}{s_2^2}}} + \log(2/\alpha)\Big) \\
        &\geq 9\Bigg(\sqrt{d_2e^{-(\tau^*)^2/2}\log(2/\alpha)} + \log(2/\alpha)\Bigg).
    \end{align*}
    Under the null hypothesis, $\bar{Y}_j \sim N(0,1)$. Thus, directly using Lemma 5 of \cite{liu2021minimax} we have 
    \[\bbP_0(t_{\chisqlin, 1} > h) \leq \bbP_0\left(t_{\chisqlin, 1} > 9\Big(\sqrt{d_2e^{-(\tau^*)^2/2}\log(2/\alpha)} + \log(2/\alpha)\Big)\right) \leq  \frac{\alpha}{2}.\]
    Now we consider the Type II error. Suppose that the mean matrix $ \EE[\Y] = \X$ is an arbitrary element of $\Theta(s_1, s_2, \mu)$. Under this alternative, it holds that $\bar{Y}_j \sim N(\mu \frac{s_1}{\sqrt{d_1}}, 1)$. Recall that $\tau^* \geq 1$ by construction. Therefore, for $C_\mu \geq 8$, we can apply Lemmas 6 and 7 of \cite{liu2021minimax} to conclude
    \begin{align*}
        \EE_\X\big[t_{\chisqlin}\big] &\geq \frac{s_2s_1^2}{2d_1}\mu^2, \\
        \var_\X\big(t_{\chisqlin}\big) &\leq C_1s_2\frac{s_1^2}{d_1}\mu^2 + C_1 (\tau^*)^3e^{-(\tau^*)^2/2}(d_2 - s_2)
    \end{align*}
    where $C_1 > 0$ is the absolute constant obtained from Lemma 7 of \cite{liu2021minimax} Therefore, for $C_\mu \geq 4C_h$, we have 
    \begin{align*}
        \EE_\X\big[t_{\chisqlin}\big] &\geq \frac{s_2s_1^2}{2d_1}\mu^2 \\
        &\geq C_\mu \frac{s_2}{2}\log\Big(1 + \frac{d_2}{s^2_2}\Big) \\
        &\geq 2C_h s_2 \log\Big(1 + \frac{d_2}{s^2_2}\Big) \\
        &= 2h.
    \end{align*}
    Using this calculation, we can control the Type II error as follows. 
    \begin{align*}
        \bbP_\X \big(t_{\chisqlin} \leq h\big) &= \bbP_\X \big(t_{\chisqlin} - \EE_\X[t_{\chisqlin}] \leq h - \EE_\X[t_{\chisqlin}]\big) \\
        &= \bbP_\X \big((t_{\chisqlin} - \EE_\X[t_{\chisqlin}])^2 \geq (h - \EE_\X[t_{\chisqlin}])^2\big) \\
        &\leq \bbP_\X \big((t_{\chisqlin} - \EE_\X[t_{\chisqlin}])^2 \geq (\EE_\X[t_{\chisqlin}] / 2)^2\big) \\
        &\leq 4\frac{\var_\X(t_{\chisqlin})}{(\EE_\X[t_{\chisqlin}])^2} \quad \text{(Markov's inequality)} \\
        &\leq 4\frac{C_1s_2\frac{s_1^2}{d_1}\mu^2 + C_1 (\tau^*)^3e^{-(\tau^*)^2/2}(d_2 - s_2)}{\big(\frac{s_2s_1^2}{2d_1}\mu^2\big)^2} \\
        &= 4(\text{I} + \text{II}).
    \end{align*}
    We control each of these terms separately.
    \begin{align*}
        \text{I} &= \frac{C_1s_2\frac{s_1^2}{d_1}\mu^2}{\big(\frac{s_2s_1^2}{2d_1}\mu^2\big)^2} \\
        &= \frac{4C_1}{C_\mu s_2\log\big(1 + \frac{d_2}{s_2^2}\big)} \\
        &\leq \frac{4C_1}{C_\mu s_2\log\big(1 + c\big)} \\
        &\leq \frac{\alpha}{16}
    \end{align*}
    where the final inequality holds for $C_\mu$ taken sufficiently large. Finally, we have 
    \begin{align*}
        \text{II} &= \frac{C_1 (\tau^*)^3e^{-(\tau^*)^2/2}(d_2 - s_2)}{\big(\frac{s_2s_1^2}{2d_1}\mu^2\big)^2} \\
        &\leq \frac{C_1 (\tau^*)^3e^{-(\tau^*)^2/2}d_2}{\big(\frac{s_2s_1^2}{2d_1}\mu^2\big)^2} \\
        &\leq \frac{C_1 C^{3/2} (\log(1 + \frac{d_2}{s_2^2}))^{3/2}d_2}{\big(\frac{s_2s_1^2}{2d_1}\mu^2\big)^2(1 + \frac{d_2}{s_2^2})} \quad \text{(by our choice of $\tau^*$)} \\
        &\leq \frac{C'\big(\log(1 + \frac{d_2}{s_2^2})\big)^{3/2}}{C_\mu^2\big(\log(1 + \frac{d_2}{s_2^2})\big)^2}\frac{\frac{d_2}{s_2^2}}{1 + \frac{d_2}{s_2^2}} \quad \text{(by our lower bound on $\mu^2$)}\\ 
        &\leq \frac{C'}{C_\mu^2 \sqrt{\log(1 + \frac{d_2}{s_2^2})}} \\
        &\leq \frac{C'}{C_\mu^2 \sqrt{\log(1 + c)}} \\
        &\leq \frac{\alpha}{16},
    \end{align*}
    where $C' = 8C_1C^{3/2}$ and again the final inequality holds for $C_\mu$ sufficiently large. Combining these bounds on the Type I and Type II errors gives us
    \[\cR(\Delta_{\chisqlin}^h, \mu) \leq \alpha\]
    and the proof is complete.
\end{proof}
\subsection{Analysis of Bonferroni corrected truncated $\chi^2$ test}
\begin{lemma}\label{lem:max-trunc-test}
    Let $\alpha \in (0,1)$ be given. Suppose that there exists a constant $c > 0$ such that $\frac{d_2}{s_2^2}\log {d_1 \choose s_1} \geq c$. Define
    \[\tau^* = \sqrt{C\log\Big(1 + \frac{d_2}{s_2^2}\log {d_1 \choose s_1}\Big)}\]
    for $C = \big(\log(1 + c)\big)^{-1} \lor 2$. There exist constants $C_\mu, C_h > 0$ such that if
    \[\mu^2 \geq C_\mu\Bigg(\frac{1}{s_1}\log\Big(1 + \frac{d_2}{s_2^2}\log {d_1 \choose s_1}\Big) + \frac{1}{s_2s_1}\log {d_1 \choose s_1}\Bigg),\]
    then $\Delta_{\chisqmax, 1}^h$ with $\tau = \tau^*$ and $h = C_h\Big(s_2\log\Big(1 + \frac{d_2}{s_2^2}\log {d_1 \choose s_1}\Big) + \log {d_1 \choose s_1}\Big)$ satisfies 
    \[\cR(\Delta_{\chisqmax}^h, \mu) \leq \alpha.\]
\end{lemma}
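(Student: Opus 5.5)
We follow the template of Lemma~\ref{lem:linear-trunc-test}: control the Type~I error by a union bound over $J_1\in\mathcal P_{s_1}(d_1)$ combined with an exponential tail for each truncated $\chi^2$ statistic, and control the Type~II error by Chebyshev applied to the single statistic with $J_1=S_1$. Write $N={d_1\choose s_1}$ and $L=\log(1+\frac{d_2}{s_2^2}\log N)$, so that $\tau^{*2}=CL$ and $e^{-\tau^{*2}/2}\le (1+\frac{d_2}{s_2^2}\log N)^{-1}$ since $C\ge 2$.

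\emph{Type I error.} Under $H_0$, for each fixed $J_1$ the variables $\bar\Y_{J_1,j}$, $j\in[d_2]$, are i.i.d.\ $N(0,1)$. Lemma~5 of \cite{liu2021minimax} in exponential form gives, for every $x>0$,
\[
\bbP_0\Big(t_{J_1}>9\big(\sqrt{d_2e^{-\tau^{*2}/2}x}+x\big)\Big)\le e^{-x},
\]
where $t_{J_1}$ denotes the inner sum. We take $x=\log(2N/\alpha)$ and union bound over the $N$ sets, which gives total Type~I error at most $\alpha/2$. It remains to check that $h$ dominates the threshold. The linear part $x\lesssim\log N+\log(2/\alpha)$ is absorbed by the $C_h\log N$ term in $h$; here we use $N\ge 1$ and note that if $N=1$ we may replace $\log N$ by $\log(eN)$. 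For the square-root part,
\[
\sqrt{d_2e^{-\tau^{*2}/2}\log N}\le \sqrt{\frac{d_2\log N}{1+\frac{d_2}{s_2^2}\log N}}= s_2\sqrt{\frac{u}{1+u}},\qquad u=\frac{d_2}{s_2^2}\log N\ge c,
\]
and $\sqrt{u/(1+u)}\lesssim\log(1+u)$ for $u\ge c$. Hence this part is at most a constant times $s_2L$, which is absorbed by the first term of $h$.

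\emph{Type II error.} Let the support be $S_1\times S_2$. Then $t_{\chisqmax,1}\ge t_{S_1}$, and the coordinates $\bar\Y_{S_1,j}$ are independent with law $N(\theta_j,1)$, where $\theta_j\ge \mu\sqrt{s_1}$ for $j\in S_2$ and $\theta_j=0$ otherwise. Since $\mu^2s_1\ge C_\mu L$, we have $\theta_j\ge \sqrt{C_\mu/C}\,\tau^*\ge 2\tau^*$ for $C_\mu$ large. Lemmas~6 and~7 of \cite{liu2021minimax} then give
\[
\EE_\X t_{S_1}\ge \tfrac12 s_2s_1\mu^2,\qquad \var_\X t_{S_1}\le C_1 s_2 s_1\mu^2+C_1\tau^{*3}e^{-\tau^{*2}/2}d_2 .
\]
The mean bound holds with all elevated entries $\ge\mu$ by monotonicity in $\theta_j$. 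The assumption $\mu^2\ge C_\mu(L/s_1+\log N/(s_1s_2))$ gives $s_1s_2\mu^2\ge C_\mu(s_2L+\log N)\ge 4h$ once $C_\mu\ge 4C_h$. By Chebyshev,
\[
\bbP_\X(t_{S_1}\le h)\le \frac{16\,\var}{(s_1s_2\mu^2)^2}.
\]
The first variance term contributes $\lesssim 1/(s_1s_2\mu^2)\le 1/(C_\mu s_2 L)\le 1/(C_\mu\log(1+c))$. The second contributes
\[
\lesssim \frac{L^{3/2}\,d_2/(1+u)}{C_\mu^2 s_2^2L^2}\le \frac{1}{C_\mu^2\sqrt{L}}\cdot\frac{u}{(1+u)\log N}.
\]
This is the one genuine point where the Bonferroni structure matters, and it is the main obstacle. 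A naive Type~II bound is harmless, but the Type~I part needs $h$ to exceed the union-bound threshold while $h$ stays small relative to the signal. This works because $h$ contains the additive $\log N$ term and the rate contains the matching $\log N/(s_1s_2)$. In the second variance term we use $\log N\ge \log 2$ whenever $N\ge 2$, or replace $\log N$ by $\log(eN)$ as in the statement of the test, so that the last fraction is bounded by a constant. Together with $L\ge\log(1+c)$, both variance terms are at most $\alpha/4$ for $C_\mu$ large, which finishes the proof.
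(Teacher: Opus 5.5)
Your proposal is correct and follows the paper's proof essentially step for step. Type~I is handled by a union bound over the ${d_1\choose s_1}$ row sets combined with the exponential tail of Lemma~5 of \cite{liu2021minimax}, and Type~II by Chebyshev applied to the single statistic indexed by the true row support $S_1$, using Lemmas~6 and~7 of \cite{liu2021minimax}; in both, $h$ absorbs the $s_2\log\big(1+\frac{d_2}{s_2^2}\log{d_1\choose s_1}\big)$ and $\log{d_1\choose s_1}$ pieces. Your explicit use of $\log{d_1\choose s_1}\ge\log 2$, which holds automatically because the hypothesis $\frac{d_2}{s_2^2}\log{d_1\choose s_1}\ge c>0$ forces ${d_1\choose s_1}\ge 2$, also justifies the bound on the second variance term, a step the paper passes over without comment.
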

\begin{proof}
    We begin by controlling the Type I error. 
    First, for $C_h$ taken sufficiently large note that
    \begin{align*}
        h &= C_h\Big(s_2\log\Big(1 + \frac{d_2}{s_2^2}\log {d_1 \choose s_1}\Big) + \log {d_1 \choose s_1}\Big) \\
        &\geq C_h\Big(s_2\log(1 + c) + \log {d_1 \choose s_1}\Big) + 
        \log(2/\alpha)\\
        &\geq C_h\Big(\log(1+c)\sqrt{\frac{d_2(\log {d_1 \choose s_1} + \log(2/\alpha))}{1 + \frac{d_2}{s_2^2}\log {d_1 \choose s_1}}} + \log {d_1 \choose s_1}\Big) + \log(2/\alpha) \\
        &\geq 9\Bigg(\sqrt{d_2 e^{-(\tau^*)^2/2}\Big(\log{d_1 \choose s_1} + \log(2/\alpha)\Big)} + \log {d_1 \choose s_1} + \log(2/\alpha)\Bigg) \\
        &:= \bar{h}.
    \end{align*}
    We have 
    \begin{align*}
        \bbP_0\big(\Delta_\chisqmax^h = 1\big) &= \bbP_0\big(t_{\chisqmax} > h\big) \\
        &\leq {d_1 \choose s_1}\bbP_{Z_j \iid N(0,1)}\Big(\sum_{j = 1}^{d_2}(Z_j^2 - \beta_{\tau^*})\one(|Z_j| > \tau^*) > h)\Big) \quad \text{(by union bound)} \\
        &\leq {d_1 \choose s_1}\bbP_{Z_j \iid N(0,1)}\Big(\sum_{j = 1}^{d_2}(Z_j^2 - \beta_{\tau^*})\one(|Z_j| > \tau^*) > \bar{h})\Big) \\
        &\leq {d_1 \choose s_1} e^{-\log(\frac{2}{\alpha}{d_1 \choose s_1})} \quad \text{(using Lemma 5 of \cite{liu2021minimax})} \\
        &= \frac{\alpha}{2}.
    \end{align*}
    Now we consider the Type 2 error. Let the mean matrix $\X$ be an arbitrary element of $\Theta(s_1, s_2, \mu)$ with support $S_1 \times S_2$. Note that $\tau^* \geq 1$ by definition. Therefore, for $C_\mu \geq 8$, we apply Lemmas 6 and 7 of \cite{liu2021minimax} to conclude
    \begin{align*}
        \EE_{\X}\big[t_{\chisqmax}\big] &\geq \frac{s_1s_2}{2}\mu^2 \\
        \var_\X(t_{\chisqmax}) &\leq C_1s_1s_2\mu^2+C_1 (\tau^*)^3e^{-(\tau^*)^2/2}(d_2 - s_2),
    \end{align*}
    where $C_1$ is the absolute constant from Lemma 7 of \cite{liu2021minimax}. For $C_\mu \geq 4C_h$, it holds
    \begin{align*}
        \EE_{\X}\big[t_{\chisqmax}\big] &\geq \frac{s_1s_2}{2}\mu^2 \\
        &\geq \frac{C_\mu}{2}\Big(s_2\log\Big(1 + \frac{d_2}{s_2^2}\log{d_1 \choose s_1}\Big) + \log {d_1 \choose s_1}\Big) \\ 
        &\geq 2h.
    \end{align*} 
    Thus, we can control the Type 2 error as follows.
    \begin{align*}
        \bbP_\X \big(t_{\chisqmax} \leq h\big) &= \bbP_\X \big(t_{\chisqmax} - \EE_\X[t_{\chisqmax}] \leq h - \EE_\X[t_{\chisqmax}]\big) \\
        &= \bbP_\X \big((t_{\chisqmax} - \EE_\X[t_{\chisqmax}])^2 \geq (h - \EE_\X[t_{\chisqmax}])^2\big) \\
        &\leq \bbP_\X \big((t_{\chisqmax} - \EE_\X[t_{\chisqmax}])^2 \geq (\EE_\X[t_{\chisqmax}] / 2)^2\big) \\
        &\leq 4 \frac{\var_\X(t_{\chisqmax})}{(\EE_\X[t_{\chisqmax}])^2} \quad \text{(Markov's inequality)} \\
        &\leq 4\frac{C_1s_2s_1\mu^2 + C_1 (\tau^*)^3e^{-(\tau^*)^2/2}(d_2 - s_2)}{\big(\frac{s_2s_1}{2}\mu^2\big)^2} \\
        &= 4(\text{I} + \text{II}).
    \end{align*}
    We now control these two terms separately.
    \begin{align*}
        \text{I} &= \frac{C_1s_2s_1\mu^2}{\big(\frac{s_2s_1}{2}\mu^2\big)^2} \\
        &\leq \frac{4C_1}{C_\mu^2s_2\log\big(1 + c\big) + C_\mu^2\log{d_1 \choose s_1}} \\
        &\leq \frac{\alpha}{16},
    \end{align*}
    where the final inequality holds for $C_\mu$ taken sufficiently large. Finally, we have 

    \begin{align*}
        \text{II} &= \frac{C_1 (\tau^*)^3e^{-(\tau^*)^2/2}(d_2 - s_2)}{\big(\frac{s_2s_1}{2}\mu^2\big)^2} \\
        &\leq \frac{C_1 (\tau^*)^3e^{-(\tau^*)^2/2}d_2}{\big(\frac{s_2s_1}{2}\mu^2\big)^2} \\
        &\leq \frac{C_1 (C \log(1 + \frac{d_2}{s_2^2}\log {d_1 \choose s_1}))^{3/2}d_2}{\big(\frac{s_2s_1}{2}\mu^2\big)^2(1 + \frac{d_2}{s_2^2}\log {d_1 \choose _1})} \quad \text{(by our choice of $\tau^*$)} \\
        &\leq \frac{C'\big(\log(1 + \frac{d_2}{s_2^2}\log {d_1 \choose s_1})\big)^{3/2}}{C_\mu^2\big(\log(1 + \frac{d_2}{s_2^2}\log {d_1 \choose s_1})\big)^2}\frac{\frac{d_2}{s_2^2}\log {d_1 \choose s_1}}{1 + \frac{d_2}{s_2^2}\log {d_1 \choose s_1}} \quad \text{(by our lower bound on $\mu^2$)}\\ 
        &\leq \frac{C'}{C_\mu^2 \sqrt{\log(1 + \frac{d_2}{s_2^2}\log {d_1 \choose s_1})}} \\
        &\leq \frac{C'}{C_\mu^2 \sqrt{\log(1 + c)}} \\
        &\leq \frac{\alpha}{16},
    \end{align*}
    where $C' = 8C_1C^{3/2}$ and again the final inequality holds for $C_\mu$ sufficiently large. Combining these bounds on the Type I and Type II errors gives us
    \[\cR(\Delta_{\chisqlin}^h, \mu) \leq \alpha\]
    and the proof is complete.
\end{proof}

\subsection{Proof of Proposition 1}
\begin{proof}
    
Here we give a proof of Proposition 1, which establishes the upper bound stated in Theorem 1. Recall the construction of our proposed optimal test. We let 
\begin{align*}
    \tilde R = \big(\psi_{12} + \beta_{21}\big) \land \big(\psi_{21} + \beta_{12}\big) \land \phi_{12} \land \phi_{21}.
\end{align*}
We then define our optimal test as
\begin{align*}
    \Delta^* = \begin{cases}
        \Delta_a^{h_1, h_2} & \text{ if } \tilde R = \phi_{12}, \\
        \Delta_b^{h'_1, h'_2} & \text{ if } \tilde R = \phi_{21}, \\
        \Delta_c^{h_3, h_4} & \text{ if } \tilde R = \psi_{12} + \beta_{21},\\
        \Delta_d^{h_3', h_4'} & \text{ if } \tilde R = \psi_{21} + \beta_{12}.
    \end{cases}
\end{align*}
where
\begin{align*}
    &\Delta_a^{h_1,h_2} = \begin{cases}
        \Delta^{h_1}_{\chisqlin, 1} & \text{if  $\frac{d_2}{s_2^2} \geq 1$,} \\ \Delta^{h_2}_{\lin} & \text{otherwise}
    \end{cases} \qquad \text{ and } 
    \qquad \Delta_b^{h_1',h_2'} = \begin{cases}
        \Delta^{h_1'}_{\chisqlin, 2} & \text{if  $\frac{d_1}{s_1^2} \geq 1$,} \\ \Delta^{h_2'}_{\lin} & \text{otherwise,}
    \end{cases}
\end{align*}
and 
\begin{align*}
    &\Delta_c^{h_3,h_4} = \begin{cases}
        \Delta^{h_3}_{\chisqmax, 1} & \text{if  $\frac{d_2}{s_2^2}\log\!\left(e{d_1 \choose s_1}\right) \geq 1$,} \\ \Delta^{h_4}_{\linmax, 1} & \text{otherwise}
    \end{cases} \quad \text{ and } 
    \quad \Delta_d^{h_3',h_4'} = \begin{cases}
        \Delta^{h_3'}_{\chisqmax, 2} & \text{if  $\frac{d_1}{s_1^2}\log\!\left(e{d_2 \choose s_2}\right) \geq 1$,} \\ \Delta^{h_4'}_{\linmax, 2} & \text{otherwise}.
    \end{cases} 
\end{align*}
By Lemma~\ref{lem_simplified_rate_for_UB}, we always have $(\psi_{12} + \psi_{21}) \land \phi_{12} \land \phi_{21} \gtrsim \tilde R$. 
Assume first that  $\tilde R = \big(\psi_{12} + \beta_{21}\big) \land \big(\psi_{21} + \beta_{12}\big)$, and, by symmetry, assume that we have  $\tilde R = \psi_{12} + \beta_{21}$. 
In this case, we have $\Delta^* = \Delta^{h_3, h_4}_{c}$. If $\frac{d_2}{s_2^2}\log\!\left(e{d_1 \choose s_1}\right) \geq 1$, the result follows by Lemma~\ref{lem:max-trunc-test}, and otherwise the result follows from Lemma \ref{lem_maxlinear_test}. We can proceed similarly in the case where $\tilde R = \psi_{21} + \beta_{12}$. 

Assume now that $\tilde R = \phi_{12}$ and $d_2 \geq s_2^2$. Then the result follows by Lemma~\ref{lem:linear-trunc-test}. Similarly, if  
$\tilde R = \phi_{21}$ and $d_1 \geq s_1^2$, the result follows by Lemma~\ref{lem:linear-trunc-test}.
Finally, if none of the conditions above are satisfied, then we have $\Delta^* = \Delta^{h_2}_{\lin}$ and the result follows by Lemma~\ref{lem:linear-test}. 
The proof is complete.
\end{proof}

\subsection{Analysis of adaptive tests}

\subsubsection{Adaptive Bonferroni corrected linear test}

\begin{lemma}\label{lem_ub_ada_maxlinear}
    Let $\alpha \in (0,1)$ be given. Define
    \[\Omega_1 = \left\{\frac{d_1}{2^m}: m \in \{0, \ldots, \lfloor \log_2(d_1) \rfloor\}\right\}.\]
    The adaptive Bonferroni corrected linear test is defined as
    \[\Delta^{\text{ada}}_{\linmax, 1} = \max_{s_1 \in \Omega_1}\one\big(t_{\linmax, 1}(s_1) > h(s_1)\big),\]
    for $h(s_1)$ to be specified later. Let $s_1^*$ and $s_2^*$ denote the true row and column sparsity under the alternative hypothesis. Suppose that $d_1 \geq 3$ and $s_1^* \leq d_1 - 1$. Then there exist constants $C_h, C_{\mu} > 0$ such that if
    \[\mu^2 \geq C_\mu \frac{d_2}{s^*_1(s^*_2)^2}\log\left(e {d_1 \choose s^*_1}\right),\]
    then the adaptive Bonferroni corrected linear test with cutoff $h(s_1) = C_h\sqrt{2\log\left(e {d_1 \choose s_1}\log_2(d_1)\right)}$ satisfies
    \[\cR(\Delta^{\text{ada}}_{\linmax, 1}, \mu) \leq \alpha.\]
\end{lemma}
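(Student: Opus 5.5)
The plan is to follow the proof of Lemma~\ref{lem_maxlinear_test}, adding a union bound over the dyadic grid $\Omega_1$ for the Type I error. For the Type II error, the idea is to pick a grid point $s_1 \in \Omega_1$ that is close enough to the true $s_1^*$ and to pay only a constant factor in signal.

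\textbf{Type I error.} Under $H_0$, for every fixed $s_1$ and $J_1 \in \mathcal P_{s_1}(d_1)$, the statistic $\frac{1}{\sqrt{d_2}}\sum_j \bar{\Y}_{J_1,j}$ is exactly $N(0,1)$. The number of grid points satisfies $|\Omega_1| \le \log_2(d_1)+1 \le 2\log_2(d_1)$, using $d_1\ge 3$. A union bound over $s_1 \in \Omega_1$ and over $J_1$, together with the Chernoff bound, gives
\[
\bbP_0(\Delta^{\text{ada}}_{\linmax,1}=1) \le \sum_{s_1\in\Omega_1} {d_1\choose s_1} e^{-h(s_1)^2/2}.
\]
With $h(s_1)^2 = 2C_h^2\log\big(e{d_1\choose s_1}\log_2 d_1\big)$ and $C_h$ large, each summand is at most $\alpha/(4\log_2 d_1)$, so the total is at most $\alpha/2$.

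\textbf{Type II error.} Fix $\X$ with support $S_1\times S_2$, where $|S_1|=s_1^*$. I would split into two cases.

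\emph{Case $s_1^* < d_1/2$.} Take $s_1$ to be the largest element of $\Omega_1$ with $s_1 \le s_1^*$, so that $s_1^*/2 < s_1 \le s_1^*$. Take any $J_1\subset S_1$ with $|J_1| = s_1$. The corresponding statistic is Gaussian with variance $1$ and mean $\mu s_2^*\sqrt{s_1/d_2} \ge \mu s_2^*\sqrt{s_1^*/(2d_2)}$.

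\emph{Case $s_1^* \ge d_1/2$.} Take $s_1 = d_1 \in \Omega_1$ (the grid point $m=0$) and $J_1=[d_1]$. The mean is then $\mu s_1^* s_2^*/\sqrt{d_1 d_2} \ge \mu s_2^*\sqrt{s_1^*/(2d_2)}$, again with variance $1$.

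In both cases the mean satisfies
\[
\mathrm{mean}^2 \ge \tfrac{C_\mu}{2}\log\Big(e{d_1\choose s_1^*}\Big)
\]
by the assumption on $\mu^2$. It remains to show this is at least $4h(s_1)^2 + 4/\alpha$, since Chebyshev's inequality, as in Lemma~\ref{lem_maxlinear_test}, then bounds the Type II error by $\alpha/2$.

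\textbf{Main obstacle: comparing $h(s_1)^2$ with $\log\big(e{d_1\choose s_1^*}\big)$.} I must absorb both $\log{d_1\choose s_1}$ and $\log\log_2 d_1$.

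In the first case, $s_1 \le s_1^* < d_1/2$, and binomial coefficients are monotone on $[0,d_1/2]$. Hence ${d_1\choose s_1}\le{d_1\choose s_1^*}$.

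In the second case, ${d_1\choose d_1}=1$, so $h(d_1)^2 \asymp \log(e\log_2 d_1)$.

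In both cases, the assumption $1\le s_1^*\le d_1-1$ gives ${d_1\choose s_1^*}\ge d_1$. Therefore
\[
\log(e\log_2 d_1) \lesssim \log(e d_1) \le \log\Big(e{d_1\choose s_1^*}\Big).
\]
Consequently $h(s_1)^2 \lesssim C_h^2\log\big(e{d_1\choose s_1^*}\big)$. Choosing $C_\mu$ large relative to $C_h^2$ and $1/\alpha$ completes the argument. The delicate point is the dense regime $s_1^*$ close to $d_1$: there the nearest dyadic point below $s_1^*$ could have a much larger binomial coefficient, which is why I would use the full-row statistic $s_1=d_1$ in that case rather than the nearest dyadic point.
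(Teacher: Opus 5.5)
Your proof is correct. Its skeleton matches the paper's: a union bound over $\Omega_1$ and $J_1$ for the Type I error, then a single well-chosen grid point and Chebyshev for the Type II error. The difference is which grid point you compare against.

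The paper goes up. It uses the dyadic point $s_1^+\in[s_1^*,2s_1^*)$ and a superset $S_1^+\supset S_1^*$ of that size. (Its displayed formula has a ceiling, but a floor is what actually gives $s_1^*\le s_1^+$.) The statistic then absorbs $s_1^+-s_1^*$ pure-noise rows and loses at most a factor $\sqrt2$ through the normalization $1/\sqrt{s_1^+}$.

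That route needs no case split, because $s_1^+=d_1$ automatically when $s_1^*>d_1/2$. However, to compare $h(s_1^+)^2$ with $\log\left(e\binom{d_1}{s_1^*}\right)$ one must check $\log\binom{d_1}{s_1^+}\lesssim\log\binom{d_1}{s_1^*}$ for $s_1^*\le s_1^+<2s_1^*\le d_1$, via $s\log(d/s)\le\log\binom{d}{s}\le s\log(ed/s)$. The paper leaves this step implicit; it only comments on absorbing $\log\log_2 d_1$ using $s_1^*\le d_1-1$.

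Your downward choice of a subset $J_1\subset S_1^*$ replaces that entropy comparison with exact monotonicity of $k\mapsto\binom{d_1}{k}$ for $k\le d_1/2$. The cost is the case split. You correctly identify why the split is unavoidable on your route: for $s_1^*$ close to $d_1$, the dyadic point below $s_1^*$ can carry an exponentially larger binomial coefficient, so switching to the full-row statistic $s_1=d_1$ is the right fix.

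Two minor points:
\begin{enumerate}
\item Your route presupposes a grid point $\le s_1^*$. For $s_1^*=1$ this exists only if the grid $d_1/2^m$ is rounded down to integers, since its smallest literal element lies in $[1,2)$. The upward route always has $d_1$ available, so it is insensitive to this rounding convention, which the paper also leaves implicit.
\item Requiring the squared mean to exceed $4h(s_1)^2+4/\alpha$ gives a Type II bound of $\alpha$ rather than $\alpha/2$. Use $8/\alpha$ instead.
\end{enumerate}
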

\begin{proof}
    We consider the Type I error first. Under the null hypothesis, we have
    \begin{align*}
        \bbP_0\left(\Delta^{\text{ada}}_{\linmax, 1} = 1\right) &= \bbP_0\left(\max_{s_1 \in \Omega_1}\max_{J_1 \in \mathcal P_{s_1}\!(d_1)}\frac{1}{\sqrt{d_2}}\sum_{j = 1}^{d_2}\bar{Y}_{J_1, j} > h(s_1)\right) \\
        &\leq \sum_{s_1 \in \Omega_1}{d_1 \choose s_1}\bbP_{Z \sim N(0,1)}\big(Z > h(s_1)\big) \quad \text{(union bound)} \\
        &\leq \sum_{s_1 \in \Omega_1}{d_1 \choose s_1}e^{-h^2(s_1)/2} \quad \text{(Chernoff bound)} \\
        &= \sum_{s_1 \in \Omega_1}{d_1 \choose s_1} \exp\left[-C_h^2\log\left(e {d_1 \choose s_1}\log_2(d_1)\right)\right] \\
        &\leq \sum_{s_1 \in \Omega_1}{d_1 \choose s_1} \left(\frac{1}{{d_1 \choose s_1}\log_2(d_1)}\right)^{C_h^2} \\
        &= \left(\frac{1}{{d_1 \choose s_1}\log_2(d_1)}\right)^{C_h^2- 1} \quad \text{(for $C_h > 1$)} \\
        &\leq \left(\frac{1}{\log_2(3)}\right)^{C_h^2- 1} \quad \text{(for $d_1 \geq 3, {d_1 \choose s_1} \geq 1$)} \\
        &\leq \frac{\alpha}{2},
    \end{align*}
    where the final inequality holds for $C_h$ taken sufficiently large. Now we turn our attention to the Type II error. Let the mean matrix $\mathbf{X}$ be an arbitrary element of $\Theta(s^*_1, s^*_2, \mu)$ where $s^*_1$ and $s^*_2$ denote the true row and column sparsity levels respectively with support $S^*_1 \times S^*_2$. We have not assumed that $s_1^* \in \Omega_1$. However, it is true that $s_1^+ := d_12^{-\lceil \log_2(d_1/s_1)\rceil}$ belongs to $\Omega_1$. Since $s_1^* \leq s_1^+$, there exists a set $S_1^+ \in \mathcal{P}_{s_1^+}(d_1)$ such that $S_1^* \subset S_1^+$. Define the statistic
    \[t_{S^+_1} = \sum_{j = 1}^{d_2}\bar{Y}_{S^+_1, j} = \frac{1}{\sqrt{s^+_1d_2}}\sum_{j = 1}^{d_2}\sum_{i \in  S^+_1}Y_{ij}.\]
    Under the alternative hypothesis, $\EE_{\X} [t_{S^+_1}] = (\mu s_2s^*_1) / \sqrt{s_1^+d_2}$ and $\text{var}(t_{S_1}) = 1$. By our assumption on $\mu$ and the construction of $s_1^+$, we have
    \begin{align*}
        \EE_{\X} [t_{S_1}] &= \mu \frac{s_2s^*_1}{\sqrt{s_1^+d_2}} \\
        &\geq \mu s_2\sqrt{\frac{s^*_1}{2d_2}} \\
        &\geq \sqrt{\frac{C_{\mu}}{2}\log\left(e {d_1 \choose s^*_1}\right)} \\
        &\geq 2h,
    \end{align*}
    where the final inequality holds for $C_\mu$ taken sufficiently large, and relies on $s_1^* \leq d_1 - 1$ to obtain the inequality $\log\left(e{d_1 \choose s_1}\right) \geq \log \log_2 (d_1)$. The remainder of the proof follows exactly as that of Lemma \ref{lem_maxlinear_test}; we omit the details. The proof is complete.
\end{proof}

\subsubsection{Adaptive truncated $\chi^2$ test}
\begin{lemma}\label{lem_ub_ada_linear}
    Let $\alpha \in (0,1)$ be given. Suppose that there exists a constant $c > 0$ such that $\frac{d_2}{(s_2^*)^2} \geq c$ and that $s_2^* \geq 3$, where $s_2^*$ denotes the true column sparsity under $H_1$. Let $S_2 = \{s_2 : cs_2^2 \leq d_2, s_2 \geq 3\}$, and let
    \[\Omega_2 = \left\{\frac{d_2}{2^m} \in S_2 : m \in \{0, \ldots, \lfloor \log_2(d_2) \rfloor\}\right\}\]
    Given $s_2 \in \Omega_2$, we define
    \[\tau(s_2) = \sqrt{C\log\Big(1 + \frac{d_2}{s_2^2}\Big)},\]
where $C = \big(\log(1 + c)\big)^{-1}$. Letting $h(s_2) = C_hs_2\log\big(1 + \frac{d_2}{s_2^2}\big)$ for $C_h$ to be specified later, the adaptive truncated $\chi^2$ test is defined as
\[\Delta^{\text{ada}}_{\chisqlin} = \max_{s_2 \in \Omega_2}\one(t^{\tau(s_2)}_{\chisqlin} \geq h(s_2)).\]
There exist constants $C_h, C_\mu > 0$ such that if
\[\mu^2 \geq C_\mu \frac{d_1}{(s^*_1)^2}\log\big(1 + \frac{d_2}{(s^*_2)^2}\big),\]
then $\Delta^{\text{ada}}_{\chisqlin}$ with  satisfies
\[\cR(\Delta^{\text{ada}}_{\chisqlin}) \leq \alpha.\]
\end{lemma}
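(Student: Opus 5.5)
We follow the proof of Lemma~\ref{lem:linear-trunc-test}, adding a union bound over the dyadic grid $\Omega_2$ for the Type I error and an argument that some grid point $s_2\in\Omega_2$ is close enough to $s_2^*$ for the Type II error.

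\textbf{Type I error.} Under $H_0$, each $\bar{\Y}_j\sim N(0,1)$ independently. For a fixed $s_2\in\Omega_2$ we apply Lemma 5 of \cite{liu2021minimax} at confidence level $\delta(s_2)$. It suffices that
\[
h(s_2)\ge 9\Big(\sqrt{d_2e^{-\tau(s_2)^2/2}\log(1/\delta(s_2))}+\log(1/\delta(s_2))\Big).
\]
Choose $\delta(s_2)=\frac{\alpha}{2}\cdot\frac{6}{\pi^2 m(s_2)^2}$, where $m(s_2)$ is the index of $s_2$ in the grid ordered by decreasing $d_2/s_2^2$. Since the grid is dyadic, $d_2/s_2^2$ grows geometrically along it, so $\log(1/\delta(s_2))\lesssim 1+\log\log(1+d_2/s_2^2)$. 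By the choice of $C$ in $\tau(s_2)$, the right-hand side is at most a constant times $s_2\log(1+d_2/s_2^2)$. This uses $cs_2^2\le d_2$, which keeps the logarithm bounded below, and $s_2\ge 3$, which lets $s_2\log(1+d_2/s_2^2)$ absorb the $\log\log(1+d_2/s_2^2)$ term. Hence $h(s_2)$ dominates once $C_h$ is large. A union bound gives total Type I error at most $\alpha/2$.

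A simpler alternative is to make $h(s_2)$ also carry a $\log\log_2 d_2$ term, with $\delta=\alpha/(2\log_2 d_2)$. That would follow the adaptive Bonferroni corrected linear test (Lemma~\ref{lem_ub_ada_maxlinear}), but it is unnecessary if the geometric-summation argument goes through. We first try the geometric argument, since it keeps $h(s_2)$ exactly as stated.

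\textbf{Type II error.} Let $s_2^+=d_2 2^{-\lceil\log_2(d_2/s_2^*)\rceil}$, so that $s_2^*\le s_2^+<2s_2^*$. Then $d_2/(s_2^+)^2\ge d_2/(4(s_2^*)^2)\ge c/4$, and since $s_2^+\ge s_2^*\ge 3$, we get $s_2^+\in S_2$ after adjusting $c$ to $c/4$ in the definition of $S_2$. It suffices to show that the single test indexed by $s_2^+$ rejects with probability at least $1-\alpha/2$.

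Under the alternative, $s_2^*$ coordinates have mean at least $\mu s_1^*/\sqrt{d_1}$ and the remaining ones are centered. Lemmas 6 and 7 of \cite{liu2021minimax} then give
\[
\EE[t]\ge \frac{s_2^*(s_1^*)^2\mu^2}{2d_1},\qquad
\var(t)\le C_1 s_2^*\frac{(s_1^*)^2}{d_1}\mu^2 + C_1\tau^3e^{-\tau^2/2}d_2,
\]
where $\tau=\tau(s_2^+)$. Using $\log(1+d_2/(s_2^+)^2)\asymp\log(1+d_2/(s_2^*)^2)$ (constants depending on $c$) and $s_2^+\le 2s_2^*$, we get $\EE[t]\ge 2h(s_2^+)$ for $C_\mu$ large. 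The ratio $\var(t)/\EE[t]^2$ is then bounded exactly as in terms I and II of Lemma~\ref{lem:linear-trunc-test}, with $s_2^*$ replacing $s_2$ up to factors of 4. Chebyshev's inequality concludes.

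\textbf{Main obstacle.} The delicate step is the Type I union bound: making the grid-dependent confidence levels sum to $\alpha/2$ while keeping the threshold $h(s_2)=C_hs_2\log(1+d_2/s_2^2)$ unchanged. Here the conditions $s_2\ge 3$ and $cs_2^2\le d_2$ must be used so that this threshold absorbs the extra $\log\log$ cost of the Bonferroni correction.
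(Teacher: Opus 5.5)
Your proof is correct in substance and has the same skeleton as the paper's. For the Type~I error you use Lemma~5 of \cite{liu2021minimax} with a union bound over the dyadic grid. For the Type~II error you reduce to one grid point within a factor $2$ of $s_2^*$, then apply Lemmas~6--7 of \cite{liu2021minimax} and Chebyshev.

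What differs is the bookkeeping. For Type~I, the paper sets $u(s_2)=\tfrac{C_h^2}{81}\log^2(1+d_2/s_2^2)\wedge\tfrac{C_h}{9}s_2\log(1+d_2/s_2^2)$, the largest level at which $h(s_2)$ still dominates the Lemma~5 threshold. It then sums $e^{-u(s_2)}$ as a $\sum s_2^{-2}$ series plus a series compared to a Gaussian integral. You instead fix the budget $\delta_m\propto m^{-2}$ in advance and check that $h(s_2)$ absorbs the resulting $\log\log$ cost. This is the same weighted Bonferroni bound. Your version, however, makes explicit that summability comes from the geometric growth of $d_2/s_2^2$ along the grid, and it only uses $s_2\ge 1$. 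The paper's Gaussian-integral step is justified only for the sum over $\Omega_2$. Over all integers in $S_2$, as the paper writes it, the $\asymp\sqrt{d_2/c}$ integers with $d_2/s_2^2\in[c,4c]$ already make the sum unbounded.

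For Type~II, the paper uses the grid point below $s_2^*$. That point satisfies $cs_2^2\le d_2$ automatically, but the paper never checks that it is $\ge 3$. You use the point above $s_2^*$, which is automatically $\ge 3$. There the mean condition of Lemma~6 is immediate since $\tau(s_2^+)\le\tau(s_2^*)$, but you must shrink $c$ to $c/4$, which changes the test.

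Two slips need fixing.

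\emph{The grid ordering is reversed.} You must index the grid from the \emph{largest} admissible $s_2$, i.e.\ the smallest $d_2/s_2^2\ge c$. Then $d_2/s_2^2\ge c\,4^{m-1}$, so $\log(1/\delta_m)\lesssim 1+\log\bigl(1+\log(d_2/(cs_2^2))\bigr)$. With the ordering you wrote (decreasing $d_2/s_2^2$), the point $s_2\asymp\sqrt{d_2/c}$ gets index $\asymp\log d_2$. The term $9\sqrt{d_2e^{-\tau^2/2}\log(1/\delta)}\asymp s_2\sqrt{\log\log d_2}$ is then not dominated by $h(s_2)\asymp C_h s_2$. Separately, the bound $d_2e^{-\tau^2/2}\le s_2^2$ needs $C\ge 2$, i.e.\ $C=(\log(1+c))^{-1}\vee 2$ as in Lemma~\ref{lem:linear-trunc-test}.

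\emph{Your formula gives the lower grid point, not the upper one.} The quantity $d_2 2^{-\lceil\log_2(d_2/s_2^*)\rceil}$ lies in $(s_2^*/2,s_2^*]$; it is the paper's $s_2^-$. For the point above $s_2^*$ you need $\lfloor\cdot\rfloor$.

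You can avoid modifying $c$ except in a degenerate case. When $s_2^*\ge 6$, use the lower point: it exceeds $3$ and satisfies $cs_2^2\le d_2$. When $3\le s_2^*<6$, use the upper point, which lies in $S_2$ as soon as $d_2\ge 144c$. What remains is a bounded-$d_2$ regime where $\Omega_2$ can be empty (e.g.\ $d_2=9$, $s_2^*=3$, $c=1$). There no argument can work for the test as stated.
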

\begin{proof}

 We begin by controlling the Type I error. This proof follows the structure of the proof of Theorem 23 in \cite{kotekal2023minimax}. For every $s_2 \in S_2$, we define $u(s_2) = \frac{C^2_h}{81}\log^2(1 + \frac{d_2}{s_2^2}) \land \frac{C_h}{9} s_2\log(1 + \frac{d_2}{s_2^2})$. By direct calculation, we have
\begin{align*}
    9\Big(\sqrt{d_2 e^{-\tau(s_2)^2}u(s_2)} + u(s_2)\Big) 
    &\leq  9\Big(\sqrt{d_2 \frac{s_2^2}{s_2^2 + d_2} e^{-C}u(s_2)} + u(s_2)\Big) \\
    &\leq 9\Big(s_2e^{-C}\frac{C_h}{9}\log\big(1 + \frac{d_2}{s_2^2}\big) + \frac{C_h}{9}\log\big(1 + \frac{d_2}{s_2^2}\big)\Big) \\
    &\leq C_h s_2 \log\big(1 + \frac{d_2}{s_2^2}\big) \\
    &= h(s_2).
\end{align*}
Then it holds
\begin{align*}
    \bbP_0\Big(\Delta^{\text{ada}}_{\chisqlin} = 1\Big) &\leq \sum_{s_2 \in S_2}\bbP_0\Big(t^{\tau(s_2)}_{\chisqlin} \geq h(s_2)\Big) \\
    &\leq \sum_{s_2 \in S_2}\bbP_0\Big(t^{\tau(s_2)}_{\chisqlin} \geq 9\Big(\sqrt{d_2 e^{-\tau(s_2)^2}u(s_2)} + u(s_2)\Big)\Big) \\
    &\leq \sum_{s_2 \in S_2}e^{-u(s_2)} \quad \text{(by Lemma 5 of \cite{liu2021minimax})} \\
    &\leq \sum_{s_2 \in S_2}e^{-\frac{C^2_h}{81}\log^2\big(1 + \frac{d_2}{s_2^2}\big)} + \sum_{s_2 \in S_2}e^{-\frac{C_h}{9}s_2 \log\big(1 + \frac{d_2}{s_2^2}\big)}.
\end{align*}
We control the second sum first. Note that for $s_2 \geq 3$ and $C_h$ taken sufficiently large, it holds
\[\frac{C_h}{9}\log\big(1 + \frac{d_2}{s_2^2}\big) \geq \frac{C_h}{9}\log\big(1 + c\big) \geq \frac{\log(s_2^2)}{s_2} + \log\big(\frac{8\pi^2}{6\alpha}\big).\]
Then by taking $C_h$ sufficiently large, it holds
\begin{align*}
    \sum_{s_2 \in S_2}e^{-\frac{C_h}{9}s_2 \log\big(1 + \frac{d_2}{s_2^2}\big)} &\leq \frac{\alpha}{16} + \frac{\alpha}{16} + \sum_{s_2 \in S_2 / \{1, 2\}}e^{-\frac{C_h}{9}s_2 \log\big(1 + \frac{d_2}{s_2^2}\big)} \\
    &\leq \frac{\alpha}{8} +  \sum_{s_2 \in S_2 / \{1, 2\}}e^{-\log(s_2^2) - \log(8\pi^2/6\alpha)} \\
    &\leq \frac{\alpha}{8} + \frac{6\alpha}{8\pi^2}\sum_{s_2 = 1}^{\infty}\frac{1}{s_2^2} \\
    &= \frac{\alpha}{4}.
\end{align*}
To control the first sum, we have
\[\sum_{s_2 \in S_2}e^{-\frac{C^2_h}{81}\log^2\big(1 + \frac{d_2}{s_2^2}\big)} \leq \int_{-\infty}^{\infty}\exp\Big(-\frac{C_h^2}{81}x^2\Big)\text{d}x \leq \sqrt{\frac{81\pi}{C^2_h}} \leq \frac{\alpha}{4},\]
where the final inequality holds for $C_h$ taken sufficiently large. Therefore, we have shown
\[\bbP_0(\Delta^{\text{ada}}_{\chisqlin}) \leq \frac{\alpha}{2}.\]
We now turn our attention to the Type II error. Let the mean matrix $\mathbf{X}$ be an arbitrary element of $\Theta(s^*_1, s^*_2, \mu)$ where $s^*_1$ and $s^*_2$ denote the true row and column sparsity levels respectively. While we have assumed that $s_2^* \in S_2$, we have not assumed that $s_2^* \in \Omega_2$. However, since $s_2 \mapsto d_2 / s_2^2$ is decreasing in $s_2$, it is true that $s_2^- := d_22^{-\lceil \log_2(d_2/s_2)\rceil}$ belongs to $\Omega_2$. Note that $\tau(s_2^-) \geq 1$ by construction. Therefore, to apply Lemmas 6 and 7 of \citet{liu2021minimax} to the statistic $t^{\tau(s^-_2)}_{\trunc-\chi^2}$, we first to verify that $\EE[\bar{Y}_j] \geq 8\tau(s_2^-)$. Observe
\begin{align*}
    \EE[\bar{Y}_j] &= \frac{s_1^*}{\sqrt{d_1}}\mu \\
    &\geq \sqrt{C_\mu\log\left(1 + \frac{d_2}{(s_2^*)^2}\right)} \\
    &\geq \sqrt{C_\mu\log\left(1 + \frac{d_2}{2(s_2^-)^2}\right)} \\
    &\geq 8\tau(s_2^-),
\end{align*}
where the final inequality holds for $C_\mu$ taken sufficiently large. We may therefore invoke Lemmas 6 and 7 of \citet{liu2021minimax} to lower and upper bound the mean and variance of $t^{\tau(s^-_2)}_{\trunc-\chi^2}$ respectively, and then proceed according to the proof of Lemma \ref{lem:linear-trunc-test}. We omit the details for brevity. The proof is complete.
\end{proof}

\subsubsection{Adaptive Bonferroni-corrected truncated $\chi^2$ test}
We define
\begin{equation}\label{eq:adaptive-maxchisq-tau}
    \tau(s_1, s_2) = \sqrt{C\log\Big(1 + \frac{d_2}{s_2^2}\log \Big({d_1 \choose s_1}\log_2(d_1)\log_2(d_2)\Big)\Big)},
\end{equation}
and
\begin{equation}\label{eq:adaptive-maxchisq-u}
    u(s_1, s_2) = 9\Big(\sqrt{d_2e^{-(\tau(s_1, s_2))^2/2}\log(\frac{2}{\alpha}{d_1\choose s_1}\log_2(d_1)\log_2(d_2))} + \log\Big(\frac{2}{\alpha}{d_1 \choose s_1}\log_2(d_1)\log_2(d_2)\Big)\Big),
\end{equation}
where $\alpha > 0$ and $C > 0$ are constants to be specified later. For a constant $c > 0$, we define the set $\Omega$ as 
\[\Omega = \Big\{(s_1, s_2) : \frac{d_2}{s_2^2}\log {d_1 \choose s_1} \geq c\Big\}.\]
In practice, we should take $c$ small enough such that $(s_1^*, s_2^*) \in \Omega$, assuming that such a $c$ exists. We take $\bar{\Omega}$ as a dyadic partition of $\Omega:$
\begin{equation}
    \bar{\Omega} = \Big\{\big(\frac{d_1}{2^{m_1}}, \frac{d_2}{2^{m_2}}\big) \in 
\Omega : m_1 \in \{0, \ldots, \lfloor \log_2(d_1)\rfloor \}, m_2 \in \{0, \ldots, \lfloor \log_2(d_2) \rfloor\}  \Big\}.
\end{equation}
\begin{lemma}\label{lem_ub_ada_max}
    Let $\alpha \in (0,1)$ be given. Suppose that the pair of true sparsity levels satisfies $(s^*_1, s^*_2) \in \Omega$. Suppose that there exists a constant $C' > 0$ such that $s_2^* + \log \log d_1 \geq C' \log \log d_2$, $(d_1/s_1^*) \land  (d_2/s_2^*) \geq e$, and $d_1 \land d_2 \geq 8$. The adaptive Bonferroni-corrected truncated $\chi^2$ test is defined as
\[\Delta^{\text{ada}}_{\chisqmax} = \max_{(s_1, s_2) \in \bar{\Omega}} \one\Big(t^{(s_1, s_2)}_{\chisqmax} \geq u(s_1, s_2)\Big),\]
where $\tau(s_1, s_2)$ and $u(s_1, s_2)$ are defined as in (\ref{eq:adaptive-maxchisq-tau}) and (\ref{eq:adaptive-maxchisq-u}) respectively. We take the constant $C$ in the definition of $\tau(s_1, s_2)$ as $C = (\log(1 + c))^{-1} \lor 2$. Then there exists a constant $C_\mu > 0$ such that if
\[\mu^2 \geq C_\mu\Bigg(\frac{1}{s^*_1}\log\Big(1 + \frac{d_2}{(s^*_2)^2}\log {d_1 \choose s^*_1}\Big) + \frac{1}{s^*_2 s^*_1}\log {d_1 \choose s^*_1}\Bigg),\]
then $\Delta^{\text{ada}}_{\chisqmax}$  satisfies
\[\cR(\Delta^{\text{ada}}_{\chisqlin}) \leq \alpha.\]
\end{lemma}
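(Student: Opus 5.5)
The proof combines the non-adaptive analysis of Lemma~\ref{lem:max-trunc-test} with a union bound over the dyadic grid $\bar\Omega$, which has at most $\log_2(d_1)\log_2(d_2)$ elements (up to the $+1$ rounding).

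\textbf{Type I error.} Fix $(s_1,s_2)\in\bar\Omega$ and union bound over $J_1\in\mathcal P_{s_1}(d_1)$. Under $H_0$, each $\bar{\Y}_{J_1,j}\sim N(0,1)$ independently over $j$. Lemma~5 of \cite{liu2021minimax}, applied with deviation parameter $x=\log\big(\tfrac{2}{\alpha}{d_1\choose s_1}\log_2(d_1)\log_2(d_2)\big)$, gives
\[
\bbP_0\big(t^{(s_1,s_2)}_{\chisqmax}\ge u(s_1,s_2)\big)\le {d_1\choose s_1}e^{-x}=\frac{\alpha}{2\log_2(d_1)\log_2(d_2)}.
\]
This is exactly why $u(s_1,s_2)$ has the form \eqref{eq:adaptive-maxchisq-u}. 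Summing over the grid yields Type I error at most $\alpha/2$ (adjusting constants for the $\lfloor\cdot\rfloor+1$ cardinality, using $d_1\land d_2\ge 8$).

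\textbf{Type II error.} Take a true support $S_1^*\times S_2^*$. Choose the grid point $(s_1^+,s_2^-)$, where $s_1^+=d_12^{-\lfloor\log_2(d_1/s_1^*)\rfloor}\in[s_1^*,2s_1^*)$ and $s_2^-\in(s_2^*/2,s_2^*]$ is the dyadic point below $s_2^*$. Then $d_2/(s_2^-)^2\ge d_2/(s_2^*)^2$, and $\log{d_1\choose s_1^+}\gtrsim\log{d_1\choose s_1^*}$ (using $d_1/s_1^*\ge e$ if $s_1^+$ is capped at $d_1$). Hence $(s_1^+,s_2^-)\in\Omega$, possibly after shrinking $c$, so it lies in $\bar\Omega$. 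Choose $J_1\supseteq S_1^*$ with $|J_1|=s_1^+$. For $j\in S_2^*$, the mean satisfies $\EE\bar{\Y}_{J_1,j}\ge\mu s_1^*/\sqrt{s_1^+}\ge\mu\sqrt{s_1^*/2}$. Since $t^{(s_1^+,s_2^-)}_{\chisqmax}\ge\sum_j(\bar{\Y}_{J_1,j}^2-\nu_\tau)\one(|\bar{\Y}_{J_1,j}|>\tau)$, we may follow the proof of Lemma~\ref{lem:max-trunc-test} with this fixed $J_1$:
\begin{enumerate}
\item Check $\EE\bar{\Y}_{J_1,j}\ge 8\tau(s_1^+,s_2^-)$, which requires $\mu^2 s_1^*\gtrsim\log\big(1+\tfrac{d_2}{(s_2^*)^2}\log({d_1\choose s_1^*}\log_2 d_1\log_2 d_2)\big)$.
\item Apply Lemmas~6 and~7 of \cite{liu2021minimax} to obtain mean $\gtrsim s_2^*s_1^*\mu^2$ and variance $\lesssim s_1^*s_2^*\mu^2+\tau^3e^{-\tau^2/2}d_2$.
\item Show the mean is at least $2u(s_1^+,s_2^-)$.
\item Conclude by Chebyshev, exactly as in Lemma~\ref{lem:max-trunc-test}.
\end{enumerate}

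\textbf{Main obstacle.} The difficulty is absorbing the extra $\log\log$ factors in $\tau$ and $u$ into the stated rate.
\begin{itemize}
\item In $u$, the term $\log(\log_2 d_1\log_2 d_2)$ must be dominated by $\log{d_1\choose s_1^*}+s_2^*\log(1+\cdots)$. The $\log\log d_1$ part is fine because $\log{d_1\choose s_1^*}\ge\log d_1$ when $1\le s_1^*\le d_1-1$.
\item The $\log\log d_2$ part is where the hypothesis $s_2^*+\log\log d_1\ge C'\log\log d_2$ is used: either $s_2^*\log(1+c)\gtrsim\log\log d_2$, or $\log{d_1\choose s_1^*}\ge\log d_1\ge\log\log d_1\gtrsim\log\log d_2$.
\item In $\tau$, the argument of the logarithm is inflated by a factor of at most $\log(\log_2 d_1\log_2 d_2)/\log{d_1\choose s_1^*}+1$, which is $\lesssim 1+\log\log d_2/\log d_1$. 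Again split into cases: if $s_2^*$ is large, the additive slack is handled by the second term of the rate, $\frac{1}{s_1^*s_2^*}\log{d_1\choose s_1^*}$, or else by the same hypothesis. Concretely, $\log(1+ab)\le\log(1+a)+\log(1+b)$ reduces this to bounding $\frac{1}{s_1^*}\log(1+\log\log d_2/\log d_1)\lesssim\frac{1}{s_1^*s_2^*}\log{d_1\choose s_1^*}+\frac1{s_1^*}\log(1+c)$, which follows from the hypothesis after a short case analysis.
\item The variance term II is controlled as before, since $e^{-\tau^2/2}$ only gains from the larger $\tau$. Finally, choose $C_\mu$ large.
\end{itemize}
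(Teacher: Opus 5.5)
\paragraph{Verdict.} Your overall structure matches the paper's proof: a union bound with Lemma 5 of \citet{liu2021minimax} for the Type I error, and for the Type II error the dyadic point $(s_1^+,s_2^-)$, Lemmas 6--7 and Chebyshev. Your treatment of $u$ is also right: comparing the total mean $\tfrac12 s_1^*s_2^-\mu^2$ with $u(s_1^+,s_2^-)$ is exactly where $s_2^*+\log\log d_1\ge C'\log\log d_2$ is used. The gap is in step 1, and it cannot be closed.

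\paragraph{The gap.} You correctly note that Lemma 6 needs a per-coordinate condition, $\mu^2 s_1^*\gtrsim\tau^2(s_1^+,s_2^-)$. But your claimed reduction
\[
\frac{1}{s_1^*}\log\Big(1+\frac{\log\log d_2}{\log d_1}\Big)\lesssim\frac{1}{s_1^*s_2^*}\log\binom{d_1}{s_1^*}+\frac{1}{s_1^*}\log(1+c)
\]
does not follow from the hypothesis after any case analysis. The hypothesis only constrains $d_1$ versus $d_2$ when $s_2^*$ is small. When $s_2^*$ is large it holds trivially, and at the same time $\frac{1}{s_1^*s_2^*}\log\binom{d_1}{s_1^*}$ becomes negligible.

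\paragraph{Counterexample.} Take $d_1=8$ and $s_1^*=1$. Choose $s_2^*$ with $\frac{d_2}{(s_2^*)^2}\log 8\in[c,4c]$, set all nonzero entries equal to $\mu$, and let $d_2\to\infty$.
\begin{itemize}
\item Every assumption of the lemma holds.
\item The required $\mu^2$ is at most $C_\mu(\log(1+4c)+o(1))$, so it stays bounded.
\item Every $(s_1,s_2)\in\bar\Omega$ has $d_2/s_2^2\ge c/\log 70$. Hence $\tau^2(s_1,s_2)\ge C\log\big(1+\tfrac{c}{\log 70}\log(3\log_2 d_2)\big)\to\infty$.
\item Every coordinate mean $\EE\bar{\Y}_{J_1,j}$ is at most $\mu$.
\end{itemize}
So the hypothesis of Lemma 6 fails. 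In fact the signal then contributes only $o(u)$ to every statistic in the scan, and the test has asymptotically no power. This is a defect of the statement, not just of your case analysis.

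\paragraph{Comparison with the paper, and a repair.} The paper's proof does not supply this step either. At the corresponding point it only checks $s_1^*s_2^-\mu^2\ge\tau^2(s_1^+,s_2^-)$. The hypothesis does give that, but it is weaker than the per-coordinate requirement of Lemma 6 by the factor $s_2^-$. Your outline goes through once the inflation factor $\log\big(\binom{d_1}{s_1^+}\log_2 d_1\log_2 d_2\big)/\log\binom{d_1}{s_1^*}$ is bounded, for instance under the extra assumption $\log\log d_2\lesssim\log\binom{d_1}{s_1^*}$. Then Lemma~\ref{lem_logs_constants}(ii) gives $\tau^2(s_1^+,s_2^-)\lesssim\log\big(1+\frac{d_2}{(s_2^*)^2}\log\binom{d_1}{s_1^*}\big)\le\mu^2 s_1^*/C_\mu$, and your remaining steps work as written.

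\paragraph{Minor point.} No shrinking of $c$ is needed, and shrinking it would change the test anyway. Since $d_1/s_1^*\ge e$, you get $s_1^+\le d_1/2$. Hence $\log\binom{d_1}{s_1^+}\ge\log\binom{d_1}{s_1^*}$, and $(s_1^+,s_2^-)\in\Omega$ directly.
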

\begin{proof}
    We begin by controlling the Type 1 error.
    \begin{align*}
\bbP_0\big(\Delta^{\text{ada}}_{\chisqmax} = 1\big) &\leq \sum_{(s_1, s_2) \in \bar{\Omega}} {d_1 \choose s_1}\bbP_{Z \iid N(0,1)}\Big(\sum_{j = 1}^{d_2}\big(Z^2_{j} - \beta_{\tau(s_1,s_2)}\big)\one(|Z_j| > \tau(s_1, s_2)) \geq u(s_1, s_2)\Big) \\
    &\leq \sum_{(s_1, s_2) \in \bar{\Omega}} {d_1 \choose s_1}e^{-\log(\frac{2}{\alpha}{d_1\choose s_1}\log_2(d_1)\log_2(d_2))} \quad \text{(by Lemma 5 of \cite{liu2021minimax})} \\
    &\leq \frac{\alpha}{2}.
    \end{align*}
    We remark that our application of Lemma 5 of \cite{liu2021minimax} relies on the fact that each candidate pair $(s_1, s_2)$ belongs to the set $\Omega$, and that the constant $C$ in the definition of $\tau(s_1, s_2)$ is sufficiently large. Now, we turn our attention to the Type 2 error. Let the mean matrix $\mathbf{X}$ be an arbitrary element of $\Theta(s^*_1, s^*_2, \mu)$ where $s^*_1$ and $s^*_2$ denote the true row and column sparsity levels respectively. While $(s_1^*, s_2^*) \in \Omega$ by assumption, we have no guarantee that $(s_1^*, s_2^*) \in \bar{\Omega}$. For $k \in \{1, 2\}$, we define $s_k^{-} = d_k 2^{-\lceil \log_2(d_k/s_k) \rceil}$ and $s_k^{+} = d_k 2^{-\lfloor \log_2(d_k/s_k) \rfloor}$ so that $s_k^{-} \leq s_k \leq s_k^{+}$. Since $(s_1, s_2) \mapsto d_2s_2^{-2}\log {d_1 \choose s_1}$ is decreasing in $s_2$, increasing in $s_1$ for $s_1 \leq d_1/2$, and decreasing in $s_1$ for $s_1 \geq d_1/2$, it always holds that either $(s_1^{+}, s_2^{-})$ or $(s_1^{-}, s_2^{-})$ belong to $\bar{\Omega}$ depending on the value of $s_1$. Since we have assumed that $d_1 / s_1^* \geq e$, we need only consider the case $s_1 \leq d_1/2$. We first verify that $\EE_{\mathbf{X}}[t^{(s_1^+, s_2^-)}_{\chisqmax}] \geq 2 u(s_1^+, s_2^-)$. We aim to apply Lemma 6 of \cite{liu2021minimax}. To do so, we first need to establish that $s_1^* s_2^{-}\mu^2 \geq \tau^2(s_1^+, s_2^-)$. By our assumption on the size of $\mu$, it is sufficient to show that $s_1^* s_2^{-}\mu^2 \geq C(\log \log d_1 + \log \log d_2)$ to establish this fact, where $C$ is the constant specified in the definition of $\tau(s_1, s_2)$. Note that
\begin{align*}
     \frac{(s_1^*)\cdot (s_2^-)}{2}\mu^2 &\geq \frac{s_1^*s_2^-}{2}C_\mu\Bigg(\frac{1}{s^*_1}\log\Big(1 + \frac{d_2}{(s^*_2)^2}\log {d_1 \choose s^*_1}\Big) + \frac{1}{s^*_2s^*_1}\log {d_1 \choose s^*_1}\Bigg) \\
    &\geq C'_\mu s_2^- + \frac{C_\mu}{2}\frac{s_2^-}{s_2^*}\log {d_1 \choose s_1^*} \\
    &= C'_\mu s_2^*\frac{s_2^-}{s_2^*} + \frac{C_\mu}{2}\frac{s_2^-}{s_2^*}\log {d_1 \choose s_1^*} \\
    &\geq \frac{C'_{\mu}}{4}s_2^* + \frac{C_\mu}{4}\log {d_1 \choose s_1^*},
\end{align*}
where $C'_\mu = \frac{C_\mu}{8}\log(1 + c)$ and the final inequality uses $2s_2^- \geq s_2^*$ which holds by the construction of $s_2^-$. Since $d_1/s_1^* \geq e$ by assumption, it is clear that
\begin{align*}
    \frac{C_\mu}{4}\log {d_1 \choose s_1^*} &\geq \frac{C_\mu}{4} s_1^* \log\left(\frac{d_1}{s_1^*}\right) \\
    &\geq C \log \log_2 d_1,
\end{align*}
Furthermore, invoking this logic again and the fact that $\log_2(x) \geq \log (x)$ for $x \geq 1$, we have
\begin{align*}
    \frac{C'_{\mu}}{4}s_2^* + \frac{C_\mu}{4}\log {d_1 \choose s_1^*} &\geq \frac{C'_{\mu}}{4}s_2^* + \frac{C_\mu}{4} \log \log d_1 \\
    &\geq C''_\mu \log \log d_2 \quad \text{(by hypothesis, with $C''_\mu = 4^{-1}(C_\mu \land C'_\mu) \cdot C'$)} \\
    &\geq C \log \log d_2,
\end{align*}
where the final inequality holds for $C_\mu$ taken sufficiently large. Thus, it holds that $s_1^* s_2^- \mu^2 \geq \tau^2(s_1^+, s_2^-)$ and we may invoke Lemma 6 of \cite{liu2021minimax} to obtain
\begin{align*}
    \EE_{\mathbf{X}}[t^{(s_1^+, s_2^-)}_{\chisqmax}] &\geq \frac{s_1^*s_2^-}{2}\mu^2.
\end{align*}
Additionally, it holds
\begin{align*}
    u(s^+_1, s^-_2) &= 9\Big(\sqrt{d_2e^{-(\tau(s^+_1, s^-_2))^2/2}\log(\frac{2}{\alpha}{d_1\choose s^+_1}\log_2(d_1)\log_2(d_1))} + \log\Big(\frac{2}{\alpha}{d_1 \choose s^+_1}\log_2(d_1)\log_2(d_1)\Big)\Big) \\
    &\leq \frac92Cs^*_2 + 9\log\Big(\frac{2}{\alpha}{d_1 \choose s^+_1}\log_2(d_1)\log_2(d_2)\Big)\Big) \quad \text{(by the form of $\tau(s_1, s_2)$ and using $s_2^- \leq s_2^*$)} \\
    &= \frac92Cs^*_2 + 9\log\Big(\frac{2}{\alpha}{d_1 \choose s^+_1}\Big) + 9 \log \log_2 d_1 + 9 \log \log_2 d_2.
\end{align*}
By our construction of $s_1^+$, it holds
\begin{align*}
    \frac{\log {d_1 \choose s^+_1}}{\log {d_1 \choose s^*_1}} &\leq \frac{s_1^+\log(ed_1 / s_1^+)}{s_1^*\log(d_1 / s_1^*)} \\
    &\leq 2\frac{\log(ed_1/s_1^*)}{\log(d_1 / s_1^*)} \\
    &= 2\Big(1 + \frac{e}{\log(d_1/s_1^*)}\Big) \\
    &\leq 2(1 + e) \leq 8,
\end{align*}
where the second to final inequality uses $d_1 / s_1^* \geq e$. Therefore, $u(s_1^+, s_2^-)$ satisfies
\[u(s_1^+, s_2^-) \leq \frac{9}{2}Cs_2^* + 9\log(2/\alpha) + 72\log {d_1 \choose s_1^*} + 9\log \log_2 d_1 + 9 \log \log_2 d_2.\]
Using our previously established lower bound on $\EE_{\mathbf{X}}[t^{(s_1^+, s_2^-)}_{\chisqmax}]$ and the assumption $\mu$, it is clear that
\[\frac12\EE_{\mathbf{X}}[t^{(s_1^+, s_2^-)}_{\chisqmax}] \geq \frac{9}{2}Cs_2^* + 9\log(2/\alpha) + 72\log {d_1 \choose s_1^*} + 9\log \log_2 d_1 + 9 \log \log_2 d_2,\]
for $C_\mu$ taken sufficiently large. We have thus shown that $\EE_{\mathbf{X}}[t^{(s_1^+, s_2^-)}_{\chisqmax}] \geq 2 u(s_1^{+}, s_2^{-})$. With this result in hand, we control the Type 2 error using Chebyshev's inequality.
\begin{align*}
    \pr_\X \left(\Delta^{\text{ada}}_{\chisqmax} = 0\right) &= \pr\left(\bigcap_{(s_1, s_2 \in \bar{\Omega})} \bigg\{t_{\chisqmax}^{(s_1, s_2)} \leq u(s_1, s_2) \bigg\}\right) \\
    &\leq \pr\left(t_{\chisqmax}^{(s^+_1, s^-_2)} \leq u(s^+_1, s^-_2)\right) \\
    &= \pr\left(t_{\chisqmax}^{(s^+_1, s^-_2)} - \EE[t_{\chisqmax}^{(s^+_1, s^-_2)}]\leq u(s^+_1, s^-_2) - \EE[t_{\chisqmax}^{(s^+_1, s^-_2)}]\right) \\
    &= \pr\left(\bigg(t_{\chisqmax}^{(s^+_1, s^-_2)} - \EE[t_{\chisqmax}^{(s^+_1, s^-_2)}]\bigg)^2\geq \bigg(u(s^+_1, s^-_2) - \EE[t_{\chisqmax}^{(s^+_1, s^-_2)}]\bigg)^2\right) \\
    &\leq \pr\left(\bigg(t_{\chisqmax}^{(s^+_1, s^-_2)} - \EE[t_{\chisqmax}^{(s^+_1, s^-_2)}]\bigg)^2\geq \bigg(\frac12\EE[t_{\chisqmax}^{(s^+_1, s^-_2)}]\bigg)^2\right) \\
    &\leq 4\frac{\var \Big(t_{\chisqmax}^{(s^+_1, s^-_2)}\Big)}{\Big(\EE[t_{\chisqmax}^{(s^+_1, s^-_2)}]\Big)^2}.
\end{align*}
By Lemma 7 of \cite{liu2021minimax}, it holds
\[\var_\X(t^{(s_1^+, s_2^-)}_{\chisqmax}) \leq\frac{C_1}{4}s^*_1s^-_2\mu^2 + d_2C_1 (\tau(s_1^+, s_2^-))^3e^{-(\tau(s_1^+, s_2^-))^2/2},\]
where $C_1 \geq 1$ is universal constant. With this inequality in hand, alongside the established fact that $\EE[t_{\chisqmax}^{(s^+_1, s^-_2)}] \geq 2^{-1}s_1^*s_2^-\mu^2$, we can bound the Type 2 error as follows:
\begin{align*}
    \pr_\X \left(\Delta^{\text{ada}}_{\chisqmax} = 0\right) &\leq 4\frac{\var \Big(t_{\chisqmax}^{(s^+_1, s^-_2)}\Big)}{\Big(\EE[t_{\chisqmax}^{(s^+_1, s^-_2)}]\Big)^2} \\
    &\leq 4\frac{\frac{C_1}{4}s^*_1s^-_2\mu^2 + d_2C_1 (\tau(s_1^+, s_2^-))^3e^{-(\tau(s_1^+, s_2^-))^2/2}}{\Big(\EE[t_{\chisqmax}^{(s^+_1, s^-_2)}]\Big)^2} \\
    &\leq \frac{4C_1}{s_1^*s_2^-\mu^2} + 4\sqrt{2}C_1\frac{d_2(\tau(s_1^+, s_2^-))^3e^{-(\tau(s_1^+, s_2^-))^2/2}}{\mu^4(s_1^*s_2^-)^2} \\
    &= \text{I} + \text{II}.
\end{align*}
We control each of these terms separately.
\begin{align*}
    \text{I} &= \frac{4C_1}{s_1^*s_2^-\mu^2} \\
    &\leq \frac{4C_1}{C_\mu s_2^-\log\Big(1 + \frac{d_2}{(s_2^*)^2}\log {d_1 \choose s_1^*}\Big)} \\
    &\leq \frac{4C_1}{C_\mu s_2^-\log(1 + c)} \\
    &\leq \frac{\alpha}{4},
\end{align*}
with the final inequality holding for $C_\mu$ taken sufficiently large. Turning our attention to II, it holds
\begin{align*}
    \text{II} &= 4\sqrt{2}C_1\frac{d_2(\tau(s_1^+, s_2^-))^3e^{-(\tau(s_1^+, s_2^-))^2/2}}{\mu^4(s_1^*s_2^-)^2} \\
    &\leq 4\sqrt{2}C_1C^{3/2}\frac{d_2\big(\log(1 + \frac{d_2}{(s_2^{-})^2}\log {d_1 \choose s_1^+ }\log_2(d_1)\log_2(d_2))\big)^{3/2}}{\mu^4(s_1^*s_2^-)^2\big(1 + \frac{d_2}{(s_2^{-})^2}\log {d_1 \choose s_1^+ }\log_2(d_1)\log_2(d_2)\big)} \\
    &\leq 4\sqrt{2}C_1C^{3/2}\frac{\big(\log(1 + \frac{d_2}{(s_2^{-})^2}\log {d_1 \choose s_1^+ }\log_2(d_1)\log_2(d_2))\big)^{3/2}}{\mu^4(s_1^*)^2\log {d_1 \choose s_1^+ }\log_2(d_1)\log_2(d_2)} \\
    &\leq \frac{4\sqrt{2}C_1C^{3/2}}{C^4_\mu}\frac{\big(\log(1 + \frac{d_2}{(s_2^{-})^2}\log {d_1 \choose s_1^+ }\log_2(d_1)\log_2(d_2))\big)^{3/2}}{\big(\log(1 + \frac{d_2}{(s_2^{*})^2}\log {d_1 \choose s_1^* })\big)^{2}\log_2(d_1)\log_2(d_2)} \\
    &\leq \frac{4\sqrt{2}C_1C^{3/2}}{C^4_\mu}\frac{\big(\log(1 + \frac{d_2}{(s_2^{-})^2}\log {d_1 \choose s_1^+ }\log_2(d_1)\log_2(d_2))\big)^{3/2}}{\Big(\log\big(1 + \frac{d_2}{(s_2^{*})^2}\log {d_1 \choose s_1^* } \log_2(d_1)\log_2(d_2) \big)\Big)^{2}} \quad \text{(since $d_1 \land d_2 \geq 8 \geq 2^e$)} \\
    &\leq \frac{4\sqrt{2}C_1C^{3/2}}{C^4_\mu} \frac{1}{\sqrt{\log\big(1 + \frac{d_2}{(s_2^{*})^2}\log {d_1 \choose s_1^* } \log_2(d_1)\log_2(d_2)\big)}} \\
    &\leq \frac{4\sqrt{2}C_1C^{3/2}}{C^4_\mu \sqrt{\log(1 + c)}} \\
    &\leq \frac{\alpha}{4},
\end{align*}
where the final inequality holds with $C_\mu$ taken sufficiently large. Assembling these results, we conclude
\[\pr_\X \left(\Delta^{\text{ada}}_{\chisqmax} = 0\right) \leq \text{I} + \text{II} \leq \frac{\alpha}{2}.\]
Combining this result with our Type 1 error bound, we have shown that
\[\cR\left(\Delta^{\text{ada}}_{\chisqmax}\right) \leq \alpha,\]
and the proof is complete.
\end{proof}

\subsubsection{Proof of Proposition 2}
\begin{proof}
Recall that $\Omega$ denotes the dyadic partition of the set $[d_1] \times [d_2]$:
\[
    \Omega = \Bigg\{\Big(\frac{d_1}{2^{m_1}}, \frac{d_2}{2^{m_2}}\Big): m_1 \in [\log_2(d_1)], m_2 \in [\log_2(d_2)]  \Bigg\}.
\]
Let us define the following index sets:
    \begin{enumerate}
        \item $\Omega_1 = \Big\{(s_1, s_2) \in \Omega : \tilde{R}(s_1, s_2) = \psi_{12} + \beta_{21} \Big\}$
        \item $\Omega_2 = \Big\{(s_1, s_2) \in \Omega : \tilde{R}(s_1, s_2) = \psi_{21} + \beta_{12} \Big\}$
        \item $\Omega_3 = \Big\{(s_1, s_2) \in \Omega : \tilde{R}(s_1, s_2) = \phi_{12} \Big\}$ 
        \item $\Omega_3 = \Big\{(s_1, s_2) \in \Omega : \tilde{R}(s_1, s_2) = \phi_{21} \Big\}$
    \end{enumerate}
    where we define $\tilde{R}(s_1, s_2) = \tilde{R}$ with the dependence on $s_1$ and $s_2$ rendered explicit. Using this notation, we simply decompose the minimax risk of $\Delta^{*, \text{ada}}$ as follows:
    \begin{align*}
        \cR(\Delta^{*, \text{ada}}) &= \cR\left(\max_{(s_1, s_2) \in \Omega}\Delta^*(s_1, s_2)\right) \\
        &\leq \cR\left(\max_{(s_1, s_2) \in \Omega_1
         }\Delta^*(s_1, s_2)\right) + \cR\left(\max_{(s_1, s_2) \in \Omega_2
         }\Delta^*(s_1, s_2)\right) \\
         &\quad + \cR\left(\max_{(s_1, s_2) \in \Omega_3
         }\Delta^*(s_1, s_2)\right) + \cR\left(\max_{(s_1, s_2) \in \Omega_4
         }\Delta^*(s_1, s_2)\right).
    \end{align*}
    The first two summands are controlled by Lemmas \ref{lem_ub_ada_max} and \ref{lem_ub_ada_maxlinear} according to cases, and the latter two by Lemmas \ref{lem_ub_ada_linear} and \ref{lem:linear-test} similarly by cases. Since $(\psi_{12} + \psi_{21}) \land \phi_{12} \land \phi_{21} \gtrsim \tilde R(s_1^*, s_2^*)$ by Lemma \ref{lem_simplified_rate_for_UB}, the proof is complete.
\end{proof}
\section{Additional technical lemmas}

\begin{lemma}\label{lem_2.6}
    For any two real numbers $x,y>1$, at least one of the two inequalities $x \geq e \log(y)$ or $y \geq e\log(x)$ holds.
\end{lemma}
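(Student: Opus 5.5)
We argue by contradiction. Suppose both inequalities fail, i.e. $x < e\log y$ and $y < e\log x$. Since $x,y>1$, all logarithms involved are positive. The plan is to show that these two inequalities together force a number to be smaller than its own image under a map that never lies above the identity, which is impossible.

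The key elementary fact is that for every $t>0$ we have $e\log t \le t$. To see this, set $g(t) = t - e\log t$. Its derivative is $g'(t) = 1 - e/t$, so $g$ decreases on $(0,e)$ and increases on $(e,\infty)$. Its minimum is therefore $g(e) = e - e = 0$, giving $g\ge 0$ everywhere.

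Now chain the two assumed inequalities. Applying the first fact with $t=y$ gives $x < e\log y \le y$. Applying it with $t=x$ gives $y < e\log x \le x$. Together these yield $x<y<x$, a contradiction. Hence at least one of $x\ge e\log y$ or $y \ge e\log x$ holds.

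The argument is short, and the only thing to check is the inequality $e\log t\le t$. The hypothesis $x,y>1$ is not actually needed for the contradiction: $e\log t \le t$ holds for all $t>0$, so the same chain works for any positive $x,y$. Positivity of the logarithms plays no role. There is no real obstacle here; the only care needed is to make the chaining explicit rather than trying to compare $x$ and $y$ through monotonicity of $\log$ directly.
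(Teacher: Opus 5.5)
Your proof is correct, and it takes a slightly different and cleaner route than the paper.

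The paper first proves $t \ge e\log(e\log t)$ for all $t>1$, by minimizing $t - e\log(e\log t)$; its derivative $1 - e/(t\log t)$ vanishes at $t=e$, where the function equals $0$. It then composes the two negated inequalities through the monotonicity of $\log$: $x < e\log y < e\log(e\log x)$, which is a contradiction. Because this chain passes through $\log(e\log x)$, the lemma is stated for $x,y>1$.

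You instead apply the more primitive bound $e\log t \le t$ to each negated inequality separately. This gives $x<y$ and $y<x$ directly, with no composition and no appeal to monotonicity of $\log$. That is why your argument holds verbatim for all $x,y>0$.

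The paper's key inequality is actually a consequence of yours used twice, $e\log(e\log t) \le e\log t \le t$, so your version isolates the real content of the lemma. The paper's route gains nothing extra here. Its only use of the lemma is the WLOG reduction to $d_1/s_1 \ge e\log(d_2/s_2)$, and both arguments serve equally well for that.
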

\begin{proof}[Proof of Lemma ~\ref{lem_2.6}]
    We first prove a preliminary result: For any $x > 1$, it holds that $x \geq e\log(e \log(x))$. 
    To see this, define the function $f: (1,\infty) \to \R;~ x \mapsto x - e\log(e \log(x))$. We have for any $x>1$
    \begin{align*}
        f'(x) = 1 - \frac{e}{x\log(x)}.
    \end{align*}
    The only value $x^*$ for which $f(x^*)=0$ is $x^* = e$, which implies that $f$ is decreasing over $(1,e)$ and increasing over $(e,\infty)$. Hence, $f$ is minimized at $e$ and its corresponding minimum value is
    \begin{align*}
        f(e) = e - e\log(e\log(e)) = 0.
    \end{align*}

    This fact being established, assume now for the sake of contradiction that there exist two real numbers $x,y$ such that $x<e \log(y)$ and $y< e\log(x)$. 
    Then we obtain $x< e\log(e\log(x))$ which is a contradiction. This concludes the proof.
\end{proof}

\begin{lemma}\label{lemma:probXk}

Let $X \sim \text{Bin}(n, p)$ with $p \leq \frac12$. Then for any $k \in \{1, ..., n\}$, we have 

\[\Pr(X = k) \leq \Big(\frac{2enp}{k}\Big)^k\exp\big(-np\big)\]
    
\end{lemma}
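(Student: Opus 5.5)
We bound the exact probability mass function directly:
\[
\Pr(X=k)=\binom{n}{k}p^k(1-p)^{n-k}.
\]

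\emph{Binomial coefficient.} We use the elementary estimate $\binom{n}{k}\le \frac{n^k}{k!}\le \left(\frac{en}{k}\right)^k$, where the last step follows from $k!\ge (k/e)^k$, itself a consequence of $e^k=\sum_j k^j/j!\ge k^k/k!$. This gives $\binom{n}{k}p^k\le \left(\frac{enp}{k}\right)^k$.

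\emph{The factor $(1-p)^{n-k}$.} This is where the factor $2^k$ and the assumption $p\le \frac12$ are used. First, $(1-p)^{n}\le e^{-np}$ by $1-p\le e^{-p}$. Second, $(1-p)^{-k}\le 2^k$ because $1-p\ge \frac12$. Combining,
\[
(1-p)^{n-k}=(1-p)^n(1-p)^{-k}\le 2^k e^{-np}.
\]

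\emph{Conclusion.} Multiplying the two bounds yields $\Pr(X=k)\le \left(\frac{2enp}{k}\right)^k e^{-np}$ for every $k\in\{1,\dots,n\}$.

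No step is a real obstacle. The only point requiring care is the sign of the exponent $n-k$: we cannot simply bound $(1-p)^{n-k}$ by $e^{-(n-k)p}$ and discard the correction, because we want the full $e^{-np}$. The $(1-p)^{-k}\le 2^k$ trick handles this cleanly, and it explains both the hypothesis $p\le\frac12$ and the constant $2e$ in the statement.
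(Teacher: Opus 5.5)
Your proof is correct and is essentially the paper's argument: bound $\binom{n}{k}\le (en/k)^k$ (which the paper cites from a reference and you derive from $k!\ge (k/e)^k$), then split $(1-p)^{n-k}=(1-p)^n(1-p)^{-k}\le e^{-np}\,2^k$ using $p\le \tfrac12$.
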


\begin{proof}
    Using the bound ${n \choose k } \leq \big(\frac{ne}{k}\big)^k$ (Appendix A in \cite{roch2024modern}), we have 

    \begin{align*}
        \Pr(X = k) &= {n \choose k}p^k\big(1 - p\big)^{n - k} \\
        &\leq \Big(\frac{npe}{k}\Big)^k\big(1 - p\big)^{n - k} \\
        &=  \Big(\frac{npe}{k}\Big)^k\big(1 - p\big)^{n}\big(1 - p\big)^{-k} \\ 
        &\leq \Big(\frac{npe}{k}\Big)^k\big(1 - p\big)^{n}\big(1/2\big)^{-k} \\
        &= \Big(\frac{2enp}{k}\Big)^k\Big(1 - p\Big)^{n} \\
        &\leq \Big(\frac{2enp}{k}\Big)^k\exp\big(-np\big)
    \end{align*}
where the final inequality uses $(1 - x)^b \leq \exp(-xb)$ for any $x, b \geq 0$.
\end{proof}

\begin{lemma}\label{lemma:log-plus-one}
    Let $f : S \rightarrow (0,\infty)$ be differentiable, where $S \subseteq (0,\infty)$ is an interval. For $x \in S$, define $g(x) = \frac1x\log\big(f(x)\big)$ and $g^{(1)}(x) = \frac1x \log\big(1 + f(x)\big)$. If $g$ is decreasing on $S$, then $g^{(1)}$ is decreasing on $S$ as well.
\end{lemma}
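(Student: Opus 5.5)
The plan is to differentiate and compare signs. Because $f>0$ is differentiable on the interval $S$, both $g$ and $g^{(1)}$ are differentiable on $S$. By the quotient rule, $g'(x) = \frac{1}{x^2}\big(\frac{x f'(x)}{f(x)} - \log f(x)\big)$. Hence $g$ being decreasing means $h(x) := \frac{x f'(x)}{f(x)} - \log f(x) \le 0$ for every $x\in S$, since a differentiable function is monotone decreasing exactly when its derivative is nonpositive on the interval. In the same way, $g^{(1)}$ is decreasing if and only if $h_1(x) := \frac{x f'(x)}{1+f(x)} - \log(1+f(x)) \le 0$ on $S$. The goal is therefore to show that $h\le 0$ implies $h_1\le 0$ pointwise.

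Fix $x\in S$ and split according to the sign of $f'(x)$.

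If $f'(x)\le 0$, then $\frac{x f'(x)}{1+f(x)}\le 0$. Also $\log(1+f(x))>0$ because $f(x)>0$. So $h_1(x)<0$ immediately.

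If $f'(x)>0$, then $\frac{x f'(x)}{1+f(x)}\le \frac{x f'(x)}{f(x)}\le \log f(x)$, using $h(x)\le 0$ for the second inequality. Since $\log f(x)<\log(1+f(x))$, this gives $h_1(x)<0$.

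In both cases $(g^{(1)})'(x)\le 0$, so $g^{(1)}$ is decreasing on $S$.

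No step here is a real obstacle. The one point needing care is the equivalence between monotonicity and the sign of the derivative on an interval. If the lemma is meant with $g$ only non-increasing, the same argument gives non-increasing for $g^{(1)}$, and the strict inequalities above even give strictly decreasing. Because we work pointwise, we never need $f$ to be monotone; we only use $f>0$, which ensures both logarithms are defined and $\log(1+f)>\max(0,\log f)$.
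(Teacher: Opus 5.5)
Your proof is correct and follows essentially the same route as the paper: both differentiate $g$ and $g^{(1)}$ and use $g'\le 0$ to bound $(g^{(1)})'$ pointwise. The only difference is bookkeeping. The paper multiplies $\frac{f'}{xf}\le\frac{\log f}{x^2}$ by $\frac{f}{1+f}$ and then invokes $\frac{f}{1+f}\log f\le\log(1+f)$, which silently requires checking both signs of $\log f$; your case split on the sign of $f'(x)$ makes every comparison explicit.
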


\begin{proof}
    Let $f'(x) = \frac{\D f}{\D x}(x)$ for any $x \in S$. By direct calculation, we have 

    \[\frac{\D g}{ \D x}(x) = \frac{f'(x)}{x f(x)} - \frac{\log \big(f(x)\big)}{x^2}.\]

    If $g$ is decreasing on $S$, then for each $x \in S$ it holds

    \[\frac{f'(x)}{x f(x)} < \frac{\log \big(f(x)\big)}{x^2}.\]

    Now calculating the derivative of $g^{(1)}$ at any $x \in S$, we have 

    \begin{align*}
        \frac{\D g^{(1)}}{\D x}(x) &= \frac{f'(x)}{x \big(1 + f(x)\big)} - \frac{\log \big(1 + f(x)\big)}{x^2} \\
        &= \frac{f'(x)}{x \big(1 + f(x)\big)}\frac{f(x)}{f(x)} - \frac{\log \big(1 + f(x)\big)}{x^2} \\
        &< \frac{\log \big(f(x)\big)}{x^2}\frac{f(x)}{\big(1 + f(x)\big)}- \frac{\log \big(1 + f(x)\big)}{x^2} \\
        &\leq 0
    \end{align*}

    where the final inequality uses $f(x) \leq 1 + f(x)$ and $\log\big(f(x)\big) \leq \log\big(1 + f(x)\big)$. This completes the proof.
\end{proof}

\begin{lemma}\label{lem_logs}
    Let $x,y > e$.
    \begin{enumerate}
        \item[(i)] If $x/\log(x) \leq y/2$, then we have $x \leq y \log(y)$.
        \item[(ii)] If $x/\log(x) \geq y$, then we have $x \geq y \log(y)$.
    \end{enumerate}
\end{lemma}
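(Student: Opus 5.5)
The plan is to deduce both claims from monotonicity of the function $h(t) = t/\log t$ on $(e,\infty)$. Its derivative is $(\log t - 1)/\log^2 t > 0$ for $t > e$, so $h$ is strictly increasing there. Each implication then reduces to comparing $x/\log x$ with $h$ evaluated at the candidate point $y\log y$.

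For (ii), I will argue by contraposition. Suppose $x < y\log y$. Since $y > e$, we have $y\log y > y > e$, and so monotonicity of $h$ on $(e,\infty)$ gives
\[
\frac{x}{\log x} < \frac{y\log y}{\log(y\log y)} = \frac{y\log y}{\log y + \log\log y} < y,
\]
where the last inequality holds because $\log\log y > 0$ for $y > e$. This contradicts the hypothesis $x/\log x \geq y$, so $x \geq y\log y$.

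For (i), I will again use contraposition. Suppose $x > y\log y$; then
\[
\frac{x}{\log x} > \frac{y\log y}{\log y + \log\log y}.
\]
It then suffices to show this right-hand side is at least $y/2$, which is equivalent to $2\log y \geq \log y + \log\log y$, i.e.\ $\log y \geq \log\log y$. This is true for all $y > 1$, since $u \geq \log u$ for every $u > 0$, applied with $u = \log y$. Hence $x/\log x > y/2$, contradicting the hypothesis $x/\log x \leq y/2$.

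No step here is a real obstacle. The only point needing care is checking that $y\log y$ lies in the interval $(e,\infty)$ where $h$ is monotone, and this is immediate from $y > e$.
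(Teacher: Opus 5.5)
Your proof is correct and follows the same route as the paper: contraposition in both parts, using that $t\mapsto t/\log t$ is increasing on $(e,\infty)$ and evaluating it at $y\log y$, where it equals $y/(1+\log\log y/\log y)$. In (ii) you correctly assume $x<y\log y$ as the negated conclusion, whereas the paper's write-up opens with ``Suppose $x/\log(x)<y$'', which appears to be a slip; the chain of inequalities is the same in both.
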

\begin{proof}
\begin{enumerate}
    \item[(i)] We prove the statement by contrapositive. Suppose that $x > y \log (y)$. Then since $t \mapsto \frac{t}{\log(t)}$ is increasing over $(e, \infty)$, we have
    \begin{align*}
        \frac{x}{\log(x)} \geq \frac{y \log(y)}{\log(y \log (y))} = \frac{y}{ 1 +\frac{\log \log(y)}{\log(y)}} > \frac{y}{2}.
    \end{align*}
    \item[(ii)] Again, we prove the statement by contrapositive. 
    Suppose that $x /\log(x) < y.$ Then since $t \mapsto \frac{t}{\log(t)}$ is increasing over $(e, \infty)$, we have
     \begin{align*}
        \frac{x}{\log(x)} < \frac{y \log(y)}{\log(y \log (y))} = \frac{y}{ 1 +\frac{\log \log(y)}{\log(y)}} < y.
    \end{align*}
    This completes the proof.
\end{enumerate}

\end{proof}

\begin{lemma}\label{lem_logs_constants}
    The following two properties hold.
    \begin{enumerate}
        \item[(i)] For any $x \geq 0$ and $c \in [0,1]$, we have $\log(1+cx) \geq c\log(1+x)$.
        \item[(ii)] For any $x \geq 0$ and $C\geq 1$, we have $C\log(1+x) \geq \log(1+Cx)$.
    \end{enumerate}
\end{lemma}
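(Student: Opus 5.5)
Both items come down to elementary one-variable arguments: a concavity or monotonicity argument in $x$ for fixed $c$ (resp.\ $C$), plus a check at the endpoint $x=0$.

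For (i), I would fix $c\in[0,1]$ and study $h(x)=\log(1+cx)-c\log(1+x)$ on $[0,\infty)$. We have $h(0)=0$ and
\[
h'(x)=\frac{c}{1+cx}-\frac{c}{1+x}=\frac{c(1-c)x}{(1+cx)(1+x)}\ \ge 0 .
\]
So $h$ is nondecreasing and hence $h\ge 0$, which is the claim. The degenerate cases $c=0$ and $c=1$ are immediate equalities and need no separate treatment.

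For (ii), I would argue symmetrically with $k(x)=C\log(1+x)-\log(1+Cx)$. Again $k(0)=0$, and
\[
k'(x)=\frac{C}{1+x}-\frac{C}{1+Cx}=\frac{C(C-1)x}{(1+x)(1+Cx)}\ \ge 0
\]
for $C\ge1$. Hence $k\ge0$ on $[0,\infty)$.

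Alternatively, (ii) follows from (i) by substituting $c=1/C$ and $x\mapsto Cx$. This gives $\log(1+x)\ge \frac1C\log(1+Cx)$, and multiplying by $C$ yields the claim. I would present the derivative proof of (i) and deduce (ii) this way, to avoid duplication.

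There is no real obstacle here. The only point requiring care is the sign of the numerator, $c(1-c)x$ in (i) and $C(C-1)x$ in (ii), which is exactly where the hypotheses $c\le1$ and $C\ge1$ are used.
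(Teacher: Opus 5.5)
Your proof is correct. For (ii), your preferred route is exactly what the paper does: apply (i) with $c=1/C$ and $Cx$ in place of $x$, then multiply by $C$.

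For (i), the paper avoids calculus. It writes $1+cx=(1-c)\cdot 1+c(1+x)$ and uses concavity of $\log$ to get $\log(1+cx)\ge(1-c)\log 1+c\log(1+x)=c\log(1+x)$ in one line. Your monotonicity argument is equally valid: you show $h'(x)=c(1-c)x/\bigl((1+cx)(1+x)\bigr)\ge 0$ and $h(0)=0$. It also makes explicit where the hypothesis $c\le 1$ enters.

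The concavity argument is shorter and needs no endpoint check. It is really the general fact that $g(cx)\ge c\,g(x)$ for any concave $g$ on $[0,\infty)$ with $g(0)\ge 0$, specialized to $g(t)=\log(1+t)$.
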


\begin{proof}[Proof of Lemma~\ref{lem_logs_constants}]
\begin{enumerate}
    \item[(i)] By concavity of the logarithm, we have $\log(1+cx) = \log\left((1-c)\cdot 1 + c(1+x)\right) \geq (1-c) \log(1) + c\log(1+x) = c \log(1+x)$ for any $c \in [0,1]$. 
    \item[(ii)] We have seen above that $\log(1+cy) \geq c \log(1+y)$ for any $y \geq 0$ and $c \in [0,1]$. 
Applying this with $y = Cx$ and $c = 1/C$ yields the result. 
\end{enumerate}
    
\end{proof}

\section{Additional justifications: imbalanced regimes}\label{app_additional_justifications}

\subsection{Proof of Corollaries in Section 4}

\begin{proposition}[Corollary 2 in the main text.]
        Suppose that $s_1^2 \geq \bar{c}d_1 s_2$ for a constant $\bar{c} > 0$ and $s_j \leq c_j d_j$ for $j \in \{1, 2\}$ where $c_1,c_2 > 0$ are sufficiently small constants. Additionally, suppose that $\frac{d_1}{s_1} \geq e \log(\frac{d_2}{s_2})$ and that there exists a constant $\alpha > 0$ such that $d_2 \geq s_2^{2 + \alpha}$. Then it holds
\begin{equation}
    (\mu^*)^2 \asymp \frac{1}{s_2}\log\left(1 + \frac{d_1s_2}{s_1^2}\log(d_2)\right).
\end{equation}
    \end{proposition}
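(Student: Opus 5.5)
The plan is to combine Theorem~\ref{thm_main}, which gives $(\mu^*)^2\asymp R=(\psi_{12}+\psi_{21})\land\phi_{12}\land\phi_{21}$, with the simplification already proved in Lemma~\ref{lem_simplify_dense_allrates}. The hypotheses of that lemma are exactly ours: $s_1^2\geq \bar c d_1 s_2$, $s_j\leq c' d_j$, and $d_1/s_1\geq e\log(d_2/s_2)$. Note that the lemma is stated with the supplement's simplified forms of $\psi$, with $\log{d_j\choose s_j}$ replaced by $s_j\log(d_j/s_j)$; these are equivalent up to constants when $s_j\leq c_j d_j$. The lemma gives
\[
R\asymp \psi_{21}\land \widetilde\phi_{12},\qquad \widetilde\phi_{12}=\frac{d_1}{s_1^2}\log\Big(1+\frac{d_2}{s_2^2}\Big),
\]
where the truncation of $\phi_{12}$ at $d_1/s_1^2\leq C$ was shown to be irrelevant. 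So it remains to show that $\psi_{21}\land\widetilde\phi_{12}\asymp \frac1{s_2}\log\big(1+\frac{d_1s_2}{s_1^2}\log d_2\big)$.

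\textbf{Step 1: simplify $\psi_{21}$.} We have $\psi_{21}=\frac1{s_2}\log\big(1+\frac{d_1 s_2}{s_1^2}\log(d_2/s_2)\big)$. Since $d_2\geq s_2^{2+\alpha}$, we get $\log(d_2/s_2)\geq \frac{1+\alpha}{2+\alpha}\log d_2$, and trivially $\log(d_2/s_2)\leq\log d_2$. Hence $\log(d_2/s_2)\asymp\log d_2$ with constants depending on $\alpha$. By Lemma~\ref{lem_logs_constants}, constants inside $\log(1+\cdot)$ can be pulled out, so $\psi_{21}$ is of the order of the target expression.

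\textbf{Step 2: show $\psi_{21}\lesssim\widetilde\phi_{12}$.} Using $d_2\geq s_2^{2+\alpha}$, we have $\log(1+d_2/s_2^2)\geq\log(d_2/s_2^2)\geq\frac{\alpha}{2+\alpha}\log d_2$, so $\widetilde\phi_{12}\gtrsim \frac{d_1}{s_1^2}\log d_2$. On the other hand, $\log(1+x)\leq x$ gives $\psi_{21}\leq\frac{1}{s_2}\cdot\frac{d_1s_2}{s_1^2}\log(d_2/s_2)\leq \frac{d_1}{s_1^2}\log d_2$. Therefore $\psi_{21}\lesssim\widetilde\phi_{12}$, the minimum is $\asymp\psi_{21}$, and combining with Step 1 yields the claim.

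\textbf{Main obstacle.} The delicate point is making sure that invoking Lemma~\ref{lem_simplify_dense_allrates} through Theorem~\ref{thm_main} is legitimate. This requires checking that the supplement's simplified $\psi,\phi$ agree up to constants with the main-text definitions, including the $\log(e{d_1\choose s_1})$ form and the $\infty$-truncation of $\phi$. It also requires that the constant $\alpha$ enters only through absolute multiplicative constants. If that comparison turned out to be awkward, the fallback is to redo the lemma's short argument directly. One bounds $\psi_{12}\lesssim\psi_{21}$ in the regime $\frac{d_1s_2}{s_1^2}\log\frac{d_2}{s_2}>1$, and bounds $\psi_{21}\gtrsim\widetilde\phi_{12}$ by linearization otherwise. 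In the latter case, Step 2's two-sided comparison again shows that both $\psi_{21}$ and $\widetilde\phi_{12}$ are of order $\frac{d_1}{s_1^2}\log d_2$, which equals the linearized target.
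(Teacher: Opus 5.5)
\textbf{There is a gap: the term $\phi_{21}$ is never handled.} Lemma~\ref{lem_simplify_dense_allrates} only asserts $(\psi_{12}+\psi_{21})\land\phi_{12}\asymp\psi_{21}\land\phi_{12}$; it says nothing about $\phi_{21}$. What you actually get is $R\asymp\psi_{21}\land\widetilde\phi_{12}\land\phi_{21}$, not $R\asymp\psi_{21}\land\widetilde\phi_{12}$.

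Your Steps 1--2 therefore establish only $R\lesssim\psi_{21}$. That is the upper bound on $(\mu^*)^2$. The matching lower bound $(\mu^*)^2\gtrsim\frac{1}{s_2}\log\big(1+\frac{d_1s_2}{s_1^2}\log d_2\big)$ also needs $\phi_{21}\gtrsim\psi_{21}$, and you never show this. It is not automatic. In $\phi_{21}=\frac{d_2}{s_2^2}\log\big(1+\frac{d_1}{s_1^2}\big)$, the factor $\log(1+d_1/s_1^2)$ can be small here, since $d_1/s_1^2\le 1/(\bar c s_2)$. You also cannot discard $\phi_{21}$ via the supplement's $\infty$-truncation. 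Theorem~\ref{thm_main} is stated with the untruncated $\phi$, and the truncation only triggers when $d_2/s_2^2$ exceeds a constant.

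\textbf{The fix is short, and it is exactly the last step of the paper's proof.} The hypothesis $s_1^2\ge\bar c\, d_1 s_2$ gives $d_1/s_1^2\le 1/\bar c$. By concavity, $\log(1+x)\ge \frac{\log(1+M)}{M}\,x$ on $[0,M]$, so $\log(1+d_1/s_1^2)\gtrsim d_1/s_1^2$ with a constant depending on $\bar c$. Using $\log(1+y)\le y$ and your Step 2, this yields
\[
\phi_{21}\gtrsim\frac{d_1d_2}{s_1^2s_2^2}\ \ge\ \frac{d_1}{s_1^2}\log\Big(1+\frac{d_2}{s_2^2}\Big)=\widetilde\phi_{12}\ \gtrsim\ \psi_{21}.
\]

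With this added, your argument coincides with the paper's:
\begin{enumerate}
\item invoke Theorem~\ref{thm_main};
\item apply Lemma~\ref{lem_simplify_dense_allrates} to reduce to $\psi_{21}\land\phi_{12}$;
\item use $d_2\ge s_2^{2+\alpha}$ to get $\log(d_2/s_2)\asymp\log d_2$;
\item linearize to get $\psi_{21}\lesssim\phi_{12}$;
\item check that $\phi_{21}\gtrsim\phi_{12}$.
\end{enumerate}
Your comparison of the main-text and supplement forms of $\psi_{21}$ is correct. So are the explicit exponents $\frac{1+\alpha}{2+\alpha}$ and $\frac{\alpha}{2+\alpha}$ you give for $\log(d_2/s_2)$ and $\log(d_2/s_2^2)$ relative to $\log d_2$.
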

    \begin{proof}
        Theorem 2 grants us that 
        \[(\mu^*)^2 \asymp (\psi_{12} + \psi_{21}) \land \phi_{12} \land \phi_{21}.\]
        By Lemma \ref{lem_simplify_dense_allrates}, we have that
        \[(\psi_{12} + \psi_{21}) \land \phi_{12} \asymp \psi_{21} \land \phi_{12}.\]
        Using the inequality $\log(1+x) \le x$ which holds for any $x \ge 0$, we have
\begin{align*}
\psi_{21} &= \frac{1}{s_2} \log\left(1 + \frac{d_1}{s_1^2} \log\left(e\binom{d_2}{s_2}\right)\right) \\
&\asymp \frac{1}{s_2} \log\left(1 + \frac{d_1 s_2}{s_1^2} \log(d_2/s_2)\right) \\
&\asymp \frac{1}{s_2} \log\left(1 + \frac{d_1 s_2}{s_1^2} \log(d_2)\right) \quad \text{(since $d_2 \geq s_2^{2 + \alpha}$)}\\
&\le \frac{1}{s_2} \frac{d_1 s_2}{s_1^2} \log(d_2) \\
&\asymp \frac{d_1}{s_1^2} \log\left(1 + \frac{d_2}{s_2^2}\right) \quad \text{(since $d_2 \geq s_2^{2 + \alpha}$)}  \\
&= \phi_{12}.
\end{align*}
This implies that $\psi_{21} \land \phi_{12} = \psi_{21}$. It thus remains to show that $\psi_{21} \land \phi_{21} = \psi_{21}$. Observe
\begin{align*}
    \phi_{21} &= \frac{d_2}{s^2_2}\log\left(1 + \frac{d_1}{s_1^2}\right) \\
    &\asymp \frac{d_2d_1}{s^2_2s_1^2} \quad \text{(since $s_1^2 \gtrsim d_1s_2$)} \\
    &\geq \frac{d_1}{s_1^2}\log\left(1 + \frac{d_2}{s_2^2}\right) \quad \text{(again using $\log(1+x) \leq x$ for $x \geq 0$)} \\
    &= \phi_{12}.
\end{align*}
Since we have also demonstrated that $\phi_{12} \geq \psi_{21} \asymp s_2^{-1}\log\left(1 + \frac{d_1 s_2}{s_1^2} \log(d_2)\right)$, the proof is complete.
    \end{proof}

    \begin{proposition}[Corollary 3 in the main text.]
        Suppose that $s_1 = s_2^2$ and $d_1 = d_2^2$, and that there exists a constant $\alpha > 0$ such that $d_1 \geq s_1^{2 + \alpha}$. Then it holds
       \begin{equation}
            (\mu^*)^2 \asymp \frac{\log \left(d_1\right)}{\sqrt{s_1}}.
        \end{equation}
    \end{proposition}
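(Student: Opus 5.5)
The plan is to evaluate each of the four quantities $\psi_{12},\psi_{21},\phi_{12},\phi_{21}$ in Theorem~\ref{thm_main} directly under the substitutions $s_1=s_2^2$ and $d_1=d_2^2$. I would then show that $\psi_{21}\asymp \log(d_1)/s_2=\log(d_1)/\sqrt{s_1}$ and that each of the other three terms is $\gtrsim\psi_{21}$. Then $(\psi_{12}+\psi_{21})\land\phi_{12}\land\phi_{21}\asymp\psi_{21}$, and the claim follows from Theorem~\ref{thm_main}. All implicit constants will be allowed to depend on $\alpha$. I will also assume $d_1$ exceeds a large constant; otherwise every quantity is $O(1)$ and the claim is trivial up to constants.

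First, I record the basic consequences of the hypothesis. Since $d_1\ge s_1^{2+\alpha}$ and $d_1=d_2^2$, $s_1=s_2^2$, we also get $d_2\ge s_2^{2+\alpha}$. Hence $d_j/s_j\ge d_j^{(1+\alpha)/(2+\alpha)}$, so $\log(d_j/s_j)\asymp\log d_j$. Combined with the standard bounds $s\log(d/s)\le\log{d\choose s}\le s\log(ed/s)$, this gives
\[
\log\Big(e{d_1\choose s_1}\Big)\asymp s_1\log d_1,\qquad \log\Big(e{d_2\choose s_2}\Big)\asymp s_2\log d_2\asymp s_2\log d_1 .
\]
Likewise $d_j/s_j^2\ge d_j^{\alpha/(2+\alpha)}$, so $\log(1+d_j/s_j^2)\asymp\log d_j\asymp \log d_1$.

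Next comes the term-by-term computation.

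For $\psi_{12}$, the argument of the logarithm is $1+\frac{d_2}{s_2^2}\cdot s_1\log d_1\asymp 1+d_2\log d_1$. Its logarithm is $\asymp\log d_1$, so $\psi_{12}\asymp\log(d_1)/s_1$.

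For $\psi_{21}$, the argument is $1+\frac{d_1}{s_1^2}s_2\log d_2= 1+\frac{d_2^2}{s_2^3}\log d_2$. Using $s_2\le d_2^{1/(2+\alpha)}$ gives $d_2^2/s_2^3\ge d_2^{1/2}$, while the argument is at most $d_2^3$. Therefore the logarithm is $\asymp\log d_2\asymp\log d_1$, and $\psi_{21}\asymp \log(d_1)/s_2=\log(d_1)/\sqrt{s_1}$.

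Since $s_1=s_2^2\ge s_2$, we have $\psi_{12}\lesssim\psi_{21}$, so $\psi_{12}+\psi_{21}\asymp\psi_{21}$.

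For the $\phi$ terms, $\phi_{12}\asymp (d_1/s_1^2)\log d_1\ge s_1^{\alpha}\log d_1\ge \log d_1\ge\psi_{21}$. Similarly $\phi_{21}\asymp (d_2/s_2^2)\log d_1\ge s_2^\alpha\log d_1\ge\psi_{21}$. Combining these bounds yields the claimed rate.

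The main obstacle is the $\psi_{21}$ computation. I must make sure the inner argument is polynomially large in $d_2$, which uses the specific exponent bound $s_2^3\le d_2^{3/(2+\alpha)}<d_2^{3/2}$. I must also make sure all $\asymp$ constants depend only on $\alpha$, especially near the boundary cases $s_2=1$ or small $d_2$, which are handled by the "large constant" reduction above.
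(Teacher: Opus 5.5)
Your proposal is correct and follows essentially the same route as the paper. Both arguments evaluate $\psi_{12},\psi_{21}$ via $\log\bigl(e\binom{d_j}{s_j}\bigr)\asymp s_j\log d_j$ to get $\psi_{12}+\psi_{21}\asymp \log(d_1)/\sqrt{s_1}$, then show that $\phi_{12}\gtrsim s_1^{\alpha}\log d_1$ and $\phi_{21}\gtrsim s_2^{\alpha}\log d_1$ dominate it. (One small caveat: your small-$d_1$ reduction only works for $d_1\ge 2$, since at $d_1=1$ the right-hand side is $0$ while the rate is of order one; the paper's statement ignores this degenerate case as well.)
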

    \begin{proof}
        Under the conditions $s_1 = s_2^2$ and $d_1 = d_2^2$, it holds
        \begin{align*}
            \psi_{12} + \psi_{21} &= \frac{1}{s_1} \log\!\Big(1+ \frac{d_2}{s_2^2} \log\!\Big(\!e\scalebox{.92}{$\displaystyle{d_1 \choose s_1}$}\Big)\,\Big) + \frac{1}{s_2} \log\!\Big(1+ \frac{d_1}{s_1^2} \log\!\Big(\!e\scalebox{.92}{$\displaystyle{d_2 \choose s_2}$}\Big)\,\Big) \\
            &\asymp \frac{1}{s_1} \log\!\Big(1+ \sqrt{d_1}\log\!\Big(\frac{d_1}{s_1}\Big)\,\Big) + \frac{1}{\sqrt{s_1}} \log\!\Big(1+ \frac{d_1}{s_1^{3/2}} \log\!\Big(\frac{d_1}{s_1}\Big)\,\Big) \\
            &\asymp \frac{1}{s_1} \log(d_1) + \frac{1}{\sqrt{s_1}} \log(d_1) \\
            &\asymp \frac{1}{\sqrt{s_1}} \log(d_1)
        \end{align*}
        where the second-to-last expression uses $d_1 \geq s_1^{2 + 
        \alpha}$ to simplify the second summand. It thus remains to show that 
        \[\frac{1}{\sqrt{s_1}} \log(d_1) \leq \phi_{12} \land \phi_{21}.\]
        Note that the conditions $s_1 = s_2^2$, $d_1 = d_2^2$, and $d_1 \geq s_1^{2 + \alpha}$ imply that $d_2 \geq s_2^{2 + \alpha}$. Thus observe
        \begin{align*}
            \phi_{12} &= \frac{d_1}{s_1^2}\log\left(1 + \frac{d_2}{s_2^2}\right) \\
            &\gtrsim  s_1^{\alpha}\log\left( \frac{d_2}{s_2}\right) \\
            &\asymp s_1^{\alpha}\log\left( \frac{d_1}{s_1}\right) \\
            &\asymp s_1^{\alpha}\log\left( d_1\right) \\
            &> \frac{1}{\sqrt{s_1}} \log(d_1).
        \end{align*}
        The proof that $\phi_{21} \geq s_1^{-1/2}\log(d_1)$ is symmetric and thus omitted. The proof is complete.
    \end{proof}

\subsection{Case $s_1 = 1$}

Assume $s_1 = 1$. We distinguish between the following cases.

\begin{enumerate}
    \item First case: Assume $\log(d_1) > s_2$. Then the quantities involved in the rate simplify as follows
    \begin{itemize}
        \item $\psi_{12} = \log\left(1+\frac{d_2}{s_2^2}\log(ed_1)\right) \asymp \log\left(\frac{ed_2}{s_2}\right) + \log\left(\frac{e\log(d_1)}{s_2}\right)$
        \item $\psi_{21} = \frac{1}{s_2} \log\left(1+d_1 \log(e{d_2 \choose s_2})\right) \asymp \frac{\log d_1}{s_2} + \frac{\log(\log(e{d_2 \choose s_2}))}{s_2},$
    \end{itemize}
    hence 
    \begin{align*}
        \psi_{12} + \psi_{21} \asymp \log\left(\frac{ed_2}{s_2}\right) + \frac{\log d_1}{s_2}.
    \end{align*}
    Moreover, we also have
    \begin{align*}
        \phi_{21} = \frac{d_2}{s_2^2}\log(1+d_1) \gtrsim \log\left(\frac{ed_2}{s_2}\right) + \frac{\log d_1}{s_2} \asymp \psi_{12} + \psi_{21}.
    \end{align*}
    As for $\phi_{12}$, assume first that $s_2^2 \leq d_2$. Then, we have
      \begin{align*}
        \phi_{12} &= d_1 \log\left(1+\frac{d_2}{s_2^2}\right) \\
        &\asymp \frac{d_1}{s_2} \frac{d_2}{s_2} \geq \frac{d_1}{s_2} \lor \frac{d_2}{s_2} \qquad  \text{ since } d_1 \geq e^{s_1} \geq s_1 \text{ and } d_2 \geq s_2\\
        &\geq \frac{\log(s_1)}{s_2} \lor \log(e\frac{d_2}{s_2}) \\
        &\asymp \psi_{12} + \psi_{21}.
    \end{align*}
    Next, assume that $s_2^2 > d_2$. Then $\log(ed_2/s_2) \asymp \log(ed_2) \asymp \log(es_2)$ and we know that $\log(d_1) \geq s_2$, which implies $d_1 \geq \log(s_2)$. Therefore, 
    \begin{align*}
        \phi_{12} \gtrsim d_1 \geq \frac{\log(d_1)}{s_2} \lor \log(s_2) \asymp \psi_{12} + \psi_{21}.
    \end{align*}
    It follows that $(\mu^*)^2 \asymp \psi_{12} + \psi_{21} \asymp \log\left(\frac{ed_2}{s_2}\right) + \frac{\log d_1}{s_2}$, as claimed. 

\item Assume now that $\log(d_1)< s_2$. We have $\psi_{12} = \log\left(1+\frac{d_2}{s_2^2}\log(ed_1)\right)$, and 
\begin{itemize}
    \item $\phi_{12} = d_1 \log\left(1+\frac{d_2}{s_2^2}\right) \geq \psi_{12}$ by Lemma~\ref{lem_logs_constants}.(ii).
    \item $\phi_{21} = \frac{d_2}{s_2^2}\log (1+d_1) \gtrsim \psi_{12}$.
\end{itemize}
Assume first that $\frac{d_2}{s_2^2}\log (ed_1)<1$. It follows that $\psi_{12} \asymp \phi_{21}$, so that $(\mu^*)^2 \asymp(\psi_{12} + \psi_{21}) \land \phi_{12} \land \phi_{21} \asymp \phi_{12}$, as claimed. 

Now, assume that $\frac{d_2}{s_2^2}\log (ed_1) \geq 1$. Then by Lemma~\ref{lem_logs_constants}.(ii), we have
\begin{align*}
    s_2 \psi_{12} \geq \log\left(1+ \frac{d_2\log(ed_1)}{s_2}\right) \gtrsim \log(1+s_2) \lor \log\log\left(\frac{ed_2}{s_2}\right) \asymp \log\left(e{d_2 \choose s_2}\right),
\end{align*}
hence $\psi_{12} \gtrsim \psi_{21}$. It follows that $(\mu^*)^2 \asymp \psi_{12}$, which concludes the proof. 
\end{enumerate}

\end{document}